\crefname{equation}{}{}
\let\originalleft\left
\let\originalright\right
\renewcommand{\left}{\mathopen{}\mathclose\bgroup\originalleft}
\renewcommand{\right}{\aftergroup\egroup\originalright}
\crefname{algocf}{Algorithm}{Algorithms}
\crefname{equation}{}{} %remove ``Equation''
\colorlet{refkey}{orange!20}
\colorlet{labelkey}{blue!30}
\crefname{algocf}{Algorithm}{Algorithms}
\numberwithin{equation}{section}
\newtheorem{theorem}{Theorem}[section]
\newtheorem{proposition}[theorem]{Proposition}
\newtheorem{lemma}[theorem]{Lemma}
\newtheorem{claim}[theorem]{Claim}
\crefname{claim}{Claim}{Claims}
\newtheorem{corollary}[theorem]{Corollary}
\newtheorem*{question*}{Question}
\newtheorem{fact}[theorem]{Fact}
\crefname{fact}{Fact}{Facts}
\theoremstyle{definition}
\newtheorem{definition}[theorem]{Definition}
\newtheorem*{definition*}{Definition}
\theoremstyle{remark}
\newtheorem{remark}[theorem]{Remark}
\newtheorem*{remark*}{Remark}
\newcommand{\floor}[1]{\left\lfloor #1 \right\rfloor}
\newcommand{\mb}{\mathbb}
\newcommand{\mc}{\mathcal}
\newcommand{\mr}{\mathrm}
\newcommand{\on}{\operatorname}
\renewcommand{\Pr}{\mb P}
\newcommand{\equal}{=}
\newcommand*{\claimproofname}{Proof of claim}
\newenvironment{claimproof}[1][\claimproofname]{\begin{proof}[#1]}{\end{proof}}
\tikzset{
blackvertexv2/.style={circle, draw=black!100,fill=black!100,thick, inner sep=0pt, minimum size= 2.5mm},
blackvertex/.style={circle, draw=black!100,fill=black!100,thick, inner sep=0pt, minimum size= 2.2mm},
blackvertexv3/.style={regular polygon,regular polygon sides=3, draw=black!100,fill=black!100,thick, inner sep=0pt, minimum size= 2.8mm},
greenvertexv2/.style={circle, draw=black!100,fill=green!75!black,thick, inner sep=0pt, minimum size= 2.5mm},
greenvertex/.style={rectangle, draw=black!100,fill=green!25!black ,thick, inner sep=0pt, minimum size= 2.3mm},
greenvertexv3/.style={regular polygon,regular polygon sides=3, draw=black!100,fill=green!100,thick, inner sep=0pt, minimum size= 2.8mm},
bluevertexv2/.style={circle, draw=black!100,fill=blue!100,thick, inner sep=0pt, minimum size= 2.5mm},
bluevertex/.style={rectangle, draw=black!100,fill=blue!100,thick, inner sep=0pt, minimum size= 2.2mm},
bluevertexv3/.style={regular polygon,regular polygon sides=3, draw=black!100,fill=blue!100,thick, inner sep=0pt, minimum size= 2.8mm},
yellowvertexv2/.style={circle, draw=black!100,fill=yellow!100,thick, inner sep=0pt, minimum size= 2.5mm},
yellowvertex/.style={rectangle, draw=black!100,fill=yellow!100,thick, inner sep=0pt, minimum size= 2.3mm},
yellowvertexv3/.style={regular polygon,regular polygon sides=3, draw=black!100,fill=yellow!100,thick, inner sep=0pt, minimum size= 2.8mm},
redvertexv2/.style={circle, draw=black!100,fill=red!80!black,thick, inner sep=0pt, minimum size= 2.5mm},
redvertex/.style={rectangle, draw=black!100,fill=red!100,thick, inner sep=0pt, minimum size= 2.3mm},
redvertexv3/.style={regular polygon,regular polygon sides=3, draw=black!100,fill=red!100,thick, inner sep=0pt, minimum size= 2.8mm},
tealvertexv2/.style={circle, draw=black!100,fill=teal!100,thick, inner sep=0pt, minimum size= 2.3mm},
tealvertex/.style={rectangle, draw=black!100,fill=teal!100,thick, inner sep=0pt, minimum size= 2.3mm},
tealvertexv3/.style={regular polygon,regular polygon sides=3, draw=black!100,fill=teal!100,thick, inner sep=0pt, minimum size= 2.8mm},
violetvertexv2/.style={circle, draw=black!100,fill=violet!100,thick, inner sep=0pt, minimum size= 2.5mm},
violetvertex/.style={rectangle, draw=black!100,fill=violet!100,thick, inner sep=0pt, minimum size= 2.3mm},
violetvertexv3/.style={regular polygon,regular polygon sides=3, draw=black!100,fill=violet!100,thick, inner sep=0pt, minimum size= 2.8mm},
limevertexv2/.style={circle, draw=black!100,fill=olive!100,thick, inner sep=0pt, minimum size= 2.3mm},
limevertex/.style={rectangle, draw=black!100,fill=olive!100,thick, inner sep=0pt, minimum size= 2.3mm},
limevertexv3/.style={regular polygon,regular polygon sides=3, draw=black!100,fill=olive!100,thick, inner sep=0pt, minimum size= 2.8mm},
magentavertexv2/.style={circle, draw=black!100,fill=magenta!100,thick, inner sep=0pt, minimum size= 2.3mm},
magentavertex/.style={rectangle, draw=black!100,fill=magenta!100,thick, inner sep=0pt, minimum size= 2.3mm},
magentavertexv3/.style={regular polygon,regular polygon sides=3, draw=black!100,fill=magenta!100,thick, inner sep=0pt, minimum size= 2.8mm},
dummywhite/.style={circle, draw=white!100,fill=white!100,thick, inner sep=0pt, minimum size= 0.5mm},
}
\title{Smoothed analysis for graph isomorphism}
\author[Anastos]{Michael Anastos}
\address{Institute of Science and Technology Austria (ISTA).}
\email{michael.anastos@ist.ac.at}
\author[Kwan]{Matthew Kwan}
\address{Institute of Science and Technology Austria (ISTA).}
\email{matthew.kwan@ist.ac.at}
\author[Moore]{Benjamin Moore}
\address{University of Manitoba.}
\email{Ben.Moore@umanitoba.ca}
\thanks{
All authors were supported by ERC Starting Grant ``RANDSTRUCT'' No.\ 101076777. Michael Anastos was also supported in part by the Austrian Science Fund (FWF) [10.55776/ESP3863424] and by the European Union’s Horizon 2020 research and innovation
programme under the Marie Sk\l{}odowska-Curie grant agreement No.\ 101034413 \includegraphics[width=4.5mm, height=3mm]{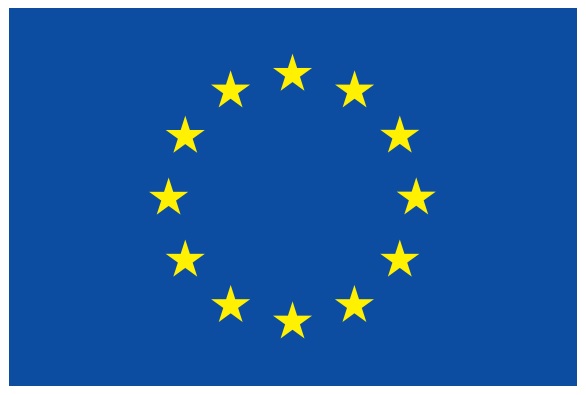}.
For Open Access purposes, the authors have applied a CC BY public copyright license to any author accepted manuscript version arising from this submission.
}
\begin{document}

\global\long\def\mk#1{\textcolor{red}{\textbf{[MK comments:} #1\textbf{]}}}
%\global\long\def\mk#1{}
\global\long\def\ma#1{\textcolor{orange}{\textbf{[MA comments:} #1\textbf{]}}}
%\global\long\def\ma#1{}

\begin{abstract}
There is no known polynomial-time algorithm for graph isomorphism testing, but elementary combinatorial ``refinement'' algorithms seem to be very efficient in practice. Some philosophical justification for this phenomenon is provided by a classical theorem of Babai, Erd\H os and Selkow: an extremely simple polynomial-time combinatorial algorithm (variously known as ``na\"ive refinement'', ``na\"ive vertex classification'', ``colour refinement'' or the ``1-dimensional Weisfeiler--Leman algorithm'')  yields a so-called canonical labelling scheme for ``almost all graphs''. More precisely, for a typical outcome of a random graph $\mb G(n,1/2)$, this simple combinatorial algorithm assigns labels to vertices in a way that easily permits isomorphism-testing against any other graph. 

We improve the Babai--Erd\H os--Selkow theorem in two directions. First, we consider \emph{randomly perturbed} graphs, in accordance with the \emph{smoothed analysis} philosophy of Spielman and Teng: for any graph $G$, na\"ive refinement becomes effective after a tiny random perturbation to $G$ (specifically, the addition and removal of $O(n\log n)$ random edges). Actually, with a twist on na\"ive refinement, we show that $O(n)$ random additions and removals suffice. These results significantly improve on previous work of Gaudio, R\'acz and Sridhar (resolving one of their conjectures), and are in certain senses best-possible.

Second, we complete a long line of research on canonical labelling and automorphisms for random graphs: for any $p$ (possibly depending on $n$), we prove that a random graph $\mb G(n,p)$ can typically be canonically labelled in polynomial time. This is most interesting in the extremely sparse regime where $p$ has order of magnitude $c/n$; denser regimes were previously handled by Bollob\'as, Czajka--Pandurangan, and Linial--Mosheiff. Our proof also provides a description of the automorphism group of a typical outcome of $\mb G(n,p)$ (slightly correcting a prediction of Linial--Mosheiff).
\end{abstract}
\maketitle

\section{Introduction}

Given a pair of graphs $G_{1}$ and $G_{2}$ (on the same vertex set
$\{1,\dots,n\}$, say), how can we test whether they are isomorphic?
Perhaps the most obvious first thought is to try to identify some easily-computable
isomorphism-invariant information that distinguishes the
two graphs. For example, one can easily compute the degrees of the
vertices of $G_{1}$ and the degrees of the vertices in $G_{2}$,
sort these lists and check if they are the same. If they are different,
we have successfully determined that the graphs are not isomorphic (and if they are the same our test was inconclusive).

Perhaps the most influential approach along these lines is called
\emph{colour refinement}, also known as \emph{na\"ive refinement}
or the \emph{1-dimensional Weisfeiler--Leman algorithm}\footnote{The origin of this idea is difficult to pin down, but it seems to
have been first proposed by Morgan~\cite{Mor65} in 1965 in the context of computational
chemistry!}. This is an algorithm that produces a ``colour'' for each vertex; the colour of a vertex describes the degree of that vertex, together with all other data that one can obtain by allowing degree information to ``percolate through the graph''. We will define the algorithm formally in \cref{def:refinement} (and give an example in \cref{fig:CR}), but briefly: at the start of the
algorithm, every vertex ``looks the same''. Then, in the first step
we distinguish vertices by their degrees (i.e., the colour of each vertex is its degree). In the second step, we distinguish
vertices by their \emph{number of neighbours with each degree} (i.e.,
each vertex now has a colour consisting of its own degree, together
with a multiset of the degrees of its neighbours). In general, at
each step we update the colour of each vertex by appending the
multiset of colours of its neighbours. After some number of iterations
of this procedure, it will ``stabilise'' in the sense that no further
vertices can be distinguished from each other.

It turns out that the colour refinement algorithm can be executed very efficiently: in a graph with $n$ vertices, the stable colouring can be computed\footnote{We are brushing over some subtleties here, which we will discuss further in \cref{sec:basic-notions}.} in time $O(n^2\log n)$. It is not hard to see that \emph{if} the algorithm manages to assign each vertex of $G$ a \emph{distinct label}, then the sequence of colours in the stable colouring uniquely determines the isomorphism class of $G$. In fact, more is true: in this case one can use the colours to define a \emph{canonical labelling} of $G$. This notion will be defined formally in \cref{def:canonical-labelling}, but roughly speaking it means that one can label the vertices of $G$ with the integers $\{1,\dots,n\}$ (assuming $G$ has $n$ vertices), in such a way that isomorphic graphs are always labelled the same way. 

Unfortunately, colour refinement is not always so effective. For example, in a regular graph, where every vertex has the same degree, colour refinement is
useless on its own, as it cannot distinguish any vertices from each other. However, a landmark result of Babai, Erd\H os and Selkow~\cite{BES80} shows that this situation is ``atypical'': the proportion of $n$-vertex graphs which cannot be canonically labelled using colour refinement tends to zero as $n\to \infty$ (see also the improvements in \cite{BK79,Lip78,Kar79}). This result is most easily stated in the language of random graphs, as follows. 
\begin{theorem}\label{thm:BES}
For a random graph\footnote{In the random graph $\mathbb{G}(n,p)$ (called the \emph{binomial} or sometimes the \emph{Erd\H os--R\'enyi} random graph), we fix a set of $n$ vertices and include each of the $\binom n2$ possible edges with probability $p$ independently.
} $G\sim \mb G(n,1/2)$, whp\footnote{We say a property holds \emph{with high probability}, or ``whp''
for short, if it holds with probability tending to 1. Here and for
the rest of the paper, all asymptotics are as $n\to\infty$.} the colour refinement algorithm distinguishes all vertices from each other. In particular, whp $G$ can be tested for isomorphism with any other graph in polynomial time.
\end{theorem}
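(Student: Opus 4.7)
My plan is to show that after two rounds of colour refinement, $G \sim \mb G(n,1/2)$ whp has all vertices receiving pairwise distinct colours, following the strategy of Babai--Erd\H os--Selkow. Set $k := \lceil 3\log_2 n \rceil$ and aim to identify a ``distinguishing set'' $S \subseteq V(G)$ of $k$ vertices whose degrees in $G$ are pairwise distinct and strictly larger than the degree of every vertex outside $S$. A natural choice is the set of $k$ vertices of top degree in $G$; a standard analysis of the binomial degree distribution (noting that the gaps between the top few order statistics of $\mathrm{Bin}(n-1,1/2)$ are whp on the order of $\sqrt{n/\log n}$, hence much larger than~$1$) shows this has the required property whp.

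Given such an $S$, two rounds of refinement already distinguish all vertices. After round~1 the colour of each $v$ is its degree $d(v)$, and after round~2 the colour additionally records the multiset $\{d(u) : u \sim v\}$. Since each $s \in S$ has a unique degree $d_s$ in $G$, the multiplicity of $d_s$ in this multiset equals $\mathbf{1}[v \sim s]$, so the round-2 colour of $v$ determines the adjacency pattern $(\mathbf{1}[v \sim s])_{s \in S} \in \{0,1\}^S$. It therefore suffices to show that whp all $n$ vertices have pairwise distinct adjacency patterns to $S$.

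If $S$ were \emph{fixed}, a union bound over the $\binom{n}{2}$ vertex pairs would suffice, since each pair collides with probability $O(2^{-k}) = O(n^{-3})$, giving total collision probability $O(n^{-1})$. To handle that $S$ is graph-dependent, I would use a standard conditioning trick: for each pair $\{u,v\}$, let $S_{uv}$ denote the top-$k$-degree set computed using only the subgraph $G - \{u,v\}$. Conditional on $G - \{u,v\}$, the edges from $u$ and $v$ to $S_{uv}$ are independent Bernoulli$(1/2)$, so the $O(n^{-3})$ collision bound applies; a separate robustness argument then controls the (low-probability) event $S \neq S_{uv}$, which happens only if removing $\{u,v\}$ shuffles the identity of the top-$k$-degree set.

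The main obstacle is this decoupling between the graph-dependent choice of $S$ and the union bound on adjacency patterns. The conditioning trick above works provided that the top-$k$-degree set is stable under a bounded-degree perturbation, which follows from the gap estimate on the order statistics mentioned in the first paragraph. Once all vertices have distinct round-2 colours, the sequence of colours provides a canonical labelling, which in particular enables polynomial-time isomorphism testing against any other graph, giving the last sentence of the theorem.
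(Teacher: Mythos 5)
Your proposal is correct in outline, but it is a genuinely different route from the one this paper takes. What you have reconstructed is essentially the original Babai--Erd\H os--Selkow argument (which the paper cites rather than reproves): pick the $k=\lceil 3\log_2 n\rceil$ vertices of top degree as a distinguishing set $S$, note that after one round each $s\in S$ has a unique colour (its degree), so after a second round the colour of any vertex encodes its adjacency pattern to $S$, and then show by a union bound that whp no two vertices share a pattern, decoupling the graph-dependence of $S$ by conditioning on $G-\{u,v\}$ and a gap condition on the top order statistics. The paper instead obtains \cref{thm:BES} as the special case $G_0=\emptyset$, $p=1/2$ of \cref{thm:smoothed-colour-refinement}, whose proof goes through views/universal covers, the difference sets $\mc S^i(\{u,v\})$, an expansion lemma, and Littlewood--Offord-type anticoncentration, culminating in \cref{prop:3core} (whp every vertex of the 3-core of the random part gets a unique colour; at $p\ge(1+\eps)\log n/n$ the 3-core is everything). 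The trade-off: your argument is short, uses only two refinement rounds, and can be pushed to exponentially small failure probability (\`a la Babai--Ku\v cera), but it is tied to dense purely random graphs, since it hinges on top-degree vertices having globally unique degrees --- exactly the feature that disappears under adversarial $G_0$ or at sparser densities, which is what the paper's heavier machinery is built to survive. Two small points to tighten in a full write-up: the typical gaps between the top $\Theta(\log n)$ degree order statistics are of order $\sqrt n/(\log n)^{3/2}$ rather than $\sqrt{n/\log n}$ (all you actually need, and can prove by a routine first-moment computation handling the mild dependence between degrees, is that whp all these gaps exceed $2$, so that $S=S_{uv}$ for all pairs avoiding $S$); and pairs $\{u,v\}$ meeting $S$ must be excluded from the decoupling step, which is harmless because vertices of $S$ already receive unique colours after the first round.
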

\cref{thm:BES} is widely touted as philosophical justification for why algorithms based on colour refinement seem to be so effective in practice. Indeed, all graph isomorphism algorithms in common usage employ a modification of colour refinement called \emph{individualisation-refinement}\footnote{It will not be relevant for the present paper to formally describe this paradigm, but for the curious reader: the idea is that occasionally a vertex must be artificially distinguished from the other vertices of its colour in order to ``break regularity'' (one must then consider all possible ways to make this artificial choice).}. It is well-known that algorithms of this type take exponential time on worst-case inputs (see \cite{NS18} for recent general results in this direction), but one rarely seems to encounter such inputs in practice. Of course, we would be remiss not to mention that from a theoretical point of view, fully sub-exponential algorithms are now available: there is an amazing line of work due to Babai, Luks, Zemlyachenko and others (see \cite{Bab18} for a survey) applying deep ideas from group theory to the graph isomorphism problem, which culminated in Babai's recent quasipolynomial-time graph isomorphism~\cite{Bab16} and canonical labelling~\cite{Bab19} algorithms. 

\begin{remark*}
    The study of the colour refinement algorithm is of interest beyond its direct utility in graph isomorphism testing. Indeed, if graphs $G$ and $H$ are indistinguishable by colour refinement, we say that $G$ and $H$ are \emph{fractionally isomorphic}; this is an important notion of intrinsic interest in graph theory, that has surprisingly many equivalent formulations (e.g., in terms of first-order logic~\cite{CFI}, universal covers~\cite{Ang80}, tree homomorphism counts~\cite{Dvorak} and doubly-stochastic similarity of adjacency matrices~\cite{Tin86}). See for example the surveys \cite{PV11,GKMS21} and the monograph \cite{BBG17} for more.

    We also remark that the colour refinement algorithm represents the limit of so-called \emph{graph neural networks} for graph isomorphism testing~\cite{GNN1,GNN2}; these connections have recently been of significant interest in the machine learning community (see \cite{GNNsurvey,GNNsurvey2} for surveys).
\end{remark*}

\subsection{Smoothed analysis}

As our first main direction in this paper, we take the philosophy of \cref{thm:BES} much further, combining it with the celebrated \emph{smoothed analysis} framework of Spielman and Teng~\cite{ST04}. To give some context: the \emph{simplex algorithm} for linear optimisation is another fundamental example of an algorithm which seems to perform well in practice but takes exponential time in the worst case. As a very strong explanation for this, Spielman and Teng proved that if one takes \emph{any} linear optimisation problem and applies a slight (Gaussian) random perturbation to its coefficients, then for a typical outcome of the resulting perturbed linear optimisation problem, the simplex algorithm succeeds in polynomial time. This shows that poorly-performing instances are ``fragile'' or ``unstable'', and perhaps we should not expect them to appear in practice. 

Linear optimisation is a continuous problem, and for discrete problems one needs a different notion of random perturbation. In a later paper, Spielman and Teng~\cite{ST03} suggested that the most natural way
to define a discrete random perturbation is in terms of \emph{symmetric difference}: for graphs $G,G'$ on the same vertex set, write $G\triangle G'$ for the graph containing all edges which are in exactly one of $G$ and $G'$
(we can think of $G'$ as the ``perturbation graph'', specifying where we should ``flip'' edges of $G$ to non-edges, or vice versa). Significantly strengthening \cref{thm:BES}, we prove that for \emph{any} graph, randomly perturbing each edge with probability about $\log n/n$ (i.e., adding and removing about $n \log n$ random edges\footnote{Note that $\mb G(n,p)$ tends to have about $p\binom n2\approx pn^2$ edges.}) is sufficient to make colour refinement succeed whp.

\begin{theorem}\label{thm:smoothed-colour-refinement}
Fix a constant $\varepsilon>0$ and consider any $p\in [0,1/2]$ satisfying $p\ge (1+\varepsilon )\log n/n$. For any graph $G_0$, and with $G_{\mr{rand}}\sim \mb G(n,p)$, whp the colour refinement algorithm distinguishes all vertices of $G_0\triangle G_{\mr{rand}}$ from each other. In particular, whp $G_0\triangle G_{\mr{rand}}$ can be tested for isomorphism with any other graph in polynomial time.
\end{theorem}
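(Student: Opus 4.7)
The plan is to prove \cref{thm:smoothed-colour-refinement} via a union bound over pairs of vertices: for any fixed pair $u \ne v$, I aim to show that with probability $1 - o(1/n^2)$, colour refinement eventually assigns different colours to $u$ and $v$. As a preliminary simplification, note that $G := G_0 \triangle G_{\mr{rand}}$ is a graph whose edges are independent, with each edge $e$ present with probability $1-p$ if $e \in G_0$ and with probability $p$ otherwise. Thus we are analysing a graph with independent edges whose probabilities lie in $\{p,1-p\}$, and since the stable partition produced by colour refinement is invariant under complementation, this asymmetry is essentially harmless.

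For a fixed pair $(u,v)$, I would expose the randomness in stages. First expose all edges incident to $u$ or $v$, revealing the sets $A := N(u) \setminus (N(v) \cup \{v\})$, $B := N(v) \setminus (N(u) \cup \{u\})$, and $C := N(u) \cap N(v)$. If $|A| \ne |B|$ then the degrees differ and round~$1$ already separates $u$ and $v$. Otherwise, expose the remaining edges touching $A \cup B$; the degrees (in $G$) of vertices in $A$ and $B$ then become essentially independent Binomials, and round~$2$ of colour refinement separates $u$ from $v$ unless the multiset of degrees on $A$ equals the multiset of degrees on $B$. A local-CLT / anti-concentration argument for the coincidence of two independent sums of integer-valued random variables should give a bound of $o(1/n^2)$ as long as $|A|$ is of order $np = \Omega(\log n)$, which is its typical value.

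The main obstacle is handling ``exceptional'' pairs, where $|A|$ is atypically small; these can arise from the adversarial graph $G_0$ (for example, when $u$ and $v$ have nearly identical $G_0$-neighbourhoods). To deal with them, I would iterate: expose edges at successive BFS layers outward from $u$ and $v$, and invoke correspondingly higher rounds of colour refinement. Since $p \ge (1+\eps)\log n / n$ guarantees expansion of the perturbed graph, the BFS volumes grow by a factor of roughly $np \gtrsim \log n$ per layer, so after $O(\log n / \log \log n)$ rounds the accumulated independent randomness on the ``new'' vertices at each layer should suffice to push the probability of indistinguishability below $n^{-2-\delta}$ for some $\delta > 0$. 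Synthesising these phased exposures into a single argument, with careful conditioning to preserve independence of the revealed randomness at each stage, is the technical heart of the proof; the threshold $(1+\eps)\log n/n$ enters precisely because below it the perturbed graph can retain too much low-entropy local structure (e.g.\ isolated vertices or pendant edges) for any finite number of rounds to succeed.
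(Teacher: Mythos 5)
There is a genuine gap, and it sits at the quantitative heart of your argument rather than in the ``exceptional pairs''. For $p=\Theta(\log n/n)$ — which is exactly the bottom of the range you must cover — even a \emph{typical} pair with $|A|=|B|\asymp np\asymp\log n$ is not separated by round~2 with probability $1-o(n^{-2})$. Each degree is a sum of independent Bernoulli$(p)$/Bernoulli$(1-p)$ indicators, so its point probabilities are of order $1/\sqrt{np}$, and the two degree-count vectors on $A$ and $B$ (which is what round~2 compares) collide with probability at least roughly $e^{-\Theta(\sqrt{np}\,\log (np))}=e^{-\Theta(\sqrt{\log n}\,\log\log n)}=n^{-o(1)}$; comparing only the two \emph{sums} of degrees, as your phrase ``coincidence of two independent sums'' suggests, is even weaker (a Littlewood--Offord bound of order $1/\sqrt{n^2p^2}=O(1/\log n)$, cf.\ \cref{thm:LO}). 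So the union bound over the $\sim n^2$ pairs collapses, and this is not an artefact of the method: as noted in \cref{subsec:further-directions}, two refinement rounds provably stop sufficing below roughly $p=(\log n)^2(\log\log n)^{-3}/n$. Consequently the multi-layer iteration you defer to the end is not a patch for adversarially created pairs with small $|A|$ — it is needed for essentially \emph{every} pair whenever $(1+\varepsilon)\log n/n\le p\lesssim (\log n)^2/n$, and making it work is the entire technical content of the proof, not a synthesis step.

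There is also a structural flaw in the iteration as described: a discrepancy in BFS-layer statistics does not imply that colour refinement separates $u$ and $v$. The stable colours encode views (rooted walk trees), not distance layers: one-dimensional refinement cannot in general count vertices at a prescribed distance (all vertices of the disjoint union of $C_6$ and two triangles receive the same stable colour despite different distance profiles), and imbalances created at layer $i$ can cancel against walk multiplicities inherited from earlier layers. This is precisely why the paper replaces BFS layers by the surplus multisets $\mc L^i(u,v)$ of walk-copies and proves \cref{lem:receive_same_colour} to convert a degree-statistic imbalance between $\mc L^i(u,v)$ and $\mc L^i(v,u)$ into distinct colours; and why, to beat $n^{-2}$, the paper first boosts the discrepancy set to size $n^{1-o(1)}$ using expansion (\cref{lem:expansion_SLi}) and then runs a \emph{joint} anticoncentration argument over well-separated degree buckets inside a small ``hole'' (\cref{prop:expansion-colour-refinement}), rather than a per-layer local CLT. \cref{thm:smoothed-colour-refinement} then follows because for $p\ge(1+\varepsilon)\log n/n$ the whole vertex set lies in the 3-core of $G_{\mr{rand}}$ (\cref{prop:3core}), after reducing to $p\le 2\log n/n$ by splitting the perturbation. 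Your opening reductions (independent edges, degree comparison at round~1) are fine, but as written the proposal's key probabilistic estimate is false in the critical regime and the layer-to-colour implication is unjustified.
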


Note that the $p=1/2$ case of \cref{thm:smoothed-colour-refinement} is precisely \cref{thm:BES}. Indeed, when $p=1/2$, the random perturbation is so extreme that all information from the original graph is lost and we end up with a purely random graph. (We may restrict our attention to $p\le 1/2$, because perturbing $G$ with a random graph $\mb G(n,p)$ is the same as perturbing the complement of $G$ with a random graph $\mb G(n,1-p)$.)

We remark that we are not the first to consider smoothed analysis for graph isomorphism: this setting was also recently considered by Gaudio, R\'acz and Sridhar~\cite{GRS}, though only under quite restrictive conditions on $G_0$, and with a stronger assumption on $p$ (both of which make the problem substantially easier). Specifically, they found an efficient canonical labelling scheme that succeeds whp as long as $G_0$ satisfies a certain ``sparse neighbourhoods'' property, and as long as $p$ has order of magnitude between $(\log n)^2/n$ and $(\log n)^{-3}$. Actually, \cref{thm:smoothed-colour-refinement} confirms a conjecture in \cite{GRS} (that randomly perturbed graphs $G_0\triangle G_{\mr{rand}}$ can be efficiently canonically labelled when $G_{\mr{rand}}\sim \mb G(n,p)$ with $p\ge (1+\varepsilon )\log n/n$).

It is not hard to see that the assumption on $p$ in \cref{thm:smoothed-colour-refinement} cannot be significantly improved. Indeed, if $G_0$ is the empty graph and $p\le (1-\varepsilon)\log n/n$, then $G_0\triangle G_{\mr{rand}}$ is likely to have many isolated vertices (see e.g.\ \cite[Theorem~3.1]{FK16}), which cannot be distinguished by colour refinement. This does not necessarily mean that colour refinement fails to provide a canonical labelling scheme (isolated vertices can be labelled arbitrarily), but to illustrate a more serious problem, suppose $G_0$ is a disconnected graph in which every component is a 3-regular graph on at most 10 vertices. If we perturb with edge probability $p\le (1/20)\log n/n$, it is not hard to see that whp many of the components of $G$ will be completely untouched by the random perturbation (and therefore colour refinement will be unable to distinguish the vertices in these components, despite the components potentially having very different structure).

Of course, while tiny regular components may foil colour refinement, they are not really a fundamental problem for canonical labelling (as we can afford to use very inefficient canonical labelling algorithms on these tiny components). We are able to go beyond \cref{thm:smoothed-colour-refinement} via a modification of the colour refinement algorithm in this spirit. Indeed, with a variation on the colour refinement algorithm called the \emph{2-dimensional Weisfeiler--Leman algorithm}, and a notion of a \emph{disparity graph} 
(which allows us to define an appropriate generalisation of the notion of a ``tiny component''), together with an exponential-time canonical labelling algorithm of Corneil and Goldberg~\cite{CG84} (only applied to the analogues of ``tiny components''), we are able to define a combinatorial canonical labelling scheme which becomes effective after extremely mild random perturbation (perturbation probability about $1/n$).

\begin{theorem}\label{thm:smoothed-disparity}
There is a set of graphs $\mc H$, and an explicit polynomial-time canonical labelling algorithm for graphs in $\mc H$ (which can also detect \emph{whether} a graph lies in $\mc H$), such that the following holds.

Consider any $p\in [0,1/2]$ satisfying $p\ge 100/n$. For any graph $G_0$, and $G_{\mr{rand}}\sim \mb G(n,p)$, whp $G_0\triangle G_{\mr{rand}}\in \mc H$. In particular, whp $G_0\triangle G_{\mr{rand}}$ can be tested for isomorphism with any other graph in polynomial time.
\end{theorem}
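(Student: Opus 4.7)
The plan is to design a canonical labelling algorithm for a carefully chosen class $\mathcal{H}$ in three stages. First, run the 2-dimensional Weisfeiler--Leman refinement on $G := G_0 \triangle G_{\mathrm{rand}}$ to obtain a canonical colouring of ordered vertex pairs. Second, distill from this colouring a canonical \emph{disparity graph} $D(G)$ on $V(G)$, whose connected components (the ``generalised tiny components'') will correspond to sets of vertices that 2-WL cannot canonically separate using only its own colouring. Third, apply the exponential-time Corneil--Goldberg algorithm to each component of $D(G)$ separately, and assemble the local canonical labellings into a global one using the canonical ordering of the components supplied by 2-WL. The class $\mathcal{H}$ is then defined as exactly those graphs for which every component of $D(G)$ has size at most some threshold $k_0 = O(\log n / \log\log n)$ (say), so that Corneil--Goldberg runs in $2^{O(k_0)} = n^{O(1)}$ time on each piece; membership in $\mathcal{H}$ is then testable in polynomial time by running the procedure.

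With this setup, the genuine work is the probabilistic statement that $G_0 \triangle G_{\mathrm{rand}} \in \mathcal{H}$ whp when $p \geq 100/n$. The core estimate is a union bound: for each $k \leq k_0$ and each candidate vertex subset $S$ of size $k$, I would argue that if $S$ is to sit inside a single disparity-graph component then $G[S \cup N(S)]$ (in some constant-radius neighbourhood sense adapted to 2-WL) must exhibit a specific kind of internal ``regularity'' that is broken as soon as a single random edge appears in an ``asymmetric position''. Since $p \geq 100/n$, the $\Theta(k n)$ potential edges touching $S$ contribute $\Omega(k)$ independent coin flips of probability $\Theta(1/n)$, and the event that none of them falls in a symmetry-breaking slot has probability at most $e^{-\Omega(k)}$, which beats the $\binom{n}{k} \leq (en/k)^k$ choices of $S$ provided $k$ is at least a sufficiently large constant. (For the base case of very small $k$, the regularity constraints become rigid enough that direct enumeration suffices.)

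The main obstacle is defining the disparity graph in just the right way, balancing three competing demands: it must be canonical with respect to 2-WL (so that its components carry canonical labels from 2-WL itself), it must force enough rigidity that the union bound above actually goes through, and it must be permissive enough that the complementary ``good'' part of $G$ is genuinely canonically distinguished by 2-WL alone. I expect the correct definition to link two vertices in $D(G)$ precisely when their 2-WL colour class, together with the class of an adjacent pair, fails a local ``disparity'' test witnessing that some neighbourhood-level invariant is shared. The probabilistic analysis then reduces to showing that typical perturbations break every such coincidence outside of truly small clusters; this is strictly more subtle than the analysis behind \cref{thm:smoothed-colour-refinement}, because in the regime $p = \Theta(1/n)$ we cannot rely on typical degrees being distinct and must instead exploit the second-order local structure visible to 2-WL.

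Once the component sizes of $D(G)$ are controlled, the final canonical labelling step is routine: the 2-WL colours of pairs of vertices lying in different components induce a canonical linear order on the components (with ties between isomorphic components broken consistently using Corneil--Goldberg's canonical form), and within each component Corneil--Goldberg supplies a canonical internal ordering, so concatenating these produces the desired canonical labelling of all of $V(G)$.
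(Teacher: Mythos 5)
Your high-level architecture (a canonical colouring, a ``disparity graph'' whose small components are handled by Corneil--Goldberg, and a reduction from canonically labelling $G$ to canonically labelling the disparity graph) does match the paper's skeleton, and that reduction step is indeed routine (cf.\ \cref{prop:D-to-G,cor:small-components}). But the probabilistic core of your proposal has a genuine gap, in two respects. First, the union bound as you state it does not close: with $p=\Theta(1/n)$, a set $S$ of size $k$ sees only $\Theta(k)$ expected random edges in ``symmetry-breaking slots'', so the per-set failure probability is $e^{-\Omega(k)}$, whereas beating $\binom{n}{k}\le (en/k)^k$ requires failure probability $e^{-\Omega(k\log n)}$ when $k=O(\log n)$; the product $(en/k)^k e^{-ck}$ diverges, so ``provided $k$ is at least a sufficiently large constant'' does not rescue it. The paper explicitly flags exactly this: a direct union bound over $\log n$-sized subsets ``does not seem to yield any nontrivial bounds'', and it is replaced by a fingerprint/container-style argument (\cref{lem:graph_shrinks_certificate}), where a large surviving colour class forces a much smaller witness set $S$ of size about $|C'|/\log s$ whose ``distance-discrepancy'' set $\on{Diff}(S)$ has size $\Omega(|C'|)$, so the union bound is over only $s^{|C'|/(19\log s)}$ sets against a failure probability $0.3^{|C'|/8}$.

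Second, and more fundamentally, you have not supplied any mechanism by which a random edge at density $1/n$ is actually \emph{detected} by colour refinement or 2-WL. At this density degrees are not distinct and a single flipped edge does not, by itself, change any vertex's stable colour; the paper's engine is that vertices of the 3-core of $G_{\mr{rand}}$ whp receive unique colours (\cref{prop:3core}), which itself rests on the view/universal-cover machinery, expansion, and Littlewood--Offord anticoncentration (\cref{prop:expansion-colour-refinement,prop:2core}) --- none of which appears in your sketch. On top of that, because colour refinement is non-monotone, one cannot simply reveal the randomness touching each candidate set $S$ in isolation: the paper anchors on the (monotone) 3-core and runs seven rounds of sprinkling, with consistency lemmas (\cref{fact:2WL-consistent,lem:disparity-components}) and the exploration/submartingale argument of \cref{lem:break} controlling how components and colour classes evolve as new unique colours are added. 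Finally, your disparity graph is left undefined (``I expect the correct definition to link two vertices\dots''); the paper's definition is the concrete majority-complement construction of \cref{def:disparity}, and identifying that object is part of what makes the argument work. So while the outer framing is right, the proposal is missing the ideas that constitute essentially all of the proof's difficulty.
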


We highlight that the algorithm for \cref{thm:smoothed-disparity} is a ``combinatorial'' algorithm, using completely elementary refinement/recursion techniques (in particular, the algorithm can be interpreted as falling into the \emph{individualisation-refinement} paradigm, and it does not use any group theory).
Actually, up to constant factors, the restriction $p\ge 100/n$ in \cref{thm:smoothed-disparity} is essentially best possible for combinatorial algorithms, given state-of-the-art worst-case guarantees: if we were to take $G_{\mr{rand}}\sim \mb G(n,p)$ for $p=o(1/n)$ (i.e., if we were to take a milder random perturbation than in \cref{thm:smoothed-disparity}), then any polynomial-time canonical labelling algorithm that succeeds whp for graphs of the form $G_0\triangle G_{\mr{rand}}$ would immediately give rise\footnote{For any graph $G^*$, we can consider a larger graph $G_0$ with many copies of $G^*$ as connected components, with parameters chosen such that a very mild random perturbation typically leaves some copies of $G^*$ completely unaffected (and therefore a canonical labelling of the randomly perturbed graph can be translated into a canonical labelling of $G^*$). We omit the details.} to a sub-exponential-time canonical labelling algorithm for \emph{all} graphs (i.e., an algorithm that runs in time $e^{o(n)}$). Although such algorithms are known to exist (most obviously, we already mentioned Babai's quasipolynomial time algorithm), they all fundamentally use group theory.

The proof of \cref{thm:smoothed-disparity} involves a wide range of different ideas, which we outline at some length in \cref{subsec:ideas}. As an extremely brief summary: we first use expansion and anticoncentration estimates, together with a characterisation of colour refinement in terms of \emph{universal covers}, to prove that the colour refinement algorithm (applied to $G_0\triangle G_{\mr{rand}}$) typically assigns distinct colours to the vertices in the so-called \emph{3-core} of $G_{\mr{rand}}$. This is already enough to prove \cref{thm:smoothed-colour-refinement}, but to prove \cref{thm:smoothed-disparity} (with its weaker assumption on $p$), we need to combine this with a delicate 
\emph{sprinkling} argument, in which we slowly reveal the edges of $G_{\mr{rand}}$, and study how the 3-core changes during this process. Every time a new vertex joins the 3-core (thereby receiving a unique colour by the colour refinement algorithm), this new colour information cascades through the 2-dimensional Weisfeiler--Leman algorithm, breaking up many of the colour classes into smaller parts. This has the effect of partitioning the graph into smaller and smaller parts, which can be treated separately at the end.

\subsection{Sparse random graphs}
After the Babai--Erd\H os--Selkow theorem (\cref{thm:BES}) on canonical labelling for $\mb G(n,1/2)$, one of the most obvious directions for further study was to consider sparser random graphs $\mb G(n,p_n)$ (note that it suffices to consider $p_n\le 1/2$, since canonical labelling is not really affected by complementation).

The first work in this direction was by Bollob\'as (see \cite[Theorem~3.17]{Bol01}), who showed that the proof approach for \cref{thm:BES} works as long as $p_n$ does not decay too rapidly with $n$. Combining this with a later result of Bollob\'as~\cite{Bol82} (which considered a very different type of canonical labelling scheme, under different assumptions on $p_n$), and a result of Czajka and Pandurangan~\cite{CP08} (which considered colour refinement for an intermediate range of $p_n$), one obtains polynomial-time canonical labelling schemes as long as $n p_n-\log n\to \infty$ as $n\to \infty$. This condition on $p_n$ is significant because it is the same range where random graphs are typically \emph{rigid}: as was famously proved by Erd\H os and R\'enyi~\cite{ER63} (see also \cite{Wri70}), such random graphs typically have only trivial automorphisms, whereas if $n p_n-\log n\to -\infty$ then there typically exist many automorphisms.

Even sparser graphs were recently considered by Linial and Mosheiff~\cite{LM17}; they handled the regime where $np_n\to \infty$ using another different type of canonical labelling scheme. The regime $n p_n\to 0$ is easy, as in this regime (see \cite[Section~2.1]{FK16}) the components of $\mb G(n,p_n)$ are all trees with size at most $o(\log n)$ (so various different types of trivial canonical labelling schemes suffice; see for example \cite{AKRV17}). That is to say, the only regime left unaddressed is the regime where $p_n$ is about $c/n$ for some constant $c$.

As our next main result, we close this gap, finding a canonical labelling scheme for all $p_n$.
\begin{theorem}\label{thm:sparse-random-informal}
There is a set of graphs $\mc H$, and an explicit polynomial-time canonical labelling algorithm for graphs in $\mc H$ (which can also detect \emph{whether} a graph lies in $\mc H$), such that the following holds.

For any sequence $(p_n)_{n\in \mb N}\in [0,1]^{\mb N}$, and $G_n\sim \mb G(n,p_n)$, whp $G_n\in \mc H$. In particular, whp $G_n$ can be tested for isomorphism with any other graph in polynomial time.
\end{theorem}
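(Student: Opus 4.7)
The plan is to produce a single set $\mc H$ and a single polynomial-time canonical labelling algorithm that internally dispatches on the edge count of the input, handling the denser regime via \cref{thm:smoothed-disparity} and the very sparse regime via a dedicated combinatorial argument. When the input has at least $50n$ edges (so that it ``looks like'' an outcome of $\mb G(n, p_n)$ with $p_n \ge 100/n$), we simply apply \cref{thm:smoothed-disparity} with $G_0$ the empty graph and $G_{\mr{rand}} := G_n$: then $G_0 \triangle G_{\mr{rand}} = G_n$ lies in the set from that theorem whp, and the same polynomial-time algorithm canonically labels it. Thus the new work is required only when $p_n \le 100/n$, so that $np_n = O(1)$.

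In this sparse regime, classical random graph theory tells us that $G_n$ has a very restricted structure whp: apart from possibly a unique giant component of size $\Theta(n)$ (appearing only when $np_n$ exceeds $1$), all components are trees or unicyclic with at most $O(\log n)$ vertices, modulo an additional ad-hoc treatment in the critical window where components can reach size $O(n^{2/3})$ but have bounded excess and are therefore canonically labellable in polynomial time. Small components are canonically labelled individually using the exponential-time Corneil--Goldberg algorithm, which is polynomial in $n$ on inputs of size $O(\log n)$, and the resulting canonical forms are sorted lexicographically to produce a canonical multi-index of isomorphism classes. For the giant component we pass to its 2-core $C$ and then to the kernel $K$ obtained by suppressing all degree-2 vertices of $C$: $K$ is a random multigraph on $\Theta(n)$ vertices with minimum degree at least $3$. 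A canonical labelling of $K$ lifts to one of $C$ by canonically labelling each subdivided path (whose endpoints already carry labels), and then to one of the whole giant component by canonically labelling the pendant trees hanging off $C$.

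The crux is thus to canonically label the kernel $K$ of a sparse random graph whp. Here I would adapt the core machinery of \cref{thm:smoothed-disparity}: the minimum-degree-$\ge 3$ condition on $K$ plays the role the 3-core did there, driving the expansion and anti-concentration estimates and feeding the universal-cover characterisation of colour refinement, so that the 2-dimensional Weisfeiler--Leman algorithm---possibly preceded by individualising $O(1)$ vertices to break residual global symmetries---distinguishes all vertices of $K$ whp. Along the way, the analysis produces a description of $\on{Aut}(G_n)$: automorphisms arise only from permutations of isomorphic small components combined with local symmetries of the pendant trees and subdivided paths attached to $K$, matching the Linial--Mosheiff prediction up to the correction mentioned in the abstract.
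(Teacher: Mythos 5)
Your high-level architecture (handle $p_n\ge 100/n$ via \cref{thm:smoothed-disparity} with $G_0=\emptyset$, and reduce the sparse regime to the 2-core/kernel plus trees and small components) is in the right spirit, but the crux of the sparse case is asserted rather than proved, and that assertion is exactly where the difficulty lies. You propose to canonically label the kernel $K$ by ``adapting the core machinery'' so that 2-dimensional Weisfeiler--Leman (plus $O(1)$ individualisations) distinguishes all vertices of $K$ whp. The machinery of \cref{prop:expansion-colour-refinement,prop:2core} is driven by independent Bernoulli edge indicators of $\mb G(n,p)$ (the anticoncentration step sums $\chi(\{x,y\})(a_x+a_y)\xi_{\{x,y\}}$ over unrevealed edges), whereas the kernel is a dependent random multigraph obtained by contracting bare paths, so the adaptation is not routine; moreover your claim that $K$ has $\Theta(n)$ vertices fails throughout the weakly supercritical regime, and near the critical point the expansion arguments degrade entirely. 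The paper avoids this by never labelling the kernel as a standalone object: it proves (\cref{thm:2-core}) that colour refinement run on $G$ itself assigns distinct colours to all vertices of $V_{2,3}(G)$, using \cref{prop:2core} with assumption \cref{item:2core-for-random} when $np\ge 1+(\log n)^{-40}$, and then handles the remaining near-critical window $0.9/n\le p\le(1+(\log n)^{-2})/n$ by a genuinely different argument (\cref{lem:longpaths,lem:strong-cycles}): there the graph has poor expansion but large diameter, so exploration processes from two kernel vertices stay ``alive'' for $\Omega(\log n)$ steps and accumulate enough independent randomness for a per-step anticoncentration bound of $0.8$, which suffices after a union bound. Your proposal contains no argument for this window (the ``bounded excess'' remark concerns component structure, not distinguishing vertices of the giant component's kernel, whose size diverges already for $np_n=1+\eps_n$ with $\eps_n n^{1/3}\to\infty$), so the gap is not a technicality.

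Two secondary problems. First, the density dispatch at $50n$ edges leaves a coverage hole: for $p_n$ slightly below $100/n$ the realised graph exceeds $50n$ edges with constant probability and is routed to the branch whose guarantee (\cref{thm:smoothed-disparity}) is only stated for $p\ge 100/n$, so whp-membership in $\mc H$ is not certified near the threshold unless your sparse argument itself works up to $100/n$. Second, outside the strictly subcritical regime the non-giant tree and unicyclic components can have $n^{\Omega(1)}$ vertices (e.g.\ $np_n=1\pm n^{-1/4}$), so labelling them with Corneil--Goldberg is super-polynomial; one needs a dedicated polynomial-time scheme for trees/unicyclic (equivalently outerplanar) graphs, as the paper does via \cref{thm:outerplanarcl} inside \cref{cor:kernel-canonical,prop:kernel-canonical}. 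Both of these are fixable, but the kernel-distinguishing step, especially in the near-critical window, is a genuine missing proof.
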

In fact, we prove that colour refinement on its own yields a canonical labelling scheme whp, unless $p_n$ has order of magnitude $c/n$. If $p_n$ has order of magnitude $c/n$, then colour refinement still \emph{almost} works; the only obstruction is connected components which have a single cycle, which can be easily handled separately in a number of different ways.
\begin{remark}
    There are important connections between the colour refinement algorithm and mathematical logic (see for example the monograph \cite{Gro17}). Our proof shows that random graphs of any density typically have \emph{Weisfeiler--Leman dimension} at most 2 (and if we delete components with a single cycle, the Weisfeiler--Leman dimension is exactly equal to 1). That is to say, random graphs of any density can typically be uniquely distinguished by the $2$-variable fragment of first-order logic with counting quantifiers.
\end{remark}

\begin{remark}\label{rem:independent-work}An equivalent version of \cref{thm:sparse-random-informal}, in the regime $p_n=O(1/n)$, has been independently proved in concurrent work by Oleg Verbitsky and Maksim Zhukovskii~\cite{VZ} (they also study the colour refinement algorithm, but with quite different methods: while we take advantage of our general machinery already used to prove \cref{thm:smoothed-disparity}, Verbitsky and Zhukovskii take advantage of structural descriptions of the ``anatomy'' of a sparse random graph~\cite{DLP14,DKLP11}). We believe both proof approaches to be of independent interest.
\end{remark}

Given the machinery developed to prove \cref{thm:smoothed-disparity}, the proof of \cref{thm:sparse-random-informal} is rather simple. Special care needs to be taken in the regime where $p$ is very close to $1/n$ (this is the critical regime for the \emph{phase transition} of Erd\H os--R\'enyi random graphs).

A related problem is to characterise the automorphism group of a random graph. Linial and Mosheiff achieved this when $np_n\to \infty$, and asked about the case where $p_n$ has order of magnitude $1/n$. Specifically,  they wrote that they ``suspect'' that the largest $2$-connected component of the $2$-core has trivial automorphism group. We prove that Linial and Mosheiff's suspicion is mostly (but not exactly) correct. To state our theorem in this direction, we need a further definition.
\begin{definition}\label{def:2core}
    For a graph $G$, let $\mathrm{core}_k(G)$ be its $k$-core (its largest subgraph with minimum degree at least $k$). A \emph{bare path} in a graph is a path whose internal vertices have degree 2. A \emph{closed} bare path is a bare path between a vertex and itself (strictly speaking, this is actually a cycle, not a path).
\end{definition}
Note that any graph $G$ can always be obtained by gluing a (possibly trivial) rooted tree to each vertex of $\mathrm{core}_2(G)$, and adding some tree components. As observed by Linial and Mosheiff, in order to characterise the automorphisms of $G$, it suffices to characterise how such automorphisms act on $\mathrm{core}_2(G)$. Indeed, having specified how an automorphism acts on $\mathrm{core}_2(G)$, all that remains is to specify automorphisms of the rooted trees attached with each vertex, and the tree components. (Note that automorphisms of rooted trees are easy to characterise; we can only permute vertices at the same depth, and only if their corresponding subtrees are isomorphic).
\begin{theorem}\label{thm:automorphisms}
    Consider any sequence $(p_n)_{n\in \mb N}$ and let $G\sim \mb G(n,p_n)$. Then, $G$ satisfies the following property whp. Every automorphism of $G$ fixes the vertices of the $\on{core}_{2}(G)$, with the following (possible) exceptions.
\begin{itemize}
\item For every cycle component of $\on{core}_{2}(G)$, automorphisms of
this cycle may give rise to automorphisms of $G$.
\item For every pair of cycle components in $\on{core}_{2}(G)$ which have
the same length, there may be an automorphism of $G$ which exchanges
these two cycles.
\item For a pair of vertices $u,v$ with degree at least 3 in $\on{core}_{2}(G)$,
such that (in $\on{core}_{2}(G)$) there are multiple bare paths between
$u$ and $v$, there may be an automorphism of $G$ which exchanges
these bare paths.
\item For a vertex $u$ with degree at least 3 in $\on{core}_{2}(G)$, such
that (in $\on{core}_{2}(G)$) there is a (closed) bare path from $u$ to itself, there may
be an automorphism of $G$ which ``flips'' this bare path (reversing
the order of the internal vertices).
\end{itemize}
\end{theorem}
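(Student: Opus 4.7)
The plan is to deduce \cref{thm:automorphisms} from a colour-refinement statement that slightly strengthens what is needed for \cref{thm:sparse-random-informal}. Concretely, the main claim to establish is: whp the colour refinement algorithm (or, in the critical regime $p_n = \Theta(1/n)$, a light augmentation of it) assigns a \emph{unique} colour to every vertex of $G$ that is not an internal vertex of a bare path in $\on{core}_2(G)$ and not a vertex of a cycle component of $\on{core}_2(G)$. The machinery developed to prove \cref{thm:sparse-random-informal} already yields this: outside of the 2-core, the tree attachments propagate unique colour information up from the 2-core; and inside the 2-core, vertices of degree at least $3$ form the ``kernel'' of $\on{core}_2(G)$, which (once one encodes bare-path lengths as colour refinements) can be distinguished by the same sprinkling/anticoncentration/expansion analysis that handles the 3-core.

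Given this, the proof is mostly structural. Since every automorphism $\sigma$ of $G$ permutes vertices within colour classes of the stable colouring, $\sigma$ fixes every vertex that is given a unique colour. In particular, $\sigma$ fixes every vertex of $\on{core}_2(G)$ of degree at least $3$. It remains only to analyse the action of $\sigma$ on degree-$2$ vertices of $\on{core}_2(G)$; such a vertex lies either in a cycle component of $\on{core}_2(G)$ or in the interior of a bare path whose endpoints are fixed.

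The action on cycle components is by definition an automorphism of a disjoint union of cycles; after canonically identifying isomorphic cycles, the only possibilities are (a) dihedral automorphisms within a single cycle component, accounting for bullet~1, and (b) swapping two cycle components of the same length, accounting for bullet~2. For a bare path $P$ from $u$ to $v$ in $\on{core}_2(G)$, the endpoints $u,v$ are fixed, so $\sigma(P)$ is again a bare path with endpoint set $\{u,v\}$ and the same length as $P$. If $u \ne v$, the only freedom is to swap $P$ with another parallel bare path of the same length (bullet~3); the action on the internal vertices is then forced by the images of the endpoints. If $u=v$, there is the additional possibility of reversing $P$ (bullet~4). This exhausts the case analysis and proves the theorem.

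The main obstacle is the colour-refinement claim itself in the regime $p_n \sim c/n$, where $\on{core}_3(G)$ may be empty or tiny, so one cannot simply import the 3-core distinguishability used earlier. We expect to handle this by working instead with the \emph{kernel} multigraph of $\on{core}_2(G)$ (obtained by contracting bare paths and treating their lengths as edge labels): via the configuration-model description of this kernel together with the sprinkling toolkit from \cref{thm:smoothed-disparity}, one shows that the kernel is rigid apart from the ``parallel edge'' and ``self-loop'' symmetries recorded in bullets~3--4, and that its vertices are distinguished by iterated refinement against these edge-length labels. Once this is in place, the reduction above transfers the rigidity statement from the kernel to the 2-core of the original random graph.
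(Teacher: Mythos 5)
Your structural reduction is sound and is essentially the paper's own deduction (cf.\ \cref{rem:automorphisms}): once every vertex of $V_{2,3}(G)$ receives a colour shared by no other vertex, any automorphism fixes $V_{2,3}(G)$ pointwise, and the remaining degree-$2$ vertices of $\on{core}_2(G)$ lie on cycle components or on bare paths with fixed endpoints, which forces exactly the four listed exceptions. One caveat on your stated main claim: it is \emph{false} that whp every vertex that is not an internal bare-path vertex and not on a cycle component gets a unique colour --- for $p_n=\Theta(1/n)$ there are whp two leaves hanging off a common vertex, and these are CR-indistinguishable. This overclaim happens to be harmless, since your deduction only uses uniqueness of colours on $V_{2,3}(G)$, which is precisely \cref{thm:2-core}; and in the supercritical regime $p\ge (1+(\log n)^{-40})/n$ your plan coincides with the paper's (assumption \cref{item:2core-for-random} of \cref{prop:2core}; no sprinkling or edge-length encoding is actually needed there).

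The genuine gap is the critical window $p_n$ close to $1/n$, which is exactly the regime the paper flags as requiring special care, and your proposal for it (``we expect to handle this by'' passing to the edge-labelled kernel, its configuration-model description, and the sprinkling toolkit of \cref{thm:smoothed-disparity}) does not go through as described. The sprinkling machinery needs perturbation probability at least of order $100/n$, and the underlying engine \cref{prop:expansion-colour-refinement} relies on supercritical expansion; near $p=1/n$ the graph has poor expansion, the kernel is tiny, and none of that toolkit applies --- this is stated explicitly in \cref{sec:random-graphs}. Moreover, even granting rigidity of the labelled kernel, you would still need an argument transferring distinguishability in the kernel (with edge-length labels) to distinguishability by colour refinement on $G$ itself, which you do not supply. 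The paper instead exploits the opposite feature of the critical regime: large diameter. It shows (\cref{lem:no-close-cycles,lem:strong-cycles}) that for distinct $u,v\in V_{2,3}(G)$ the sets $\mc S^i(\{u,v\})$ are nonempty for all $i\le 10\log n$, and then accumulates per-step Littlewood--Offord anticoncentration along this long exploration (\cref{lem:longpaths}), giving failure probability $0.8^{10\log n}=o(n^{-2})$ per pair, which survives a union bound. Your proposal is missing an argument of this kind (or any workable substitute) precisely where the theorem's main difficulty lies.
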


If $np_n\to \infty$, it is not hard to see that whp none of the above exceptions actually occur (and therefore \cref{thm:automorphisms} generalises the main result in \cite{LM17}, which states that the 2-core has no nontrivial automorphisms in this regime). Indeed, in the 2-core, whp: there is no pair of bare paths of the same length between any pair of degree-3 vertices, there are no bare paths from a vertex to itself, and there are no cycle components. However, when say $p_n=2/n$, there is a non-negligible probability that each of the aforementioned configurations exist (and that the rooted trees attached to these configurations permit automorphisms of $G$ which permute vertices of $\on{core}_2(G)$): the asymptotic distribution of the numbers of each of these configurations can be described by a sequence of independent Poisson random variables with nonzero means. This can be shown by the method of moments (see for example \cite[Section~6.1]{JLR00}); we omit the details.

\begin{remark}\label{rem:independent-work-2}
    In the independent work of Verbitsky and Zhukovskii mentioned in \cref{rem:independent-work}, they also deduced \cref{thm:automorphisms}. They also went on to characterise all the automorphisms of $\on{core}_2(G)$ (not just those induced by automorphisms of $G$); this requires some additional work.
\end{remark}

\subsection{Key proof ideas}\label{subsec:ideas}
Before presenting the full proofs of \cref{thm:smoothed-colour-refinement,thm:smoothed-disparity,thm:sparse-random-informal,thm:automorphisms}, we take a moment to describe some of the key ideas in the proofs.
\subsubsection{Exploring universal covers}

The starting point for all the proofs in this paper is an observation essentially due to Angluin~\cite{Ang80}, that the stable colouring obtained by the colour refinement algorithm assigns two vertices the same colour if and only if the \emph{universal covers} rooted at those vertices are isomorphic. We will use a minor modification of the notion of a universal cover called a \emph{view}, which encodes the same information but which is slightly more convenient for our purposes. Roughly speaking, the view $\mc T_G(v)$ rooted at a vertex $v$ in a graph $G$ is a (potentially infinite) tree encoding all possible walks in $G$ starting from $v$ (see \cref{sec:universal-covers} for a precise definition).

Without random perturbation, we are helpless to deal with the fact that many vertices may have isomorphic views (e.g., if $G$ is $d$-regular, then $\mc T_G(v)$ is always isomorphic to the infinite rooted tree where each vertex has $d$ children). The power of random perturbation is that, if two vertices $u$ and $v$ ``see different vertices in their walks'' (for example, if the set of neighbours of $u$ is very different from the set of neighbours of $v$), then the random perturbation is likely to affect $\mc T_G(u)$ and $\mc T_G(v)$ differently, distinguishing them from each other. One of the key technical results in this paper is a general lemma (\cref{prop:expansion-colour-refinement}) which makes this precise.

\subsubsection{Distinguishing vertices via random perturbation: expansion and anticoncentration}It is difficult to give a quick summary of \cref{prop:expansion-colour-refinement} without the necessary definitions, but to give a flavour: we define sets $\mc S^i(\{u,v\})$ which describe the vertices which ``feature differently'' in length-$i$ walks starting from $u$ and length-$i$ walks starting from $v$. \cref{prop:expansion-colour-refinement} says that even extremely mildly randomly perturbed graphs typically have the property that if for some vertices $u,v$, and some $i$, the set $\mc S^i(\{u,v\})$ contains more than about $\log n$ vertices, then $u$ and $v$ are assigned different colours by colour refinement.

To prove this, we first use the expansion properties of random graphs (via a coupling of the type often appearing in analysis of branching processes) to ``boost'' the condition on $\mc S^i(\{u,v\})$, showing that if $\mc S^i(\{u,v\})$ has more than about $\log n$ vertices then there is typically some $j\ge i$ such that $\mc S^j(\{u,v\})$ has $n^{1-o(1)}$ vertices.

There is then a huge amount of ``space'' in $\mc S^j(\{u,v\})$ to take advantage of fluctuations due to the random perturbation. In particular, we show how to algorithmically divide $\mc S^j(\{u,v\})$ into ``buckets'' with different degree statistics, use the edges within these buckets to describe certain fluctuations in $\mc T_G(u)$ and $\mc T_G(v)$ via certain essentially independent inhomogeneous random walks on $\mb Z$, and then apply an anticoncentration inequality (a variant of the classical Erd\H os--Littlewood--Offord inequality) to show that it is extremely unlikely that these random walks behave in the same way for $u$ and $v$ (so unlikely that we can union bound over all $u,v$).

Of course, in order to actually apply \cref{prop:expansion-colour-refinement}, we need to study the pairs $u,v$ for which $\mc S^i(\{u,v\})$ has more than about $\log n$ vertices (for some $i$). If the perturbation probability is greater than about $\log n/n$ (as in \cref{thm:smoothed-colour-refinement}), it is easy to show that whp \emph{all} pairs of vertices have this property (in this case we can even take $i=1$). For milder random perturbation (or sparser random graphs), we will need to restrict our attention to certain pairs $u,v$ lying in certain special subgraphs, as we discuss next.

\subsubsection{The 2-core and the kernel}\label{subsec:cores-outline}
Recall from \cref{def:2core} that the \emph{$k$-core} of a graph $G$ is its maximal subgraph of minimum degree at least $k$. The \emph{kernel} of $G$ is the smallest multigraph homeomorphic to the 2-core of $G$ (i.e., with the same topological structure as the 2-core). It can be obtained from the 2-core by iteratively replacing bare paths (also defined in \cref{def:2core}) by single edges.

Especially in the setting of random graphs, cores and kernels are objects of fundamental interest, typically possessing extremely strong expansion properties. In particular, the kernel of a random graph in some sense describes its fundamental underlying expander structure: a celebrated theorem of Ding, Lubetzky and Peres~\cite{DLP14} shows that one can in some sense ``build a random graph from its kernel'' by (very informally) starting from a random expander (the kernel), randomly replacing some edges with bare paths, (to obtain the 2-core), and randomly affixing some trees.

If $k\ge 3$, the expansion properties of the $k$-core make it quite convenient to apply \cref{prop:expansion-colour-refinement}: it is fairly simple to show that, for $k\ge 3$, a mildly randomly perturbed graph typically has the property that colour refinement assigns a unique colour to all vertices of the $k$-core of $G_{\mr{rand}}$. This immediately implies \cref{thm:smoothed-colour-refinement} (since when $p\ge (1+\varepsilon)\log n/n$ the 3-core of $G_{\mr{rand}}$ typically comprises the entire vertex set).

However, in very sparse regimes (in particular, when $p< c/n$ for a certain constant $c$, famously computed by Pittel, Spencer and Wormald~\cite{PSW96}), the 3-core is typically \emph{empty}, and we are forced to turn to the 2-core and the kernel. Unfortunately, this makes everything much more delicate. We prove a somewhat technical lemma (\cref{prop:2core}) showing that a mildly randomly perturbed graph typically has the property that colour refinement can distinguish the vertices\footnote{This is not strictly speaking true; for the purposes of this outline we are ignoring a technical caveat concerning very rare configurations of edges in the 2-core.} of degree at least 3 in the 2-core.

\subsubsection{Sparse random graphs}\label{subsec:sparse-random-outline}
The above considerations on the 2-core apply when $p\ge (1+\varepsilon)/n$ (for any constant $\varepsilon>0$). Considering the case where the initial graph $G_0$ is empty, it is straightforward to deduce that simple canonical labelling schemes are typically effective for sparse random graphs $\mb G(n,p)$, when $p\ge (1+\varepsilon)/n$ (thus proving \cref{thm:sparse-random-informal} in this regime).

The significance of this assumption on $p$ is that it corresponds to the celebrated \emph{phase transition} for Erd\H os--R\'enyi random graphs: when the edge probability is somewhat above $1/n$, there is typically a \emph{giant component} with good expansion properties, but when the edge probability is somewhat less than $1/n$, there are typically only tiny components with very poor expansion properties (see for example the monographs \cite{FK16,JLR00,Bol01} for more).

In the critical regime $(1-\varepsilon)/n<p< (1+\varepsilon)/n$, we proceed differently. In this regime, every component of $\mb G(n,p)$ with more than one cycle has quite large \emph{diameter}: exploration processes can run for quite a long time without exhausting all the vertices in the graph, and we can accumulate quite a lot of independent randomness over this time. The anticoncentration from this randomness gives us another way to prove that different vertices are assigned different colours by colour refinement (completing the proof of \cref{thm:sparse-random-informal}). The details of this argument appear in \cref{sec:random-graphs}.

\subsubsection{Sprinkling via the 3-core}\label{subsec:sprinkling-outline}
For \cref{thm:smoothed-disparity} (on canonical labelling of very mildly randomly perturbed graphs), the role of the $k$-core (and \cref{prop:2core}, discussed in \cref{subsec:cores-outline}) is that it provides a kind of ``monotonicity'' that allows us to use a technique called \emph{sprinkling}.

For the unfamiliar reader: sprinkling, in its most basic form, is the observation that a random graph $G_{\mr{rand}}\sim \mb G(n,p)$ can be interpreted as the union of two independent random graphs $G_{\mr{rand}}^1\cup G_{\mr{rand}}^2$, where $G_{\mr{rand}}^1,G_{\mr{rand}}^2\sim \mb G(n,p')$ with $1-p=(1-p')^2$. This observation allows one to first show that certain properties hold whp for $G_{\mr{rand}}^1$, then reveal an outcome of $G_{\mr{rand}}^1$ satisfying these properties, and use the independent randomness of $G_{\mr{rand}}^2$ to ``boost'' these properties.

Sprinkling only really makes sense when we are dealing with properties of $G_{\mr{rand}}$ that are \emph{monotone}, in the sense that once we have established the property for some subgraph of $G_{\mr{rand}}$, adding the remaining edges of $G_{\mr{rand}}$ cannot destroy the property. Unfortunately, the colour refinement algorithm is highly non-monotone: in general, adding additional edges can allow the algorithm to distinguish more vertices, but can also prevent the algorithm from distinguishing some vertices. In the proof of \cref{thm:smoothed-disparity}, the critical role played by \cref{prop:2core} is that it makes a connection between the colour refinement algorithm and the $k$-core, which is a fundamentally monotone object. For example, if a vertex is in the $k$-core of $G_{\mr{rand}}^1$, then it is guaranteed to be in the $k$-core of $G_{\mr{rand}}^1\cup G_{\mr{rand}}^2$.

We will actually split our random perturbation $G_{\mr{rand}}$ into many independent random perturbations $G_{\mr{rand}}^1,\dots,G_{\mr{rand}}^T\in \mb G(n,p')$ (in the proof of \cref{thm:smoothed-disparity} we will take $T=8$). If $p'\ge 10/n$, then one can show that whp the first random perturbation $G_{\mr{rand}}^1$ already has a giant 3-core\footnote{With a little more work, it would suffice to consider the vertices of degree at least 3 in the 2-core, in which case we only need $p'$ to be slightly larger than $1/n$. However, the 3-core is much more convenient to work with.}, and by \cref{prop:2core} we can safely assume that each of the vertices in this 3-core will be assigned unique colours by the colour refinement algorithm (applied to the randomly perturbed graph $G=G_0\triangle (G_{\mr{rand}}^1\cup\dots\cup G_{\mr{rand}}^T)$, which we have not yet fully revealed). Then, in each subsequent random perturbation $G_{\mr{rand}}^{i}$, for $i\ge 2$, additional vertices randomly join the 3-core, and we can assume that they will also receive unique colours.

The upshot is that, whatever properties we are able to prove about the stable colouring produced by the colour refinement algorithm, we can ``boost'' these properties by randomly assigning unique colours to some vertices (and studying how this new information propagates through the colour refinement algorithm). To describe the types of properties we are interested in, we need to define the notion of a \emph{disparity graph}\footnote{This notion has previously been introduced under the name ``flip graph''~\cite{KN24,amenablegraphs2}; we thank the anonymous referees of \emph{STOC 2025} for bringing this to our attention.}, as follows.

\subsubsection{Small components in disparity graphs}\label{subsec:disparity-outline}
Recall that \cref{thm:smoothed-colour-refinement} cannot hold for $p=o(\log n/n)$. The key obstruction to keep in mind is that if $G_0$ has many tiny regular connected components, then very mild random perturbation will leave some of these components untouched, and the colour refinement algorithm will not be able to distinguish the vertices in these components.

For the proof of \cref{thm:smoothed-disparity}, we therefore need an appropriate generalisation of the notion of ``tiny component'' (taking into account the fact that tiny components in the \emph{complement} of $G$ play the same role as tiny components of $G$). To this end, we introduce the notion of a \emph{disparity graph}.

\begin{definition}\label{def:disparity}
    For a graph $G$, a set of colours $\Omega$ and a colouring $c:V(G)\to \Omega$, define the \emph{majority graph} $M(G,c)$ (on the same vertex set as $G$) as follows. For any (possibly non-distinct) pair of colours $\omega,\omega'\in \Omega$:
    \begin{itemize}
        \item If at least half of the possible edges between vertices of colours $\omega$ and $\omega'$ are in fact present as edges of $G$, then $M(G,c)$ contains every possible edge between vertices of colours $\omega$ and $\omega'$.
        \item Otherwise (if fewer than half of the possible edges between vertices of colours $\omega$ and $\omega'$ are present), $M(G,c)$ contains no edges between vertices of colours $\omega$ and $\omega'$.
    \end{itemize}
    Then, define the \emph{disparity graph} $D(G,c)=M(G,c)\triangle G$.
\end{definition}

Informally speaking, the majority graph $M(G,c)$ is the best possible approximation to $G$ among all graphs which are ``homogeneous'' between colour classes (for every pair of colour classes, to decide whether to put all edges between them or no edges between them, we look at the majority behaviour in $G$ among vertices of those colours). Then, the disparity graph identifies the places where the majority graph differs from $G$. Equivalently, we can define the disparity graph to be the graph obtained by considering every pair of colour classes and deciding whether to complement the edges between those colour classes or not, depending on which of the two choices would make the graph sparser.

If $c$ is the stable colouring obtained from the colour refinement algorithm, it is not hard to show that a canonical labelling of $D(G,c)$ yields a canonical labelling of $G$ (this is stated formally in \cref{prop:D-to-G} later in the paper). So, we can canonically label $G$ in polynomial time whenever $D(G,c)$ has sufficiently small components (small enough that we can afford to use known inefficient canonical labelling schemes on each component).

\subsubsection{Percolation for a weaker result}\label{subsec:percolation-outline}

Let $c$ be the stable colouring obtained by the colour refinement algorithm. To prove \cref{thm:smoothed-disparity}, it would suffice to prove that whp the disparity graph $D(G_0\triangle G_{\mr{rand}},c)$ has small components (for a polynomial-time combinatorial algorithm, we need every component to have $O(\log n)$ vertices, so that we can afford to use an exponential-time canonical labelling algorithm of Corneil and Goldberg~\cite{CG84} on each component\footnote{We remark that it does not actually seem to make the problem much easier if we weaken this requirement on the size of each component (which we may, if we are willing to use a more sophisticated group-theoretic algorithm, with better worst-case guarantees, on each connected component).}).

Unfortunately, we were not quite able to manage this when the random perturbation probability is $O(1/n)$ (as demanded by \cref{thm:smoothed-disparity}). Indeed, our proof of \cref{thm:smoothed-disparity} requires a more sophisticated variant of the colour refinement algorithm, as we discuss later in this outline. However, the above goal is achievable if the random perturbation probability is at least (say) $100\log \log n/n$. For expository purposes we next sketch how to prove this, before moving on to the more sophisticated ideas in the full proof of \cref{thm:smoothed-disparity} (the details of this simpler argument can be found in \cref{app:loglog}).

As discussed in \cref{subsec:sprinkling-outline}, we can interpret our random perturbation $G_{\mr{rand}}$ as a composition of three random perturbations $G_{\mr{rand}}^{1}\cup G_{\mr{rand}}^{2}\cup G_{\mr{rand}}^{3}$. The first random perturbation $G_{\mr{rand}}^{1}$ already whp establishes a giant 3-core (of size at least $n/2$, say); we can assume that the vertices in this 3-core get unique colours (and are hence isolated vertices in the disparity graph), so we only need to worry about the vertices outside the 3-core. Our two additional random perturbations $G_{\mr{rand}}^{2}$ and $G_{\mr{rand}}^{3}$ each cause an independent random subset of vertices to receive unique colours (as they join the 3-core): if $p\ge 100\log \log n/n$, we calculate that each vertex receives a unique colour with probability at least $1-o(1/\log n)$.

With one round of sprinkling (i.e., with the random assignment of unique colours provided by $G_{\mr{rand}}^2$, followed by the colour refinement algorithm) we can show that the disparity graph has maximum degree $O(\log n)$ whp\footnote{For this bound we only need that (say) $p\ge 100/n$ (this implies that in each round of sprinkling, every vertex joins the 3-core with probability at least 0.9). Using that $p\ge 100\log \log n/n$ (so in every round of sprinkling, every vertex joins the 3-core with probability at least $1-o(1/\log n)$), we can actually show that the degrees are at most $O(\log n/\log \log n)$ whp. But, this stronger bound would not really affect the final result.}. Indeed, the disparity graph describes how the neighbourhood of a vertex differs from the ``majority behaviour'' among vertices of its colour class, so if there is a vertex with high degree in the disparity graph, then there is a pair of vertices in the same colour class with very different neighbourhoods. With a union bound over pairs of vertices, it is easy to show that no such pairs persist after a round of sprinkling (the random assignment of unique colours typically allows the colour refinement algorithm to distinguish all such pairs).

For our second round of sprinkling (provided by $G_{\mr{rand}}^3$), we view the random assignment of unique colours as \emph{percolation}: if a vertex gets a unique colour, then it becomes isolated in the disparity graph, and we can imagine that that vertex is deleted. We are interested in the connected components that remain after these deletions\footnote{Here we are sweeping under the rug some technical issues related to the ``consistency'' of the disparity graph as the underlying graph changes. This turns out to be quite delicate, and is handled in \cref{lem:disparity-components}.}. But if the disparity graph has degree $O(\log n)$ before sprinkling, and each vertex is deleted with probability $1-o(1/\log n)$, one can show that these deletions usually shatter the disparity graph into small components (the necessary analysis is similar to analysis of subcritical branching processes). This percolation step is the part where we are fundamentally using that the random perturbation probability is at least about $\log \log n/n$: if the perturbation probability were smaller than this, then the deletions would not be severe enough to break the disparity graph into small components.

\subsubsection{Splitting colour classes and components}\label{subsec:breakup-outline}
In order to go beyond the ideas in the previous subsection, to work with random perturbation probabilities as small as $O(1/n)$, we need to get a much stronger conclusion from our sprinkled random perturbation. Every time a vertex is added to the 3-core (and gets a unique colour), this vertex is not simply removed from the disparity graph: the new colour information has a cascading effect (via the colour refinement algorithm) that affects the colours of many other vertices, indirectly affecting the connected components of the disparity graph.

First, it is instructive to think about the effect of sprinkling on the connected components (of the disparity graph) which intersect a given colour class $C$. If there is a vertex $v$ which has a neighbour in $C$ (with respect to the disparity graph), then if we were to assign $v$ a unique colour this would cause $C$ to break into multiple colour classes: namely, the colour refinement algorithm would be able to distinguish the vertices in $C$ adjacent to $v$ from the vertices in $C$ which are not adjacent to $v$ (it is a simple consequence of the definition of the disparity graph that $v$ is adjacent to at most half the vertices in $C$).

So, we can consider an exploration process that starts from the vertices of $C$, and explores\footnote{For the purposes of this outline, the reader can imagine that at each step we choose the unexplored vertex which is closest to $C$ in the current disparity graph, though our actual exploration process is a bit more complicated.} all the vertices which share a connected component (in the disparity graph) with a vertex in $C$. Each time we consider a new vertex, we reveal whether it is assigned a unique colour, and propagate this information via the colour refinement algorithm\footnote{The resulting changes to the colouring also change the disparity graph. In particular, vertices which used to be in the same connected component may later be spread over multiple connected components, but this is not really a problem.}. As we continue this exploration/refinement process, the colour classes will start to break up into smaller pieces. There is a limit to how long this process can continue, since colour classes of size 1 cannot be broken up further. Indeed, by considering an auxiliary submartingale (which measures how the number of colours we have discovered so far compares to the number of vertices we have explored so far, at each point in the process), we can prove that our exploration process is likely to terminate after $O(|C|)$ steps, having explored all the vertices that share a component with a vertex in $C$. That is to say, the sizes of the connected components of the disparity graph after sprinkling are bounded in terms of the sizes of the colour classes before sprinkling. (We emphasise that in this discussion $C$ always refers to our colour class \emph{before} sprinkling, i.e., we are not varying $C$ during the exploration process.)

Unfortunately, we cannot hope to show that the colour classes are small, in general (indeed, if $G$ has many isolated vertices, then all these vertices will be assigned the same colour by any canonical vertex-colouring scheme). However, the above idea can be ``localised'' to a connected component:
We prove a crucial lemma (\cref{lem:break}), which tells us that in order to show that the disparity graph has connected components with $O(\log n)$ vertices (after sprinkling), it suffices to show that, before sprinkling, for all colour classes $C$ and connected components $X$ of the disparity graph, we have $|C\cap X|=O(\log n)$.

In order to show that these intersection sizes $|C\cap X|$ are small, we need another round of sprinkling which ``shatters large components into small colour classes''. In order for this sprinkling to have a strong enough effect, we need to consider a more powerful variant of the colour refinement algorithm, as follows.

\subsubsection{Distinguishing vertices via the 2-dimensional Weisfeiler--Leman algorithm}\label{subsec:2WL-outline}
The \emph{2-dimensional Weisfeiler--Leman algorithm} refines colourings of \emph{pairs} of vertices: starting with a certain ``trivial'' colouring $\phi_G:V(G)^2\to \Omega$, we repeatedly refine $\phi_G$ based on statistics of 3-vertex configurations, until a stable colouring $f:V(G)^2\to \{0,1\}$ is reached. This colouring of pairs of vertices then gives rise to a colouring of individual vertices $v\mapsto f(v,v)$, which contains a lot more information than the result of ordinary colour refinement.

In particular, this more sophisticated refinement operation allows us to distinguish vertices based on \emph{distances}: in the final vertex-colouring, every two vertices of the same colour see the same number of vertices of every given colour at any given distance (in the disparity graph). As discussed in \cref{subsec:percolation-outline}, after a single round of sprinkling whp the disparity graph has maximum degree $O(\log n)$, so if a connected component $X$ has more than logarithmically many vertices then it is quite sparse, meaning that there is a very rich variety of different pairs of vertices at different distances. So, when sprinkling causes new vertices to receive unique colours, hopefully the 2-dimensional Weisfeiler--Leman algorithm will be able to significantly break up the colour classes in $X$ (recall that our goal is now to prove that the intersections between colour classes and connected components have size $O(\log n)$).

Na\"ively, one might hope to prove this via a simple union bound over subsets $Z\subseteq X$ of about $\log n$ vertices: for any such set $Z$, one might try to prove that after a round of sprinkling, and the 2-dimensional Weisfeiler--Leman algorithm, it is overwhelmingly unlikely that all vertices of $Z$ have the same colour. To prove this, it would suffice to show that for any such $Z$ there are many vertices which see some vertices of $Z$ at different distances (so if any of these many vertices receive a new unique colour, this could be used by the 2-dimensional Weisfeiler--Leman algorithm to give different colours to some of the vertices of $Z$).

Unfortunately, this direct approach does not seem to yield any nontrivial bounds, without making structural assumptions about $G$. Instead, we use a ``fingerprint'' technique reminiscent of the method of \emph{hypergraph containers} (see \cite{containers-1,containers-2}) in extremal combinatorics, which reduces the scope of our union bound. Specifically, if there were a large colour class $C'\subseteq X$ after sprinkling, we show that this would imply the existence of a much smaller ``fingerprint'' set $S\subseteq X$ which sees many vertices (specifically, many vertices of $C'$) at a variety of different distances. We can then take a cheaper union bound over the smaller fingerprint sets $S$.

We remark that the above sketch was very simplified, and serves only to illustrate the rough ideas. The full proof of \cref{thm:smoothed-disparity} (which appears in \cref{sec:disparity}) confronts a number of delicate technical issues and requires seven rounds of sprinkling, each of which gradually reduce the degrees, colour classes and connected components of the disparity graph.

\subsection{Further directions}\label{subsec:further-directions}
There are a very large number of natural directions for further research. First is the question of optimising the probability implicit in the ``whp'' in each of \cref{thm:smoothed-colour-refinement,thm:smoothed-disparity,thm:sparse-random-informal}, and improving the runtimes of the relevant algorithms. In the setting of \cref{thm:BES}, both these issues were comprehensively settled by Babai and Kucera~\cite{BK79}: they showed that, except with exponentially small probability, two steps of colour refinement suffice (yielding a linear-time algorithm).

The algorithm in \cref{thm:smoothed-colour-refinement} (colour refinement) runs in time $O((n+m)\log n)$, where $n$ and $m$ are the numbers of vertices and edges of our graph of interest, while the algorithm in \cref{thm:sparse-random-informal} na\"ively runs in time $O(n(n+m)\log n)$, due to repeated iteration of the colour refinement algorithm. With some careful analysis, it may be possible to bound the necessary number of colour refinement steps, to obtain an optimal linear-time algorithm in the setting of \cref{thm:smoothed-colour-refinement}. In fact, the work of Gaudio, R\'acz and Sridhar~\cite{GRS}, mentioned earlier in the introduction, already makes some initial steps in this direction: they essentially show that three steps of colour refinement suffice, as long as $p$ has order of magnitude between $(\log n)^2/n$ and $(\log n)^{-3}$. It seems plausible that in fact three steps of colour refinement suffice as long as $p\ge (1+\varepsilon)\log n/n$, and recent work on ``shotgun reassembly''~\cite{GM22,JKRS} indicates that there should be a phase transition between two steps of colour refinement being necessary, and three steps being necessary, at around $p=(\log n)^2(\log \log n)^{-3}/n$.

With similar considerations on the necessary number of colour refinement steps, and the necessary number of iterations of the colour refinement algorithm, it may also be possible to improve the runtime in the setting of \cref{thm:sparse-random-informal} to near-linear-time. Regarding \cref{thm:smoothed-disparity}: this algorithm also na\"ively runs in about quadratic-time, due to the use of the 2-dimensional Weisfeiler--Leman algorithm. However, we do not need the full power of the general algorithms that we cite, and it seems plausible that special-purpose variants could be designed that might also yield a near-linear-time algorithm.

Also, there is the possibility of improving on the amount of random perturbation in \cref{thm:smoothed-disparity}. In particular, taking advantage of group-theoretic techniques (e.g., using the quasipolynomial-time algorithm of Babai~\cite{Bab19}), it might be possible to design a canonical labelling scheme that becomes effective with tiny amounts of random perturbation (e.g., perturbation probability $o(1/n)$). In the case where $G_0$ is a regular graph, it may also be worth considering alternative models of random perturbation that do not destroy the regularity of $G_0$, as colour refinement is completely ineffective for regular graphs. Perhaps surprisingly, this may actually make the problem \emph{easier}, as sparse random regular graphs are much better expanders than sparse Erd\H os--R\'enyi random graphs  (cf.\ the work of Bollob\'as~\cite{Bol82} showing that distance profiles provide an efficient canonical labelling scheme even for very sparse random regular graphs).

Next, there is the possibility of stronger connections between our results on smoothed analysis for graph isomorphism, and the work of Spielman and Teng on smoothed analysis for linear optimisation. Indeed, (a slight variant of) the colour refinement algorithm is used for \emph{dimension reduction} in linear optimisation (see \cite{GKMS14}) and it would be interesting to consider smoothed analysis in this setting. 

There is also the possibility of considering smoothed analysis for many different types of graph algorithms other than isomorphism-testing. Some early work in this direction was undertaken by Spielman and Teng~\cite{ST03} (see also \cite{AM09,MR22,BRU17,BRS19}), but somewhat surprisingly, despite the algorithmic origin of the smoothed analysis framework, there is now a much larger body of work on randomly perturbed graphs in extremal and probabilistic graph theory (see for example \cite{RP1,RP2,RP3,RP4,RP5,RP6,RP7,RP8,RP9,RP10,RP11,RP12,RP13,RP14,RP15,RP16}) than on algorithmic questions.

In particular, it is worth remarking that canonical labelling is closely related to the so-called ``network alignment problem'' (also known as the ``graph matching problem''), in which the objective is to find a mapping between the vertex sets of two graphs such that the number of adjacency disagreements between the two graphs is minimised. See for example \cite{DMWZ21} (and the references therein) for recent work on alignment of random graphs; it would be interesting to explore to what extent this can be extended to the smoothed analysis stetting.

\subsection{Notation}
Our graph-theoretic notation is for the most part standard. For a graph $G$, we write $V(G)$ for its set of vertices. We write $G[S]$ for the subgraph of $G$ induced by the vertex subset $S$, write $G[S,T]$ for the bipartite subgraph of $G$ induced between $S$ and $T$ (assuming $S,T$ are disjoint), and write $G\triangle G'$ for the \emph{symmetric difference} of two graphs $G,G'$ on the same vertex set (i.e., $e$ is an edge of $G\triangle G'$ if it is an edge of exactly one of $G,G'$). We write $\on{dist}_G(u,v)$ for the distance between two vertices $u,v$, and we write $\mb G(n,p)$ for the Erd\H{o}s--R\'enyi random graph on $n$ vertices with edge probability $p$. 

We (slightly abusively) conflate vertex-colourings with vertex-partitions. In particular, for vertex-colourings $c:V(G)\to \Omega$ and $c':V(G)\to \Omega'$, we say that $c$ is a \emph{refinement} of $c'$ (or $c'$ is a \emph{coarsening} of $c$) if each colour class of $c$ is a subset of a colour class of $c'$.

Our use of asymptotic notation is standard as well. For functions
$f=f\left(n\right)$ and $g=g\left(n\right)$, we write $f=O\left(g\right)$
to mean that there is a constant $C$ such that $\left|f\right|\le C\left|g\right|$,
$f=\Omega\left(g\right)$ to mean that there is a constant $c>0$
such that $|f|\ge c\left|g\right|$ for sufficiently large $n$, and $f=o\left(g\right)$ to mean that $f/g\to0$ as $n\to\infty$. We say that an event occurs with high probability (``whp'') if it holds with probability $1 - o(1)$.

For a real number $x$, the floor and ceiling functions are denoted $\lfloor x\rfloor=\max\{i\in \mb Z:i\le x\}$ and $\lceil x\rceil =\min\{i\in\mb Z:i\ge x\}$. We will however sometimes omit floor and ceiling symbols and assume large numbers are integers, wherever divisibility considerations are not important. All logarithms in this paper are in base $e$, unless specified otherwise.

\subsection{Organisation of the paper} The structure of the paper is as follows. We start with some preliminaries in \cref{sec:basic-notions}; in particular, this section features the formal definitions of many of the notions informally discussed in this introduction. Some of the more routine proofs in this section are deferred to \cref{app:labelling}.

Then, in \cref{sec:universal-covers} we introduce some machinery for working with \emph{views}, and in \cref{sec:expansion} we use this machinery to prove a general lemma (\cref{prop:expansion-colour-refinement}) giving an expansion-type condition under which two vertices are typically assigned different colours by colour refinement.

In \cref{sec:3core} we apply \cref{prop:expansion-colour-refinement} to prove that (under suitable random perturbation) colour refinement typically assigns unique colours to all of the vertices of degree 3 in the 2-core of $G_\mr{rand}$ which satisfy a certain technical condition. We deduce (in \cref{prop:3core}) that colour refinement typically assigns unique colours to all the vertices in the 3-core of $G_\mr{rand}$. This immediately implies \cref{thm:smoothed-colour-refinement}.

In \cref{sec:disparity} we prove \cref{thm:smoothed-disparity}, combining the results in \cref{sec:3core} with a large number of additional ideas. This section can be read independently of most of the rest of the paper (treating \cref{prop:3core} as a black box). A subset of the ideas in \cref{sec:disparity} can also be used to give a much simpler proof of a slightly weaker result (assuming that the random perturbation probability $p$ is at least about $\log \log n/n$); we include the details in \cref{app:loglog}.

Finally, in \cref{sec:random-graphs} we prove \cref{thm:sparse-random-informal,thm:automorphisms}, via the results in \cref{sec:3core} and some additional considerations for near-critical random graphs. 

\subsection{Acknowledgements} We would like to thank Oleg Verbitsky and Maksim Zhukovskii for several insightful comments, and for alerting us to their independent work (see \cref{rem:independent-work,rem:independent-work-2}). The first two authors would also like to thank Marc Lelarge for insightful discussions which inspired some of the directions in this paper, and the second author would like to thank Oliver Riordan for interesting comments on the necessary number of colour refinement steps (see \cref{subsec:further-directions}). Finally, we would like to thank the anonymous reviewers of previous versions of this paper, for their careful reading and a large number of comments and suggestions that have significantly improved the paper.

\section{Basic notions and preliminary lemmas}\label{sec:basic-notions}
In this section we formally define some of the notions informally discussed in the introduction, and we state and prove some general-purpose lemmas that will be used throughout the paper. In \cref{subsec:CL} we present some general definitions and results about canonical labelling, in \cref{subsec:CR} we collect some basic facts about colour refinement, in \cref{subsec:WL} we define the 2-dimensional Weisfeiler--Leman algorithm and collect some basic results about it, and in \cref{subsec:prob} we collect some probabilistic estimates.
\subsection{Canonical labelling} \label{subsec:CL}
We start by formally defining the notion of canonical labelling.
\begin{definition}\label{def:canonical-labelling}
Let $\mc G_n$ be the set of all graphs on the vertex set $\{1,\dots,n\}$, and let $\mc S_n$ be the set of all permutations of $\{1,\dots,n\}$. A \emph{labelling scheme} is a map $\Phi:\mc G_n\to \mc S_n$. For a graph $G\in \mc G_n$, we write $\Phi((G))\in \mc G_n$ for the graph obtained by applying the permutation $\Phi(G)$ to the vertices and edges of $G$. 

We say that a family of graphs $\mathcal F\subseteq \mc G_n$ is \emph{isomorphism-closed} if it is a disjoint union of isomorphism classes in $\mc G_n$. For an isomorphism-closed family of graphs $\mathcal F\subseteq \mc G_n$, we say that $\Phi:\mc G_n\to \mc S_n$ is a \emph{canonical} labelling scheme for $\mc F$ if $\Phi((G))=\Phi((G'))$ whenever $G,G'\in \mc F$ are isomorphic.
\end{definition}

Intuitively speaking, one should think of a canonical labelling scheme as a way to assign labels to an \emph{unlabelled} graph $G$, where we are not allowed to make ``arbitrary choices'' (e.g., arbitrarily breaking ties), unless different outcomes of those choices correspond to automorphisms of $G$.

Note that a canonical labelling scheme $\Phi$ for an isomorphism-closed family $\mc F\subseteq \mc G_n$ gives rise to a simple way to test for isomorphism between a graph $G\in \mc F$ and any other graph $G'\in \mc G_n$: simply compute $\Phi((G))$ and $\Phi((G'))$, and compare the two labelled graphs (e.g., by their adjacency matrices).

The primary engine underlying all canonical labelling schemes in this paper is the so-called \emph{colour refinement algorithm}. It was informally introduced in the introduction; here we define it more formally.
\begin{definition}\label{def:refinement}
For a graph $G$ and a colouring $c:V(G)\to \Omega$, the \emph{refinement} $\mathcal R_Gc:V(G)\to \Omega\times\mb N^\Omega$ of $c$ is defined by $\mc R_G c(v)=(c(v),(d_\omega(v))_{\omega\in \Omega})$, where $d_\omega(v)$ denotes the number of neighbours of $v$ which have colour $\omega$ (with respect to $c$). Note that every colouring $c:V(G)\to \Omega$ defines a partition of $V(G)$ into colour classes. If $c,c'$ define the same vertex partition, then $\mc R_Gc,\mc R_Gc'$ also define the same vertex partition. So, we can actually view $\mc R_G$ as an operator on vertex partitions.

Also, denote the $t$-fold iteration of $\mc R_G$ by
\[\mathcal R_G^tc=\underbrace{\mathcal R_G\dots \mathcal R_G}_{t\text{ times}} c.\]

 Note that for any colouring $c$, the partition defined by $\mc R_G c$ is always a refinement of the partition defined by $c$. For a vertex set of size $n$, the longest possible chain of proper refinements has length $n$, so if we start with any colouring $c$ in an $n$-vertex graph, and repeatedly apply the refinement operation $\mc R_G$ (at most $n$ times), we always end up with a ``stable colouring''. That is to say, there is always some $t\le n$ such that $\mc R_G^{t}c$ and $\mc R_G^{t+1} c$ define the same vertex partition.
We denote the corresponding stable colouring by $\mc R_G^*c=\mc R_G^tc$. 

We will write $\sigma_G:V(G)\to \{0\}$ for the trivial colouring in which every vertex is given the same ``0'' colour.
\end{definition}

So, $\mc R_G\sigma_G$ distinguishes vertices by their degree, and $\mc R_G^* \sigma_G$ is the stable colouring obtained by the colour refinement algorithm as described in the introduction. See \cref{fig:CR} for an example. For all the notation introduced in \cref{def:refinement}, we will often omit the subscript ``$G$'' when it is clear from context.

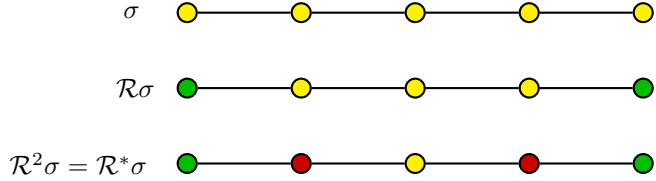
\begin{figure}
\begin{center}
    \begin{tikzpicture}
    \node[yellowvertexv2] at (0,0) (v1) {};
    \node[yellowvertexv2] at (1.5,0) (v2) {};
    \node[yellowvertexv2] at (3,0) (v3) {};
    \node[yellowvertexv2] at (4.5,0) (v4) {};
    \node[yellowvertexv2] at (6,0) (v5) {};
    \node[dummywhite] at (-1,0) (dummy1) [label = right:$\sigma$]{};
    \draw[thick,black] (v1)--(v2)--(v3)--(v4)--(v5);

    \begin{scope}[yshift =-1cm]
    \node[greenvertexv2] at (0,0) (v1) {};
    \node[yellowvertexv2] at (1.5,0) (v2) {};
    \node[yellowvertexv2] at (3,0) (v3) {};
    \node[yellowvertexv2] at (4.5,0) (v4) {};
    \node[greenvertexv2] at (6,0) (v5) {};
    \node[dummywhite] at (-1.1,0) (dummy1) [label = right:$\mathcal{R} \sigma$] {};
    \draw[thick,black] (v1)--(v2)--(v3)--(v4)--(v5);
    \end{scope}

    \begin{scope}[yshift = -2cm]
    \node[greenvertexv2] at (0,0) (v1) {};
    \node[redvertexv2] at (1.5,0) (v2) {};
    \node[yellowvertexv2] at (3,0) (v3) {};
    \node[redvertexv2] at (4.5,0) (v4) {};
    \node[greenvertexv2] at (6,0) (v5) {};
    \node[dummywhite] at (-2.5,0) (dummy1) [label = right:$\mathcal{R}^{2} \sigma\equal \mc R^*\sigma$] {};
    \draw[thick,black] (v1)--(v2)--(v3)--(v4)--(v5);
    \end{scope}

    \end{tikzpicture}
    \caption{\label{fig:CR}An illustration of the colour refinement algorithm on the 4-edge path.
    }
    \end{center}
\end{figure}

If $\mc R^*\sigma$ assigns every vertex in $G$ a distinct colour, then it is easy to obtain a canonical labelling for $G$ from $\mc R^*\sigma$: we simply fix some total order of all the potential colours of vertices (e.g. lexicographical order), and assign the labels $\{1,\dots,n\}$ in increasing order. However, it is not completely obvious that this gives rise to an efficient canonical labelling scheme: na\"ively, after many steps of refinement the sizes of the labels become extremely large. In order to address this, after each step of colour refinement we can re-encode all colours with some subset of the integers $\{1,\dots,n\}$ (i.e., we do not actually care about the identities of the colours, only the induced partition of the vertices), but we must be careful to do this ``in a canonical way'' (i.e., all choices can only depend on the isomorphism class of our graph). This issue is comprehensively handled by Berkholz, Bonsma and Grohe \cite{BBG17} (building on ideas in \cite{CC82,Hop71,PT87}), who present an algorithm that computes a canonical stable colouring of $G$ in time $O((n+m)\log n)$ (where $m$ is the number of edges in $G$).

\begin{theorem}\label{thm:CR-discrete-canonical}
    Let $\mc F\subseteq \mc G_n$ be the set of all graphs on the vertex set $\{1,\dots,n\}$ for which  $\mc R^* \sigma$ assigns each vertex a distinct colour. Then, the colour refinement algorithm defined in \cite{BBG17} is a canonical labelling scheme for $\mc F$. It computes a canonical labelling for a graph $G\in \mc F$ in time $O((n+m) \log n)$ (where $m$ is the number of edges in $G$).
\end{theorem}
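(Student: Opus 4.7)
The plan is to prove both parts of the statement somewhat separately: the time bound is essentially a direct quotation of the algorithmic guarantee from \cite{BBG17}, while canonicity reduces to an equivariance property of the refinement operator, combined with the canonical colour-encoding scheme provided by that same algorithm.

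First I would unpack the algorithm of \cite{BBG17}. What it actually produces, in time $O((n+m)\log n)$, is the stable partition $\mc R_G^*\sigma_G$ of $V(G)$ together with a labelling of its blocks by integers in $\{1,\dots,n\}$, chosen in a way that depends only on the isomorphism class of $G$ (and not on the identities of the input vertices). The nontrivial content there is that the colour labels themselves must be assigned in a canonical way at each refinement step, since if one re-encodes colours arbitrarily (e.g.\ in the order new colour classes are discovered during a scan of the vertex list), the resulting encoding depends on the labelling of $V(G)$. I would cite \cite{BBG17} for this; nothing new is needed.

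Next I would verify canonicity. The key point is the following equivariance: if $\pi:V(G)\to V(G')$ is a graph isomorphism and $c$ is any colouring of $V(G)$, then the colouring $c\circ \pi^{-1}$ on $V(G')$ satisfies $\mc R_{G'}(c\circ \pi^{-1})=(\mc R_G c)\circ\pi^{-1}$ as vertex partitions. Iterating, and starting from $\sigma_G$ (which is transported to $\sigma_{G'}$ by every bijection), one obtains that $\pi$ carries the partition $\mc R_G^*\sigma_G$ to $\mc R_{G'}^*\sigma_{G'}$. Together with the fact that \cite{BBG17} labels the blocks of these partitions canonically, this means the block-label of each vertex $v\in V(G)$ equals the block-label of $\pi(v)\in V(G')$. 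When $G\in \mc F$, all blocks are singletons, so each vertex is directly identified with an integer in $\{1,\dots,n\}$; defining $\Phi(G)$ to be the permutation sending $v$ to its block-label yields a labelling scheme with $\Phi(G)=\Phi(G')\circ \pi$ for every isomorphism $\pi$, and therefore $\Phi((G))=\Phi((G'))$.

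The hard part of this theorem is not in the argument above (which is essentially bookkeeping) but in the \cite{BBG17} algorithm itself: producing a canonical encoding of the refined colours at each step, while still running in $O((n+m)\log n)$ time, requires the partition-refinement data structure of Hopcroft~\cite{Hop71} together with a careful ordering trick. Since this is treated in detail in \cite{BBG17}, I would simply invoke it as a black box and spend the bulk of the write-up on the equivariance verification, which occupies no more than a few lines.
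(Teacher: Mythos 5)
Your proposal is correct and matches the paper's treatment: the paper gives no separate proof of this statement, simply invoking \cite{BBG17} as a black box for the canonical stable colouring computable in $O((n+m)\log n)$ time, together with the observation (made in the discussion preceding the theorem) that when all colours are distinct, ordering vertices by a canonical encoding of their colours yields a canonical labelling. Your explicit equivariance check of $\mc R_G$ under isomorphisms is the routine bookkeeping the paper leaves implicit, so the two arguments are essentially identical.
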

We will use \cref{thm:CR-discrete-canonical} to prove \cref{thm:smoothed-colour-refinement}, but for \cref{thm:smoothed-disparity,thm:sparse-random-informal} we will need some more robust lemmas on canonical labelling. First, recall the definition of the \emph{disparity graph} from \cref{def:disparity}. As mentioned in \cref{subsec:disparity-outline}, we obtain an efficient canonical labelling scheme when the disparity graph can itself be efficiently labelled; we formalise this in \cref{prop:D-to-G} below, after introducing another definition.

\begin{definition}A \emph{canonical colouring scheme} $(c_G)_{G\in \mc G_n}$ is an assignment of a colouring $c_G:V(G)\to \Omega$ to each graph $G\in \mc G_n$, in such a way that for any graphs $G,H\in \mc G_n$, and any isomorphism $\psi$ from $G$ to $H$, we have $c_H(\psi(v))=c_G(v)$ for all $v\in V(G)$.

\end{definition}
Note that the colour refinement algorithm, starting from the trivial colouring, defines a canonical colouring scheme (i.e., let $c_G=\mc R_G^*\sigma$). However, in our proof of \cref{thm:smoothed-disparity} we will end up needing a slightly more sophisticated canonical colouring scheme (to be defined in \cref{subsec:WL}).

\begin{proposition}\label{prop:D-to-G}
    Let $\Phi_{\mc{H}}$ be a canonical labelling scheme for a graph family $\mathcal H\subseteq \mc G_n$, and let $(c_G)_{G\in \mc G_n}$ be a canonical colouring scheme, such that $\Phi_{\mc{H}}(G)$ and $c_G$ can both be computed in time $T$.
    
    Let $\mathcal F$ be the family of all graphs $G\in \mc G_n$ such that $D(G,c_G)\in \mc H$. Then there is a canonical labelling scheme $\Phi$ for $\mc F$, such that for every $G\in \mc F$, we can compute $\Phi(G)$ in time $O(n^2 +T)$.
\end{proposition}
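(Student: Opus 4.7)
The plan is to reduce canonical labelling of $G$ to the given canonical labelling $\Phi_{\mc H}$ of its disparity graph, using $c_G$ to break any residual ambiguity. Given $G\in\mc F$, I would first compute $c_G$ in time $T$, then construct the majority graph $M:=M(G,c_G)$ and disparity graph $D:=D(G,c_G)\in\mc H$ in time $O(n^2)$ by iterating over pairs of $c_G$-colour classes (for each pair, comparing the density of $G$ to $1/2$ to decide whether $M$ contains all or none of the edges between them, and then XOR-ing with $G$). Next I would invoke $\Phi_{\mc H}$ on $D$ in time $T$ to obtain a permutation $\tau:=\Phi_{\mc H}(D)$. I then define $\Phi(G)$ to be the permutation that sorts the vertices of $G$ in increasing lexicographic order under the key $(c_G(v),\tau(v))$, using a fixed canonical total order on $\Omega$ and the natural order on $\{1,\dots,n\}$. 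Sorting costs $O(n\log n)$, giving a total runtime of $O(n^2+T)$ as required.

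For canonicity, let $G,G'\in\mc F$ be isomorphic via some bijection $\psi$. Canonicity of the colouring scheme gives $c_{G'}\circ\psi=c_G$, so $\psi$ bijects $c_G$-colour classes onto $c_{G'}$-colour classes (preserving the canonical colour label, and thus the first coordinate of the sort key). The majority graph $M(G,c_G)$ is determined by $c_G$ together with the \emph{majority data}---the Boolean function on pairs of $c_G$-classes recording whether $M$ contains all or no edges between them---which is a manifest isomorphism invariant. Therefore $\psi$ also induces an isomorphism $D(G,c_G)\to D(G',c_{G'})$, and canonicity of $\Phi_{\mc H}$ yields $\tau(D(G,c_G))=\tau'(D(G',c_{G'}))$ as labelled graphs, where $\tau':=\Phi_{\mc H}(D(G',c_{G'}))$; equivalently, $\gamma:=(\tau'\circ\psi)\circ\tau^{-1}\in\mathrm{Aut}(\tau(D))$.

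It remains to verify $\Phi(G)(G)=\Phi(G')(G')$ as labelled graphs. I would argue that $\Phi(G)(G)$ is reconstructible from data that is manifestly isomorphism-invariant: the labelled graph $\tau(D)$, the majority data, the canonical colour order on $\Omega$, and the sorted colour-class sizes. Using $G=M\triangle D$, this reconstruction is $\Phi(G)(G)=\pi(\tau(M))\triangle\pi(\tau(D))$, where $\pi$ is the sort step; the second term is just $\Phi_{\mc H}((D))$, and the first is a union of complete bipartite pieces between sorted colour-class blocks prescribed by the majority data. The potential ambiguity in $\tau$ from the action of $\gamma$ is absorbed by the lexicographic sort, because the rank of a vertex depends only on the sorted multiset of $\tau$-values inside its colour class, which is an invariant. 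The main obstacle will be to make this last point rigorous: concretely, one has to check that the permutation of sorted positions induced by $\gamma$ extends to an automorphism of the final labelled graph, so that the output does not depend on the choice of representative $\tau$ within its $\mathrm{Aut}(\tau(D))$-coset.
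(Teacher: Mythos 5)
Your algorithm is exactly the paper's construction (sort vertices by the key $(c_G(v),\Phi_{\mc H}(D(G,c_G))(v))$), but your canonicity argument stops at precisely the decisive point, and that point is a genuine gap rather than a routine verification. The claim that ``the rank of a vertex depends only on the sorted multiset of $\tau$-values inside its colour class, which is an invariant'' is circular: that multiset is invariant exactly when the residual ambiguity $\gamma=\tau'\circ\psi\circ\tau^{-1}\in\on{Aut}(\tau(D))$ maps each pushed-forward colour class $\tau(C)$ to itself, i.e.\ exactly the colour-preservation you would need to establish. Canonicity of an \emph{uncoloured} labelling scheme $\Phi_{\mc H}$ only pins down $\tau$ up to arbitrary automorphisms of $\tau(D)$, and $D(G,c_G)$ can have colour-mixing automorphisms. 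A concrete instance: let $G$ be the path $1$--$2$--$3$--$4$ with $c_G=\mc R^*\sigma$ (classes $\{1,4\}$ and $\{2,3\}$); then $D(G,c_G)$ is the matching $\{13,24\}$. The relabelled copy $G'$ obtained by swapping $1\leftrightarrow 3$ (the path $3$--$2$--$1$--$4$) has the \emph{same labelled} disparity graph $\{13,24\}$, hence receives the same $\tau$, but its colour classes are $\{3,4\}$ and $\{1,2\}$; sorting by $(c,\tau)$ then outputs two different labelled paths for $G$ and $G'$. So for a legitimate but adversarially chosen $\Phi_{\mc H}$, the step you defer can actually fail, not merely resist your attempted justification.

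What your write-up is missing is the one substantive observation in the paper's proof: every automorphism of $D(G,c_G)$ that fixes the colouring $c_G$ also fixes $M(G,c_G)$ (whose edges depend only on the colours of their endpoints), and hence is an automorphism of $G=M(G,c_G)\triangle D(G,c_G)$. This is the mechanism by which residual ambiguity in the labelling of the disparity graph is absorbed into $\on{Aut}(G)$, so that the sorted output is well defined; you never state or use it. But to invoke it one must first know that the ambiguity respects the colour classes, which requires using the labelling of $D$ in a colour-compatible way --- in effect canonically labelling the coloured graph $(D(G,c_G),c_G)$, as the component-wise labelling schemes in the paper's applications allow --- rather than treating $\Phi_{\mc H}$ as a black box on the uncoloured graph $D$. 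The obstacle you name in your final sentence is therefore the actual content of the proposition, and your proposal identifies it without overcoming it.
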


\begin{proof}
    For a graph $G\in \mc F$, our canonical labelling $\Phi(G)\in \mc S_n$ is computed as follows. First compute the colouring $c_G$ (in time $T$).
    Then, compute the disparity graph $D(G,c_G)$ (this can be done in time $O(n^2)$). To each vertex $v$ associate the pair 
    \[\Big(c_G(v),\,\Phi_{\mc{H}}(D(G,c_G))(v)\Big).\]
    Then, obtain $\Phi(G)$ by ordering the vertices lexicographically by their associated pairs.
To see that this indeed describes a canonical labelling scheme, note that every automorphism of $D(G,c_G)$ which fixes the colouring $c_G$ is an automorphism of $G$.
\end{proof}

We will need to combine \cref{prop:D-to-G} with an efficient canonical labelling scheme for graphs with small connected components (it is easy to deal with each connected component separately, so this really comes down to worst-case guarantees for graphs on a fixed number of vertices). There are algorithms based on group-theoretic ideas with very strong theoretical guarantees (due to recent work of Babai~\cite{Bab19}), but as the purpose of this paper is to study the typical performance of \emph{combinatorial} algorithms, we will instead use the following theorem of Corneil and Goldberg~\cite{CG84}: with an ingenious recursion scheme, it is possible to canonically label graphs in exponential time (improving on na\"ive factorial-time algorithms).

\begin{theorem}\label{thm:CG}
There is a canonical labelling scheme $\Phi$ for $\mc G_n$ such that for every $G\in \mc G_n$, we can compute $\Phi(G)$ in time $\exp(O(n))$.
\end{theorem}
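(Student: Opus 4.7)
The plan is to use the classical individualisation-refinement recursion of Corneil and Goldberg. I define a recursive procedure $\Phi$ that takes a vertex-coloured graph $(G,c)$ with $c:V(G)\to\Omega$ and returns a permutation of $V(G)$. First, replace $c$ by the stable colouring $c':=\mathcal R_G^*c$. If $c'$ is \emph{discrete} (every colour class is a singleton), return the permutation ordering the vertices lexicographically by colour (with respect to a fixed total order on $\Omega$). Otherwise, select the canonically-smallest non-trivial colour class $C$ --- for concreteness, take the class of smallest size, with ties broken by taking the lex-smallest colour. For each $v\in C$, form $c_v$ from $c'$ by assigning $v$ a fresh colour larger than every colour currently used, and recursively compute $\Phi(G,c_v)$. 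Among the $|C|$ resulting permutations, return the one for which the labelled adjacency matrix of $G$ obtained by applying it to $V(G)$ is lex-smallest. The canonical labelling scheme on $\mathcal G_n$ is then $G\mapsto \Phi(G,\sigma_G)$ with $\sigma_G$ the trivial one-colour colouring.

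Correctness holds essentially by construction. Every step of the recursion is a canonical function of the isomorphism type of the coloured graph $(G,c)$: both the refinement operator $\mathcal R_G^*$ and the class-selection rule depend only on the colouring and on graph-theoretic structure (not on the identities of the vertices), and selecting the lex-smallest recursive output at each branching node ensures isomorphism-invariance of the final permutation. An easy induction on the recursion depth then shows that isomorphic inputs produce identical outputs, so $\Phi(\cdot,\sigma_{\,\cdot\,})$ is a canonical labelling scheme for $\mathcal G_n$.

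The main technical obstacle is bounding the runtime by $\exp(O(n))$. Each refinement step takes polynomial time by \cref{thm:CR-discrete-canonical}, so it suffices to bound the size of the recursion tree. The depth is at most $n$, since each individualisation strictly refines the colouring and the number of colour classes is capped at $n$. With a branching factor of up to $n$ at each level this gives only the naive bound $n^n=\exp(O(n\log n))$, so sharpening to $\exp(O(n))$ is the key step. I would follow the amortised analysis of \cite{CG84}: the point is that a branching factor of $k$ at an internal node forces a non-singleton colour class of size $k$ to be broken, consuming a ``non-singleton budget'' that is bounded by $n$ across any root-to-leaf path, so large branching factors cannot persist and the total leaf count is $\exp(O(n))$. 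Combined with the polynomial per-node cost this yields the stated runtime.
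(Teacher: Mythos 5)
The paper does not actually prove \cref{thm:CG}: it is imported as a black box from Corneil--Goldberg \cite{CG84} (``an ingenious recursion scheme''), so your sketch must stand on its own, and there it has a genuine gap. The runtime claim $\exp(O(n))$ is not established by your budget argument, and indeed it is false for the algorithm you describe. When you individualise a vertex of a colour class of size $k$, the stable refinement may do nothing beyond splitting that class into a singleton and a class of size $k-1$; the potential $\sum_C(|C|-1)$ then drops by exactly $1$ while the branching factor is $k$. The number of leaves of the recursion tree is the \emph{product} of the branching factors along a root-to-leaf path, and a bound of $n$ on the \emph{sum} of the potential drops does not control that product. Concretely, on the empty graph your scheme branches on classes of sizes $n,n-1,n-2,\dots$, giving $n!=\exp(\Theta(n\log n))$ leaves; a disjoint union of $n/3$ triangles behaves similarly (branching factors roughly $n,2,n-3,2,n-6,\dots$). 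So plain individualisation--refinement over a full colour class, with no pruning, is exactly the na\"ive factorial-time algorithm that \cref{thm:CG} is supposed to improve upon.

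Consequently the key step cannot be discharged by ``following the amortised analysis of \cite{CG84}'': Corneil and Goldberg do not analyse the procedure you defined, and their $\exp(O(n))$ bound comes from a genuinely different recursion, not from an amortisation of class-splitting in the scheme above. To make your proposal work you would need either to reproduce their actual recursion, or to add a mechanism by which a branching factor of $k$ is paid for proportionally (for instance, pruning branches equivalent under automorphisms already discovered, or recursing on vertex subsets in a way that shrinks the instance geometrically), together with an analysis of the resulting tree size. The canonicity part of your argument (isomorphism-invariant refinement via \cref{thm:CR-discrete-canonical}, canonical target-class selection, and choosing the lexicographically smallest labelled adjacency matrix over branches) is fine; it is the $\exp(O(n))$ bound, which is the entire content of the theorem beyond the trivial $n!\cdot\mathrm{poly}(n)$ algorithm, that is unsupported.
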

We record the following corollary of \cref{prop:D-to-G,thm:CG}.
\begin{corollary}\label{cor:small-components}
Fix a polynomial-time computable canonical colouring scheme $(c_G)_{G\in \mc G_n}$. Fix a constant $C$, and let $\mathcal F_C$ be the set of all graphs $G$ on the vertex set $\{1,\dots,n\}$, for which $D(G,c_G)$, has connected components with at most $C\log n$ vertices. Then there is a polynomial-time-computable canonical labelling scheme for $\mc F_C$.
\end{corollary}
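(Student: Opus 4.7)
The plan is to construct a polynomial-time canonical labelling scheme $\Phi_{\mc H}$ for the family $\mc H$ of all graphs in $\mc G_n$ whose connected components each have at most $C \log n$ vertices, and then invoke \cref{prop:D-to-G} with this $\Phi_{\mc H}$ and the given canonical colouring scheme $(c_G)$. Since by definition every $G \in \mc F_C$ satisfies $D(G,c_G) \in \mc H$, this will immediately furnish the desired canonical labelling scheme for $\mc F_C$, in time $O(n^2 + T)$, where $T$ is the running time of $\Phi_{\mc H}$ plus the time to compute $c_G$ (both polynomial).

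To build $\Phi_{\mc H}$, given a graph $H \in \mc H$, I would first compute its connected components in linear time. For each component $K$, which has at most $C \log n$ vertices, I apply the Corneil--Goldberg algorithm of \cref{thm:CG} to obtain a canonical labelling of $K$; this takes time $\exp(O(C \log n)) = n^{O(C)}$ per component, and since there are at most $n$ components, the total cost remains polynomial. Each component then has a canonically labelled representative, which I encode (in some fixed way, e.g.\ the row-wise concatenation of its canonical adjacency matrix together with its size) as a binary string. Next, sort the components in lexicographic order of these encodings, and assign the vertex labels $1,\dots,n$ by concatenating the canonical orderings of the components in this sorted order. The result is the permutation $\Phi_{\mc H}(H) \in \mc S_n$.

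The main (and only) subtlety is verifying that this scheme is genuinely canonical. If $H, H' \in \mc H$ are isomorphic, then they have the same multiset of isomorphism classes of components. The Corneil--Goldberg scheme assigns identical canonical encodings to isomorphic components, so $H$ and $H'$ produce the same sorted multiset of encodings, hence the same sequence of canonically-labelled components. Within any group of components sharing an encoding, any relative ordering is consistent, because permuting isomorphic components is itself an automorphism of $H$ (and of $H'$), so the resulting labelled graphs $\Phi_{\mc H}((H))$ and $\Phi_{\mc H}((H'))$ coincide. Combining this with \cref{prop:D-to-G} completes the proof.
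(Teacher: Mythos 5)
Your proposal is correct and follows essentially the same route as the paper: the paper proves a general fact (\cref{fact:components}) that canonical labelling can be done component-by-component (Corneil--Goldberg on each component, then lexicographic sorting of the canonically-ordered adjacency matrices, with ties harmless because they correspond to automorphisms), and then combines it with \cref{thm:CG} and \cref{prop:D-to-G} exactly as you do. The only difference is organisational—you inline the component-handling argument rather than stating it as a separate lemma.
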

For the reader unfamiliar with canonical labelling, we provide some details in \cref{app:labelling} for how exactly to deduce \cref{cor:small-components} from \cref{prop:D-to-G,thm:CG} (i.e., how to handle connected components separately).

Next, for \cref{thm:sparse-random-informal}, we need a generalisation of \cref{thm:CR-discrete-canonical} to a much larger class of graphs, as follows.
\begin{definition}
A graph $G$ is \textit{CR-determined} if the multiset of colours in $\mc R^{*} \sigma$ distinguishes $G$ from all non-isomorphic graphs (i.e., if there is no other graph $H$, not isomorphic to $G$, for which colour refinement produces the same multiset of colours).
\end{definition}
We remark that there is a combinatorial characterisation of CR-determined graphs (see \cite{AKRV17,amenablegraphs2}), though we will not need this. 
\begin{theorem}[see \cite{AKRV17}, Corollary 18]\label{thm:CR-determined-canonical}
    Let $\mc F\subseteq \mc G_n$ be the set of all CR-determined graphs on the vertex set $\{1,\dots,n\}$. Then there is a canonical labelling algorithm that computes a canonical labelling for a graph $G\in \mc F$ in polynomial time.
\end{theorem}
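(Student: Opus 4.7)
The plan is a standard individualisation-refinement procedure. Maintain a colouring $c$ of $V(G)$, initialised to $c\gets\mc R_G^*\sigma_G$. While $c$ is not discrete, select the non-singleton colour class $C$ whose colour label is lex-smallest, let $v\in C$ be the vertex of smallest label, individualise $v$ by assigning it a fresh unique colour, and update $c\gets\mc R_G^* c$. After at most $n$ iterations the colouring is discrete, at which point we return the canonical labelling provided by \cref{thm:CR-discrete-canonical}. Since each refinement step is polynomial, so is the whole algorithm.

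The substance of the proof is to show the output is genuinely canonical, i.e.\ depends only on the isomorphism class of $G$. The key lemma I would establish is that, for CR-determined $G$ and any colouring $c$ arising during the execution of the algorithm, the orbits of $\on{Aut}(G,c)$ on $V(G)$ coincide with the colour classes of $\mc R^*_G c$. Given this, any two candidates for the individualised vertex within the chosen colour class are related by an automorphism of $(G,c)$, so continuing the algorithm from either choice produces the same labelled graph, and thus the output depends only on the isomorphism class of $G$.

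I would prove this lemma by induction on the number of individualisations performed so far. The base case ($c=\sigma_G$) is the crux: given $u,v\in V(G)$ with $\mc R^*\sigma(u)=\mc R^*\sigma(v)$, one must produce an automorphism of $G$ mapping $u$ to $v$. I would attack this by a gadget reduction. Attach a marker gadget (e.g.\ a pendant path of distinguishable length, chosen so as to be ``invisible'' to colour refinement on the rest of $G$) at $u$ to form $G^+_u$, and similarly $G^+_v$. By construction their stable colour multisets agree, since the initial multisets agree (because $u,v$ share a colour) and refinement preserves this. If the gadget is designed so that CR-determinedness is preserved, then $G^+_u\cong G^+_v$, and any such isomorphism restricts to an automorphism of $G$ sending $u\mapsto v$. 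The inductive step is the same argument, carried out on $(G,c)$ in place of $(G,\sigma_G)$, with the gadget distinguishing individualised vertices from the rest.

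The main obstacle is precisely the preservation of CR-determinedness under the gadget construction: one needs to ensure that every graph with the same stable colour multiset as $G^+_u$ necessarily arises by attaching this same gadget to some vertex of a graph isomorphic to $G$ (which must then be identifiable with $u$ via an automorphism). Alternatively, one can short-circuit the entire argument by invoking the explicit combinatorial characterisation of CR-determined graphs in~\cite{AKRV17,amenablegraphs2}, from which the orbit/colour-class equality is immediate; the paper notes this characterisation is not strictly needed but it provides a convenient shortcut.
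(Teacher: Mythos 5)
Your algorithm is the right one, and in fact it is exactly the algorithm this paper attributes to Tinhofer~\cite{tinhofer}, Immerman--Lander~\cite{IL90} and Arvind--K\"obler--Rattan--Verbitsky~\cite{AKRV17}: individualise an arbitrary vertex inside a non-singleton colour class, refine, and repeat. Note, however, that the paper does not prove \cref{thm:CR-determined-canonical} at all -- it is quoted from that literature -- so the relevant question is whether your write-up constitutes an actual proof, and there it has a genuine gap. Everything hinges on your key lemma that, at every stage, the colour classes of $\mc R^*_G c$ are the orbits of $\on{Aut}(G,c)$. Your proposed proof of even the base case is incomplete: the gadget reduction only yields $G^+_u\cong G^+_v$ if the gadgeted graph $G^+_u$ is itself CR-determined, and you explicitly concede that you cannot show CR-determinedness is preserved by the gadget. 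This is not a removable technicality -- being distinguished-from-all-non-isomorphic-graphs is a global property of the colour multiset and is not in general inherited by modifications such as pendant attachments or individualisations, so the reduction as stated does not go through. The inductive step suffers from the same problem in a stronger form: you need that the stabiliser of the already-individualised vertices still acts transitively on each class of the refined colouring, i.e.\ the orbit/colour-class equality for the \emph{coloured} graphs $(G,c)$ arising mid-run, and CR-determinedness of the original $G$ gives you no direct handle on these coloured graphs.

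The fallback of declaring the orbit/colour-class equality ``immediate'' from the combinatorial characterisation in \cite{AKRV17,amenablegraphs2} also does not rescue the argument as a proof. Deriving from that structural characterisation that the automorphism group acts transitively on every cell, and that this persists through individualisation--refinement so that all arbitrary choices are absorbed by automorphisms, is essentially the substance of the canonical-labelling result of \cite{AKRV17} -- the very statement of \cref{thm:CR-determined-canonical}. Invoking it is therefore legitimate as a citation (which is all the paper itself does), but circular as a proof. To make your approach self-contained you would need to actually prove the orbit lemma for CR-determined graphs, including its coloured/individualised version, rather than reduce it to an unverified preservation property or to the cited theorem.
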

The canonical labelling algorithm for \cref{thm:CR-determined-canonical} is a slight variation on the colour refinement algorithm (due to Tinhofer~\cite{tinhofer} and Immerman and Lander~\cite{IL90}, and made canonical by Arvind, K\"{o}bler, Rattan and Verbitsky~\cite{AKRV17}; see also \cite{amenablegraphs2} for similar independent results). The idea is that if the colour refinement algorithm reaches a stable colouring in which two vertices have the same colour, we arbitrarily give one of those two vertices a unique colour, and continue.

To use \cref{thm:CR-determined-canonical} in our proof of \cref{thm:sparse-random-informal}, we will need a way to show that graphs are CR-determined. To this end, it \emph{almost} suffices to study the vertices in the kernel of $G$ (i.e., the vertices with degree at least 3 in the 2-core), as follows.
\begin{definition}\label{def:V23}
For a graph $G$, let $V_{2,3}(G)$ denote the vertices in the $2$-core of $G$ which have degree at least $3$.
\end{definition}
\begin{proposition}
\label{prop:kernel-canonical}
    Let $G$ be a connected graph with $V_{2,3}(G)\ne \emptyset$. If $\mc R^{*} \sigma$ assigns all vertices of $V_{2,3}(G)$ distinct colours, then $G$ is CR-determined.
\end{proposition}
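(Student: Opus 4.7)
The plan is to show that any graph $H$ whose stable colouring has the same multiset of colours as $c := \mc R_G^{*}\sigma_G$ must be isomorphic to $G$. I would appeal to the characterization (due to Angluin, cited in the paper and formalised later via \emph{views}) that two vertices of any two graphs receive the same stable colour if and only if their views are isomorphic. In particular, for each $v \in V_{2,3}(G)$ the view $\mc T_G(v)$ is unique among views of vertices of $G$, and since the multisets of colours in $G$ and $H$ agree, there is a canonical bijection $\phi \colon V_{2,3}(G) \to W$ for some $W \subseteq V(H)$, sending each $v$ to the unique vertex of $H$ of colour $c(v)$.

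Adjacency is automatically preserved on $V_{2,3}(G)$: for two singleton colour classes $\{u\},\{v\}$, the inter-class edge count $d_{c(u),c(v)} \in \{0,1\}$ is intrinsic to the stable colouring (the same in any graph that realises this colouring stably), so $G[V_{2,3}(G)] \cong H[W]$ via $\phi$. More generally, I would argue that the view $\mc T_G(v)$ at a vertex $v \in V_{2,3}(G)$ encodes (i) all the bare paths in the 2-core emanating from $v$, each appearing as a ``thread'' of degree-$2$ vertices terminating at a vertex of $V_{2,3}$-colour (whose identity is pinned down by the uniqueness of that colour); and (ii) all the tree components attached at $v$, appearing as the finite subtrees of the view. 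Since $\phi(v)$ has the same view, the same bare-path and tree structures occur at $\phi(v)$ in $H$; in particular the kernel multigraph of $G$ on $V_{2,3}(G)$ (with each bare path recorded as a length-labelled edge, including closed bare paths) is isomorphic via $\phi$ to the corresponding kernel of $H$ on $W$, and $W = V_{2,3}(H)$.

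Finally, I would extend $\phi$ to all of $V(G)$ as follows. Each bare path of length $\ell$ between $u,v\in V_{2,3}(G)$ is matched positionally to a bare path of length $\ell$ between $\phi(u),\phi(v)$ in $H$ (which exists by the kernel match; the ambiguity arising from multiple bare paths of the same length and endpoints, or from the two orientations of a closed bare path, does not matter up to graph isomorphism). Each rooted tree attached at $v\in V_{2,3}(G)$ is matched to its counterpart rooted at $\phi(v)$ via any canonical labelling of rooted trees. Using connectedness of $G$ and hence of its 2-core, the various tree components and bare-path interiors are pairwise vertex-disjoint, so these choices combine into a well-defined bijection $V(G) \to V(H)$ preserving all edges, i.e.\ an isomorphism. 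The main obstacle is to formalise the claim that the bare-path and tree structure can be extracted from a single view: although the view is in general the universal cover (recording walks rather than paths), the distinct $V_{2,3}$-colours serve as \emph{anchors} that let one recognise precisely when a walk in the view ``returns'' to a $V_{2,3}$-vertex, and an inductive argument on distance in the view from $V_{2,3}(G)$, exploiting the stability of $c$ under refinement, pins down the local structure of $G$ around every $V_{2,3}$-vertex.
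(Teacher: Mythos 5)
Your proposal is correct and is essentially the paper's own argument: the appendix proof likewise uses the view/colour equivalence (\cref{Universialtreecoverstep}) together with the uniqueness of the $V_{2,3}(G)$ colours as anchors to read off, from the view at each vertex, the lengths and endpoints of the bare paths and the structure of the attached trees, thereby reconstructing $G$ (equivalently, an isomorphism to any $H$ with the same colour multiset). The only point to tidy is the one you flag yourself: pendant trees appear as \emph{finite} subtrees only after passing to non-backtracking walks (universal covers, cf.\ \cref{rem:view-vs-UC}), and the paper disposes of your ``main obstacle'' by first proving (\cref{lem:differentiatecoreandkernel}) that the stable colour alone determines whether a vertex lies outside the $2$-core, is a degree-$2$ vertex of the $2$-core, or lies in $V_{2,3}(G)$, after which reading off the bare paths and trees is immediate.
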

It is fairly straightforward to prove \cref{prop:kernel-canonical} by reasoning carefully about how to reconstruct $G$ given $\mc R^{*} \sigma$ (first put bare paths between vertices of $V_{2,3}(G)$ to obtain $\on{core}_2(G)$, then attach trees to $\on{core}_2(G)$ to obtain $G$). For completeness, we provide some details in \cref{app:labelling}.

\begin{remark}\label{rem:automorphisms}
    With the same considerations, it is straightforward to characterise the automorphisms of a graph $G$ for which all vertices of $V_{2,3}(G)$ are assigned distinct colours. Indeed, the vertices of $V_{2,3}(G)$ are fixed by any automorphism, so the only flexibility comes from components with at most one cycle (i.e., components with no vertices of $V_{2,3}(G)$), bare paths between vertices of $V_{2,3}(G)$, and the trees attached to $\on{core}_2(G)$. This is relevant for \cref{thm:automorphisms}.
\end{remark}

\cref{prop:kernel-canonical} does not tell us anything about graphs with $V_{2,3}(G)= \emptyset$, but these graphs are easy to handle separately. Indeed, every connected graph with $V_{2,3}(G)= \emptyset$ either has no cycles (i.e., is a tree) or has exactly one cycle. Such graphs have very simple structure, and there are many ways to handle them; for example, all such graphs are \emph{outerplanar} (i.e., they have a planar embedding for which all vertices belong to the outer face of the embedding), so we can appeal to the following classical theorem of Sys\l o~\cite{Sys77}.
\begin{theorem}\label{thm:outerplanarcl}
    There is an $O(n)$-time algorithm to canonically label the family of $n$-vertex outerplanar graphs. 
\end{theorem}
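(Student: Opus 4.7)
The plan is to exploit the well-known structural description of outerplanar graphs: every outerplanar graph $G$ decomposes into biconnected blocks joined along cut-vertices (the block-cut tree), and every biconnected outerplanar graph with at least three vertices has a \emph{unique} Hamiltonian cycle, namely the boundary of its outer face. This unique cycle can be recovered purely combinatorially from the adjacency structure (for instance by repeatedly removing degree-two vertices or by running a planar embedding algorithm specialised to the outerplanar case), and it admits a canonical orientation up to at most $2n$ choices (a starting vertex and a direction).

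First I would compute the block-cut tree $T_G$ of $G$ in linear time using standard depth-first-search-based biconnectivity algorithms. Each block $B$ is then either a single edge, a single vertex, or a $2$-connected outerplanar graph; in the last case I extract its Hamiltonian cycle $H_B$, and encode $B$ as a cyclic string $w_B$ over a small alphabet that records the vertex positions together with the chord structure of $B$ (since $B$ is outerplanar, the chords are non-crossing, so this encoding has length $O(|V(B)|)$). Using the Booth algorithm (or an equivalent technique) I compute the lexicographically least rotation of $w_B$ (and of its reversal) in time $O(|V(B)|)$; this yields a canonical code $\mathrm{code}(B)$ together with a canonical linear ordering of the vertices of $B$.

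Next, I would canonically label the block-cut tree $T_G$ itself using the Aho-Hopcroft-Ullman tree canonisation procedure, where each block-node is annotated by its code $\mathrm{code}(B)$ and each cut-vertex is annotated by the multiset of positions at which it sits inside its incident blocks. This gives a canonical rooted form of $T_G$, and by following the tree outward from the root, one reads off a canonical numbering of the vertices of $G$ in $O(n)$ total time. Because automorphic blocks receive identical codes and automorphic subtrees of $T_G$ are identified by the tree canonisation, isomorphic graphs produce identical labellings.

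The main technical obstacle is the careful handling of cut-vertices: a single vertex may lie in many blocks, and its canonical position inside each block must be reconciled with its canonical position in the block-cut tree. The resolution is to perform the tree canonisation and the within-block canonisation jointly, using the code of each block (augmented with the identity of the cut-vertex used as its ``attachment root'' in $T_G$) as the label passed up to the parent in the AHU traversal; this ensures that every ambiguity in the block ordering or in the cyclic rotation is broken canonically, while keeping the total running time linear.
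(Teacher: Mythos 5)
The paper does not prove this statement at all: it is imported as a black box, a classical theorem cited to Sys\l o, whose original argument is essentially the one you sketch (each biconnected block of an outerplanar graph has a unique Hamiltonian cycle, giving a canonical cyclic code per block, which is then combined with a canonical form of the block--cut tree). Your outline is therefore sound and matches the standard proof; the only point to be careful about for the claimed $O(n)$ bound is that the codes passed around are variable-length strings, so at each level of the bottom-up processing you must rename codes by integers via bucket/radix sorting (as in the Aho--Hopcroft--Ullman procedure) rather than comparing full strings lexicographically, and the within-block canonisation of a block $B$ must use as its symbols the already-computed integer codes of the subtrees hanging at the vertices of $B$, not positional data defined only after $B$ is canonised.
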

Combining \cref{thm:outerplanarcl,prop:kernel-canonical,thm:CR-determined-canonical} yields the following corollary (the details of the deduction appear in \cref{app:labelling}).
\begin{corollary}\label{cor:kernel-canonical}
    Let $\mc F \subseteq \mc G_{n}$ be the set of graphs $G$ on the vertex set $\{1,\ldots,n\}$ for which $\mc R^{*} \sigma$ assigns vertices  of $V_{2,3}(G)$ distinct colours. Then there is a polynomial-time computable canonical labelling scheme for $\mathcal F$.
\end{corollary}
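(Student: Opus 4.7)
The plan is to reduce the problem to the connected case by treating each component independently, combining the per-component canonical labellings in a canonical order. First I would observe that the stable colouring of colour refinement splits cleanly across components: since each refinement step depends only on the multiset of colours in a vertex's neighbourhood, an easy induction on the number of refinement steps shows that, for any component $C$ of $G$, the partition of $V(C)$ induced by $\mc R_G^* \sigma$ coincides with the partition induced by $\mc R_C^* \sigma$. In particular, for every $G\in\mc F$ and every component $C$ of $G$, the stable colouring of $C$ gives distinct colours to the vertices of $V_{2,3}(C)$.

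Next, I would dichotomise on the components. Fix a component $C$ of $G\in \mc F$. If $V_{2,3}(C)\ne \emptyset$, then by \cref{prop:kernel-canonical} (applied using the previous observation) the component $C$ is CR-determined, and we canonically label $C$ in polynomial time via \cref{thm:CR-determined-canonical}. If $V_{2,3}(C)=\emptyset$, then the 2-core of $C$ has maximum degree at most 2, which forces $C$ to be either a tree or to contain exactly one cycle; in either case $C$ is outerplanar, so we can canonically label $C$ in linear time via \cref{thm:outerplanarcl}. Note that the decision of which algorithm to apply depends only on $C$ as an unlabelled graph (so canonicity is preserved), and the total running time is polynomial since there are at most $n$ components each of size at most $n$.

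Finally I would stitch the per-component labellings into a single canonical labelling of $G$. Let $\psi_C$ denote the canonical labelled graph produced for component $C$ by the above procedure, and encode $\psi_C$ as a binary string (e.g.\ by reading off the adjacency matrix row by row, prefixed by the size of $C$). Sort the components lexicographically by these encodings, breaking ties arbitrarily among components with identical encodings (which, being labelled identically by the per-component canonical scheme, must be isomorphic). Assign the labels $\{1,\dots,n\}$ in the resulting order, using the internal canonical labels of each component as the within-block ordering. Because both the per-component schemes and the lexicographic sort depend only on the isomorphism type of $G$, this defines a canonical labelling scheme for $\mc F$.

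The only genuinely non-routine step is verifying that the per-component partitions obtained from $\mc R_G^*\sigma$ and $\mc R_C^*\sigma$ agree, so that the hypothesis of \cref{prop:kernel-canonical} transfers from $G$ to each component; everything else is a bookkeeping argument that I would mostly relegate to the appendix, together with a careful description of the canonical concatenation of component labellings.
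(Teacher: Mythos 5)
Your proposal is correct and follows essentially the same route as the paper: split $G$ into components, label components with nonempty $V_{2,3}$ via \cref{prop:kernel-canonical} and \cref{thm:CR-determined-canonical}, label the remaining (tree or unicyclic, hence outerplanar) components via \cref{thm:outerplanarcl}, and combine by lexicographically sorting the canonically labelled components, which is exactly the paper's application of \cref{fact:components}. The only difference is that you spell out the (true, and implicitly used) fact that $\mc R_G^*\sigma$ and $\mc R_C^*\sigma$ induce the same partition on each component $C$, which the paper leaves implicit.
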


\subsection{Basic facts about colour refinement}\label{subsec:CR}
We collect some simple observations about colour refinement. First, we observe that stable colourings of $G$ are always \emph{equitable}\footnote{This should not be confused with a different meaning of the term ``equitable colouring'' more common in extremal graph theory (namely, that the colouring has a roughly equal number of vertices in each colour).}: $G$ is regular between each pair of colour classes.
\begin{definition}\label{def:equitable}
    For a graph $G$, a colouring $c:V(G)\to \Omega$ is \emph{equitable} if for any two distinct colour classes $A,B$ of $G$, the induced subgraph $G[A]$ is regular, and the induced bipartite subgraph $G[A,B]$ is a biregular bipartite graph. 
\end{definition}
\begin{fact}\label{fact:equitable}
    Consider any graph $G$ and any colouring $c:V(G)\to \Omega$ then $\mc R^*c$ is equitable. 

    In fact, $\mc R^*c$ is the \emph{coarsest} equitable partition refining $c$, in the sense that any equitable colouring $c'$ refining $c$ is also a refinement of $\mc R^* c$.
\end{fact}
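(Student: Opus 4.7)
The statement has two parts, and both follow by unpacking the definition of $\mc R$ together with the fact that $\mc R^*c$ is a fixed point of $\mc R$ (modulo re-encoding of colours).

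For the first part, I would observe that if $\mc R^*c$ is stable, then by definition the partitions induced by $\mc R^*c$ and $\mc R(\mc R^*c)$ coincide, which means that two vertices $u,v$ lying in the same colour class $V_i$ of $\mc R^*c$ must have $\mc R(\mc R^*c)(u)=\mc R(\mc R^*c)(v)$. Since the refinement records, for each colour $\omega$ of $\mc R^*c$, the number of neighbours of colour $\omega$, this equality forces $d_\omega(u)=d_\omega(v)$ for every colour class $V_\omega$. Specialising to $\omega=i$ shows that $G[V_i]$ is regular, and specialising to $\omega=j\ne i$ shows that $G[V_i,V_j]$ has constant degree on the $V_i$-side; by symmetry it is biregular. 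So $\mc R^*c$ is an equitable colouring of $G$.

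For the second part (coarsest), I would prove by induction on $t\ge 0$ that any equitable colouring $c'$ refining $c$ also refines $\mc R^t c$; then taking $t$ large enough that $\mc R^t c=\mc R^* c$ finishes the proof. The base case $t=0$ holds by assumption. For the inductive step, suppose $c'(u)=c'(v)$; I want to show $\mc R^{t+1}c(u)=\mc R^{t+1}c(v)$. The ``first coordinate'' $\mc R^t c(u)=\mc R^t c(v)$ holds by the inductive hypothesis. For the ``second coordinate'', fix any colour class $W$ of $\mc R^t c$; by the inductive hypothesis $W$ is a disjoint union $C_1\cup\dots\cup C_k$ of colour classes of $c'$, and since $c'$ is equitable, the number of neighbours of $u$ in each $C_\ell$ equals the corresponding number for $v$. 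Summing over $\ell$ yields equality of the number of neighbours in $W$, as required.

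The proof is essentially mechanical; no step is a real obstacle. The only minor thing to be careful about is keeping track of the fact that $\mc R$ is really an operator on partitions (the colour ``names'' are inessential), so that ``stability'' means equality of partitions rather than literal equality of labels; this is already emphasised in \cref{def:refinement}, so it can be invoked without further comment.
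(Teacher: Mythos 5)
Your proof is correct. Note that the paper does not actually supply an argument for this statement: it is recorded as a \emph{fact} without proof, being a standard and classical property of colour refinement (the stable colouring is the coarsest equitable partition refining the initial one). Your two steps — deducing equitability from the stability of $\mc R^*c$ (same partition before and after one more refinement forces equal neighbour counts into every colour class, hence regularity within classes and biregularity between classes), and the induction on $t$ showing that any equitable refinement $c'$ of $c$ refines every $\mc R^t c$ by summing the constant degrees of $c'$-classes over the classes of $\mc R^t c$ — constitute exactly the standard argument and would serve as a complete justification; your remark that stability must be read at the level of partitions rather than colour labels is the right point of care and is consistent with \cref{def:refinement}.
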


The uniqueness of $\mc R^* c$ as the coarsest equitable partition refining $c$ (in \cref{fact:equitable}) has the following consequence for the ``consistency'' of the colour refinement algorithm: if we start the colour refinement algorithm from any coarsening of $\mc R^* \sigma$, it will still end up at $\mc R^* \sigma$.

\begin{fact}\label{fact:consistent}
    Consider any graph $G$, and let $c:V(G)\to \Omega$ be a colouring which is a coarsening of $\mc R^*\sigma$. 
    Then, $\mc R^*\sigma$ and $\mc R^*c$ define the same partition of $V(G)$. 
\end{fact}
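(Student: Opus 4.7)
The plan is to derive \cref{fact:consistent} as a direct double-application of \cref{fact:equitable}, which characterises $\mc R^*c$ as the \emph{coarsest} equitable refinement of $c$. The two inclusions between the partitions defined by $\mc R^*\sigma$ and $\mc R^*c$ will both fall out of that universal property, so essentially no new combinatorial input is needed.

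First I would show that the partition defined by $\mc R^*\sigma$ refines the partition defined by $\mc R^*c$. For this, I note that $\mc R^*\sigma$ is equitable by \cref{fact:equitable}, and that it refines $c$ by hypothesis (since $c$ is by assumption a coarsening of $\mc R^*\sigma$). Thus $\mc R^*\sigma$ is an equitable colouring refining $c$, and the universal-property half of \cref{fact:equitable} (applied with $c$ in place of the starting colouring) immediately gives that $\mc R^*\sigma$ is a refinement of $\mc R^*c$.

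For the reverse direction, I would observe that $\mc R^*c$ is also equitable by \cref{fact:equitable}, and that it trivially refines the constant colouring $\sigma$ (every colouring does). Applying the universal-property half of \cref{fact:equitable} now with the starting colouring $\sigma$, we conclude that $\mc R^*c$ is a refinement of $\mc R^*\sigma$. Combining the two directions yields that the partitions defined by $\mc R^*\sigma$ and $\mc R^*c$ coincide.

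There is no real obstacle here; the only point that warrants a line of care is the observation that \cref{fact:equitable} was stated for colourings but its conclusion depends only on the induced partitions, so it can be applied to $\mc R^*\sigma$ and $\mc R^*c$ interchangeably as ``equitable partitions.'' Everything else is bookkeeping about the refinement preorder.
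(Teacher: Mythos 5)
Your proposal is correct and matches the paper's intended argument: the paper states \cref{fact:consistent} as a direct consequence of the coarsest-equitable-refinement property in \cref{fact:equitable}, which is exactly the double application of that universal property (once with starting colouring $c$, once with $\sigma$) that you spell out.
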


Finally, we observe that the colour refinement algorithm can ``see'' which pairs of colours in a graph $G$ are connected by a path. This is also true for the disparity graph, and for a more general class of graphs in which we can prescribe for each pair of colour classes whether to complement the edges between those colour classes.

\begin{definition}\label{def:DL}
For a graph $G$, a vertex-colouring $V(G)\to \Omega$, and a matrix $L\in \{0,1\}^{\Omega\times \Omega}$, let $M_L(G,c)$ be the graph defined as follows. For any (possibly non-distinct) pair of colours $\omega,\omega'\in \Omega$:
\begin{itemize}
        \item If $L_{\omega,\omega'}=1$, then $M_L(G,c)$ contains every possible edge between vertices of colours $\omega$ and $\omega'$.
        \item If $L_{\omega,\omega'}=0$, then $M_L(G,c)$ contains no edges between vertices of colours $\omega$ and $\omega'$.
    \end{itemize}
    Then, define the \emph{generalised disparity graph} $D_L(G,c)=M_L(G,c)\triangle G$.
\end{definition}

\begin{fact}\label{fact:detect-paths}
    Fix a graph $G$, let $c=\mc R^*\sigma$, and let $\Omega$ be the set of colours used by $c$. Consider any $L\in \{0,1\}^{\Omega\times \Omega}$, any two vertices $u,v$ for which $c(u)=c(v)$, and any colour $\omega\in \Omega$. Then, with respect to the graph $D_L(G,c)$, there is a path from $u$ to an $\omega$-coloured vertex if and only if there is a path from $v$ to an $\omega$-coloured vertex.
\end{fact}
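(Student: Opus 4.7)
The plan is to reduce the statement to a straightforward consequence of equitability. First, I would invoke \cref{fact:equitable}, which tells us that $c=\mc R^*\sigma$ is an equitable colouring of $G$: for every pair of colour classes $A,B$ of $c$, the induced subgraph $G[A]$ is regular and the induced bipartite subgraph $G[A,B]$ is biregular. The key observation is that the construction of $D_L(G,c)$ preserves equitability. Indeed, between any ordered pair of colour classes $(\omega,\omega')$ we either leave the edges of $G$ unchanged (if $L_{\omega,\omega'}=0$) or replace them by their (bipartite) complement (if $L_{\omega,\omega'}=1$), and both regularity and biregularity are preserved under complementation within / between colour classes. Consequently $c$ is equitable on $D_L(G,c)$ as well, and in particular: for any two colours $\omega,\omega'\in\Omega$, either \emph{every} vertex of colour $\omega$ has at least one $D_L(G,c)$-neighbour of colour $\omega'$, or no vertex of colour $\omega$ does.

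Next, I would introduce the auxiliary ``colour graph'' $H$ on vertex set $\Omega$, in which $\omega$ is joined to $\omega'$ if and only if there is at least one edge of $D_L(G,c)$ between a vertex of colour $\omega$ and a vertex of colour $\omega'$. Any walk $u=u_0,u_1,\dots,u_k$ in $D_L(G,c)$ projects to a walk $c(u_0),c(u_1),\dots,c(u_k)$ in $H$. Conversely, by the equitability observation above, any walk $c(u)=\omega_0,\omega_1,\dots,\omega_k=\omega$ in $H$ can be lifted starting from $u$: one picks a $D_L(G,c)$-neighbour $u_1$ of $u$ of colour $\omega_1$ (which must exist), then a $D_L(G,c)$-neighbour $u_2$ of $u_1$ of colour $\omega_2$, and so on. Therefore $u$ can reach a vertex of colour $\omega$ in $D_L(G,c)$ if and only if $c(u)$ lies in the same connected component of $H$ as $\omega$.

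Since this last condition depends only on $c(u)$, any two vertices $u,v$ with $c(u)=c(v)$ have the same reachability to the colour class of $\omega$, yielding the claim. There is no genuine obstacle here; the whole argument is a short unwinding of the definition of equitability, applied to the fact (immediate from \cref{def:DL}) that equitability between colour classes is preserved by the ``complement or keep'' operation defining $D_L(G,c)$.
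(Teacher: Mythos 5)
Your proposal is correct and matches the paper's own argument: the paper's proof sketch likewise notes that $c$ remains an equitable partition of $D_L(G,c)$ (since complementing edges between colour classes preserves regularity/biregularity) and then lifts a path through the same sequence of colour classes starting from $v$. Your auxiliary colour graph $H$ and the walk-lifting step are just a slightly more explicit write-up of that same path-lifting-via-biregularity idea.
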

\begin{proof}[Proof sketch]
    The key point is that $c$ induces an equitable partition of $D_L(G,c)$ (indeed, $c$ is an equitable partition of $G$ by \cref{fact:equitable}, and complementing edges between pairs of colour classes does not change this property). So, if there is a path from $u$ to a colour class $C_\omega$ via some sequence of colour classes $C_1,\dots,C_\ell$, then by biregularity, through the same sequence of colour classes it is also possible to find a path from $v$ to $C_\omega$.
\end{proof}

We remark that the main reason we introduced the general notion of $L$-disparity graphs is that if we have two colourings $c,c'$ of a graph $G$, such that $c'$ is a refinement of $c$, then the disparity graph $D(G,c)$ with respect to $c$ can be interpreted as a generalised disparity graph $D_L(G,c')$ with respect to $c'$.

\subsection{The 2-dimensional Weisfeiler--Leman algorithm}\label{subsec:WL}
In this section we introduce a variant of colour refinement called the \emph{2-dimensional Weisfeiler--Leman algorithm}, which is a refinement algorithm for colourings of pairs of vertices, instead of colourings of single vertices\footnote{As the reader may have guessed, the colour refinement algorithm (also known as the 1-dimensional Weisfeiler--Leman algorithm) and the 2-dimensional Weisfeiler--Leman algorithm are both special cases of a more general \emph{$k$-dimensional Weisfeiler--Leman algorithm}, first defined in an influential paper of Cai, F\"urer and Immerman~\cite{CFI}. This will not be relevant for the present paper.}.

\begin{definition}\label{def:2WL}
For a graph $G$ and a colouring $f:V(G)^2\to \Omega$ of the ordered pairs of vertices of $G$, the \emph{2-dimensional refinement} $(2 \mc R)f:V(G)^2\to \Omega\times\mb N^\Omega$ of $f$ is defined by $(2\mc R) f(u,v)=(f(u,v),(d_{\omega_1,\omega_2}(u,v))_{(\omega_1,\omega_2)\in \Omega^2})$,
where $d_{\omega_1,\omega_2}(u,v)$ denotes the number of vertices $w\in V(G)$ such that $(f(u,w),f(w,v))=(\omega_1,\omega_2)$. As in \cref{def:refinement}, let $(2\mc R)^*f$ be the ``stable colouring'', obtained by iterating the refinement operation $2\mc R$ until we see the same vertex-pair partition twice in a row.

For a vertex colouring $c:V(G)\to \Omega$, we write $\phi_{G,c}:V(G)^2\to \Omega\sqcup \{0,1\}$  for the colouring defined by $\phi_{G,c}(u,v)=1$ if $uv$ is an edge, $\phi_{G,c}(u,v)=c(v)$ if $u=v$, and $\phi_G(u,v)=0$ otherwise (here we write $\sqcup$ for disjoint union). We write $\phi_G$ as shorthand for $\phi_{G,\sigma_G}$, recalling that $\sigma_G$ is the trivial vertex-colouring of $V(G)$ assigning every vertex the same colour.

Also, for a colouring $f:V(G)^2\to \Omega$, define the ``vertex-projection'' $\Pi f:V(G)\to \Omega$ by $\Pi f(v)=f(v,v)$. Let $\mc V^*=\Pi(2\mc R)^*$ be the vertex-projection of the 2-dimensional Weisfeiler--Leman algorithm.
\end{definition}

\begin{remark}
    There are (at least) two slightly different definitions of the 2-dimensional Weisfeiler--Leman algorithm, with different properties. The definition above was the original one introduced by Cai, F\"urer and Immerman~\cite{CFI}, and is sometimes called the ``2-folklore-Weisfeiler--Leman'' algorithm (see \cite{GNN1}).
\end{remark}

The first crucial fact we need about the 2-dimensional Weisfeiler--Leman algorithm is that it is efficiently computable. This was first observed by Cai, F\"urer and Immerman~\cite{CFI}.

\begin{fact}\label{fact:2WL-efficient}
For any graph $G$, the stable vertex-pair-colouring $(2\mc R)^*\phi_G$ can be computed in polynomial time. Moreover, this can be done ``canonically'' in the sense that different relabellings of the same graph always yield the same colouring $(2\mc R)^*\phi_G$.
\end{fact}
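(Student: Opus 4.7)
The plan is to implement the algorithm directly: starting from the colouring $\phi_{G}$, repeatedly apply the operator $2\mc R$ until the induced partition of $V(G)^{2}$ stabilises, and bound the work performed at each stage.

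The first step is to show that a single application of $2\mc R$ can be computed in polynomial time. Given a colouring $f:V(G)^{2}\to\Omega$ with $|\Omega|\le n^{2}$, for each ordered pair $(u,v)$ one iterates over $w\in V(G)$ and accumulates the pair $(f(u,w),f(w,v))$ into a multiset (say, represented as an array indexed by $\Omega^{2}$). This takes $O(n)$ time per pair, for a total of $O(n^{3})$ time to produce the raw refined colour for every pair. To keep colours from blowing up in size, after each refinement step one re-encodes the colours canonically: sort the distinct refined-colour tuples in lexicographic order and assign each tuple its rank in this sorted list as its new colour. This re-encoding does not change the partition of $V(G)^{2}$ induced by the colouring, and it can itself be performed in $O(n^{2}\log n)$ time using any standard comparison-based sort on tuples of length $O(|\Omega|^{2})$.

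Next, I would bound the number of iterations. Each application of $2\mc R$ produces a partition of $V(G)^{2}$ that refines the previous one, and since $|V(G)^{2}|=n^{2}$, any strictly descending chain of such refinements has length at most $n^{2}$. Therefore after at most $n^{2}$ iterations the partition stabilises, at which point one further application of $2\mc R$ confirms stabilisation. Combining this with the per-step bound gives a polynomial overall running time of $O(n^{5})$, which one can of course sharpen but which already suffices for the claim.

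Finally, canonicity: let $\pi:V(G)\to V(H)$ be an isomorphism. The initial colouring is canonical in the sense that $\phi_{H}(\pi u,\pi v)=\phi_{G}(u,v)$ for all $u,v$, directly from \cref{def:2WL}. An easy induction on the number of refinement steps then shows that the same invariance is preserved: if $f_{G}$ and $f_{H}$ are the colourings of $G$ and $H$ after $t$ steps and $f_{H}(\pi u,\pi v)=f_{G}(u,v)$, then the multisets $\{(f_{G}(u,w),f_{G}(w,v)):w\in V(G)\}$ and $\{(f_{H}(\pi u,\pi w),f_{H}(\pi w,\pi v)):w\in V(G)\}$ are identical, so the raw refined colours also agree under $\pi$. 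Since the canonical re-encoding (lexicographic rank) depends only on the multiset of raw colours, not on the vertex labels, the re-encoded colours also satisfy this invariance. Iterating until stabilisation yields $(2\mc R)^{*}\phi_{H}(\pi u,\pi v)=(2\mc R)^{*}\phi_{G}(u,v)$, which is the desired canonicity.

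The main subtlety, as in the analogous argument for colour refinement cited via \cite{BBG17}, is the per-step re-encoding: one must be careful that the rule used to assign new colour names depends only on the \emph{partition} produced by the refinement (not on the ``names'' of the old colours, which themselves depended on the vertex labelling). Sorting tuples lexicographically and taking ranks achieves this, and with this convention both the polynomial-time bound and the canonicity follow cleanly from the induction above.
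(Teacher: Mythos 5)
Your proposal is correct, and it in fact supplies more detail than the paper does: the paper states this as a Fact and simply cites Cai--F\"urer--Immerman~\cite{CFI} for the $O(n^3\log n)$ computation, adding only the remark (mirroring the discussion before \cref{thm:CR-discrete-canonical}) that an extra lexicographic re-labelling step at each round is needed to keep colour names small while making all choices canonically. That re-labelling-by-lexicographic-rank scheme is exactly what you spell out, and your induction maintaining $f_H(\pi u,\pi v)=f_G(u,v)$ step by step, together with the observation that the rank re-encoding depends only on the multiset of raw colours, is the right way to make canonicity precise; the refinement-chain bound of $n^2$ iterations is also the standard argument. Two minor quibbles, neither affecting correctness of the (merely polynomial) claim: your $O(n^2\log n)$ for the sorting step counts comparisons rather than elementary operations, since the raw colour tuples have length up to $O(|\Omega|^2)$ (storing each multiset as a sorted list of its $n$ colour-pairs is the cleaner accounting, and is how one approaches the $O(n^3\log n)$ bound of \cite{CFI}); and your closing parenthetical, saying the old colour names ``themselves depended on the vertex labelling,'' is at odds with your own induction, which shows precisely that under your re-encoding they do not --- that label-independence of the names is what licenses using them inside the lexicographic comparison.
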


Specifically, Cai, F\"urer and Immerman~\cite{CFI} proved that the stable vertex-pair-colouring $(2\mc R)^*\phi_G$ can be computed in time $O(n^3\log n)$, but we will need an additional $O(\log n)$ ``overhead'' in order to make sure that all choices are made canonically (as discussed before \cref{thm:CR-discrete-canonical}, to prevent the sizes of the labels becoming too large we need to re-label the colours at each step; if we do not wish to make ``arbitrary choices'' then for this re-labelling we need a lexicographic sorting step). In the setting of the colour refinement algorithm, Bonsma, Berkholz and Grohe showed how to remove this ``$O(\log n)$ overhead''; this may also be possible for the 2-dimensional Weisfeiler--Leman algorithm, but we were not able to find this in the literature.

The next crucial fact we need about the 2-dimensional Weisfeiler--Leman algorithm (really, the entire reason we need to consider this algorithm, instead of the simpler colour refinement algorithm) is that it can ``detect distances between vertices'', generalising \cref{fact:detect-paths}. Specifically, $\mc V^*\phi_G(u)=(2\mc R)^*\phi_G(u,u)$ tells us the number of vertices of each colour at every given distance from $u$. This can be easily proved by induction (see also \cite[Theorem~2.6.7]{dist} for a proof in a somewhat more general framework). We will need a version of this fact for the generalised disparity graphs defined in \cref{def:DL}, which can be proved in the same way.
\begin{fact}\label{fact:detect-distances}
Fix a vertex-colouring $c$ of a graph $G$, and let $\Omega$ be the set of colours used by $\mc V^*c$. Consider any $L\in \{0,1\}^{\Omega\times \Omega}$, any two vertices $u,v$ for which $\mc V^*c(u)=\mc V^*c(v)$, any colour $\omega\in \Omega$ and any $i\in \mb N\cup \{\infty\}$. Then, with respect to the graph $D_L(G,\mc V^*c)$, the number of $\omega$-coloured vertices at distance $i$ from $u$ is exactly the same as the number of $\omega$-coloured vertices at distance $i$ from $v$.
\end{fact}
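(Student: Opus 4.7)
The plan is to prove the stronger claim that the stable pair-colouring $f^* = (2\mc R)^*\phi_{G,c}$ has the property that $f^*(u,v)$ determines both the ordered pair $(c^*(u), c^*(v))$ (where $c^* = \mc V^*c = \Pi f^*$) and the distance $\on{dist}_{D_L(G,c^*)}(u,v) \in \mb N \cup \{\infty\}$. Once this is in hand, the Fact follows by the stability of $f^*$ applied to the diagonal pairs $(u,u)$ and $(v,v)$.

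First I would record the standard ``diagonal-readability'' property of the 2-dimensional Weisfeiler--Leman algorithm: if $f^*(u,v) = f^*(u',v')$, then $f^*(u,u) = f^*(u',u')$ and $f^*(v,v) = f^*(v',v')$. This holds because the initial colouring $\phi_{G,c}$ already distinguishes diagonal from off-diagonal pairs, and the value $f^*(u,u)$ can be recovered from $f^*(u,v)$ by reading off the contribution of the summand $w = u$ in the refinement data $(d_{\omega_1,\omega_2}(u,v))_{\omega_1,\omega_2}$. Since $f^*$ refines $\phi_{G,c}$, it also encodes whether $uv\in E(G)$. Combining these facts, $f^*(u,v)$ determines whether $\{u,v\}$ is an edge of $D_L(G, c^*)$: namely, this occurs iff $L_{c^*(u),c^*(v)} \neq \one[uv\in E(G)]$.

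Next I would prove by induction on $i \geq 0$ that $f^*(u,v) = f^*(u',v')$ implies $\on{dist}_{D_L(G,c^*)}(u,v) \leq i \Leftrightarrow \on{dist}_{D_L(G,c^*)}(u',v') \leq i$. The base cases $i \in \{0,1\}$ are handled by the previous paragraph (noting that $u=v$ iff $f^*(u,v)$ is a ``diagonal'' colour). For the inductive step, $\on{dist}_{D_L}(u,v) \leq i$ iff there exist $w \in V(G)$ and $j \in \{0,\dots,i\}$ with $\on{dist}_{D_L}(u,w) \leq j$ and $\on{dist}_{D_L}(w,v) \leq i-j$. By stability of $f^*$, the multiset $\{(f^*(u,w), f^*(w,v)):w\in V(G)\}$ is determined by $f^*(u,v)$; by the inductive hypothesis the distance on each leg is determined by the corresponding pair-colour; hence the truth of the inequality ``$\on{dist}_{D_L}(u,v) \leq i$'' depends only on $f^*(u,v)$.

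Finally, assume $c^*(u) = c^*(v)$, i.e.\ $f^*(u,u) = f^*(v,v)$. Stability applied to the diagonal pairs $(u,u)$ and $(v,v)$, projected onto the first coordinate, gives that the multisets $\{f^*(u,w) : w \in V(G)\}$ and $\{f^*(v,w) : w \in V(G)\}$ coincide. Since every pair-colour $\alpha = f^*(u,w)$ determines both $c^*(w)$ (by diagonal-readability) and $\on{dist}_{D_L(G,c^*)}(u,w)$ (by the distance induction), the multiset $\{(c^*(w), \on{dist}_{D_L}(u,w)) : w \in V(G)\}$ matches its counterpart for $v$; restricting to the slice $(\omega,i)$ yields the desired equality of counts. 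The main technical obstacle is the distance induction: one must ensure that although $D_L(G,c^*)$ is defined using the \emph{final} colouring $c^*$, the $D_L$-edges incident to $u$ are already deducible from $\{f^*(u,w): w \in V(G)\}$, which is precisely what diagonal-readability guarantees.
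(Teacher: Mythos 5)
Your argument is correct, and it is essentially the proof the paper has in mind: the paper states this Fact without proof, saying it ``can be easily proved by induction'' and pointing to a more general treatment in the literature, and your write-up supplies exactly that induction (diagonal-readability of the stable pair-colouring $f^*$, an induction showing that $f^*(u,v)$ determines $\on{dist}_{D_L(G,\mc V^*c)}(u,v)$, and then projection of the diagonal refinement data to get the counts by colour and distance). One small repair: in the inductive step you should decompose so that both legs fall under the inductive hypothesis (e.g.\ for $u\ne v$, $\on{dist}(u,v)\le i$ iff there is $w$ with $\on{dist}(u,w)\le 1$ and $\on{dist}(w,v)\le i-1$), since allowing $j=0$ or $j=i$ leaves one leg as a distance-$\le i$ statement that the hypothesis does not yet determine.
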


A special case of \cref{fact:detect-distances} (where $L$ is the all-0 matrix, and $i=1$) is that $\mc V^*\phi_{G,c}(u)$ is always an equitable partition. Since $\mc V^*\phi_{G,c}(u)$ is a refinement of $c$, it is a refinement of the stable colouring $\mc R^*c$ produced by the colour refinement algorithm, by \cref{fact:equitable}. (Informally, the 2-dimensional Weisfeiler--Leman algorithm is ``at least as powerful'' as the colour refinement algorithm).
\begin{fact}\label{fact:2WL-refines-CR}
    For any vertex-colouring $c$ of a graph $G$, the vertex-colouring $\mc V^*\phi_{G,c}$ is a refinement of $\mc R^* c$.
\end{fact}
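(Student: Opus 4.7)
The plan is to apply \cref{fact:equitable}, which characterises $\mc R^*c$ as the coarsest equitable partition refining $c$. So it suffices to check that the vertex-colouring $\mc V^*\phi_{G,c}$ satisfies two properties: (a) it refines $c$, and (b) it is an equitable partition of $G$.

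For (a), observe that by definition of $\phi_{G,c}$ we have $\phi_{G,c}(v,v)=c(v)$ for every $v\in V(G)$. The operator $2\mc R$ only subdivides pair-colour classes: indeed, in \cref{def:2WL} the new colour of $(u,v)$ explicitly contains the old colour $f(u,v)$ as its first coordinate. So $(2\mc R)^*\phi_{G,c}$ is a refinement of $\phi_{G,c}$ as a colouring of $V(G)^2$. Restricting to diagonal pairs, whenever $\mc V^*\phi_{G,c}(u)=\mc V^*\phi_{G,c}(v)$, i.e.\ $(2\mc R)^*\phi_{G,c}(u,u)=(2\mc R)^*\phi_{G,c}(v,v)$, we obtain $\phi_{G,c}(u,u)=\phi_{G,c}(v,v)$, and hence $c(u)=c(v)$.

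For (b), I would invoke \cref{fact:detect-distances} in the special case where $L$ is the all-zeros matrix (so that $D_L(G,\mc V^*\phi_{G,c})=G$) and $i=1$. That case says: if $\mc V^*\phi_{G,c}(u)=\mc V^*\phi_{G,c}(v)$, then for every colour $\omega$, vertices $u$ and $v$ have exactly the same number of $\omega$-coloured neighbours in $G$. This is precisely the biregularity/regularity condition of \cref{def:equitable}, i.e.\ $\mc V^*\phi_{G,c}$ is an equitable vertex-partition of $G$. Combining (a) and (b) with the coarseness statement in \cref{fact:equitable} yields that $\mc V^*\phi_{G,c}$ refines $\mc R^*c$, as claimed.

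I do not expect any real obstacle; the only subtlety is a notational one. \cref{fact:detect-distances} is written in terms of $\mc V^*c$ for a vertex-colouring $c$, but since the operator $\mc V^*=\Pi(2\mc R)^*$ is defined in \cref{def:2WL} only on pair-colourings, this must be interpreted as shorthand for $\mc V^*\phi_{G,c}$; once this identification is made, the argument is a direct application of the facts already stated.
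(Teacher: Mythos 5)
Your proposal is correct and matches the paper's own justification: the paper likewise derives equitability of $\mc V^*\phi_{G,c}$ from \cref{fact:detect-distances} with $L$ the all-zero matrix and $i=1$, notes that this colouring refines $c$, and concludes via the coarsest-equitable-partition characterisation in \cref{fact:equitable}. Your explicit check of point (a) (that the diagonal colours refine $c$ because $2\mc R$ only subdivides classes) and the remark on the $\mc V^*c$ versus $\mc V^*\phi_{G,c}$ notation just fill in details the paper leaves implicit.
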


Finally, we need an analogue of \cref{fact:consistent}, on the ``consistency'' of the 2-dimensional Weisfeiler--Leman algorithm: if we start the 2-dimensional Weisfeiler--Leman algorithm from any coarsening of $(2\mc R)^* f$, we will end up with a coarsening of $(2\mc R)^* f$. 
This is a direct consequence of, e.g., the discussion in \cite[Section~2.6.1]{dist} (see also \cite[Proposition~2.1]{FKV21}), which characterises the vertex-pair-partition defined by $(2\mc R)^* f$ as the unique coarsest coherent configuration refining $f$ (here ``coherent configurations'' are the 2-dimensional analogues of the equitable partitions defined in \cref{def:equitable}).
\begin{fact}\label{fact:2WL-consistent}
Fix any graph $G$ and any vertex-pair colouring $f:V(G)^2\to \Omega$. If $g:V(G)^2\to \Omega$ is a coarsening of $(2\mc R)^*f$, then $(2\mc R)^*g$ is a coarsening of $(2\mc R)^*f$. 
\end{fact}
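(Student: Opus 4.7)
The plan is to deduce this from an elementary monotonicity property of the operator $2\mc R$, rather than invoking the general theory of coherent configurations. The whole argument mirrors the ``consistency'' argument for colour refinement (\cref{fact:consistent}), with the extra bookkeeping needed for pair-colourings.

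First I would establish the following monotonicity lemma: \emph{if $c_1,c_2:V(G)^2\to\Omega$ are two pair-colourings and $c_1$ is a coarsening of $c_2$, then $(2\mc R)c_1$ is a coarsening of $(2\mc R)c_2$.} To prove this, fix a surjection $\pi:\Omega\to\Omega$ witnessing the coarsening, so that $c_1=\pi\circ c_2$. Directly from \cref{def:2WL}, the neighbour counts for $c_1$ decompose as
\[
d^{c_1}_{\omega_1',\omega_2'}(u,v)\;=\;\sum_{\pi(\omega_1)=\omega_1',\;\pi(\omega_2)=\omega_2'} d^{c_2}_{\omega_1,\omega_2}(u,v).
\]
Hence any two pairs $(u,v),(u',v')$ with $(2\mc R)c_2(u,v)=(2\mc R)c_2(u',v')$ automatically have matching values of $c_1$ and matching values of every $d^{c_1}_{\omega_1',\omega_2'}$, which is exactly the statement that $(2\mc R)c_1(u,v)=(2\mc R)c_1(u',v')$.

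Next I would feed this lemma into an induction on $t\ge 0$ to show that $(2\mc R)^t g$ is a coarsening of $(2\mc R)^*f$ for all $t$. The base case $t=0$ is the hypothesis. For the inductive step, the monotonicity lemma applied to the coarsening $(2\mc R)^t g$ of $(2\mc R)^*f$ gives that $(2\mc R)^{t+1}g$ is a coarsening of $(2\mc R)\bigl((2\mc R)^*f\bigr)$; but $(2\mc R)^*f$ is by definition stable under $2\mc R$, so $(2\mc R)\bigl((2\mc R)^*f\bigr)=(2\mc R)^*f$ as vertex-pair partitions, closing the induction. Taking $t$ large enough that $(2\mc R)^t g=(2\mc R)^*g$ gives the desired conclusion.

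The only delicate point is the first step, and it is not really an obstacle once one carefully unwinds \cref{def:2WL}: one has to be comfortable with the fact that ``coarsening'' is a statement about induced vertex-pair partitions (the choice of label set $\Omega$ is immaterial), so that the surjection $\pi$ truly exists and the sum decomposition above makes sense. Everything afterwards is formal.
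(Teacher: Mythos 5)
Your proof is correct, but it takes a genuinely different route from the paper. The paper does not argue from first principles at all: it simply cites the literature characterisation of the stable pair-colouring $(2\mc R)^*f$ as the unique coarsest \emph{coherent configuration} refining $f$ (the two-dimensional analogue of \cref{fact:equitable}), from which the consistency statement is immediate. You instead prove an elementary monotonicity property of the operator $2\mc R$ — if $c_1$ is a coarsening of $c_2$ then $(2\mc R)c_1$ is a coarsening of $(2\mc R)c_2$, via the observation that the coarse counts $d^{c_1}_{\omega_1',\omega_2'}(u,v)$ are sums of the fine counts over the fibres of the relabelling map — and then close the argument by induction on $t$, using stability of $(2\mc R)^*f$ at the inductive step and taking $t$ large enough that $(2\mc R)^tg=(2\mc R)^*g$. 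All steps check out: the fibre-sum identity is exactly what \cref{def:2WL} gives, the point that coarsening is a statement about induced partitions (so the witnessing map exists and the label sets are immaterial) is correctly flagged, and the induction is sound. What your approach buys is a short self-contained proof that avoids introducing coherent configurations and parallels the standard argument behind \cref{fact:consistent}; what the paper's citation buys is brevity and access to the stronger uniqueness statement (coarsest coherent configuration), which is more than is needed for this fact.
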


\subsection{Probabilistic estimates}\label{subsec:prob}
At several points in the paper we will need some general-purpose probabilistic estimates. First, we need a Chernoff bound for sums of independent random variables; the following can be deduced from \cite[Theorem~2.1]{JLR00}.
\begin{lemma}[Chernoff bound]\label{lem:chernoff}
Let $X$ be a sum of independent random variables, each of which take values in $\{0,1\}$.
Then for any $\delta>0$ we have
\[\Pr[X\le (1-\delta)\mb{E}X]\le\exp(-\delta^2\mb{E}X/2),\qquad\Pr[X\ge (1+\delta)\mb{E}X]\le\exp(-\delta^2\mb{E}X/(2+\delta)).\]
\end{lemma}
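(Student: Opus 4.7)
The lemma is a classical Chernoff bound; the plan is the standard exponential-moment argument combined with two elementary analytic inequalities to produce the stated clean forms. Write $X=\sum_{i=1}^N X_i$ with $X_i\in\{0,1\}$ independent and $p_i=\mb E X_i$, and let $\mu=\mb E X=\sum_i p_i$. For any $t\in\mb R$, apply Markov's inequality to $e^{tX}$: for the upper tail with $t>0$,
\[
\Pr[X\ge(1+\delta)\mu]=\Pr[e^{tX}\ge e^{t(1+\delta)\mu}]\le e^{-t(1+\delta)\mu}\,\mb E e^{tX},
\]
and analogously, for the lower tail, take $t<0$.

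Next, bound the moment generating function. By independence, $\mb E e^{tX}=\prod_i\mb E e^{tX_i}=\prod_i(1-p_i+p_ie^t)$. Using the elementary inequality $1+x\le e^x$, each factor is at most $\exp(p_i(e^t-1))$, so $\mb E e^{tX}\le\exp(\mu(e^t-1))$. Substituting gives
\[
\Pr[X\ge(1+\delta)\mu]\le\exp\bigl(\mu(e^t-1)-t(1+\delta)\mu\bigr),\qquad
\Pr[X\le(1-\delta)\mu]\le\exp\bigl(\mu(e^t-1)-t(1-\delta)\mu\bigr).
\]

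Now optimise. For the upper tail, the right-hand side is minimised at $t=\log(1+\delta)$, giving
\[
\Pr[X\ge(1+\delta)\mu]\le\exp\bigl(-\mu\bigl((1+\delta)\log(1+\delta)-\delta\bigr)\bigr),
\]
and similarly for the lower tail one takes $t=\log(1-\delta)$ (valid for $\delta\in(0,1]$; the bound is trivial for $\delta>1$), yielding
\[
\Pr[X\le(1-\delta)\mu]\le\exp\bigl(-\mu\bigl((1-\delta)\log(1-\delta)+\delta\bigr)\bigr).
\]

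The final step — and the only place where any thought is required — is to convert these tight but awkward bounds into the stated forms. For the lower tail, one verifies by Taylor expansion (or by checking that the second derivative dominates) the inequality $(1-\delta)\log(1-\delta)+\delta\ge\delta^2/2$ for $\delta\in[0,1]$. For the upper tail, one similarly checks $(1+\delta)\log(1+\delta)-\delta\ge\delta^2/(2+\delta)$ for all $\delta\ge 0$; the cleanest route is to show that the function $h(\delta)=(1+\delta)\log(1+\delta)-\delta-\delta^2/(2+\delta)$ satisfies $h(0)=0$ and $h'(\delta)\ge 0$. Plugging these two inequalities into the exponent yields the two stated bounds. (In fact, rather than redo these calculations, one can simply invoke \cite[Theorem~2.1]{JLR00}, which records exactly this computation.)
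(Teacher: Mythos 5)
Your proof is correct. Note, though, that the paper does not prove this lemma at all: it simply records it as a consequence of \cite[Theorem~2.1]{JLR00}, whose bounds $\Pr[X\ge \mb EX+t]\le\exp(-t^2/(2(\mb EX+t/3)))$ and $\Pr[X\le \mb EX-t]\le\exp(-t^2/(2\mb EX))$ give the stated inequalities immediately upon setting $t=\delta\,\mb EX$ (the upper-tail exponent $\delta^2\mb EX/(2+2\delta/3)$ obtained that way is even slightly stronger than the stated $\delta^2\mb EX/(2+\delta)$). So where the paper just cites, you rederive the bound from first principles via the standard exponential-moment argument, which is a perfectly valid and self-contained alternative. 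Your two calculus steps check out: $(1-\delta)\log(1-\delta)+\delta\ge\delta^2/2$ on $[0,1]$ follows since the difference vanishes at $0$ and has derivative $-\log(1-\delta)-\delta\ge 0$, and for the upper tail one has $h'(\delta)=\log(1+\delta)-\bigl(1-4/(2+\delta)^2\bigr)\ge 0$, which reduces to $(2+\delta)^3\ge 8(1+\delta)$, clearly true for $\delta\ge 0$. The only cosmetic point is the lower-tail edge cases: at $\delta=1$ the optimal $t$ is $-\infty$, but the limiting bound $e^{-\mu}\le e^{-\mu/2}$ (or the convention $0\log 0=0$ in your inequality) covers it, and for $\delta>1$ the probability is $0$ unless $\mb EX=0$, in which case the bound is trivially $1$; you flag this correctly, so there is no gap.
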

We also need a Littlewood--Offord type \emph{anticoncentration estimate}, showing that certain sums of independent random variables are unlikely to take any particular value. Specifically, we need a generalisation of the classical  Erd\H os--Littlewood--Offord theorem to the ``$p$-biased'' setting, essentially due to Ju\v{s}kevi\v{c}ius and Kurauskas~\cite{JK21} and Singhal~\cite{SINGHAL}.

\begin{definition}
\label{def:modalprobability}
    Let $M(n,p)$ be the modal probability of a $\on{Binomial}(n,p)$ random variable, i.e., 
    \[M(n,p)=\binom{n}{x}p^{x}(1-p)^{n-x},\]
    for $x=\lfloor (n+1)p \rfloor$. (It is well-known---see for example \cite{KB80}---that $\lfloor (n+1)p \rfloor$ is always a mode for $\on{Binomial}(n,p)$).
\end{definition}

\begin{theorem}
\label{thm:LO}
Fix a positive integer $n$ and a probability $p \in (0,1/2)$. Let $a_{1},\ldots,a_{n}\in \mb R\setminus \{0\}$ be non-zero real numbers, and let $\xi_{1},\ldots,\xi_{n}\in \{0,1\}^n$ be independent $\on{Bernoulli}(p)$ random variables (meaning $\Pr[\xi_i=1]=p$ and $\Pr[\xi_i=0]=1-p$). Then
\[\sup_{x\in \mb R} \Pr[a_1\xi_1+\dots+a_n\xi_n=x]\le M(\lfloor n/2\rfloor,p)=O\left(\frac{1}{\sqrt{np}}\right).\]
\end{theorem}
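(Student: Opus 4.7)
The bound on the point-probabilities is essentially the main theorem of Juškevičius--Kurauskas \cite{JK21}, with refinements by Singhal \cite{SINGHAL}; my plan is to invoke their inequality after a short combinatorial reduction, and then deduce the asymptotic $O(1/\sqrt{np})$ by Stirling. Let me describe the approach.

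First, reduce to the case $n = 2k$ with $k = \lfloor n/2 \rfloor$ by marginalising over the last coordinate if $n$ is odd (which can only increase the concentration function). Then pair the indices into $k$ disjoint pairs $\{2i-1, 2i\}$, chosen greedily so as to maximise the number of pairs with $a_{2i-1} \neq a_{2i}$. The only obstruction is the degenerate case where some coefficient value appears more than $k$ times; in that case the problem reduces to bounding the concentration of a sum of the form $a \cdot \on{Binomial}(m,p)$ plus a few extra terms, which follows from the monotonicity $M(m,p) \le M(k,p)$ in the first argument of $M$.

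Next, condition on the pair-sums $Z_i := \xi_{2i-1} + \xi_{2i} \in \{0,1,2\}$. On the events $\{Z_i \in \{0,2\}\}$ the pair contributes a deterministic value, while on $\{Z_i = 1\}$ (which happens with probability $2p(1-p)$ independently across $i$) the conditional contribution equals $\tfrac{a_{2i-1} + a_{2i}}{2} + \epsilon_i \tfrac{a_{2i-1} - a_{2i}}{2}$ with $\epsilon_i$ a uniform sign. Letting $A = \{i : Z_i = 1\}$ and $b_i = \tfrac{a_{2i-1} - a_{2i}}{2}$ (non-zero by the choice of pairing), the conditional law of $\sum a_i \xi_i$ given $Z$ is a deterministic shift plus the Rademacher sum $\sum_{i \in A} \epsilon_i b_i$, whose point-probabilities are bounded by classical Erd\H os--Littlewood--Offord.

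The main obstacle, I expect, is passing from this conditional bound back to an unconditional bound on $\sup_x \Pr[\sum a_i \xi_i = x]$: one cannot simply swap the supremum with the expectation over $Z$, since this gives a strictly weaker estimate (in simple $n=2$ examples it yields $1 - p(1-p)$ in place of the target $M(1,p) = 1-p$). The subtle point, which is the content of \cite{JK21, SINGHAL}, is that for each fixed target $x$ only a restricted set of pair-sum patterns $Z$ can actually contribute to $\{\sum a_i \xi_i = x\}$; aggregating these contributions via an explicit probability-preserving correspondence with $\on{Binomial}(k,p)$ at its mode yields exactly the bound $M(k,p)$. The final asymptotic $M(\lfloor n/2 \rfloor, p) = O(1/\sqrt{np})$ is then a direct Stirling calculation applied to the binomial coefficient defining $M(k,p)$, using $p \le 1/2$ so that the mode $\lfloor (k+1)p \rfloor$ satisfies $\lfloor (k+1)p \rfloor (k - \lfloor (k+1)p \rfloor) = \Theta(kp)$.
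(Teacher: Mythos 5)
There is a genuine gap at the decisive step. Your own machinery --- pair the indices, condition on the pair-sums $Z_i=\xi_{2i-1}+\xi_{2i}$, and bound the conditional law on $A=\{i:Z_i=1\}$ by the classical Erd\H os--Littlewood--Offord bound for a Rademacher sum --- can only be aggregated by averaging the conditional bound over $Z$, and as your own $n=2$ computation shows, this yields $1-p(1-p)$ rather than $M(1,p)=1-p$. So this decomposition provably cannot reach the claimed sharp bound $M(\lfloor n/2\rfloor,p)$; at best it recovers the weaker Costello--Vu estimate $O(1/\sqrt{np})$ (after a Chernoff bound on $|A|$), which is not what the theorem asserts and is not enough for the paper's near-critical application, where the exact value $M(\lfloor n/4\rfloor,p)=(1-p)^{\lfloor n/4\rfloor}<1$ is used. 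To bridge this you attribute to \cite{JK21,SINGHAL} a statement about ``restricted pair-sum patterns'' and ``an explicit probability-preserving correspondence with $\on{Binomial}(k,p)$ at its mode'', but that is not what those papers prove, and you neither formulate nor prove such a lemma; as written, the sharp inequality is asserted rather than derived.

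For comparison, the paper invokes the cited results through their actual main statement: for even $n$, the concentration $\sup_x\Pr[a_1\xi_1+\dots+a_n\xi_n=x]$ is maximised when every $a_i\in\{1,-1\}$. Given this, at least $\lfloor n/2\rfloor$ coefficients share a sign (without loss of generality equal to $1$), and conditioning on the outcomes of the remaining $\xi_i$ reduces the sum to a shifted $\on{Binomial}(\lfloor n/2\rfloor,p)$, whose maximal point probability is exactly $M(\lfloor n/2\rfloor,p)$. Your reduction to even $n$ matches the paper's, and your degenerate-case remark (condition away the minority variables and use monotonicity of $M(m,p)$ in $m$) is sound, but it only handles the situation where one coefficient value occurs at least $\lfloor n/2\rfloor$ times; the general case is precisely where the extremal theorem must be quoted, and your proposal replaces it with a mechanism that does not close. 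A minor further point: your Stirling step assumes the mode $\lfloor(k+1)p\rfloor$ has order $kp$, which fails when $kp<1$ (the mode is $0$ and $M(k,p)=(1-p)^k$); that regime needs to be treated separately, although there $1/\sqrt{np}=\Theta(1)$ and the asymptotic claim is trivial.
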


The estimate $O(1/\sqrt{np})$ was first proved by Costello and Vu~\cite[Lemma~8.2]{CV08}. The sharper bound $M(\lfloor n/2\rfloor,p)$ can be deduced from a recent result due to Ju\v{s}kevi\v{c}ius and Kurauskas~\cite{JK21} and independently Singhal~\cite{SINGHAL}, as follows.

\begin{proof}[Proof of \cref{thm:LO}]
    First note that we may assume $n$ is even (otherwise, condition on any outcome of $\xi_{n-1}$ to reduce to the same question about $a_1\xi_1+\dots+a_{n-1}\xi_{n-1}$, noting in this case that $\lfloor (n-1)/2\rfloor=\lfloor n/2\rfloor$).

    The main result of \cite{SINGHAL,JK21} is that (given our assumption that $n$ is even), the quantity $\sup_{x\in \mb R} \Pr[a_1\xi_1+\dots+a_n\xi_n=x]$ is maximised when all of the $a_i$ are equal to $1$ or $-1$. By symmetry, we can assume $a_1,\dots,a_{\lfloor n/2\rfloor}=1$. Conditioning on arbitrary outcomes of $a_{\lfloor n/2\rfloor+1},\dots,a_n$, we see that the probability in question is bounded by 
    \[
        \sup_{x\in \mb R}\Pr\big[a_1\xi_1+\dots+a_{\lfloor n/2\rfloor}\xi_{\floor{n/2}}=x\big]=M(\lfloor n/2\rfloor,p).
    \]
    (here we used that $a_1,\dots,a_{\lfloor n/2\rfloor}=1$ to see that $a_1\xi_1+\dots+a_{\lfloor n/2\rfloor}\xi_{\floor{n/2}}\sim \on{Binomial}(\lfloor n/2\rfloor ,p)$).
\end{proof}

\section{Encoding colour refinement via views}\label{sec:universal-covers}
There are many (quite different) lenses from which to view colour refinement. 
In particular, an important way to understand the performance of the colour refinement algorithm (around a vertex $v$ in a graph $G$) is via a tree $\mc T_G(v)$ called the \emph{view} (first defined by Yamashita and Kameda~\cite{views}).
\begin{definition}\label{def:universal-cover}
Let $G$ be a connected graph and $v \in V(G)$. Let $\mathcal{T}_G(v)$ be the (infinite, unless $v$ is an isolated vertex) rooted tree defined as follows.
\begin{compactitem}
    \item The vertex set of $\mathcal{T}_G(v)$ is the set of all \emph{walks} $W = (x_{0},\ldots,x_{n})$ starting at $x_0=v$ (i.e., we require that $x_{i}x_{i+1}$ is an edge of $G$ for all $i \in \{0,\ldots,n-1\}$).
    \item The root vertex of $\mathcal{T}_G(v)$ is the trivial walk $W=(v)$.
    \item We say that a walk $W$ is a child of another walk $W'$ (and put an edge between $W$ and $W'$) if $W'$ is a one-step extension of $W$ (i.e., if we can write $W=(x_{0},\ldots,x_{n})$ and $W' = (x_{0},\ldots,x_{n},x_{n+1})$).
\end{compactitem}
Note that the set of vertices at depth $\ell$ in $\mathcal{T}_G(v)$ correspond precisely to the set of walks of length $\ell$ starting from $v$ in $G$. For $i \geq 0$, we write $\mathcal{T}^i_G(v)$ for the subtree of $\mathcal{T}_G(v)$ restricted to walks of length at most $i$. See \cref{fig:L-example} for an example.

We note that it will sometimes be convenient to view each vertex $u=(x_{0},\ldots,x_{n})$ of $\mathcal{T}_G(v)$ as a ``copy'' of the vertex $x_n$.
\end{definition}

The following lemma provides a connection between views and colour refinement. It was first observed by Angluin~\cite{Ang80} in a slightly different context (the particular statement here follows directly from \cite[Lemma~2.6]{Universalcover}).

\begin{lemma}
\label{Universialtreecoverstep}
Let $G$ be a connected graph, and $u,v$ two vertices. Then we have the rooted tree isomorphism $\mathcal{T}^i_G(u) \cong \mathcal{T}^i_G(v)$ if and only if $\mc R^i \sigma(u)=\mc R^i \sigma(v)$ (i.e., if $u$ and $v$ have the same colour after $i$ steps of colour refinement starting from the trivial colouring $\sigma$). In particular, $\mathcal{T}_G(u) \cong \mathcal{T}_G(v)$ if and only if $u$ and $v$ have the same colour in the stable colouring $\mc R^*\sigma$.
\end{lemma}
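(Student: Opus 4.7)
The plan is to prove the finite-depth equivalence $\mathcal{T}^i_G(u)\cong\mathcal{T}^i_G(v) \iff \mc R^i\sigma(u)=\mc R^i\sigma(v)$ by induction on $i$, and then deduce the infinite-depth ``in particular'' statement from it.

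The base case $i=0$ is immediate: $\mathcal{T}^0_G(u)$ is a single root vertex for every $u$, and $\mc R^0\sigma=\sigma$ assigns every vertex the same colour, so both sides of the equivalence hold trivially. For the inductive step, the key structural observation is that $\mathcal{T}^{i+1}_G(u)$ consists of the root together with, for each neighbour $w$ of $u$, a subtree hanging off the child $(u,w)$; and the map $(u,w,x_2,\ldots,x_k)\mapsto(w,x_2,\ldots,x_k)$ is a rooted-tree isomorphism between this subtree and $\mathcal{T}^i_G(w)$. Therefore
\[
\mathcal{T}^{i+1}_G(u)\cong\mathcal{T}^{i+1}_G(v)
\quad\Longleftrightarrow\quad
\text{there is a bijection }\pi\colon N(u)\to N(v)\text{ with }\mathcal{T}^i_G(w)\cong\mathcal{T}^i_G(\pi(w)) \text{ for all }w,
\]
which (by the inductive hypothesis) is in turn equivalent to the equality of multisets
\[
\{\!\{\mc R^i\sigma(w):w\in N(u)\}\!\} = \{\!\{\mc R^i\sigma(w):w\in N(v)\}\!\}.
\]
By the definition of $\mc R$, this multiset equality, combined with $\mc R^i\sigma(u)=\mc R^i\sigma(v)$, is precisely $\mc R^{i+1}\sigma(u)=\mc R^{i+1}\sigma(v)$. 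For the forward direction one obtains $\mc R^i\sigma(u)=\mc R^i\sigma(v)$ for free by truncating $\mathcal{T}^{i+1}$ to depth $i$ and applying the inductive hypothesis; for the backward direction, $\mc R^{i+1}\sigma(u)=\mc R^{i+1}\sigma(v)$ encodes both the multiset equality and (by construction, since $\mc R^{i+1}\sigma$ records $\mc R^i\sigma$ as its first coordinate) the equality $\mc R^i\sigma(u)=\mc R^i\sigma(v)$, so both inputs of the equivalence above are supplied.

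For the ``in particular'' statement, recall from \cref{def:refinement} that the refinement sequence stabilises at some step $t\le n$, so $\mc R^*\sigma=\mc R^t\sigma$. The forward direction (infinite tree isomorphism implies equal stable colours) is immediate by truncating to depth $t$ and applying the case $i=t$ of the finite-depth result. For the converse, from $\mc R^*\sigma(u)=\mc R^*\sigma(v)$ the finite-depth result gives $\mathcal{T}^i_G(u)\cong\mathcal{T}^i_G(v)$ for every $i\ge 0$; to promote these to a single isomorphism of the full infinite trees, one can either invoke a standard K\H{o}nig-style compactness argument (the trees are locally finite, and the set of isomorphisms $\mathcal{T}^i_G(u)\to\mathcal{T}^i_G(v)$ forms a finitely-branching tree in $i$, so has an infinite branch), or argue more explicitly that beyond depth $t$ the isomorphism type of $\mathcal{T}^j_G(u)$ at each vertex is determined functorially from $\mathcal{T}^t_G(u)$ via the stable colouring, so any isomorphism at depth $t$ extends uniquely to all higher depths.

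The only real obstacle is the step just described: transferring the family of finite-depth isomorphisms to a single infinite-depth one. This is a standard manoeuvre but worth flagging; the cleanest way in our finite-graph setting is to exploit the stabilisation of $\mc R^*\sigma$, which collapses what might look like an infinite problem into a finite combinatorial one. Everything else is routine bookkeeping with multisets of children and the recursive structure of $\mathcal{T}_G$.
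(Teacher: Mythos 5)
Your proof is correct, but it is worth knowing that the paper does not actually prove \cref{Universialtreecoverstep} at all: it is quoted from the literature, attributed to Angluin~\cite{Ang80} and obtained formally from \cite[Lemma~2.6]{Universalcover}, with \cref{rem:view-vs-UC} bridging the gap between universal covers (non-backtracking walks) and views (all walks). So your self-contained argument is a genuinely different route, and it is essentially the standard one: the depth-$(i+1)$ view decomposes as a root with child subtrees $\mathcal{T}^i_G(w)$ for $w\in N(u)$, so induction on $i$ matches tree isomorphism at depth $i+1$ with equality of the multiset of neighbour colours under $\mc R^i\sigma$, which together with the first-coordinate equality $\mc R^i\sigma(u)=\mc R^i\sigma(v)$ (recovered in the forward direction by truncation, and read off directly in the backward direction) is exactly $\mc R^{i+1}\sigma(u)=\mc R^{i+1}\sigma(v)$; the passage from all finite depths to the infinite trees via a K\H{o}nig-type compactness argument over the (finite, nonempty) sets of depth-$i$ isomorphisms is valid since each $\mathcal{T}^i_G(u)$ is a finite tree. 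Two small points you leave implicit but which are routine: that $\mc R^*\sigma(u)=\mc R^*\sigma(v)$ yields $\mc R^i\sigma(u)=\mc R^i\sigma(v)$ for \emph{all} $i$ (for $i\le t$ because each colour records its predecessor as first coordinate, for $i>t$ because once two consecutive refinements give the same partition all later ones do, cf.\ \cref{def:refinement}); and that in this finite setting one can also dispense with compactness entirely by using the equitability of the stable partition (\cref{fact:equitable}) to build the isomorphism of the infinite views level by level, matching children colour class by colour class. What your approach buys is a self-contained, elementary proof; what the paper's citation buys is brevity and an explicit pointer to the backtracking/non-backtracking subtlety, which your argument sidesteps cleanly by working with views directly.
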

\begin{remark}\label{rem:view-vs-UC}
    \cite[Lemma~2.6]{Universalcover} is stated for \emph{universal covers}, which have the same definition as views (as in \cref{def:universal-cover}) except that one only considers \emph{non-backtracking} walks $W = (x_{0},\ldots,x_{n})$ with $x_{i+1}\ne x_{i-1}$ for all $i\in \{1,\dots,n-1\}$. As discussed in \cite{Universalcover}, it is easy to see that views and universal covers encode the same information.
\end{remark}
For our purposes, it will not be very convenient to work with views directly. Instead, we consider sequences of multisets $\mc L_G^i(u,v)$ which describe the difference between two views $\mc T_G^i(u)$ and $\mc T_G^i(v)$ at depth $i$.

\begin{definition}\label{def:multisets}
Let $G$ be a graph. 
For distinct $u,v \in V(G)$ and $i \geq 0$, we recursively define multisets $\mc L^i_G(u,v)$, as follows. 
\begin{compactitem}
    \item For all $u,v$, let $\mc L_G^{0}(u,v): = \{u\}$.
    \item For $u,v,w \in V(G)$ and $i \geq 1$,
    let $\ell_G^i(w,u,v)$ be the number of neighbours of $w$ in $\mc L_G^{i-1}(u,v)$.
    \item For $u,v\in V(G)$ and $i \geq 1$, 
    we define $\mc L_G^{i}(u,v)$ as follows. For each $w\in V(G)$,
    if $\ell_G^{i}(w,u,v) > \ell_G^{i}(w,v,u)$, then put $\ell_G^{i}(w,u,v) -\ell_G^{i}(w,v,u)$ copies of $w$ in $\mc L_G^{i}(u,v)$.
\end{compactitem}
\begin{figure}
\begin{center}
\begin{tikzpicture}
    \node[dummywhite] at (0,0) (dummy1) {$G$};
    \node[blackvertexv2] at (90:1.5cm) (u0) [label = left:$u_{0}$]{};
    \node[blackvertexv2] at (90+90:1.5cm) (u1) [label =above:$u_{1}$] {};
    \node[blackvertexv2] at (90+2*90:1.5cm) [label = above:$u_{2}$] (u2) {};
    \node[blackvertexv2] at (90+3*90:1.5cm) [label = above:$u_{3}$] (u3) {};
    \draw[thick,black] (u0)--(u1)--(u2)--(u3)--(u0);
    \node[blackvertexv2] at (0,2.5) (u4) [label = left:$u_{4}$] {};
    \node[blackvertexv2] at (-1,3.5) (u5) [label = left:$u_{5}$]{};
    \node[blackvertexv2] at (1,3.5) (u6) [label = left:$u_{6}$] {};
    \draw[thick,black] (u0)--(u4)--(u5);
    \draw[thick,black] (u4)--(u6);
    \begin{scope}[xshift = 3.5cm,yshift=-0.5cm]
    \node[blackvertex] at (0,0) (v0) [label = above:$u_{2}$] {};
    \node[blackvertex] at (1,1) (v1) [label = above:$u_{1}$] {};
    \node[blackvertex] at (1,-1.2) (v2) [label = above:$u_{3}$] {};
    \node[blackvertex] at (2,1.5) (v3) [label = above:$u_{2}$] {};
    \node[blackvertex] at (2,0.5) (v4) [label = above:$u_{0}$] {};
    \node[blackvertex] at (3,2.2) (v5) [label = right:$u_{1}$] {};
    \node[blackvertex] at (3,1.75) (v6) [label = right:$u_{3}$] {};
    \node[blackvertex] at (3,1) (v7) [label = right:$u_{1}$] {};
    \node[blackvertex] at (3,0.5) (v8) [label = right:$u_{3}$] {};
    \node[blackvertex] at (3,0) (v9) [label = right:$u_{4}$] {};
    \node[blackvertex] at (2,-.8) (v10) [label = above:$u_{2}$] {};
    \node[blackvertex] at (2,-1.7) (v11) [label = above:$u_{0}$] {};
    \node[blackvertex] at (3,-.5) (v12) [label = right:$u_{1}$] {};
    \node[blackvertex] at (3,-.9) (v14) [label =right:$u_{3}$] {};
    \node[blackvertex] at (3,-1.6) (v13) [label = right:$u_{1}$] {};
    \node[blackvertex] at (3,-2) (v15) [label = right:$u_{3}$] {};
    \node[blackvertex] at (3,-2.4) (v16) [label = right:$u_{4}$] {};
    \node[dummywhite] at (1.5,-2) (dummy1) [label = left:$\mathcal{T}^{3}_{G}(u_{2})$] {};
    \draw[thick,black] (v0)--(v1)--(v3)--(v5);
    \draw[thick,black] (v3)--(v6);
    \draw[thick,black] (v1)--(v4)--(v7);
    \draw[thick,black] (v4)--(v8);
    \draw[thick,black] (v4)--(v9);
    \draw[thick,black] (v0)--(v2)--(v10)--(v12);
    \draw[thick,black] (v10)--(v14);
    \draw[thick,black] (v2)--(v11)--(v13);
    \draw[thick,black] (v11)--(v15);
    \draw[thick,black] (v11)--(v16);
    \end{scope}
    \begin{scope}[xshift =3.5cm, yshift = 3cm]
    \node[blackvertex] at (0,0) (v0) [label = above:$u_{5}$] {};
    \node[blackvertex] at (1,0) (v1) [label = above:$u_{4}\;\;$] {};
    \node[blackvertex] at (2,-0.5) (v2) [label = above:$u_{5}$] {};
    \node[blackvertex] at (2,0.5) (v6) [label = above:$u_{6}$] {};
    \node[blackvertex] at (3,-0.5) (v3) [label = right:$u_{4}$] {};
    \node[blackvertex] at (3,0.5) (v4) [label = right:$u_{4}$] {};
    \node[blackvertex] at (2,1.5) (v5) [label = above:$u_{0}$] {};
     \node[blackvertex] at (3,1.2) (v7) [label = right:$u_{4}$] {};
    \node[blackvertex] at (3,1.7) (v8) [label = right:$u_{3}$] {};
    \node[blackvertex] at (3,2.2) (v9) [label = right:$u_{1}$] {};
    \draw[thick,black] (v0)--(v1)--(v2)--(v3);
    \draw[thick,black] (v1)--(v6);
    \draw[thick,black] (v6)--(v4);
    \draw[thick,black] (v1)--(v5);
    \draw[thick,black] (v5)--(v7);
    \draw[thick,black] (v5)--(v8);
    \draw[thick,black] (v5)--(v9);
    \node[dummywhite] at (1.5,-0.75) (dummy1) [label = left:$\mathcal{T}^{3}_{G}(u_{5})$] {};
    \end{scope}
    \begin{scope}[xshift = 10.5cm]
    \node[dummywhite] at (-0.6,1.5) (dummy1) [label = {$\mathcal{L}^{0} (u_{2},u_{5}) \equal \{u_{2}\}$}] {};
    \node[dummywhite] at (-0.32,1) (dummy2) [label = {$\mathcal{L}^{1}(u_{2},u_{5}) \equal \{u_{1},u_{3}\}$}] {};
    \node[dummywhite] at (-.04, 0.5) (dummy3) [label = {$\mathcal{L}^{2}(u_{2},u_{5}) \equal \{u_{0},u_{2},u_{2}\}$}]{};
    \node[dummywhite] at (0.8,0) (dummy4) [label = {$\mathcal{L}^{3}(u_{2},u_{5}) \equal \{u_{1},u_{1},u_{1},u_{3},u_3,u_3\}$}] {};
    
    \node[dummywhite] at (-0.6, 4.5) (dummy5) [label = {$\mathcal{L}^{0}(u_{5},u_{2}) \equal \{u_{5}\}$}] {};
    \node[dummywhite] at (-0.6,4) (dummy6) [label = {$\mathcal{L}^{1}(u_{5},u_{2}) \equal \{u_{4}\}$}] {};
    \node[dummywhite] at (-0.32,3.5) (dummy7) [label = {$\mathcal{L}^{2}(u_{5},u_{2}) \equal \{u_5,u_{6}\}$}] {};
    \node[dummywhite] at (-0.6,3) (dummy8) [label = {$\mathcal{L}^{3}(u_{5},u_{2}) \equal \{u_{4}\}$}] {};

    \node[dummywhite] at (-0.51,-1.5) (dummy9) [label = {$\mathcal{S}^{0}(\{u_{2},u_{5}\}) \equal \{u_{2},u_{5}\}$}] {};
    \node[dummywhite] at (-0.23,-2) (dummy10) [label = {$\mathcal{S}^{1}(\{u_{2},u_{5}\}) \equal \{u_{1},u_{3},u_{4}\}$}] {};
        \node[dummywhite] at (-0.51,-2.5) (dummy9) [label = {$\mathcal{S}^{2}(\{u_{2},u_{5}\}) \equal \{u_{0},u_{6}\}$}] {};
            \node[dummywhite] at (-1.08,-3) (dummy9) [label = {$\mathcal{S}^{3}(\{u_{2},u_{5}\}) \equal \emptyset$}] {};

\end{scope}
\end{tikzpicture}
\caption{\label{fig:L-example}An example graph $G$, and the depth-$3$ views for $u_{5}$ and $u_{2}$. As these trees are non-isomorphic, one can deduce that $u_{2}$ and $u_{5}$ are assigned different colours after three steps of the colour refinement algorithm (in fact, they already receive different colours after the first step). In the figure we show the multisets $\mc L^i(u_2,u_5)$, $\mc L^i(u_5,u_2)$ and the sets $\mathcal{S}^{i}(\{u_{2},u_{5}\})$ for $i\in \{0,1,2,3\}$.}
\end{center}
\end{figure}
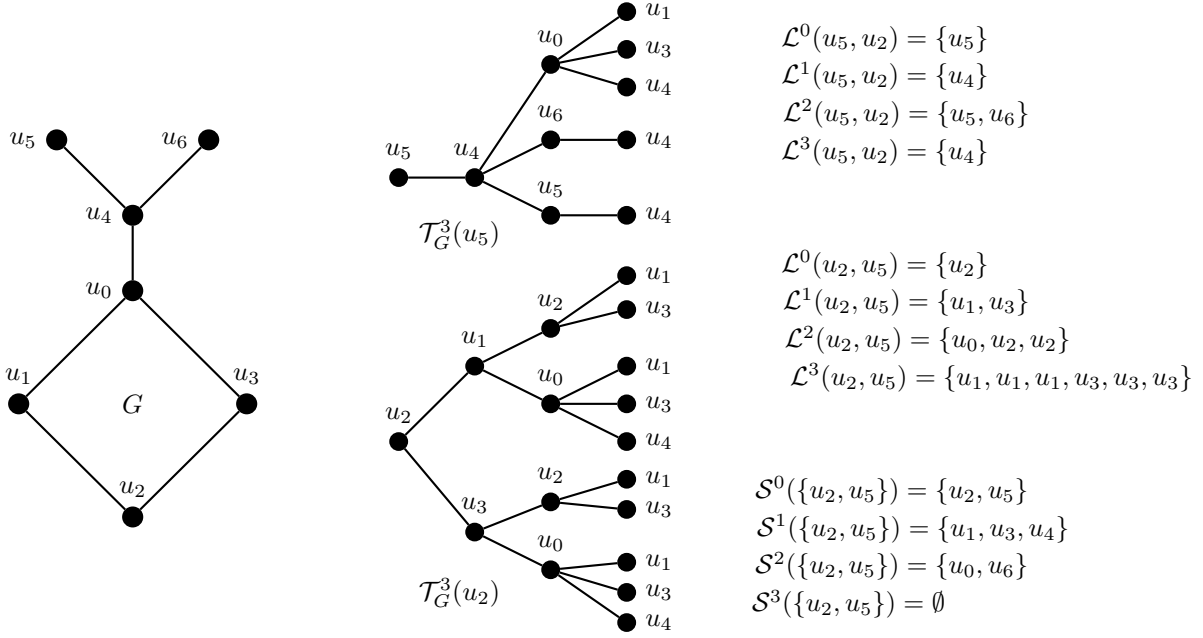

Then, write $\on{supp}\mc L_G^{i}(u,v)$ for the support of $\mc L_G^{i}(u,v)$, that is, the set of elements appearing in $\mc L_G^{i}(u,v)$, and write
\[\mc S_G^{\leq i}(\{u,v\}) := \bigcup_{j\leq i}\left(\on{supp}\mc L_G^{j}(u,v)\cup \on{supp}\mc L_G^{j}(v,u)\right),\quad \mc S_G^{i} (\{u,v\}):=\mc S_G^{\le i}(\{u,v\})\setminus \mc S_G^{\le i-1}(\{u,v\}).
\]
Note that $\on{supp}\mc L_G^i(u,v)\cup \on{supp}\mc L_G^i(v,u)$ is a \emph{disjoint} union: by definition, there are no vertices with positive multiplicity in both $\mc L_G^i(u,v)$ and $\mc L_G^i(v,u)$. We think of $\mc S_G^i(\{u,v\})$ as being
the set of vertices which ``appear differently'' in the views $\mc T_G(u)$ and $\mc T_G(v)$ for the first time at depth $i$.
\end{definition}

See \cref{fig:L-example} for an example. For the notation introduced in \cref{def:universal-cover,def:multisets}, we will often omit the subscript ``$G$'' when it is clear from context.

\cref{Universialtreecoverstep} has the following consequence in terms of our multisets $\mc L^i(u,v)$.

\begin{lemma}\label{lem:receive_same_colour}
    Let $G$ be a graph and consider distinct $u,v\in V(G)$. If there exist positive integers $i$ and $d$ such that the number of copies of vertices of degree $d$ in  $\mc L^i(u,v)$ and $\mc L^i\mc (v,u)$ are not equal then $\mc R^* \sigma(u) \neq \mc R^* \sigma(v)$.
\end{lemma}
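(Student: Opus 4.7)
The plan is to link the signed multiplicities in $\mathcal L^i(u,v)$ and $\mathcal L^i(v,u)$ to a walk count, and then deduce non-isomorphism of views, which via \cref{Universialtreecoverstep} forces different stable colours. Write $a^i(w)$ and $b^i(w)$ for the multiplicities of $w$ in $\mathcal L^i(u,v)$ and $\mathcal L^i(v,u)$ respectively, and write $n^i_x(w)$ for the number of walks of length $i$ in $G$ from $x$ to $w$. The first step is to prove by induction on $i\ge 0$ the identity
\[ a^i(w)-b^i(w) \;=\; n^i_u(w)-n^i_v(w) \qquad \text{for every } w\in V(G). \]
The base case $i=0$ is immediate from $\mathcal L^0(u,v)=\{u\}$ and $\mathcal L^0(v,u)=\{v\}$. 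For the inductive step, a short three-case check on the sign of $\ell^i(w,u,v)-\ell^i(w,v,u)$ confirms that the $\max(\cdot,0)$ truncation in the definition of $\mathcal L^i$ is compatible with the signed difference, namely $a^i(w)-b^i(w)=\ell^i(w,u,v)-\ell^i(w,v,u)$. Expanding $\ell^i(w,\cdot,\cdot)$ as a sum over neighbours of $w$ and invoking the inductive hypothesis then yields
\[ a^i(w)-b^i(w)=\sum_{x\in N(w)}\bigl(a^{i-1}(x)-b^{i-1}(x)\bigr)=\sum_{x\in N(w)}\bigl(n^{i-1}_u(x)-n^{i-1}_v(x)\bigr)=n^i_u(w)-n^i_v(w), \]
where the last equality is the standard walk-count recursion.

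With this identity in hand, the hypothesis of the lemma --- that $\mathcal L^i(u,v)$ and $\mathcal L^i(v,u)$ contain different numbers of degree-$d$ vertices, counted with multiplicity --- translates into
\[ \sum_{w:\,\deg_G(w)=d} n^i_u(w) \;\ne\; \sum_{w:\,\deg_G(w)=d} n^i_v(w), \]
i.e.\ $u$ and $v$ have different numbers of length-$i$ walks ending at a vertex of $G$-degree exactly $d$.

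To finish, I would upgrade this to non-isomorphism of the truncated views $\mathcal T^{i+1}_G(u)$ and $\mathcal T^{i+1}_G(v)$. Depth-$i$ vertices of $\mathcal T^{i+1}_G(u)$ are in bijection with length-$i$ walks from $u$, and a depth-$i$ vertex corresponding to a walk ending at $w$ has exactly $\deg_G(w)$ children in $\mathcal T^{i+1}_G(u)$. Hence any rooted-tree isomorphism $\mathcal T^{i+1}_G(u)\cong\mathcal T^{i+1}_G(v)$ would force the multiset of $\deg_G$-values over depth-$i$ vertices to agree in the two views, contradicting the walk-count inequality above. So $\mathcal T^{i+1}_G(u)\not\cong\mathcal T^{i+1}_G(v)$, and \cref{Universialtreecoverstep} yields $\mathcal R^{i+1}\sigma(u)\ne\mathcal R^{i+1}\sigma(v)$; since $\mathcal R^*\sigma$ refines $\mathcal R^{i+1}\sigma$, we conclude $\mathcal R^*\sigma(u)\ne\mathcal R^*\sigma(v)$.

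The only genuinely nontrivial step is the inductive identity in the first paragraph; the main care needed there is the three-case check ensuring $a^i(w)-b^i(w)=\ell^i(w,u,v)-\ell^i(w,v,u)$ despite the $\max(\cdot,0)$ in the construction of $\mathcal L^i$. Everything else follows routinely from the walk-count recursion, the observation that $G$-degrees are recorded as numbers of children one level deeper in the view, and the provided equivalence between views and stable colourings.
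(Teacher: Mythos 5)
Your proof is correct, but it takes a genuinely different route from the paper's. The paper argues by contrapositive: assuming $\mc R^*\sigma(u)=\mc R^*\sigma(v)$, it inductively builds, for each depth $i$ and degree $d$, a common multiset $W_d^i(\{u,v\})$ of degree-$d$ vertices so that the degree-$d$ part of layer $i$ of $\mc T(u)$ decomposes as (degree-$d$ part of $\mc L^i(u,v)$) $\cup\, W_d^i(\{u,v\})$, and likewise for $v$; since by \cref{Universialtreecoverstep} isomorphic views have equal layer statistics, the degree-$d$ counts of $\mc L^i(u,v)$ and $\mc L^i(v,u)$ must then coincide. You instead prove the unconditional signed identity $a^i(w)-b^i(w)=n^i_u(w)-n^i_v(w)$ (which is valid: the $\max(\cdot,0)$ truncation is indeed compatible with signed differences since at most one of $a^i(w),b^i(w)$ is nonzero, and the walk recursion goes through provided $\ell^i_G(w,u,v)$ counts neighbours with multiplicity in the multiset $\mc L^{i-1}_G(u,v)$, which is the intended reading, cf.\ \cref{fig:L-example}), translate the hypothesis into a discrepancy between the numbers of length-$i$ walks from $u$ and from $v$ ending at degree-$d$ vertices, and observe that this discrepancy is a rooted-tree invariant of the truncated views $\mc T^{i+1}_G(u),\mc T^{i+1}_G(v)$ (children counts at depth $i$); \cref{Universialtreecoverstep} then gives $\mc R^{i+1}\sigma(u)\ne\mc R^{i+1}\sigma(v)$, hence distinct stable colours. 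What your route buys is an unconditional, quantitative description of $\mc L^i$ as a signed walk-count difference and an explicit round ($i+1$) at which the colours separate; the paper's multiset bookkeeping avoids walk counts but is conditional on the colours agreeing and uses \cref{Universialtreecoverstep} in the opposite direction. The only shared caveat is that \cref{Universialtreecoverstep} is stated for connected graphs while the lemma is stated for general $G$; this is harmless (views and colour refinement are determined component-wise), and the paper's own proof relies on it in exactly the same way.
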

\cref{lem:receive_same_colour} is in some sense self-evident (the multisets $\mc L^i(u,v)$ describe the discrepancy between the $i$th layers of $\mc T(u)$ and $\mc T(v)$), but for completeness we provide a formal proof.
\begin{proof}[Proof of \cref{lem:receive_same_colour}]
Recalling that the vertices of each $\mc T(v)$ can be viewed as copies of vertices of $G$, for each $d,i,u,v$ let $\mc M_d^i(v)$ be the multiset of vertices of degree $d$ appearing on the $i$-th layer
of $\mc T(v)$, and let  $\mc L_d^i(u,v)$ be the multiset of vertices of degree $d$ in $\mc L^i(u,v)$.

We will prove the contrapositive of the lemma statement. Fix $u,v$ and suppose that $\mc R^* \sigma(u)=\mc R^* \sigma(v)$; our objective is to prove that $|\mc L_d^i(u,v)|=|\mc L_d^i(v,u)|$ for all $i,d$. To this end, we will prove (by induction on $i$) that for all $d$ there is a multiset $W_d^i(\{u,v\})$ (of degree-$d$ vertices common to the $i$-th layers of $\mc T(u),\mc T(u)$) such that $\mc M_d^i(u)=\mc L_d^i(u,v)\cup W_d^i(\{u,v\})$ and $\mc M_d^i(v)=\mc L_d^i(v,u)\cup W_d^i(\{u,v\})$ (these are multiset unions, respecting multiplicity). By \cref{Universialtreecoverstep}, we have $|\mc M_d^i(u)|=|\mc M_d^i(v)|$, so this suffices.

As the ($i=0$) base case for the induction, note that $\mc R^* \sigma(u)=\mc R^* \sigma(v)$ implies that $u,v$ have the same degree, thus the statement holds with $ W_d^i(\{u,v\})=\emptyset$. So, consider $i \geq 1$, and assume that for all $d$ there exists a set $W_d^{i-1}(\{u,v\})$ such that 
$$ \mc M_d^{i-1}(u)= W_d^{i-1}(\{u,v\})\cup \mc L_d^{i-1}(u,v) \text{ and }  \mc M_d^{i-1}(v)= W_d^{i-1}(\{u,v\})\cup \mc L_d^{i-1}(v,u) \text{ for } d\geq 1.$$  
Now, define multisets of vertices $X^i_d(\{u,v\})$ and $Y^i_d(\{u,v\})$ as follows. For $w\in V(G)$, of degree $d(w)=d$, the number of copies of $w$ in $X^i_d(\{u,v\})$ is precisely $\min\{\ell_G^{i}(w,u,v),\ell_G^{i}(w,v,u)\}$, and the number of copies of $w$ in $Y^i_d(\{u,v\})$ is the number of neighbours of $w$ (counted with multiplicity) in the multiset union $\bigcup_{d\geq 1} W_d^{i-1}(\{u,v\})$. Recalling the definitions of $\mc L^i(u,v)$ and $\mc L^i(u,v)$ (as the ``surplus'' of vertices in the $i$-th layer of $\mc T(u)$ compared to $\mc T(v)$, and vice versa), and recalling the inductive assumption, we have $\mc M_d^i(v)=\mc L_d^i(v,u)\cup X_d^i(\{u,v\})\cup Y_d^i(\{u,v\})$ and $\mc M_d^i(u)=\mc L_d^i(u,v)\cup X_d^i(\{u,v\})\cup Y_d^i(\{u,v\})$. So we can take $W_d^i(\{u,v\})=X_d^i(\{u,v\})\cup Y_d^i(\{u,v\})$, for the desired conclusion. 
\end{proof}

\section{Expansion for views}\label{sec:expansion}

Recall the notation $\mc S^{i}(\{u,v\})$ defined in \cref{def:multisets}, describing the set of vertices which appear differently in the views $\mc T(u)$ and $\mc T(v)$ for the first time at depth $i$.
\\

In this section we prove that, for two vertices $u,v$ in an appropriately randomly perturbed graph $G$, it is very likely that whenever $\mc S^{\leq i}(\{u,v\})$ grows reasonably large, $u$ and $v$ end up with different colours in the stable colouring $\mc R^*\sigma$. This is formalised in the following proposition.

\begin{proposition}\label{prop:expansion-colour-refinement}
    Let $G_0$ be a graph on the vertex set $\{1,\dots,n\}$ and let $G_{\mr{rand}}\sim \mb G(n,p)$, for some $p\in [0,1/2]$ satisfying
    \[\frac{1+(\log n)^{-40}}{n}\le p \le \frac{100\log n}n.\]
    Then whp $G=G_0\triangle G_{\mr{rand}}$ satisfies the following property. For every pair of vertices $u,v$ for which there is some $i$ satisfying
    \begin{equation}\label{eq:condition:expansion}
  |\mc S^{\le i}_G(\{u,v\})|\geq\frac{100\log n}{\min\{(np-1)^2,1\} },
    \end{equation}
we have $\mc R^*_{G} \sigma(u) \neq \mc R^*_{G} \sigma (v)$.
\end{proposition}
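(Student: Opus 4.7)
}
By \cref{lem:receive_same_colour}, to distinguish $u$ and $v$ via $\mc R^*\sigma$ it suffices to produce some depth $j$ and some degree $d$ such that the number of degree-$d$ vertices counted (with multiplicity) by $\mc L^j(u,v)$ differs from the number counted by $\mc L^j(v,u)$. Since $\mc S^{j}(\{u,v\})$ records the vertices that are ``newly asymmetric'' at depth $j$, the plan is to fix the pair $u,v$, expose the random perturbation layer by layer, and show that once $\mc S^{\le i}(\{u,v\})$ reaches the assumed size one can arrange a large number of \emph{independent} random edges whose state controls the imbalance $|\mc L_d^{j}(u,v)|-|\mc L_d^{j}(v,u)|$ for a cleverly chosen degree $d$ and depth $j$, then apply the Erd\H os--Littlewood--Offord bound (\cref{thm:LO}) to get an anticoncentration estimate strong enough to union bound over all $\binom n2$ pairs $(u,v)$, all depths $j\le n$, and all degrees $d\le n$.

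The first step is an expansion ``boost''. Starting from a set of size $\gtrsim \log n/\min\{(np-1)^2,1\}$ (the threshold in \cref{eq:condition:expansion}), the strategy is to iteratively track $\mc S^{\le j}(\{u,v\})$ as $j$ grows. Because $p\ge (1+(\log n)^{-40})/n$, the underlying random edges yield a supercritical branching-type process with mean offspring $\approx np>1$, so a Galton--Watson coupling (of the kind standard in random-graph theory, using independent binomial offspring distributions coming from the unexposed edges of $G_{\mr{rand}}$) should force $|\mc S^{\le j}(\{u,v\})|$ to keep multiplying by roughly $np$ at each step \emph{unless} it has already swamped a constant fraction of $V(G)$. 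The factor $(np-1)^{-2}$ in the hypothesis is precisely the ``burn-in'' cost one pays in the near-critical regime before the martingale becomes safely supercritical; the plan is to prove that whp there exists some $j^*$ with $|\mc S^{\le j^*}(\{u,v\})|\ge n/\log^{C} n$ (say). Cancellations inside the multisets $\mc L^j(u,v)\cap \mc L^j(v,u)$ are the technical annoyance here, but when $np\le 100\log n$ the layers are sparse enough that a typical vertex appears with small multiplicity, so the ``support'' version $\mc S^{\le j}$ behaves essentially like the frontier of a BFS.

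Once $\mc S^{\le j^*}(\{u,v\})$ is huge, I would split it into \emph{buckets}: subsets of vertices with identical contribution pattern to $\mc L^{j^*}(u,v)$ versus $\mc L^{j^*}(v,u)$ (in particular, vertices counted toward $u$'s view but not $v$'s). By pigeonhole, some bucket $B$ contains $|B|\ge n^{1-o(1)}$ vertices. Then I would condition on all random edges exposed so far (which determine $\mc S^{\le j^*}$ and the bucketing), and look at the edges of $G_{\mr{rand}}$ from $B$ to a set $B'$ of ``fresh'' vertices lying outside everything exposed. For a suitable target degree $d$, each such edge $b b'$ contributes a $\pm 1$ signed count to $|\mc L_d^{j^*+1}(u,v)|-|\mc L_d^{j^*+1}(v,u)|$ (the sign depending on whether $b\in \mc L^{j^*}(u,v)$ or $\mc L^{j^*}(v,u)$, and on the state of the edge in $G_0$ via the symmetric difference). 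These $|B|\cdot|B'|\ge n^{2-o(1)}$ contributions are independent Bernoulli$(p)$ variables (by disjointness of vertex pairs with the exposed set), so \cref{thm:LO} gives that the imbalance avoids any specific value with probability $\ge 1-O(1/\sqrt{n^{2-o(1)}\cdot p})$, which is comfortably $1-e^{-\omega(\log n)}$ and so union-bounds over all pairs $(u,v)$ and depths.

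The main obstacle is the expansion step in the near-critical regime $np=1+\Theta((\log n)^{-40})$: here the process is barely supercritical, the variance of each generation dwarfs its drift, and one has to argue that starting from a set of size $\log n/(np-1)^2$ is exactly the right initial condition to overwhelm the fluctuations so that the process survives and grows. I would handle this via a Doob-type martingale on $\log|\mc S^{\le j}|$, tuned so that its increments have positive drift $\gtrsim (np-1)$ and bounded second moment, and then invoke Azuma/Freedman to get exponentially small failure probability; the coupled Galton--Watson reasoning of Ding--Lubetzky--Peres style offers an alternative route. The bucketing and anticoncentration steps, while fiddly in execution, are comparatively routine once the large set $\mc S^{\le j^*}$ is in hand.
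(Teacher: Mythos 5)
There is a genuine gap, and it sits exactly at the step you describe as ``comparatively routine'': the final anticoncentration bound is quantitatively far too weak for the union bound. After conditioning, your imbalance is a weighted sum of at most $|B|\cdot|B'|\le n^{2}$ independent $\on{Bernoulli}(p)$ variables, and \cref{thm:LO} then gives anticoncentration of order $1/\sqrt{n^{2-o(1)}p}$. Since $p\le 100\log n/n$, this is $n^{-1/2+o(1)}$, not $e^{-\omega(\log n)}$ as you claim; it is nowhere near the $o(n^{-2})$ needed to union bound over the $\binom n2$ pairs $(u,v)$. Moreover, the Erd\H os--Littlewood--Offord bound is essentially tight for a single linear form (e.g.\ all coefficients $\pm1$), so no amount of care in a single application can rescue this. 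This is precisely the obstacle the paper's proof is built around: it carves a small ``hole'' $S\subseteq \mc S^{j}(\{u,v\})$ of size $n^{0.75}$, reveals everything outside it, uses the spread of the $G_1$-degrees of $S$ to extract $33$ disjoint degree buckets $V(D_1),\dots,V(D_{33})$ (each of size $\ge n^{0.6}$, with degree ranges separated by more than the maximum degree of $G_{\mr{rand}}[S]$), and then applies \cref{thm:LO} separately to the quantities $Z_t=\sum_{d\in B(D_t,100)}dQ_d$, which are \emph{independent} because they depend on edges inside disjoint buckets. Multiplying $33$ individually weak bounds $O((pn^{1.2})^{-1/2})$ gives the required $o(n^{-2})$. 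Without some such joint-anticoncentration mechanism, your plan cannot close.

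Two secondary issues. First, the quantity you propose to anticoncentrate, $|\mc L_d^{j^*+1}(u,v)|-|\mc L_d^{j^*+1}(v,u)|$ for a fixed target degree $d$, is not a fixed-coefficient linear form in the unrevealed edge indicators: whether a fresh vertex $b'$ lands in degree class $d$ depends on \emph{all} of its unrevealed incident edges, so distinct edges $b_1b'$, $b_2b'$ interact. The paper linearises by working with $\sum_d dQ_d$ (restricted to a degree window) at level $j$ itself, using edges inside the hole, whose endpoints have already-fixed multiplicities $a_x$ in $\mc L^{j}(u,v)$, and pins degrees to the window via the maximum-degree bound on $G_{\mr{rand}}[S]$. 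Second, in the expansion step your ``layers are sparse, so support behaves like a BFS frontier'' heuristic ignores that $G=G_0\triangle G_{\mr{rand}}$ has an \emph{arbitrary} (possibly dense) deterministic part, so multiplicities and cancellations between $\mc L^j(u,v)$ and $\mc L^j(v,u)$ are driven by $G_0$ and cannot be dismissed by sparsity of $G_{\mr{rand}}$; the paper's \cref{lem:expansion_SLi} handles this through the type-$Z$/type-$W$ critical-moment coupling, where a candidate vertex joins $\mc S^{j+1}$ unless specific unlikely random edges appear. Your martingale/Galton--Watson framing is in the same spirit and could likely be made to work with similar care, but as written it does not engage with the adversarial $G_0$.
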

We remark that, while we did not necessarily make an effort to optimise the assumption on $p$, there is some significance to the ``$1/n$''
essentially appearing in \cref{prop:expansion-colour-refinement}. Indeed, this corresponds to the celebrated \emph{phase transition} for Erd\H os--R\'enyi random graphs (as briefly discussed in \cref{subsec:sparse-random-outline}).

Morally speaking, the main reason why \cref{prop:expansion-colour-refinement} is true is that random graphs are extremely good \emph{expanders}: starting from any reasonably large set of vertices $S$, the set of vertices within distance $i$ of $S$ grows geometrically in $i$ until it spans nearly the whole graph. The following lemma (a key ingredient in our proof of \cref{prop:expansion-colour-refinement}) is a counterpart to this fact for the  sets $\mc S^{i}(\{u,v\})$ in a randomly perturbed graph: whenever $\mc S^{\le i}(\{u,v\})$ is reasonably large, there is some $j$ for which $\mc S^{j}(\{u,v\})$ is huge, spanning a significant proportion of the whole graph.
\begin{lemma}\label{lem:expansion_SLi}
Let $G_0,p,G_{\mr{rand}}$ be as in \cref{prop:expansion-colour-refinement}. Then whp $G=G_0\triangle G_{\mr{rand}}$ satisfies the following property. For every pair of vertices $u,v$ for which there is some $i$ satisfying \cref{eq:condition:expansion}, there is some $j$ satisfying
\begin{equation}\label{eq:condition:expansion:final_set}
 \big|\mc S^{j}_G(\{u,v\}) \big|\geq \frac{n}{4 (\log n)^{140}}.
    \end{equation}
\end{lemma}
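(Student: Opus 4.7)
My plan is to couple the growth of $\mc S_G^{\le i}(\{u,v\})$ with a supercritical branching process driven by the randomness of $G_{\mr{rand}}$.

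I would first reinterpret the sets via signed walk counts. Let $\delta^i := A_G^i(\one_u-\one_v)$, so that $\delta^i(w)$ is the signed difference of the numbers of length-$i$ walks from $u$ and $v$ to $w$. A direct induction on the definition of $\mc L^i$ (using the identity $\max(a,0)-\max(-a,0)=a$) gives the recursion $\delta^{i+1}(w) = \sum_{x\sim_G w}\delta^i(x)$ and shows $\mc S^{\le i}(\{u,v\}) = \bigcup_{j\le i}\on{supp}(\delta^j) \subseteq N_G^{\le i}(\{u,v\})$. The problem therefore reduces to proving that some $|\on{supp}(\delta^j)|$ is of order $n/\mr{polylog}(n)$.

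Next I would expose $G$ via a BFS-type process rooted at $\{u,v\}$: at each step, reveal the $G_{\mr{rand}}$-edges between the already-explored vertices and the outside. Conditioning on the adversarial $G_0$, this yields fresh $\on{Ber}(p)$ or $\on{Ber}(1-p)$ randomness at each step. I would couple the boundary growth of $\mc S^{j+1}$ to a Galton--Watson branching process of offspring mean $\approx np \ge 1 + (\log n)^{-40}$, i.e.\ supercritical. Starting from $|\mc S^{\le i}|\ge T = 100\log n/\eta^2$ active roots (with $\eta := \min\{np-1,1\}$), standard exploration-process estimates in random graphs (see e.g.\ \cite{FK16}) show the process whp reaches population $\ge n/2$ in $J = O(\log n /\eta) \le O((\log n)^{41})$ generations. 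To ensure that newly-exposed BFS-descendants actually land in $\mc S^{\le j}$ (and not merely in the BFS ball), I would apply the $p$-biased Erd\H{o}s--Littlewood--Offord inequality (\cref{thm:LO}): conditional on $\delta^j$, the value $\delta^{j+1}(w) = \sum_{x \in \on{supp}(\delta^j)}\delta^j(x)\,\one[wx\in G]$ is a linear combination of fresh independent Bernoullis with nonzero integer coefficients, hence $\Pr\bigl[\delta^{j+1}(w)=0\bigr] \le O\bigl(1/\sqrt{|\on{supp}(\delta^j)|\cdot p}\bigr)$, which is $o(1)$ once $|\on{supp}(\delta^j)|\cdot p$ is reasonably large.

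Finally, pigeonhole on $|\mc S^0|,\ldots,|\mc S^J|$ yields some layer $j\le J$ with $|\mc S^j|\ge n/(2J)\ge n/(4(\log n)^{140})$, and a union bound over the $\binom{n}{2}$ pairs $(u,v)$ closes the argument. The main obstacle is disentangling the ``old'' randomness (which determines $\delta^j$ and hence the Littlewood--Offord coefficients) from the ``new'' randomness on edges incident to newly-exposed vertices, so that \cref{thm:LO} applies cleanly at each BFS step; this requires a carefully-ordered exposure and some patience with the conditioning.
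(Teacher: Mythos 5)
Your reformulation is correct and is essentially the paper's object in different clothing: $\delta^i=A_G^i(\one_u-\one_v)$ encodes exactly the multisets $\mc L^i(u,v),\mc L^i(v,u)$, and $\mc S^{\le i}(\{u,v\})=\bigcup_{j\le i}\on{supp}(\delta^j)$. The overall skeleton — iterated geometric growth of the support with per-pair failure probability $o(n^{-2})$, the threshold $100\log n/\eta^2$ with $\eta=\min\{np-1,1\}$, and a pigeonhole over $O((\log n)^{41})$ layers to extract one layer of size $n/\mathrm{polylog}(n)$ — also matches the paper's proof (which proves a single-step expansion factor $1+(np-1)/2$ whenever $|\mc S^{\le q}|$ is between the threshold and $n/(\log n)^{100}$, then iterates).

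The gap is in the mechanism by which newly exposed vertices are shown to land in $\on{supp}(\delta^{j+1})$, which is the entire content of the lemma. The per-vertex bound $\Pr[\delta^{j+1}(w)=0]\le O(1/\sqrt{sp})$ with $s=|\on{supp}(\delta^j)|$ is vacuous throughout the regime where the lemma has something to prove: with $p\le 100\log n/n$ and $s\le n/(\log n)^{100}$ one always has $sp=o(1)$, so the bound exceeds $1$; and indeed for a typical outside vertex with no $G_0$-edges into the support, $\Pr[\delta^{j+1}(w)=0]\ge (1-p)^{s}=1-o(1)$, so no per-vertex anticoncentration statement can certify an offspring mean of $\approx np$. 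Growth has to be counted over the $\approx ns$ candidate \emph{edges}, and the correct accounting requires a case split that your sketch does not identify: candidate vertices whose deterministic ($G_0$-induced) balance is already nonzero join the support precisely when they receive \emph{no} random edge to $\mc S^{j}$ (and, importantly, a single random edge can destroy their membership), whereas balanced candidates join as soon as they receive \emph{exactly one} random edge (these are \cref{fact:exactly-zero,fact:exactly-one} and the type-$Z$/type-$W$ critical moments in the paper, combined with Chernoff bookkeeping, \cref{lem:chernoff}, of how many moments of each type occur). Relatedly, \cref{thm:LO} is neither sufficient nor needed here — in the paper it enters only later, for the joint anticoncentration of degree statistics in \cref{prop:expansion-colour-refinement} — and ``standard exploration-process estimates'' for $\mb G(n,p)$ do not transfer directly, because the process tracks the support of $A_G^j(\one_u-\one_v)$ in $G=G_0\triangle G_{\mr{rand}}$ and the adversarial $G_0$ is exactly what makes the offspring distribution non-standard. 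These issues are compressed into your closing phrase about ``patience with the conditioning'', but they are the crux rather than routine bookkeeping, so as written the proposal does not constitute a proof.
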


We defer the proof of \cref{lem:expansion_SLi} to the end of the section. Given \cref{lem:expansion_SLi}, the strategy for the proof of \cref{prop:expansion-colour-refinement} is as follows. For each $u,v$ satisfying \cref{eq:condition:expansion}, and $j$ as guaranteed by \cref{lem:expansion_SLi}, we first observe that it is possible to reveal the set $\mc S^{j}_{G}(\{u,v\})$ without revealing too much information about the random perturbation $G_{\mr{rand}}$ inside this set. We wish to use the remaining randomness, together with a Littlewood--Offord-type anticoncentration inequality (\cref{thm:LO}), to show that it is very unlikely that the multisets $\mc L^j_{G}(u,v)$ and $\mc L^j_{G}(v,u)$ have the same degree statistics.

This final step is actually rather delicate. Indeed, if we let $Q_d^i(u,v)$ be the number of vertices of degree $d$ in $\mc L^i(u,v)$, minus the corresponding number in $\mc L^i(v,u)$, it is not too hard to see that $\sum_d d Q_d^i(u,v)$ (which is the number of edges incident to $\mc L^i(u,v)$ plus the number of edges spanned by $\mc L^i(u,v)$) can be expressed as a weighted sum of Bernoulli random variables (as required for \cref{thm:LO}). So, we can use \cref{thm:LO} to show that usually we have $\sum_d d Q_d^i(u,v)\ne 0$ (in which case some $Q_d^i(u,v)\ne 0$, so the degree statistics of $\mc L^i(u,v)$ and $\mc L^i(v,u)$ must be different). However, this estimate is not good enough for a union bound over choices of $u,v$, and it is necessary to understand in much more detail the \emph{joint} anticoncentration of $dQ_d^i(u,v)$, between different $d$. This is accomplished by first revealing almost all information about $G_{\mr{rand}}$ except the edges inside a tiny subset of $\mc S^{j}_{G}(\{u,v\})$ (which we call a ``hole''). In the resulting conditional probability space, we obtain fairly weak anticoncentration bounds on the $dQ_d^i(u,v)$, but we can show that $dQ_d^i(u,v)$ is essentially independent from $d'Q_{d'}^i(u,v)$ whenever $d$ and $d'$ are reasonably far apart (so we can multiply weak anticoncentration bounds together for a much stronger final bound).

The details of the proof of \cref{prop:expansion-colour-refinement} are as follows.
 \begin{proof}[Proof of \cref{prop:expansion-colour-refinement}]
 
Fix vertices $u,v$. Our objective is to prove that with probability $1-o(n^{-2})$, there is no $i$ satisfying \cref{eq:condition:expansion}, or the property in \cref{lem:expansion_SLi} fails to hold, or there are some $j,d$ such that the number of copies of vertices of degree $d$ in $\mc L^j(u,v)$ and $\mc L^j\mc (v,u)$ are not equal. The desired result will then immediately follow by a union bound over choices of $u,v$, and \cref{lem:expansion_SLi,lem:receive_same_colour}.

\medskip
\noindent\textbf{Step 1: Exploration.} 
We reveal edges of $G_{\mr{rand}}$ according to the following procedure. For each $j$ (starting at $j=0$ and increasing $j$ by one at each step), if \cref{eq:condition:expansion:final_set} does not yet hold, then reveal all edges of $G_{\mr{rand}}$ incident to $\mc S^{j}(\{u,v\})$. The procedure terminates at the first step $j$ for which either \cref{eq:condition:expansion:final_set} holds or $\mc S^{j}(\{u,v\})=\emptyset$.
We fix this $j$ for the rest of the proof; from now on, we will treat $j,  \mc S^{j}(\{u,v\}), \mc S^{\leq j-1}(\{u,v\})$ as non-random objects (they are determined by the aforementioned revelation process). Note that we have not yet revealed anything about the edges of $G_{\mr{rand}}$ inside 
$\mc S^{j}(\{u,v\})$.

If there is $i$ satisfying \cref{eq:condition:expansion}, and if the property in \cref{lem:expansion_SLi} holds, then the latter of the two termination conditions cannot happen: we can assume that \cref{eq:condition:expansion:final_set} holds.

\medskip
\noindent\textbf{Step 2: Setting up a small ``hole''.} 
Recall from \cref{def:multisets} that $\mc S^{j}(\{u,v\})$ is defined in terms of the disjoint supports $\on{supp} \mc L^{j}(u,v)$ and $\on{supp} \mc L^{j}(v,u)$. 
Let $A=\mc S^{j}(\{u,v\}) \cap \on{supp} \mc L^{j}(u,v) $,
and assume without loss of generality $A$ is larger than 
$\mc S^{j}(\{u,v\}) \cap \on{supp} \mc L^{j}(v,u)$, 
so by \cref{eq:condition:expansion:final_set} we have
\begin{equation}
    |A|\geq \frac{n}{ 8 (\log n)^{140}}.\label{eq:Asize}
\end{equation}
Let $S$ be a subset of $n^{0.75}$ vertices of $A$. Let $G_1=G_0\triangle (G_{\mr{rand}}-G_{\mr{rand}}[S])$ be the graph obtained by randomly perturbing $G_0$ via all the edges of $G_{\mr{rand}}$ except those inside $S$.

The idea is that, since $S$ is so small, the degrees of vertices in $S$ with respect to $G_1$ are a very good prediction of the degrees with respect to $G$. So, if we reveal $G_1$ (leaving only the randomness inside $S$), we might hope to be able to find disjoint ``buckets'' of vertices in $S$, whose degrees with respect to $G_1$ are so far apart from each other that it is highly unlikely that two distinct buckets will have vertices with the same degree in $G$. This means we can handle each bucket essentially independently, which will yield strong joint anticoncentration bounds.

\medskip
\noindent\textbf{Step 3: Specifying degree buckets.}
The following claim will be used to construct these buckets (essentially, it shows that the degree distribution of the vertices of $S$ with respect to $G_1$ is ``spread out''). Let $V(d)$ be the set of vertices in $S$ which have degree $d$ with respect to $G_1$, and for a set $D\subseteq \mb N$, let $V(D)=\bigcup_{d\in D}V(d)$. Also let $n(d)=|V(d)|$ and $n(D)=|V(D)|$.
\begin{claim}\label{claim:degrees_G^1}
With probability $1-o(n^{-2})$ (over the remaining randomness of $G_1$; i.e., conditioned on the edges that have already been revealed), for every set $D\subseteq \{0,\dots,n\}$ of size $100^2$ we have $|S|-n(D)> n^{0.7}$ (i.e., there are many vertices whose degrees do not lie in $D$).
\end{claim}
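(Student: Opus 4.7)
The plan is to establish a uniform pointwise bound $N_d\le K$ on the counts $N_d := |\{v \in S : \deg_{G_1}(v) = d\}|$; once this is in hand, the claim follows immediately, since $n(D)\le |D|K = 10^4 K$ for every $D$ of size $10^4$, so it suffices to arrange $10^4 K<|S|-n^{0.7}$.

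The first step is to decompose $\deg_{G_1}(v)=c_v+X_v$ for each $v\in S$, where $c_v$ is determined by $G_0$ together with the $G_{\mr{rand}}$-edges already revealed in Step~1 of the main proof (those incident to $\mc S^{\le j-1}$), and $X_v=\sum_{w\in U}Y_w^{(v)}$ is the contribution of the as-yet unrevealed edges, indexed by $U:=V\setminus(\mc S^{\le j-1}\cup S)$. Each $Y_w^{(v)}$ is $\on{Bernoulli}(1-p)$ or $\on{Bernoulli}(p)$ according to whether $vw\in G_0$; crucially, since the edges $\{vw:w\in U\}$ are pairwise disjoint as $v$ varies over $S$, the variables $\{X_v\}_{v\in S}$ are mutually independent in the conditional space.

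Next, writing each $Y_w^{(v)}$ as a known constant plus $\pm1$ times an independent $\on{Bernoulli}(p)$ random variable, $X_v$ becomes a constant plus a signed sum of $|U|$ independent $\on{Bernoulli}(p)$ variables, so \cref{thm:LO} yields $\sup_d\Pr[\deg_{G_1}(v)=d]\le M(\lfloor|U|/2\rfloor,p)$. Using $U\supseteq\mc S^j\setminus S$, \cref{eq:Asize} and $|S|=n^{0.75}$ give $|U|\ge n/(16(\log n)^{140})$ for large $n$. For each fixed $d$, $N_d$ is a sum of independent Bernoullis with mean at most $|S|\cdot M(\lfloor|U|/2\rfloor,p)$, so the Chernoff bound (\cref{lem:chernoff}) gives $\Pr[N_d\ge K]\le n^{-4}$ for any $K\ge\max(2|S|\cdot M(\lfloor|U|/2\rfloor,p),\,C\log n)$ with a large enough constant $C$. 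A union bound over $d\in\{0,\dots,n\}$ then yields $N_d\le K$ for all $d$ simultaneously with probability $1-o(n^{-2})$.

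The main obstacle will be quantitative. The plan succeeds when $K<(|S|-n^{0.7})/10^4$, which forces $M(\lfloor|U|/2\rfloor,p)$ to be at most about $n^{-0.05}$; this is automatic once $|U|p$ is polynomially large, but it is delicate near the phase transition where $np=1+o(1)$ renders $M(\lfloor|U|/2\rfloor,p)$ of constant order and the bound from \cref{thm:LO} alone does not suffice. In that regime the argument must additionally exploit substantial variation in the shifts $c_v$ across $v\in S$---typically present due both to $G_0$ and to the already-revealed portion of $G_{\mr{rand}}$---so as to spread the distribution of $\deg_{G_1}(v)$ over a wider range of values, thereby bounding $N_d$ uniformly.
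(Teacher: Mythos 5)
There is a genuine gap, and it is exactly at the point you flag at the end: the quantitative heart of your plan cannot work in the regime of \cref{prop:expansion-colour-refinement}. Your approach needs the uniform level-set bound $\max_d N_d\le K$ with $10^4K<|S|-n^{0.7}$, i.e.\ $K\lesssim n^{0.75}/10^4$, and you propose to get $K\approx 2|S|\cdot M(\lfloor|U|/2\rfloor,p)$ from \cref{thm:LO}, which requires $M(\lfloor|U|/2\rfloor,p)\lesssim n^{-0.05}$. But in this proposition $p\le 100\log n/n$, so $|U|p\le 100\log n$ for \emph{any} $U\subseteq V$; hence $M(\lfloor|U|/2\rfloor,p)=\Omega(1/\sqrt{\log n})$ always, and it is $\Omega(1)$ whenever $np=O(1)$ (with your choice $U\supseteq\mc S^j\setminus S$ one only gets $|U|p\ge\Omega((\log n)^{-140})$, so the modal probability is $1-o(1)$). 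The "automatic once $|U|p$ is polynomially large" case is empty here. Worse, the intermediate statement you are trying to prove is simply false in general: if $G_0$ is empty (or regular) and $p=\Theta(1/n)$, the degrees of the vertices of $S$ in $G_1$ are concentrated on $O(1)$ values (Poisson-type behaviour), so a single value $d$ captures a constant fraction of $S$ and $\max_d N_d=\Theta(|S|)\gg|S|/10^4$. The claim itself survives in that scenario — many vertices still avoid any fixed set $D$ of $10^4$ values — which shows that bounding point probabilities from \emph{above} is the wrong tool; what is needed is a \emph{lower} bound on the per-vertex probability of landing outside $D$. Your closing suggestion to "exploit variation in the shifts $c_v$" cannot rescue this, because the claim must hold for every $G_0$, including regular ones, and the already-revealed randomness does not guarantee any such variation in the sparse regime.

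For comparison, the paper's proof goes in the opposite direction. It fixes $D$, leaves unrevealed only the edges between $S$ and a deliberately \emph{small} set $T\subseteq A$ of size $n/(10(\log n)^{140})$ (so that $p|T|=o(1)$), and shows that each $x\in S$ independently satisfies $\Pr[\deg_{G_1}(x)\notin D]\ge n^{-0.01}$, by exhibiting one explicit nearby target degree $d_x\notin D$ and lower-bounding the probability of hitting it exactly ($E_x^-=0$, $E_x^+=d_x-(b_x+c_x)$), which is $n^{-o(1)}$ uniformly over $1/n\lesssim p\lesssim\log n/n$. A Chernoff bound over the $n^{0.75}$ independent indicators then gives failure probability $o(n^{-100^2-2})$ for each fixed $D$, which is small enough to union bound over all $\binom{n+1}{100^2}$ choices of $D$ — a step your scheme avoids but which the paper can afford precisely because of the exponentially small per-$D$ failure probability. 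Your independence observation (disjoint unrevealed edge sets for distinct $v\in S$) is fine; it is the anticoncentration step, and the reduction to $\max_d N_d$, that cannot be repaired as stated.
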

\begin{claimproof}
Fix a set $D\subseteq \{0,\dots,n\}$ of size $100^2$. We will show that the desired property holds for this $D$ with probability at least $1-o(n^{-100^2-2})$, so the desired result will follow from the union bound.

Let $T$ be a subset of $n/(10(\log n)^{140})$ vertices of $A$ disjoint from $S$ (such a $T$ exists by \cref{eq:Asize}).

First, reveal all edges of $G_1$ except the edges between $S$ and $T$. In the resulting conditional probability space, the degree of each vertex $x\in S$ (with respect to $G_1$) only depends on the edges between $x$ and $T$ in $G_{\mr{rand}}$ (and thus these degrees are independent): we can write
\[\deg_{G_1}(x)=b_x+c_x+E_x^+-E_x^-,\]
where
\begin{compactitem}
    \item $b_x$ is the number of neighbours of $x$ outside $T$ with respect to $G_1$,
    \item $c_x$ is the number of neighbours of $x$ inside $T$ with respect to $G_0$,
    \item $E_x^+$ is the number of neighbours of $x$ inside $T$ with respect to $G_{\mr{rand}}$, among non-neighbours of $x$ with respect to $G_0$.
    \item $E_x^-$ is the number of neighbours of $x$ inside $T$ with respect to $G_{\mr{rand}}$, among neighbours of $x$ with respect to $G_0$.
\end{compactitem}
Note that $E_x^+\sim \on{Binomial}(e_x^+,p)$ and $E_x^-\sim \on{Binomial}(e_x^-,p)$, where $e_x^+$ (respectively $e_x^-$) is the number of non-neighbours (respectively, neighbours) of $x$ inside $T$ with respect to $G_0$.

Now, for each $x\in S$, we want to show that it is reasonably likely that $\deg_{G_1}(x)\notin D$. Fix some $x$, and assume without loss of generality that $e_x^+\ge e_x^-$ (so $e_x^+=n^{1-o(1)}$). Since $|D|= 100^2$, there is some $d_x\notin D$ such that $0\le d_x-(b_x+c_x)\le 100^2$, meaning that one way to have $\deg_{G_1}(x)\notin D$ is to have $E_x^-=0$ and $E_x^+=d_x-(b_x+c_x)$. We can compute the probability that this occurs (in our conditional probability space, given information revealed so far).
\begin{align*}
    \Pr[\deg_{G_1}(x)\notin D]\ge \Pr[\deg_{G_1}(x)=d_x]&\ge \Pr[E_x^-=0\text{ and }E_x^+=d_x-(b_x+c_x)]\\
    &\ge \min_{0\le k\le 100^2}(1-p)^{e_x^-}\binom{e_x^+}k(1-p)^{e_x^+-k}p^k=n^{-o(1)}\ge n^{-0.01},
    \end{align*}
recalling that $p=n^{o(1)}/n$ and $e_x^+=n^{1-o(1)}$.

Now, $|S|-n(D)$ is the number of $x\in S$ for which $\deg_{G_1}(x)\notin D$. In our conditional probability space, recall that the events $\deg_{G_1}(x)\notin D$ are independent. We have $\mb E[|S|-n(D)] \ge |S|n^{-0.01}\ge n^{0.73}$, so by a Chernoff bound (\cref{lem:chernoff}) we have
\[\Pr[|S|-n(D) <  n^{0.7}]\le \exp(-\Omega(n^{0.73}))=o(n^{-100^2-2}),\]
as desired.
\end{claimproof}

Now, we use \cref{claim:degrees_G^1} to construct many large sets of vertices with very distinct degrees with respect to $G_1$, as follows.

\begin{claim}\label{claim:intervals}
If $n$ is sufficiently large and $G_1$ satisfies the conclusion of \cref{claim:degrees_G^1}, then there exists a sequence of pairwise disjoint sets $D_1,\dots,D_{33}\subseteq \mb N$ such that $n(D_t)\ge n^{0.6}$ for all $t$, and such that for each $s\ne t$ and each $d\in D_s,d'\in D_t$ we have $|d-d'|>200$.
\end{claim}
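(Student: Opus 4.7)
The plan is to combine an averaging argument over shift classes with \cref{claim:degrees_G^1}. For each $h\in\{0,1,\ldots,200\}$ define the \emph{shift class}
\[
P_h := \{h+201k : k\geq 0\}\subseteq \mathbb{N}.
\]
Any two distinct elements of $P_h$ differ by at least $201>200$, so any collection of singletons drawn from a single $P_h$ automatically satisfies the separation requirement of the claim. Observe that $\{P_h\}_{h=0}^{200}$ partitions $\mathbb{N}$. Call a degree $d$ \emph{heavy} if $n(d)\geq n^{0.6}$ and \emph{light} otherwise.

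First I would check whether some shift class $P_{h^*}$ contains at least $33$ heavy degrees; if so, any $33$ of them give singleton sets $D_t$ satisfying the claim. Otherwise, each shift class contains at most $32$ heavy degrees, and summing over the $201$ shift classes yields at most $201\cdot 32 = 6432 < 100^2$ heavy degrees in total. Applying \cref{claim:degrees_G^1} to the set of heavy degrees (which has size $<100^2$) gives that the total heavy mass is strictly less than $|S|-n^{0.7}=n^{0.75}-n^{0.7}$, and so the total mass on light degrees exceeds $n^{0.7}$.

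In this remaining case I would build the $D_t$'s greedily from the light degrees. Enumerate the light degrees as $e_1<e_2<\cdots$. Starting from $e_1$, accumulate consecutive light degrees into $D_1$ until the accumulated mass first reaches $n^{0.6}$; close the chunk at the current degree $d^*_1$, jump to the first light degree $e_j>d^*_1+200$, and start $D_2$; continue analogously. By construction, each chunk has mass in $[n^{0.6},2n^{0.6})$ (the last added light degree contributes less than $n^{0.6}$), and consecutive chunks are separated in degree value by more than $200$. The light mass wasted in the (at most $32$) separation gaps is at most $32\cdot 200\cdot n^{0.6}=6400\,n^{0.6}$, since each gap spans at most $200$ consecutive integer degrees, each of light mass less than $n^{0.6}$. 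Hence the light mass available to form chunks is at least $n^{0.7}-6400\,n^{0.6}$, which for $n$ sufficiently large (e.g.\ $n^{0.1}\geq 6500$) comfortably exceeds $66\,n^{0.6}$, guaranteeing the formation of at least $33$ chunks.

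The main technical obstacle is the accounting of mass wasted in the separation gaps: a crude per-gap bound of $200\,n^{0.6}$ suffices thanks to the slack between $n^{0.7}$ and $33\,n^{0.6}$, but care is needed at chunk boundaries when heavy degrees happen to fall inside a separation gap (one can either absorb such heavy degrees into the neighbouring $D_t$ or simply ignore them, since the light mass alone is enough to make the argument go through).
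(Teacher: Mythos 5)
Your proof is correct and follows essentially the same strategy as the paper's: a dichotomy on the number of heavy degrees (those $d$ with $n(d)\ge n^{0.6}$), taking widely separated singletons when there are many, and otherwise invoking \cref{claim:degrees_G^1} to guarantee more than $n^{0.7}$ mass on light degrees, which is then greedily carved into $33$ well-separated groups of mass at least $n^{0.6}$. The differences are only cosmetic: you trigger the first case by pigeonholing residues modulo $201$ rather than by the threshold $|D_{\mathrm{big}}|\ge 100^2$, and in the second case you chunk only light degrees with explicit gap-waste accounting, whereas the paper greedily builds consecutive intervals $\{a_r,\dots,a_{r+1}-1\}$ and takes every third one.
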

\begin{claimproof}
Let $D_{\mr{big}}$ be the set of integers $d$ such that $n(d)\geq n^{0.6}$. We split into cases depending on the size of $D_{\mr{big}}$.

\medskip
\noindent\textbf{Case 1: $|D_{\mr{big}}|\geq 100^2$.} In this case, we can simply order the elements of $D_{\mr{big}}$ as $d_1\leq d_2\leq \cdots \leq d_{\ell}$, and take $D_t=\{d_{201t}\}$ for each $t\in \{1,\dots,33\}$.

\medskip
\noindent\textbf{Case 2: $|D_{\mr{big}}|<100^2$.}
In this case we greedily construct a sequence of nonnegative integers $a_1\le \dots\le a_{100}$, such that $n(\{a_r,a_r+1,a_r+2,\dots,a_{r+1}-1\})\ge n^{0.6}$ and $a_{r+1}-a_r\geq 101$
for each $r\in \{1,\dots,99\}$; this suffices, as we can then take $D_t=\{a_{3t},a_{3t}+1,\dots,a_{3t+1}-1\}$ for each $t\in \{1,\dots,33\}$.

Specifically, we let $a_1=0$, and for $r\ge 1$ we recursively define $a_{r+1}$ to be the minimal integer larger than $a_r+100$ such that $n(\{a_r,a_r+1,a_r+2,\dots,a_{r+1}-1\})\ge n^{0.6}$. To see that such an $a_{r+1}$ always exists, note that
\[n(\{0,1,2,\dots,a_{r}-1\})\le2rn^{0.6} +100(r-1)n^{0.6}
+n(D_\mr{big}) \le 102rn^{0.6} +n(D_\mr{big}) ,\]
so for $r\le 100$ we have $n(\{a_r,a_{r+1}\dots,n\})\ge n-n(D_\mr{big})-102rn^{0.6}\ge n^{0.7}-102\cdot 100\cdot n^{0.6} \ge n^{0.6}$ by the property in \cref{claim:degrees_G^1}, and an appropriate choice for $a_{r+1}$ exists.
\end{claimproof}

Now, reveal all the edges in $G_{\mr{rand}}$ except those inside $S$ (i.e., we have now revealed $G_1$, so from now on we treat quantities of the form $n(D)$ as being non-random). Assume that the conclusion of \cref{claim:intervals} holds, giving us a sequence of disjoint intervals $D_1,\dots,D_{33}$.

\medskip
\noindent\textbf{Step 4: Independence between the buckets.} 
We want to be able to study each $V(D_t)$ separately, so we need to make sure that the degrees of vertices in the different $V(D_t)$ cannot coincide. The following claim will be used for this purpose. (Recall that $S$ is our small ``hole'' that each of the $D_t$ lie inside).

\begin{claim}\label{claim:max-degree}
    With probability $1-o(n^{-2})$, the induced subgraph $G_{\mr{rand}}[V(S)]$ has maximum degree at most 100.
\end{claim}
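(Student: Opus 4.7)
The plan is to prove this by a direct binomial tail bound and union bound over vertices of $S$. Recall that we have already revealed all edges of $G_{\mr{rand}}$ except those inside $S$ (so $G_1$, the sets $\mc S^{\le j}(\{u,v\})$, $A$, $S$, and $V(D_1),\dots,V(D_{33})$ are fixed). In this conditional probability space, the edges of $G_{\mr{rand}}$ inside $S$ are still distributed as independent $\on{Bernoulli}(p)$ random variables, so for every $x \in S$ the number of $G_{\mr{rand}}$-neighbours of $x$ inside $S$ is a $\on{Binomial}(|S|-1,p)$ random variable.

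The key observation is that $S$ is so small, and $p$ is so close to $1/n$, that the expected degree inside $S$ is vanishing. Specifically, with $|S|=n^{0.75}$ and $p\le 100\log n/n$, the expected degree of each $x \in S$ in $G_{\mr{rand}}[S]$ is at most $|S|p \le 100(\log n)\cdot n^{-0.25} = o(1)$. A crude binomial tail bound gives
\[
\Pr\big[\on{Binomial}(|S|-1,p)\ge 101\big] \;\le\; \binom{|S|-1}{101}p^{101} \;\le\; \frac{(n^{0.75}\cdot 100\log n/n)^{101}}{101!} \;=\; O\!\left(\frac{(\log n)^{101}}{n^{25.25}}\right).
\]

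Taking a union bound over the $|S|=n^{0.75}$ vertices of $S$ then shows that the probability that some vertex of $S$ has more than $100$ $G_{\mr{rand}}$-neighbours inside $S$ is at most $O\big((\log n)^{101}/n^{24.5}\big) = o(n^{-2})$, which yields the claim (interpreting $G_{\mr{rand}}[V(S)]$ as $G_{\mr{rand}}[S]$, since $V(D_1)\cup\dots\cup V(D_{33})\subseteq S$). There is no real obstacle here: the entire point of making $S$ have size $n^{0.75}$ in Step 2 was precisely to make this event polynomially rare, and the constant $100$ is chosen with plenty of slack so that the tail bound beats the union bound by a wide margin.
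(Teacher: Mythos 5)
Your proof is correct and follows essentially the same route as the paper: a union bound over the $n^{0.75}$ vertices of $S$ combined with a binomial upper-tail estimate $\binom{|S|}{k}p^{k}$ for the degree inside $S$, using $|S|=n^{0.75}$ and $p=n^{o(1)}/n$ to get a bound far below $n^{-2}$. The only (immaterial) differences are that you threshold at degree $101$ rather than $100$ and spell out the conditional independence of the unrevealed edges inside $S$, which the paper leaves implicit.
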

\begin{claimproof}
The probability there is a vertex in $G_{\mr{rand}}[V(S)]$ with at least 100 neighbours is at most
\[n^{0.75} \binom{n^{0.75}}{100}p^{100}=o(n^{-2}),\]recalling that $p=n^{o(1)}/n$.
\end{claimproof}

For each $t\in \{1,\dots,33\}$, let $B(D_t,100)$ be the set of all $b\in \mb N$ which differ from some element of $D_t$ by at most 100. So, the property in \cref{claim:max-degree} implies that for each $t$ and each vertex $w\in V(D_t)$, the degree of $w$ with respect to $G$ lies in $B(D_t,100)$. By construction (of the $D_t$), the sets $B(D_t,100)$ are disjoint from each other.

Now, we also reveal all the edges in $G_{\mr{rand}}$ except those inside the sets $V(D_1),\dots,V(D_{33})$. So, the only remaining randomness is inside these sets.

\medskip
\noindent\textbf{Step 5: Anticoncentration of degree statistics.}
Let $Q_d$ be the number of vertices in $\mathcal L^j(u,v)$ which have degree $d$ with respect to $G$, minus the corresponding number in $\mathcal L^j(v,u)$ (here, the vertices are counted with multiplicity). Given the information revealed so far, our objective is to prove that with probability $1-o(n^{-2})$, either the property in \cref{claim:max-degree} fails to hold, or there is some $d$ such that $Q_d$ is nonzero (or both).

For each $t\in \{1,\dots,33\}$, let
\[Z_t=\sum_{d\in B(D_t,100)}dQ_d. \]
Note that if $Z_t\ne 0$, then there is some $d$ such that $Q_d$ is nonzero. These random variables $Z_t$ have been carefully defined in such a way that they can be expressed in the form required for \cref{thm:LO}.

Indeed, note that the information we have revealed so far already determines the degree (with respect to $G$) of every vertex outside the sets $V(D_1),\dots,V(D_{33})$, and recall that $\mc L^j(u,v)$ and $\mc L^j(v,u)$ are disjoint. For all $t$:
\begin{compactitem}
    \item Let $b_t$ be the contribution to $d Q_d$ from all vertices outside $V(D_1)\cup \dots\cup V(D_{33})$ (i.e., $b_t$ is a weighted sum of degrees, with respect to $G_1$, of vertices which do not lie in $V(D_1)\cup \dots\cup V(D_{33})$).
    \item Let $c_t$ be the sum of the degrees \emph{with respect to $G_1$} of all vertices in $V(D_t)$, multiplied by the number of times that vertex appears in $\mc L^j(u,v)$.
    \item For each pair of vertices $\{x,y\}\subseteq V(D_t)$, let $\xi_{\{x,y\}}$ be the indicator random variable for the event that $xy$ is an edge in $G_{\mr{rand}}$.
    \item For each $\{x,y\}\subseteq V(D_t)$, let $\chi(\{x,y\})=-1$ if $xy$ is an edge of $G_0$, and $\chi(\{x,y\})=1$ if $xy$ is not an edge of $G_0$.
    \item For each $x\in V(D_t)$, let  $a_{x}$ be the number of times that $x$ appears in $\mc L^j(u,v)$.
\end{compactitem}
Define
\[Z_t^*=b_t+c_t+\sum_{\{x,y\}\subseteq V(D_t)} \chi(\{x,y\})(a_{x}+a_{y}) \xi_{\{x,y\}},\]
and note that 
whenever the property in \cref{claim:max-degree} holds, we have $Z_t=Z_t^*$.

Note that $\chi(\{x,y\})(a_{x}+a_{y})$ is nonzero for any distinct $x,y\in V(D_t)$, and note that the different $Z_t^*$ are independent from each other. Recalling \cref{claim:intervals}, we have $\binom{|V(D_t)|}{2}\ge \binom{n^{0.6}}2=\Omega(n^{1.2})$. Also recall that $p=n^{o(1)}/n$. So, by \cref{thm:LO}, we have
\[\Pr[Z_t^*=0\text{ for all }t]\le O\left(\left(\frac{1}{\sqrt{pn^{1.2}}}\right)^{\!\!33}\right)=o(n^{-2}).\]
It follows that with probability $1-o(n^{-2})$ there is some $t$ such that $Z_t^*=Z_t\ne 0$, meaning that there is some $d$ such that $Q_d$ is nonzero, meaning that $u$ and $v$ receive different colours in the stable colouring $\mc R^*\sigma$.
\end{proof}
We have now completed the proof of \cref{prop:expansion-colour-refinement} modulo  \cref{lem:expansion_SLi}, which we prove in the next subsection.

\subsection{Exploring views: proof of \texorpdfstring{\cref{lem:expansion_SLi}}{Lemma~\ref{lem:expansion_SLi}}
}\label{subsec:Exploration}

As mentioned earlier, roughly speaking \cref{lem:expansion_SLi} is proved using the expansion properties of $G_{\mr{rand}}$. As a na\"ive approach to prove \cref{lem:expansion_SLi}, it is not hard to prove that whenever $\mc S^{i}(\{u,v\})$ is reasonably large, then $\mc S^{i+1}(\{u,v\})$ tends to be even larger (by a factor of about $np$), and one can iterate this to show that $\mc S^{j}(\{u,v\})$ eventually reaches the desired size. However, assumption \cref{eq:condition:expansion} in \cref{lem:expansion_SLi} does not guarantee that any \emph{individual} $\mc S^{i}(\{u,v\})$ is large, so we must be more careful. We proceed via a delicate coupling argument comparing sets of the form $\mc S^{\le q}(\{u,v\})$ and $\mc S^{\le q+1}(\{u,v\})$.

 \begin{proof}[Proof of \cref{lem:expansion_SLi}]Recall that we are trying to prove that for all pairs of distinct vertices $u,v$, if \cref{eq:condition:expansion} holds for some $i$ then \cref{eq:condition:expansion:final_set} holds for some $j$.

 \medskip
\noindent\hypertarget{step:one-step}{\textbf{Step 1: Reducing to single-step expansion.}}
 Our main objective will be to prove that whp $G$ satisfies the following property: for every $u,v,q$ for which
 \begin{equation}
 \frac{100\log n}{\min\{(np-1)^2,1\} }\leq |\mc S^{\leq q}(\{u,v\})| \leq \frac{n}{(\log n)^{100}},\label{eq:Sq-start}\end{equation}
 we have
\begin{equation}|\mc S^{\le q+1}(\{u,v\})|\geq 
\left(1+\frac{np-1}{2}\right)|\mc S^{\leq q}(\{u,v\})|,\label{eq:Sq-end} \end{equation}
or equivalently
 \[|\mc S^{q+1}(\{u,v\})|\geq 
\frac{np-1}{2}|\mc S^{\leq q}(\{u,v\})|.\]
To see that this suffices to prove the lemma statement, consider $u,v,i$ satisfying \cref{eq:condition:expansion}, and suppose that the above property holds. Let $j\ge i$ be minimal such that $|\mc S^{\leq j}(\{u,v\})|> n/(\log n)^{100}$. Taking $q=j-1$, we obtain
\begin{align*}
\frac2{np-1}|\mc S^{j}(\{u,v\})|&\ge 
    |\mc S^{\leq j-1}(\{u,v\})|,\\
    \left(1+\frac2{np-1}\right)|\mc S^{j}(\{u,v\})|&\ge 
    |\mc S^{\leq j}(\{u,v\})|>\frac{n}{(\log n)^{100}},
\end{align*}
so recalling that $np-1\ge (\log n)^{-40}$, we have $1/(1+2/(np-1))\ge 1/(4(\log n)^{40})$ and
\[\big|\mc S^{j}_G(\{u,v\})\big|\ge \frac n{4(\log n)^{140}},\]
meaning that $j$ satisfies \cref{eq:condition:expansion:final_set}.

So, from now on we focus on showing that the above property holds whp. In fact, fix distinct vertices $u,v$; we will prove that with probability $1-o(n^{-2})$, each $q$ satisfying \cref{eq:Sq-start} also satisfies \cref{eq:Sq-end}. This suffices; the desired result will then follow from a union bound over $u,v$.

In order to prove this, we need to study how $\mc S^{\leq q+1}(\{u,v\})$ is defined in terms of random edges of $G_{\mr{rand}}$ incident to $\mc S^{\leq q}(\{u,v\})$. The idea, roughly speaking, is that we expect $\mc S^{\leq q+1}(\{u,v\})$ to have size at least about $(1+np)|\mc S^{\leq q}(\{u,v\})|$, so having size at most $(1+(np-1)/2)|\mc S^{\leq q}(\{u,v\})|$ is very unlikely.

\medskip
\noindent\hypertarget{step:exploration-defs}{\textbf{Step 2: Exploration.}}
As in the proof of \cref{prop:expansion-colour-refinement} we consider the following procedure to iteratively reveal edges of $G_{\mr{rand}}$. For each $t$ (starting at $t=0$ and increasing $t$ by one at each step), reveal all edges of $G_{\mr{rand}}$ incident to $\mc S^{t}(\{u,v\})$; this determines the set $\mc S^{t+1}(\{u,v\})$, but all the edges incident to $\mc S^{t+1}(\{u,v\})$ remain unrevealed.

We also define a sub-procedure to break down further the process of revealing edges of $G_{\mr{rand}}$ incident to $\mc S^{t}(\{u,v\})$. To define this sub-procedure we need some more notation.
\begin{compactitem}
    \item Let $G^t_{\mr{rand}}$ be the graph of edges of $G_{\mr{rand}}$ which have been revealed so far (i.e., $G^t_{\mr{rand}}$ is the subgraph of $G_{\mr{rand}}$ consisting of edges incident to $\mc S^{\le t-1}(\{u,v\})$).
    \item Let $G^t=G_0\triangle G^t_{\mr{rand}}$.
    \item Let $W^t=\{1,\dots,n\}\setminus \mc S^{\le t}(\{u,v\})$ be the set of vertices outside $\mc S^{\le t}(\{u,v\})$ (i.e., the set of candidate vertices for $\mc S^{t+1}(\{u,v\})$).
    \item Recall the notation in \cref{def:multisets}, and let $Z^t$ be the set of vertices $w\in W^t$ such that $\ell^{t+1}_{G^t}(w,u,v)\ne \ell^{t+1}_{G^t}(w,v,u)$ (i.e., such that $w$ has a different number of neighbours in $\mc L^t(u,v)$ than $\mc L^t(v,u)$, with respect to $G^t$).
\end{compactitem}
Note that the vertices in $Z^t$ are the vertices which (intuitively speaking) seem to be most likely to end up in $\mc S^{t+1}(\{u,v\})$. Specifically, we have the following observation.
\begin{fact}\label{fact:exactly-zero}
    For every $z\in Z^t$, if $G_{\mr{rand}}$ has no edges between $\mc S^{t}(\{u,v\})$ and $z$, then $z\in \mc S^{t+1}(\{u,v\})$.
\end{fact}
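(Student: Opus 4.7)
The plan is to reduce the statement to a definition-chase: under the hypothesis, the already-revealed information $G^t$ correctly computes $\ell^{t+1}_G(z,u,v)$ and $\ell^{t+1}_G(z,v,u)$, so the defining property of $Z^t$ (stated with respect to $G^t$) transfers to $G$ and forces $z$ to appear in one of the multisets $\mc L^{t+1}_G(u,v)$, $\mc L^{t+1}_G(v,u)$.

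First, by induction on $j \le t$ I would show that $\mc L^j_{G^t}(u,v) = \mc L^j_G(u,v)$, and similarly for $(v,u)$. Indeed, the recursive definition of $\mc L^j$ requires, for each candidate vertex $w$, only the adjacencies between $w$ and $\on{supp} \mc L^{j-1}(u,v) \subseteq \mc S^{\le j-1}(\{u,v\}) \subseteq \mc S^{\le t-1}(\{u,v\})$. All edges of $G_\mr{rand}$ incident to $\mc S^{\le t-1}(\{u,v\})$ are contained in $G^t_\mr{rand}$ by definition, so these adjacencies coincide in $G$ and in $G^t$.

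Next, $\ell^{t+1}(z,u,v)$ counts neighbours of $z$ in $\mc L^t(u,v)$, and $\on{supp} \mc L^t(u,v) \subseteq \mc S^{\le t-1}(\{u,v\}) \cup \mc S^t(\{u,v\})$. The edges from $z$ into $\mc S^{\le t-1}(\{u,v\})$ agree in $G$ and $G^t$ by the same reveal argument as above. For edges from $z$ into $\mc S^t(\{u,v\})$, the graph $G^t$ sees only the $G_0$-contribution (since no $G_\mr{rand}$-edges incident to $\mc S^t(\{u,v\})$ have been revealed at this point), whereas $G = G_0 \triangle G_\mr{rand}$ flips those edges that appear in $G_\mr{rand}$; by the hypothesis of the fact, $G_\mr{rand}$ contains no such edges at $z$, so $G$ and $G^t$ again agree on these pairs. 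Combining, $\ell^{t+1}_G(z,u,v) = \ell^{t+1}_{G^t}(z,u,v)$, and likewise $\ell^{t+1}_G(z,v,u) = \ell^{t+1}_{G^t}(z,v,u)$.

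Finally, $z \in Z^t$ gives $\ell^{t+1}_{G^t}(z,u,v) \ne \ell^{t+1}_{G^t}(z,v,u)$; without loss of generality $\ell^{t+1}_G(z,u,v) > \ell^{t+1}_G(z,v,u)$. The defining rule for $\mc L^{t+1}_G(u,v)$ then places at least one copy of $z$ in this multiset, so $z \in \on{supp} \mc L^{t+1}_G(u,v) \subseteq \mc S^{\le t+1}(\{u,v\})$, and combined with $z \in W^t$ (so $z \notin \mc S^{\le t}(\{u,v\})$) we conclude $z \in \mc S^{t+1}(\{u,v\})$. No real obstacle arises here; the proof is a bookkeeping exercise, and the only subtlety is keeping straight which $G_\mr{rand}$-edges are already present in $G^t_\mr{rand}$ versus those that remain unseen at level $\mc S^t(\{u,v\})$.
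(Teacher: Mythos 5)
Your proof is correct: the paper states this fact without proof, treating it as immediate from the revelation procedure, and your definition-chase (showing $\mc L^j_{G^t}=\mc L^j_G$ for $j\le t$, then using the hypothesis to transfer the inequality $\ell^{t+1}(z,u,v)\ne\ell^{t+1}(z,v,u)$ from $G^t$ to $G$, and finally using $z\in W^t$ to place $z$ in $\mc S^{t+1}(\{u,v\})$ rather than an earlier layer) is exactly the bookkeeping the authors leave implicit. Nothing is missing.
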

The vertices in $W^t\setminus Z^t$ have an equal number of neighbours in $\mc L^t(u,v)$ and $\mc L^t(v,u)$ with respect to $G^t$, so they will not end up in $\mc S^{t+1}(\{u,v\})$ unless $G_{\mr{rand}}$ has certain edges between $\mc S^{t}(\{u,v\})$ and $z$. The simplest way this can happen is as follows.
\begin{fact}\label{fact:exactly-one}
    For every $w\in W^t\setminus Z^t$, if $G_{\mr{rand}}$ has exactly one edge between $\mc S^{t}(\{u,v\})$ and $w$, then $w\in \mc S^{t+1}(\{u,v\})$.
\end{fact}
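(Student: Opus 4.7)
The plan is to verify the two conditions for membership in $\mc S^{t+1}(\{u,v\})$: (i) $z \notin \mc S^{\le t}(\{u,v\})$, which is immediate from $z\in W^t$, and (ii) $\ell^{t+1}_G(z,u,v) \ne \ell^{t+1}_G(z,v,u)$, which places $z$ in the support of $\mc L^{t+1}(u,v) \cup \mc L^{t+1}(v,u)$.

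The main algebraic ingredient is to rewrite the discrepancy as a signed sum. Because the multiplicity difference of $w$ between $\mc L^t(u,v)$ and $\mc L^t(v,u)$ equals $\ell^t(w,u,v)-\ell^t(w,v,u)$ (from the ``take positive part'' rule of \cref{def:multisets}), I would record the identity
\[
\ell^{t+1}_G(z,u,v)-\ell^{t+1}_G(z,v,u) \;=\; \sum_{w}[zw\in G]\bigl(\ell^t(w,u,v)-\ell^t(w,v,u)\bigr),
\]
and the same identity with $G^t$ in place of $G$. Only $w\in \mc S^{\le t}(\{u,v\})$ contribute to the sum, since this is where $\mc L^t(u,v)$ and $\mc L^t(v,u)$ are supported. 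Moreover, $\mc L^t$ itself depends only on edges incident to $\mc S^{\le t-1}(\{u,v\})$, which are fully revealed in $G^t$, so by an easy induction $\mc L^t$ can be computed interchangeably in $G$ or $G^t$.

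Next I would compare the two sums edge by edge. For $w\in \mc S^{\le t-1}(\{u,v\})$, the graphs $G$ and $G^t$ agree on the edge $zw$ (all such edges are revealed). For $w \in \mc S^t(\{u,v\})$, the edges $zw$ are not yet revealed, so $G^t$ agrees with $G_0$ there, whereas in $G$ these edges are flipped exactly when $zw \in G_{\mr{rand}}$. By the hypothesis that $G_{\mr{rand}}$ has exactly one edge from $z$ to $\mc S^t(\{u,v\})$, namely $zs$, the two sums differ by precisely $\pm\bigl(\ell^t(s,u,v)-\ell^t(s,v,u)\bigr)$.

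Finally, the $G^t$-sum vanishes because $z \notin Z^t$ by assumption. Hence the $G$-discrepancy equals $\pm\bigl(\ell^t(s,u,v)-\ell^t(s,v,u)\bigr)$. Since $s\in \mc S^t(\{u,v\}) = \mc S^{\le t}(\{u,v\}) \setminus \mc S^{\le t-1}(\{u,v\})$, the vertex $s$ cannot have first appeared at any strictly earlier layer, so it belongs to $\on{supp}\mc L^t(u,v)\cup \on{supp}\mc L^t(v,u)$, which forces $\ell^t(s,u,v)\ne \ell^t(s,v,u)$. Thus the $G$-discrepancy is nonzero, establishing (ii) and hence $z\in \mc S^{t+1}(\{u,v\})$. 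I do not expect a genuine obstacle here; the whole argument is essentially bookkeeping, and the only points requiring care are the stability of $\mc L^t$ under passage from $G^t$ to $G$ and the observation that a vertex of $\mc S^t$ must first appear at the $t$-th layer.
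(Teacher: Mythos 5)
Your proof is correct and is exactly the bookkeeping the paper leaves implicit: the paper states this fact without proof as an immediate consequence of the definitions, and your argument (that $\mc L^t$ is determined by the revealed graph $G^t$, that the discrepancy $\ell^{t+1}_{G^t}(z,u,v)-\ell^{t+1}_{G^t}(z,v,u)$ vanishes because $z\notin Z^t$, and that the single unrevealed flipped edge $zs$ with $s\in\mc S^t(\{u,v\})$ shifts the discrepancy by the nonzero signed multiplicity of $s$) is precisely the intended reasoning. The only cosmetic point is that for $t=0$ the quantity $\ell^0$ is not defined, but your formulation in terms of multiplicity differences of $\mc L^0(u,v)=\{u\}$ and $\mc L^0(v,u)=\{v\}$ covers that case anyway.
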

With the above two facts in mind, our revelation sub-procedure is defined as follows.
\begin{compactitem}
    \item First, fix an arbitrary ordering of the vertices $z\in Z^t$. For each such $z$:
    \begin{compactitem}
        \item reveal all edges between $z$ and $\mc S^{t}(\{u,v\})$, determining whether $z$ is in $\mc S^{t+1}(\{u,v\})$. We say this is a \emph{type-$Z$ critical moment}, and if $z\in \mc S^{t+1}(\{u,v\})$ we say the critical moment \emph{succeeds}.
    \end{compactitem}
    \item Then, fix an arbitrary ordering of the vertex pairs $s\in \mc S^{t}(\{u,v\})$ and $w\in W^t\setminus Z^t$. For each such $s,w$:
    \begin{compactitem}
        \item if we have not yet revealed whether $sw$ is an edge of $G_{\mr{rand}}$, then we say this is a \emph{type-$W$ critical moment} for $sw$, and reveal the status of $sw$. If $sw$ is an edge of $G_{\mr{rand}}$ we say the critical moment \emph{advances}; in this case, also reveal the rest of the edges between $w$ and $\mc S^{t}(\{u,v\})$, determining whether $w$ is in $\mc S^{t+1}(\{u,v\})$. If $w\in \mc S^{t+1}(\{u,v\})$, then we say the critical moment \emph{succeeds}.
    \end{compactitem}
\end{compactitem}

\medskip
It is convenient to continue the entire revelation process only as long as $|\mc S^{\le t}(\{u,v\})|\le n/(\log n)^{100}$; i.e., once we have finished revealing a set $\mc S^{t+1}(\{u,v\})$, and see that it is larger than $n/(\log n)^{100}$, then we terminate the entire procedure and stop revealing further edges (these further edges will not be relevant for the property we are trying to study). This ensures that, for the rest of the proof, we can assume that $\mc S^{t}(\{u,v\})$ constitutes a relatively small proportion of the vertices.

Going forward, the broad strategy is as follows. Recall from the discussion at the end of \hyperlink{step:one-step}{Step 1} that we are trying to prove that, with probability $1-o(n^{-2})$, \cref{eq:Sq-start} always implies \cref{eq:Sq-end}, by proving that $\mc S^{  q+1}(\{u,v\})$ tends to have size at least about $(1+np)|\mc S^{\leq q}(\{u,v\})|$ for all $q$. We will prove this by considering the above exploration process for $t\le q$.

First, we will see that type-$Z$ critical moments almost always succeed. That is to say, for all $t$, typically almost all of the vertices of $Z^t$ end up in $\mc S^{t+1}(\{u,v\})$. So, if for any step $t\le q$ the set $Z^t$ occupies a sufficiently large fraction of $W^t$, it is easy to show that  $\mc S^{\le q+1}(\{u,v\})$ is large enough to satisfy \cref{eq:Sq-end}. We can therefore assume that most vertices in $W^t$ do not lie in $Z^{t}$, and it essentially suffices to restrict our attention to the vertices in $W^t\setminus Z^t$ throughout the revelation procedure.

Second, we will see that type-$W$ critical moments succeed with probability (at least) about $p$. For a vertex \emph{not} to end up in $\mc S^{\le q+1}$, we must have (roughly speaking, ignoring the effect of the sets $Z^t$) failed at least $|\mc S^{\le q}(\{u,v\})|$ type-$W$ critical moments, meaning that we expect $\mc S^{\leq q+1}(\{u,v\})= \mc S^{\leq q}(\{u,v\}) \cup \mc S^{q+1}(\{u,v\})$ to have size at least about $(1+np)|\mc S^{\leq q}(\{u,v\})|$.

\medskip
\noindent\textbf{Step 3: Coupling with a sequence of coin flips.}
We first make some observations about the success probabilities in critical moments.
\begin{claim}\label{claim:critical-moment-p}
Let $p'=1-400/(\log n)^{99}$.
\begin{compactitem}
    \item Every type-$Z$ critical moment succeeds with probability at least $p'$ (conditional on all information revealed until that moment).
    \item Every type-$W$ critical moment advances with probability $p$. Given that we advance, we then succeed with probability at least $p'$.
\end{compactitem}
\end{claim}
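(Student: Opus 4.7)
The proof will essentially be a bookkeeping exercise: at each critical moment, I will identify the edges of $G_{\mr{rand}}$ that remain unrevealed, observe that they form a conditionally independent $\on{Bernoulli}(p)$ family, and then combine \cref{fact:exactly-zero} and \cref{fact:exactly-one} with Bernoulli's inequality. The key quantitative input I will use repeatedly is that the termination condition has not yet fired, so $|\mc S^{t}(\{u,v\})|\le |\mc S^{\le t}(\{u,v\})|\le n/(\log n)^{100}$, which together with $p\le 100\log n/n$ gives
\[
p\cdot |\mc S^{t}(\{u,v\})|\le \frac{100}{(\log n)^{99}}.
\]

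For the type-$Z$ bound, I would first check that when the critical moment for some $z\in Z^t$ begins, none of the edges of $G_{\mr{rand}}$ between $z$ and $\mc S^{t}(\{u,v\})$ has yet been revealed: all revelations up to and including the start of step $t$ only touched edges incident to $\mc S^{\le t-1}(\{u,v\})$, which is disjoint from $\mc S^{t}(\{u,v\})$, and the earlier type-$Z$ critical moments in step $t$ only touched edges between $\mc S^{t}(\{u,v\})$ and vertices of $Z^t\setminus\{z\}$. Conditional on everything revealed so far, the $|\mc S^{t}(\{u,v\})|$ edges of $G_{\mr{rand}}$ between $z$ and $\mc S^{t}(\{u,v\})$ are therefore independent $\on{Bernoulli}(p)$ random variables. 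By \cref{fact:exactly-zero}, if none of these edges is present, the critical moment succeeds, and by Bernoulli's inequality this conditional probability is at least
\[
(1-p)^{|\mc S^{t}(\{u,v\})|}\ge 1-p\cdot |\mc S^{t}(\{u,v\})|\ge 1-\frac{100}{(\log n)^{99}}\ge p'.
\]

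For the type-$W$ bound, the main bookkeeping point is that if $(s,w)$ is genuinely a critical moment (i.e.\ $sw$ has not yet been revealed), then for every $s'$ processed earlier in the type-$W$ ordering the edge $s'w$ must have been revealed and found to be a non-edge; otherwise the earlier critical moment at $(s',w)$ would have advanced, forcing us to then reveal \emph{all} remaining edges between $w$ and $\mc S^{t}(\{u,v\})$---in particular $sw$---contradicting that $sw$ is still a critical moment. (Note also that type-$Z$ critical moments at step $t$ touch no edge incident to $w$, since $w\notin Z^t$.) Therefore $sw$, together with the edges between $w$ and the not-yet-processed portion of $\mc S^{t}(\{u,v\})\setminus\{s\}$, form an independent $\on{Bernoulli}(p)$ family conditional on everything revealed so far. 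The advance probability is thus exactly $p$. Conditional on advancing, \cref{fact:exactly-one} shows success is guaranteed provided the remaining unrevealed edges from $w$ into $\mc S^{t}(\{u,v\})\setminus\{s\}$ are all non-edges, which again by Bernoulli's inequality has conditional probability at least $(1-p)^{|\mc S^{t}(\{u,v\})|-1}\ge 1-100/(\log n)^{99}\ge p'$.

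The ``hard part'' of this claim is not probabilistic but combinatorial: one must be scrupulous that no prior revelation of any kind has secretly exposed the edges we wish to treat as fresh Bernoulli variables. Once this is properly isolated---in particular the observation that an uncreated critical moment at $(s',w)$ must have produced a non-edge---the whole argument collapses into a single application of Bernoulli's inequality driven by the already-available bound $|\mc S^{t}(\{u,v\})|\le n/(\log n)^{100}$.
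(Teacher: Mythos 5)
Your proposal is correct and follows essentially the same route as the paper: condition on everything revealed so far, note that the relevant edges between $\mc S^{t}(\{u,v\})$ and the vertex in question are still unrevealed independent $\on{Bernoulli}(p)$ variables, invoke \cref{fact:exactly-zero} and \cref{fact:exactly-one}, and use the running bound $|\mc S^{\le t}(\{u,v\})|\le n/(\log n)^{100}$ together with $p\le 100\log n/n$. The only differences are cosmetic: you make the unrevealed-edge bookkeeping explicit (the paper leaves it implicit) and use Bernoulli's inequality where the paper chains through $e^{-x}$ estimates, both of which yield the same bound $p'$.
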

\begin{claimproof}
For the first bullet point: at a type-$Z$ critical moment, we are revealing the status of $|\mc S^{t}(\{u,v\})|<n/(\log n)^{100}$ edges of $G_{\mr{rand}}$ (recalling that we continue the revelation procedure only as long as $|\mc S^{\le t}(\{u,v\})|\le n/(\log n)^{100}$), and by \cref{fact:exactly-zero}, the critical moment succeeds if none of these edges are present. So, the probability that the critical moment succeeds is at least
\[(1-p)^{n/(\log n)^{100}}\geq e^{-2np/(\log n)^{100}}\geq e^{-{200}/{(\log n)^{99}}}\geq \left(1- \frac{400}{(\log n)^{99}} \right)=p'
,\]
    recalling that $p\le 100\log n/n$.

The second bullet point is very similar: at a type-$W$ critical moment, we advance with probability $p$ (as that is the probability any particular edge appears in $G_{\mr{rand}}$), and then we succeed if no edges are present among a certain set of at most $|\mc S^{\le t}(\{u,v\})|-1\le n/(\log n)^{100}$ potential edges; by the same calculation as above this happens with probability at least $p'$.
\end{claimproof}

Now, let $(\alpha_i)_{i\in \mb N}\in \{0,1\}^{\mb N}$ be a sequence of independent $\on{Bernoulli}(p')$ random variables, and let $(\beta_i)_{i\in \mb N}\in \{0,1\}^{\mb N}$ be a sequence of independent $\on{Bernoulli}(p)$ random variables (such that $(\alpha_i)_{i\in \mb N}$ and $(\beta_i)_{i\in \mb N}$ are independent). We can couple these random variables with our revelation procedure in such a way that:
\begin{compactitem}
    \item whenever we encounter a $Z$-critical moment, we check the next unrevealed random variable $\alpha_i$, and if $\alpha_i=1$ then the critical moment succeeds;
    \item whenever we encounter a $W$-critical moment, we check the next unrevealed random variable $\beta_j$, and if $\beta_j=1$ then the critical moment advances. We then check the next unrevealed random variable $\alpha_i$, and if $\alpha_i=1$ then the critical moment succeeds.
\end{compactitem}
Let $R_\alpha^{\le q}$ be the total number of $\alpha_i$ that we reveal in the first $q$ steps of the revelation procedure (i.e., up to the point where we have determined $\mc S^{\le q+1}(\{u,v\})$). Let $N_\alpha^{\le q}$ be the number of times we see $\alpha_i=1$, among these $R_\alpha^{\le q}$
different $\alpha_i$. Similarly, let $R^{\le q}_\beta$ be the total number of $\beta_j$ that we reveal in the first $q$ steps of the revelation procedure, and let $N_\beta^{\le q}$ be the number of these $\beta_j$ that are equal to 1.

\medskip
\noindent\hypertarget{step:interpreting-coins}{\textbf{Step 4: Interpreting the coin flips.}}
Every time we see $\alpha_i=1$ (at step $t$, say), we add a new vertex to $\mc S^{t+1}(\{u,v\})$. So,
\begin{equation}|\mc S^{\le q+1}(\{u,v\})|\ge N_\alpha^{\le q}.\label{eq:success-interpretation}\end{equation}
Every time we see $\beta_j=1$, we reveal a new $\alpha_i$, so
\begin{equation}R_\alpha^{\le q}\ge N_\beta^{\le q}.\label{eq:R-N}\end{equation}
Also, note that at step $t$, exactly one $\alpha_i$ is revealed for each $z\in Z^{t}$, so $|Z^t|\le R_\alpha^{\le q}$ for each $t\le q$.
We can also relate $R_\beta^{\le q}$ to $|\mc S^{\le q}(\{u,v\})|$, as follows.
\begin{claim}\label{claim:reveal-count}
If $|\mc S^{\le q}(\{u,v\})|\le n/(\log n)^{100}$ then
\[R_\beta^{\le q}\ge |\mc S^{\le q}(\{u,v\})|\cdot\left(n-2R_\alpha^{\le q}-\frac n{(\log n)^{100}}\right).\]
\end{claim}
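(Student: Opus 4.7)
The approach is direct combinatorial bookkeeping: I would interpret $R_\beta^{\le q}$ as the total number of type-$W$ critical moments through step $q$, and lower-bound it by identifying pairs $(s,w)$ guaranteed to produce one.

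The key observation is that at any step $t\le q$, if $w\in W^t\setminus Z^t$ has no $G_{\mr{rand}}$-edge to $\mc S^t(\{u,v\})$, then every pair $(s,w)$ with $s\in \mc S^t(\{u,v\})$ yields a type-$W$ critical moment, so each such $w$ contributes exactly $|\mc S^t(\{u,v\})|$ to $R_\beta^{\le q}$. I would verify this via a short case-check on the revelation procedure: the edge $sw$ cannot have been revealed at an earlier step $t'<t$ (those only touch edges incident to $\mc S^{t'}(\{u,v\})$, which is disjoint from $\mc S^t(\{u,v\})\cup W^t$); nor by the type-$Z$ sub-procedure at step $t$ (which only reveals edges between $\mc S^t(\{u,v\})$ and $Z^t\not\ni w$); nor by an earlier type-$W$ moment at step $t$ involving $w$ (since that would need to advance to reveal $sw$, but $w$ has no edges into $\mc S^t(\{u,v\})$ by assumption).

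Letting $a_t$ denote the number of type-$W$ critical moments at step $t$ that advance (equivalently, the number of $w\in W^t\setminus Z^t$ with at least one $G_{\mr{rand}}$-edge to $\mc S^t(\{u,v\})$), the count of ``no-edge'' vertices above is $|W^t\setminus Z^t|-a_t \ge n-n/(\log n)^{100}-|Z^t|-a_t$. Summing over $t\le q$, using $|\mc S^t(\{u,v\})|\le |\mc S^{\le q}(\{u,v\})|$ together with the accounting identity $\sum_{t\le q}(|Z^t|+a_t)=R_\alpha^{\le q}$ (each $\alpha_i$ revealed comes from either a member of some $Z^t$ or from an advancing type-$W$ moment), this yields
\[R_\beta^{\le q}\ge \sum_{t\le q}|\mc S^t(\{u,v\})|\left(n-\frac{n}{(\log n)^{100}}-|Z^t|-a_t\right)\ge |\mc S^{\le q}(\{u,v\})|\left(n-R_\alpha^{\le q}-\frac{n}{(\log n)^{100}}\right),\]
which is in fact slightly stronger than the claimed inequality (the factor of $2$ in front of $R_\alpha^{\le q}$ is not needed). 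I don't anticipate any real obstacle; the only delicate point is the disjoint-layers case-check ensuring that no prior revelation has touched the edges $sw$, but this follows cleanly from the structure of the exploration procedure.
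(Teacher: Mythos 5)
Your proof is correct and follows essentially the same route as the paper's: both are deterministic bookkeeping over the same revelation procedure, observing that a pair $(s,w)$ with $s\in\mc S^{t}(\{u,v\})$, $w\in W^t\setminus Z^t$ fails to produce a type-$W$ critical moment only when an earlier advancing moment for $w$ (which costs an $\alpha$-reveal) has pre-revealed the edge, and then charging the loss against $R_\alpha^{\le q}$. Your per-vertex accounting via the exact identity $R_\alpha^{\le q}=\sum_{t\le q}(|Z^t|+a_t)$ is marginally tighter than the paper's, which bounds $|Z^t|$ and the pairs lost to advancing moments by $R_\alpha^{\le q}$ separately and therefore picks up the factor $2$; the claimed inequality follows either way.
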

\begin{claimproof}
For every $t\le q$ and every pair of vertices $s\in \mc S^{t}(\{u,v\})$ and $w\in W^t\setminus Z^t$, the only reason there might not be a type-$W$ critical moment for $sw$ at step $t$ (i.e., the only reason $sw$ might not contribute to $R_\beta^{\le q}$) is if $sw$ has already been revealed in a type-$W$ critical moment for some other pair $s'w$ (which advanced, and may or may not have succeeded; in particular it contributed to $R^{\le q}_\alpha$). So,
\begin{align*}
R_\beta^{\le q}&\ge \sum_{t\le q}|\mc S^{t}(\{u,v\})|\cdot|W^t\setminus Z^t| - \max_{t\le q} |\mc S^{t}(\{u,v\})|\cdot R_\alpha^{\le q}\\
&\ge \sum_{t\le q}|\mc S^{t}(\{u,v\})|\cdot\left(n-|\mc S^{\le t}(\{u,v\})|-|Z^t|\right)-|\mc S^{\le q}(\{u,v\})|\cdot R_\alpha^{\le q}\\
&\ge |\mc S^{\le q}(\{u,v\})|\cdot\left(n-|\mc S^{\le q}(\{u,v\})|-R_\alpha^{\le q}\right) -|\mc S^{\le q}(\{u,v\})|\cdot R_\alpha^{\le q}.
\end{align*}
The desired result follows, recalling our assumption $|\mc S^{\le q}(\{u,v\})|\le n/(\log n)^{100}$.
\end{claimproof}

\noindent\hypertarget{step:chernoff}{\textbf{Step 5: Concluding with Chernoff bounds.}}
Let $\mc E_\alpha$ be the event that for every $r\ge n/(\log n)^{100}$ there are at most $800r/(\log n)^{99}$ different ``$0$''s among the first $r$ of the $\alpha_i$. If $\mc E_{\alpha}$ holds, then for all $q$ satisfying $R_\alpha^{\le q}\ge n/(\log n)^{100}$, we have
\begin{equation}
    N_\alpha^{\le q}\ge (1-800/(\log n)^{99})R_\alpha^{\le q}.\label{eq:chernoff-alpha}
\end{equation}Recall that $p'=1-400/(\log n)^{99}$, so by a Chernoff bound (\cref{lem:chernoff}) and a union bound over $r$, we see that
\[1-\Pr[\mc E_\alpha]\le \sum_{r=n/(\log n)^{100}}^\infty \exp\left(-\Omega\left(\frac r{((\log n)^{99})^{2}}\right)\right)= o(n^{-2}),\]
i.e., $\mc E_\alpha$ holds with probability $1-o(n^{-2})$.

Similarly, let $\mc E_\beta$ be the event that for every $s\ge 100 \log n/\min\{(np-1)^2,1\}$, there are at least $(1+2(np-1)/3)s$ different ``$1$''s among the first $sn$ of the $\beta_i$. 
By a Chernoff bound and a union bound over $s$, we see that
\[1-\Pr[\mc E_\beta]\le \sum_{s=100 \log n/\min\{(np-1)^2,1\}}^\infty \exp\left(-\left(\frac{np-1}{3}\right)^2 spn/2\right)=o(n^{-2}).\]
i.e., $\mc E_\beta$ holds with probability $1-o(n^{-2})$.

Now, recall that we are trying to prove that, with probability $1-o(n^{-2})$, whenever $q$ satisfies \cref{eq:Sq-start} it also satisfies \cref{eq:Sq-end}. It suffices to show that this property follows from $\mc E_\alpha \cap \mc E_\beta$.
So, for the purpose of contradiction, suppose that $\mc E_\alpha \cap \mc E_\beta $ 
holds, and suppose that there is some $q$ for which
 \[\frac{100\log n}{\min\{(np-1)^2,1\} }\leq |\mc S^{\leq q}(\{u,v\})| \leq \frac{n}{(\log n)^{100}},\]
but
\begin{equation}|\mc S^{\le q+1}(\{u,v\}) |<
\left(1+\frac{np-1}{2}\right)|\mc S^{\leq q}(\{u,v\})| \leq \frac{50n}{(\log n)^{99}},\label{eq:chernoff-conclusion}\end{equation}
recalling that $p\le 100\log n/n$. First, note that \cref{eq:chernoff-conclusion,eq:success-interpretation,eq:chernoff-alpha} imply that
\[R_\alpha^{\le q}\le \max\left\{\frac n{(\log n)^{100}},\,2N_\alpha^{\le q}\right\}\le \max\left\{\frac n{(\log n)^{100}},\,2|\mc S^{\le q+1}(\{u,v\})|\right\}\le 100n/(\log n)^{99}.\] This and \cref{claim:reveal-count} imply that
\[
R^{\le q}_\beta\ge |\mc S^{\leq q}(\{u,v\})|\cdot \left(n-\frac{200n}{(\log n)^{99}}-\frac{n}{(\log n)^{100}}\right)\ge |\mc S^{\leq q}(\{u,v\})|\cdot \left(n-\frac{201n}{(\log n)^{99}}\right),
\]
while \cref{eq:R-N,eq:success-interpretation,eq:chernoff-alpha,eq:chernoff-conclusion} imply that
\[N^{\le q}_\beta\le R_\alpha^{\le q}\le \frac{N_\alpha^{\le q}}{1-800/(\log n)^{99}}\le \frac{|\mc S^{\le q+1}(\{u,v\})|}{1-800/(\log n)^{99}}\le \frac{(1+(np-1)/2)|\mc S^{\leq q}(\{u,v\})|}{1-800/(\log n)^{99}}.\]
Recalling that $np-1\ge (\log n)^{-40}$, these inequalities contradict $\mc E_\beta$ (taking $s=|\mc S^{\le q}(\{u,v\})|$ in the definition of $\mc E_\beta$).
\end{proof}

\section{Distinguishing vertices in the 2-core}\label{sec:3core}

In the previous section we showed that given two vertices $u,v$ in an appropriately randomly perturbed graph $G$, it is very likely that whenever the sets $\mc S^{i}(\{u,v\})$ grow reasonably large, $u$ and $v$ end up with different colours in the stable colouring $\mc R^*\sigma$. In this section we prove that this condition is typically satisfied for pairs of vertices $u,v$ satisfying certain combinatorial conditions (defined in terms of the \emph{2-core} of the random perturbation graph $G_{\mr{rand}}$). In particular, the main result of this section will directly imply \cref{thm:smoothed-colour-refinement} (as we will see at the end of this section). 
\begin{definition}\label{def:2core-extra}
Recall from \cref{def:2core} that the $k$-core of a graph $G$ is its maximum subgraph with minimum degree at least $k$.
\begin{itemize}
\item  Write $V_k(G)$ for the vertex set of this $k$-core (so the $k$-core itself is $\mr{core}_k(G)=G[V_k(G)]$).
    \item As in \cref{def:V23}, let $V_{2,3}(G)\subseteq V_3(G)$ be the set of vertices which have degree at least $3$ in the 2-core $G[V_2(G)]$.
    \item Let $V_{2,3}^{\mr{safe}}(G)\subseteq V_{2,3}(G)$ be the set of vertices $v\in V_{2,3}(G)$ satisfying the following property. If we delete any two vertices (other than $v$), the connected component of $v$ still contains at least three vertices of $V_{2,3}(G)$ (including $v$).
\end{itemize}
\end{definition}
In words, note that the 2-core can be viewed as a network of ``hubs'' (vertices with degree at least 3), joined by bare paths in which every interior vertex has degree 2. Each hub has at least three bare paths emanating from it, but it could happen that several of these bare paths lead to the same hub. We say a hub $v$ is ``safe'' if it is not possible to delete two vertices to create a tiny connected component containing $v$ and at most one other hub.

\begin{proposition}\label{prop:2core}
Let $G_0$ be a graph on the vertex set $\{1,\dots,n\}$ and let $G_{\mr{rand}}\sim G(n,p)$ for some $p\in [0,1/2]$ satisfying 
 \[p\ge \frac{1+(\log n)^{-40}}{n}.\]
Let $G=G_0\triangle G_{\mr{rand}}$. Then whp $G$ has the property that $\mc R^*_{G} \sigma(u) \neq \mc R^*_{G} \sigma (v)$ for every pair of distinct vertices $u,v$ satisfying one of the following assumptions.
\begin{enumerate}[{\bfseries{A\arabic{enumi}}}]
    \item\label{item:2core-for-perturbed}$u\in V_{2,3}^{\mr{safe}}(G_{\mr{rand}})$, or
    \item\label{item:2core-for-random}$u,v\in V_{2,3}(G_{\mr{rand}})$, and $G_0$ is the empty graph.
\end{enumerate}
\end{proposition}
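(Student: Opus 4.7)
The plan is to apply Proposition 4.1 and reduce the statement to showing that, whp, every pair $(u, v)$ satisfying A1 or A2 has some $i$ with $|\mc S^{\le i}_G(\{u, v\})|$ exceeding the threshold $100 \log n / \min\{(np-1)^2, 1\}$; a union bound over pairs $(u,v)$ then finishes the argument. First I would dispose of the range $p > 100 \log n / n$ by a direct Chernoff-type argument in the style of Babai--Erd\H os--Selkow: in this regime, whp already a single step of colour refinement assigns each vertex a distinct degree, so the conclusion is trivial.

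For the main range $p \le 100 \log n / n$, the guiding principle is that the 2-core structure of $G_{\mr{rand}}$ serves as a reservoir of independent random bits that can be used to generate asymmetries between the views rooted at $u$ and $v$. For assumption A1, I would reveal the edges of $G_{\mr{rand}}$ in a careful order, leaving a small ``hole'' of unrevealed edges around $u$ carrying essentially independent Bernoulli randomness. The safety of $u$ guarantees at least three internally vertex-disjoint paths in $\on{core}_2(G_{\mr{rand}})$ emanating from $u$ and terminating at distinct hubs, giving three ``random branches'' that are robust to local modifications. I would then track how the multisets $\mc L^i_G(u, v)$ and $\mc L^i_G(v, u)$ evolve as these branches are exposed: whenever a random edge of $G_{\mr{rand}}$ is flipped differently from the pattern forced by $v$'s structure, a new vertex enters some $\mc S^i$. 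A Littlewood--Offord / anticoncentration argument in the spirit of Step 5 of the proof of Proposition 4.1, applied to the residual randomness in the hole, should then show that the joint probability that all three branches match $v$'s view is $o(n^{-2})$, yielding the needed growth of $\mc S^{\le i}$ for the union bound.

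For assumption A2 the same blueprint applies with simplifications, since $G_0$ is empty and so $G = G_{\mr{rand}}$ is a pure random graph: both $u, v$ lie in $V_{2,3}(G)$, both have at least three random edges in the 2-core, and the classical expansion of the kernel of $\mb G(n,p)$ in the supercritical regime makes it straightforward to exhibit the $\mc S^{\le i}$ growth via a direct anticoncentration/union bound. The main obstacle in the overall proof is the sub-case of A1 in which $v$ has local 2-core structure in $G_{\mr{rand}}$ essentially identical to $u$'s and $G_0$ further conspires to make $u$ and $v$ look similar in $G$: here the safety of $u$ is essential, because its three vertex-disjoint branches cannot be simultaneously neutralised by any bounded-size local modification, so at least one branch continues to supply fresh independent randomness and the Littlewood--Offord estimate can be pushed through.
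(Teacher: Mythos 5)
Your top-level plan (feed a growth statement about $|\mc S^{\le i}_G(\{u,v\})|$ into \cref{prop:expansion-colour-refinement} and union bound over pairs) is indeed the paper's plan, but both of your concrete steps have genuine gaps. First, the disposal of $p>100\log n/n$ is wrong: one step of colour refinement does not give distinct degrees whp in that regime (degrees of $\mb G(n,p)$ with $np=\Theta(\log n)$ take only $O(\sqrt{np}\cdot\sqrt{\log n})$ values, and in the smoothed model the degrees of $G=G_0\triangle G_{\mr{rand}}$ are partly adversarial anyway); even at $p=1/2$ one step never suffices. The paper instead reduces to $p\le 2\log n/n$ by writing $G_{\mr{rand}}$ as a composition of a $\mb G(n,2\log n/n)$ perturbation with an independent one absorbed into $G_0$, noting that in this range all vertices of the $\mb G(n,2\log n/n)$ part are safe whp.

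Second, and more seriously, your main mechanism conflates the hypothesis of \cref{prop:expansion-colour-refinement} with its conclusion. The anticoncentration (Littlewood--Offord) content of the argument lives entirely inside \cref{prop:expansion-colour-refinement}, and it only yields per-pair failure probability $o(n^{-2})$ because \cref{lem:expansion_SLi} first boosts some level $\mc S^{j}$ to size $n^{1-o(1)}$, allowing $33$ buckets of size $n^{0.6}$ whose independent contributions multiply. A ``hole'' of unrevealed edges around three branches at $u$ carries nowhere near enough independent randomness to beat the union bound over $\binom n2$ pairs, especially since an adversarial $G_0$ controls the degree statistics along those branches, and since conditioning on $u\in V_{2,3}^{\mr{safe}}(G_{\mr{rand}})$ already biases the local edges of $G_{\mr{rand}}$ near $u$. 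What is actually needed here is not anticoncentration but a verification that the exploration sets $\mc S^{t}(\{u,v\})$ reach polylogarithmic size $100\log n/\min\{(np-1)^2,1\}$; the paper does this combinatorially: the exploration through the 2-core branches at each hub, and it can only die early if at most two ``bad'' coincidence vertices (controlled by \cref{claim:2core-atypical}, with probability $1-o(n^{-2})$ per pair) cut off a small set containing $u$ which, by safety under \textbf{A1} (or by $G_0=\emptyset$ under \textbf{A2}), must span more edges than vertices --- contradicting the whp expansion statement \cref{claim:no-cut} that no set of at most two vertices separates a small subgraph with more edges than vertices. Your proposal contains neither of these two ingredients, and the appeal to ``one branch supplying fresh randomness'' at the exact point where $G_0$ conspires is precisely the step that does not survive quantification. (Also, minor: safety of $u$ is not the same as having three internally disjoint bare paths to distinct hubs; it is the stronger non-separability condition used above.)
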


The idea of the proof of \cref{prop:2core} is as follows. We fix two distinct vertices $u,v$, and consider the same exploration procedure that featured in the proofs of \cref{prop:expansion-colour-refinement,lem:expansion_SLi}: at step $t$ we reveal the edges of $G_{\mr{rand}}$ incident to $\mc S^{t}(\{u,v\})$ to determine the next set $\mc S^{t+1}(\{u,v\})$. 
Vertices of $\mc S^{t}(\{u,v\})$ which lie in $V_{2,3}(G_{\mr{rand}})$ are typically connected, via bare paths in $V_2(G)$, to at least three vertices of $V_{2,3}(G_{\mr{rand}})$. Our exploration process will follow these paths, and typically each of these ``neighbouring'' vertices in $V_{2,3}(G_{\mr{rand}})$ will end up in some future $\mc S^{t'}(\{u,v\})$. That is to say, each time our exploration process reaches a hub, it branches out into at least two more hubs; the resulting exponential growth means it will not take long for $\mc S^{\le t}(\{u,v\})$ to grow large enough to apply \cref{prop:expansion-colour-refinement}.
Indeed, if the exploration process ``gets stuck'' (with $\mc S^{i+1}(\{u,v\})=\emptyset$) before \cref{eq:condition:expansion} is satisfied, some very unlikely events need to have happened: we need to have been in a subset of the 2-core with very poor expansion properties, or some edges must have shown up in atypical places during the iterative revelation process.

\begin{proof}[Proof of \cref{prop:2core}]
Before we do anything else, the first step is to reduce to the case where $p$ is in a suitable range to apply \cref{prop:expansion-colour-refinement}.

\medskip
\noindent\textbf{Step 1: Making assumptions on $p$.} 
We can assume $p\le 2\log n/n$, because for larger $p$ we can simply view the random perturbation as a composition of two random perturbations. Specifically, we can choose $p_1\in [0,1]$ in such a way that, if we take $G_{\mr{rand}}'\sim \mb G(n,2\log n/n)$ and $G_1\sim \mb G(n,p_1)$, then $G_{\mr{rand}}$ has the same distribution as $G_1\triangle G_{\mr{rand}}'$. (Namely, we need $p_1$ to satisfy the equation $p_1(1-2\log n/n)+(2\log n/n)(1-p_1)=p$). Then, letting $G_0'=G_0\triangle G_1$ we have $G=G_0'\triangle G_{\mr{rand}}'$, so after conditioning on an outcome of $G_0'$ we are considering a randomly perturbed graph with edge perturbation probability exactly $2\log n/n$. Moreover, whp the sets  $V_{2,3}^{\mr{safe}}(G_{\mr{rand}}')$ and $V_{2,3}^{\mr{safe}}(G_{\mr{rand}})$ are identical: in this regime whp both, $G_{\mr{rand}}'$ and $G_{\mr{rand}}$ are $3$-connected (see for example \cite[Theorem 4.3]{FK16}) hence $V_{2,3}^{\mr{safe}}(G_{\mr{rand}}')=V_3(G_{\mr{rand}}')=V(G)=V_3(G_{\mr{rand}})=V_{2,3}^{\mr{safe}}(G_{\mr{rand}})$.

\medskip
\noindent\textbf{Step 2: Expansion.} We observe a (very weak) expansion property of $G_{\mr{rand}}$: whp there are no small vertex cuts that disconnect a small dense subgraph.

\begin{claim}\label{claim:no-cut}
Whp $G_{\mr{rand}}$ has the following property. Consider disjoint vertex sets $T,U$, where $|T|\le 2$ and $|U|\le (\log n)^{100}$. Suppose that $G_{\mr{rand}}$ has no edges between $U$ and $V(G)\setminus T$ (i.e., $T$ disconnects $U$ from the rest of the graph), and suppose that $G_{\mr{rand}}[T\cup U]$ is connected. Then the number of edges in $G_{\mr{rand}}[U\cup T]$ is at most $|U|+|T|$.
\end{claim}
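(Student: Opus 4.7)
The plan is a direct first-moment argument: I will bound the expected number of violating pairs $(T, U)$ and show it is $o(1)$, so that the claim follows from Markov's inequality. After Step~1 above I may assume $\lambda := np$ satisfies $1 + (\log n)^{-40} \le \lambda \le 2\log n$. Fix $t = |T|\in\{0,1,2\}$ and $u = |U|$, writing $s = u + t$. For a given $(T, U)$, the bad event is the intersection of two events that depend on disjoint pools of potential edges of $G_{\mr{rand}}$ (hence independent): (i) $G_{\mr{rand}}[T \cup U]$ is connected with at least $s + 1$ edges, and (ii) none of the $u(n-s)$ potential edges between $U$ and $V(G) \setminus T$ are present.

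For (i) I will apply Cayley's formula: every such subgraph contains one of the $s^{s-2}$ spanning trees plus at least two more edges among the remaining $\binom{s}{2}-(s-1)$ positions, so the probability is at most $s^{s-2}\cdot p^{s-1}\cdot \binom{\binom{s}{2}}{2} p^2 \le s^{s+2}\,p^{s+1}$. For (ii), $(1-p)^{u(n-s)} \le e^{-\lambda u}(1+o(1))$, since $\lambda u s/n = o(1)$ whenever $u\le (\log n)^{100}$ and $\lambda \le 2\log n$. With the crude bounds $\binom{n}{t}\le n^t$ and $\binom{n}{u}\le (en/u)^u$, the expected number of violating pairs with parameters $(t,u)$ should simplify (using the identity $n^t\cdot (en/u)^u\cdot s^{s+2}/n^{s+1} \le n^{-1}\cdot e^u\cdot u^{O(1)}$, valid for $t\le 2$ and $u\ge 1$) to
\[
\mathbb E[\text{bad pairs with parameters }(t,u)] \;\le\; \frac{\lambda^{t+1}\cdot u^{O(1)}}{n}\cdot \bigl(\lambda\, e^{1-\lambda}\bigr)^{u}\cdot (1+o(1)).
\]

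The key elementary inequality will be $\lambda e^{1-\lambda}\le 1$ for all $\lambda \ge 1$ (with equality only at $\lambda = 1$). Writing $\lambda = 1 + \delta$ with $\delta$ small, a Taylor expansion gives $(1+\delta)e^{-\delta}\le e^{-\delta^2/3}$; since $\delta \ge (\log n)^{-40}$, the factor $(\lambda e^{1-\lambda})^u$ contributes a per-vertex decay of at least $\exp(-(\log n)^{-80}/3)$, enough to make $\sum_{u\le (\log n)^{100}} u^{O(1)}(\lambda e^{1-\lambda})^u$ at most polylogarithmic in $n$. (At the other extreme $\lambda \sim 2\log n$ the factor is even tinier, of order $\log n/n^2$.) Summing also over $t \in \{0,1,2\}$ thus yields $o(1)$ in total, as required.

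The only delicate point, and the reason the lower bound on $p$ in the proposition and the size cap $|U|\le (\log n)^{100}$ appear in precisely this quantitative form, will be the near-critical regime $\lambda \to 1^+$: there the per-vertex gain $\lambda e^{1-\lambda}$ sits barely below $1$, and one must check that the second-order margin $\delta^2\sim (\log n)^{-80}$ comfortably dominates the polynomial prefactors arising for $u$ up to $(\log n)^{100}$. Everything else is routine bookkeeping.
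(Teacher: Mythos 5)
Your proposal is correct and follows essentially the same route as the paper's proof: a first-moment bound over the small sets $T\cup U$, using Cayley's formula for a spanning tree plus two extra edges, the isolation probability for the pairs between $U$ and the outside, and the elementary inequality $\lambda e^{1-\lambda}\le 1$ together with the spare factor $1/n$ (coming from the ``$+1$'' in the edge count) to conclude $o(1)$. The only differences are cosmetic bookkeeping — you parameterize by $(|T|,|U|)$ rather than by $s=|T\cup U|$ — and your ``delicate point'' is in fact not needed: since $|U|\le(\log n)^{100}$ there are only polylogarithmically many terms, so the spare $1/n$ already yields $o(1)$ without invoking the second-order decay $(1+\delta)e^{-\delta}\le e^{-\delta^{2}/3}$ in the near-critical regime.
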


\begin{claimproof}
If the desired conclusion were to fail, there would be a pair of vertex sets $U,T$, with $|T|\le 2$ and $|U|\le (\log n)^{100}$, such that $G_{\mr{rand}}$ has no edges between $U$ and $V(G)\setminus T$, and such that $G_{\mr{rand}}[U\cup T]$ has at least $|U|+|T|+1$ edges.

Summing over possibilities for $t=|U|+|T|$, the probability that such a pair of vertex sets exists is at most
\begin{align*}
\sum_{t = 4 }^{(\log n)^{100} +2}
\binom{n}{t}t^2 t^{t-2} (t^2)^2 p^{t+1}(1-p)^{(t-2)(n-t)}
&\leq 2\sum_{t = 4 }^{n^{o(1)}} \bigg(\frac{en}t\bigg)^t t^{t+4}p^{t+1}e^{-pnt}e^{2np}
\\& \leq 2\sum_{t = 4 }^{n^{o(1)}} (enpe^{-pn})^t e^{2np} pt^4=o(1).
\end{align*}
In the last equality we used that if $np\leq 1000$ then $e^{2np}$, whereas if $np\geq 1000$ then $(enpe^{-pn})^t e^{2np} \geq e^{-tnp/10}$, for $t\geq 4$.

To provide a bit of explanation for the above calculation: first note that $U\cup T$ always spans at most $|U|+|T|$ edges if $|U|+|T|\le 3$, so we only need to consider $t\ge 4$. The number of possible ways to choose $U$ and $T$ is at most $\binom n t t^2$ (first choose $U\cup T$, then choose $T$). Then, note that $G_{\mr{rand}}[U\cup T]$, being connected, must contain a spanning tree (which has $t-1$ edges, and can be chosen in $t^{t-2}$ different ways, by Cayley's formula), plus two additional edges (which can be chosen in at most $(t^2)^2$ different ways). These $(t-1)+2=t+1$ edges are present in $G_{\mr{rand}}$ with probability $p^{t+1}$. Also, since $T$ disconnects $U$ from the rest of the graph, there are $(t-2)(n-t)$ specific pairs of vertices which are \emph{not} edges of $G_{\mr{rand}}$, namely the edges from $U$ to $G-(U\cup T)$) (which happens with probability $(1-p)^{(t-2)(n-t)}$).

In the last line, we used the inequality $xe^{-x}\le 1/e$ (which holds for all $x\in \mb R$), and the assumption $p=n^{o(1)-1}$.
\end{claimproof}

\noindent\textbf{Step 3: Exploration.} Define
\[f(n)=\frac{100\log n}{\min\{(np-1)^2,1\}} \leq 100\log^{81} n\le (\log n)^{100}.\]
Fix distinct vertices $u,v$. We will prove that with probability at least $1-o(n^{-2})$, either the property in \cref{claim:no-cut} fails to hold or there is $i$ such that $|\mc S^{\le i}(\{u,v\})|\ge f(n)$. The desired result will then directly follow from a union bound over $u,v$, and \cref{prop:expansion-colour-refinement}.

As in the proofs of \cref{prop:expansion-colour-refinement,lem:expansion_SLi}, we consider an exploration/revelation procedure. In each step $t$, if we do not already have $|\mc S^{\le t+1}(\{u,v\})|\ge f(n)$ (this is condition \cref{eq:condition:expansion} in \cref{prop:expansion-colour-refinement}), then we reveal the edges of $G_{\mr{rand}}$ incident to $\mc S^{t}(\{u,v\})$ to determine the next set $\mc S^{t+1}(\{u,v\})$. This process terminates at the end of the first step $i$ for which $|\mc S^{\le i+1}(\{u,v\})|\ge f(n)$ or $\mc S^{ i+1}(\{u,v\})=\emptyset$.

Our goal is to rule out (with probability $1-o(n^{-2})$) the possibility that \cref{item:2core-for-perturbed} or \cref{item:2core-for-random} holds, $\mc S^{i+1}(\{u,v\})=\emptyset$ and $|\mc S^{\le i}(\{u,v\})|<f(n)$.

\medskip
\noindent\noindent\hypertarget{step:atypical}{\textbf{Step 4: Atypical events during exploration.}} Recall the definitions of the sets $Z^t\subseteq W^t$ from \hyperlink{step:exploration-defs}{Step 2} of the proof of \cref{lem:expansion_SLi} (in \cref{subsec:Exploration}). Namely, $W^t$ is the set of ``candidate vertices'' for $\mc S^{t+1}(\{u,v\})$, and $Z^t$ is a subset of vertices that are especially likely to end up in $\mc S^{t+1}(\{u,v\})$.

Recalling \cref{fact:exactly-one}, if there is exactly one edge between $\mc S^{t}(\{u,v\})$ and $w\in W^t\setminus Z^t$ (with respect to $G_{\mr{rand}}$), then we have $w\in \mc S^{t+1}(\{u,v\})$. That is to say, an edge of $G_{\mr{rand}}$ between $\mc S^{t}(\{u,v\})$ and $w\in W^t$ always contributes to the growth of $\mc S^{t+1}(\{u,v\})$, unless $w\in Z^t$, or unless there is more than one edge between $w$ and $\mc S^{t}(\{u,v\})$. The following claim shows that these two possibilities happen very rarely (unless $|\mc S^{\le i}(\{u,v\})|$ is large, in which case we are done).

\begin{claim}\label{claim:2core-atypical}
For $t\le i$, let $Z^t_{\mr{bad}}\subseteq Z^t$ be the set of vertices in $Z^t$ which are adjacent to $\mc S^{t}(\{u,v\})$ with respect to $G_{\mr{rand}}$, and  let $W^t_{\mr{bad}}$ be the set of vertices in $W^t\setminus Z^t$ with more than one neighbour in $\mc S^{t}(\{u,v\})$, again with respect to $G_{\mr{rand}}$. Then, with probability $1-o(n^{-2})$, we have 
\[|\mc S^{\le i+1}(\{u,v\})|\ge f(n)\quad\text{or}\quad\sum_{t=0}^{i}\big(|W^t_{\mr{bad}}|+|Z^t_{\mr{bad}}|\big)\le 2.\]
\end{claim}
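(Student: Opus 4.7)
The plan is to interpret each bad event as an ``extra'' edge of $G_{\mr{rand}}$, and then bound the probability that three or more such extra edges appear within the polylogarithmically-sized explored region via a union bound in the style of \cref{claim:no-cut}.

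First, observe that each bad event corresponds to specific edges of $G_{\mr{rand}}$. By definition, every $w \in Z^t_{\mr{bad}}$ contributes at least one $G_{\mr{rand}}$-edge between $w$ and $\mc S^t(\{u,v\})$, and every $w \in W^t_{\mr{bad}}$ contributes at least two such edges. In the first case the single edge is already ``extra'', because $w$ was destined to enter $\mc S^{t+1}$ via its pre-existing imbalance (the condition $w \in Z^t$), and no edge to $\mc S^t$ was required; in the second case one of the two edges may be viewed as the ``tree-like'' edge that brings $w$ into $\mc S^{t+1}$ through the type-$W$ mechanism (recall \cref{fact:exactly-one}), and any further edge is extra. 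Consequently, if $\sum_{t\le i}(|W^t_{\mr{bad}}| + |Z^t_{\mr{bad}}|) \ge 3$, then $G_{\mr{rand}}$ contains at least three ``extra'' edges incident to the explored region $\mc S^{\le i+1}(\{u,v\})$.

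If additionally $|\mc S^{\le i+1}(\{u,v\})| < f(n) \le (\log n)^{100}$, these three extras lie within a vertex set of polylogarithmic size. We bound the probability of such a configuration by summing, over all candidate vertex sets $U \subseteq V(G)$ of size $s \le f(n)$ containing both $u$ and $v$, over the three extras' locations (each identified with a pair of endpoints at least one of which lies in $U$), and over the edges of $G_{\mr{rand}}$ forced by the exploration reaching $U$ exactly. Using the bound $p \le 2\log n/n$, the computation parallels that of \cref{claim:no-cut}: the per-$s$ contribution picks up a factor of the form $(npe^{-pn})^s$ (reflecting both a spanning structure of $s$-ish edges inside $U$ and the absence of $G_{\mr{rand}}$-edges between $U$ and $V\setminus U$), multiplied by $p^3$ for the three extras. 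Summing over $s$ and recalling $p \le 2\log n/n$, this yields $o(n^{-2})$.

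The main obstacle is correctly distinguishing ``extra'' edges from those forced by the exploration's shape: since the exploration recursively depends on the random edges it reveals, and since the definitions of $\mc L^t$, $\mc S^t$ and $Z^t$ also involve $G_0$ through $G^t$, one cannot naively union-bound over edge configurations. The resolution is to condition on the exploration outcome (the sequence $\mc S^0, \mc S^1, \ldots, \mc S^{i+1}$) and, for each bad event, designate a specific witness edge of $G_{\mr{rand}}$ that is not needed to produce this sequence. The union bound is then applied to the witnesses, with the forced edges absorbed into the per-configuration probability; the polylogarithmic bound $|\mc S^{\le i+1}| \le (\log n)^{100}$ keeps the enumeration manageable.
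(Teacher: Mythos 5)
Your combinatorial accounting of ``extra'' edges is fine as far as it goes (it mirrors how the paper later uses the bad vertices as a cut set in Step 5), but the probabilistic step is where the proposal breaks. The analogy with \cref{claim:no-cut} is not available here: in \cref{claim:no-cut} the enumerated configuration is \emph{connected in $G_{\mr{rand}}$}, and it is exactly the spanning tree of $G_{\mr{rand}}$-edges that supplies the factor $p^{|U|+|T|-1}$ needed to beat the $\binom{n}{t}$ enumeration. The explored sets $\mc S^{t}(\{u,v\})$, however, grow through the views of $G=G_0\triangle G_{\mr{rand}}$, and in particular through the $Z^t$-mechanism (\cref{fact:exactly-zero}), which requires \emph{no} $G_{\mr{rand}}$-edges at all -- when $G_0$ is dense the whole explored region can be glued together by $G_0$ alone. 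So there is no ``spanning structure of $s$-ish edges'' of $G_{\mr{rand}}$ inside $U$ to charge $p^{s}$ to, the claimed per-$s$ factor $(npe^{-pn})^{s}$ does not exist, and a union bound over candidate explored sets of size up to $(\log n)^{100}$ (cost $\approx n^{s}$) cannot be paid for by $p^{3}$ and $e^{-pns}$. Two further quantitative problems: a bad vertex need not itself lie in $\mc S^{\le i+1}$, so its witness edge has a free endpoint costing a factor $n$ against a single factor $p$ (i.e.\ $\approx nsp\ge 1$ per witness, no gain); and you never control $|Z^t|$, which can a priori be as large as $n$ per step, so with only one factor of $p$ per $Z$-bad witness the expected number of $Z$-bad vertices is not even $o(1)$, let alone small enough for a $o(n^{-2})$ bound on three of them. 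Finally, the proposed fix of ``conditioning on the exploration outcome and designating witness edges'' is circular: once you condition on the sequence $\mc S^0,\dots,\mc S^{i+1}$, every edge of $G_{\mr{rand}}$ incident to the explored region has already been revealed, so the witnesses no longer carry independent probability $p$; and summing over outcomes just reinstates the unaffordable $n^{s}$ enumeration.

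The paper's proof avoids all of this by staying sequential. It first disposes of large $Z^t$: if ever $|Z^t|\ge 2f(n)$ while $|\mc S^{\le t}(\{u,v\})|<f(n)$, then by the argument of \cref{claim:critical-moment-p} each $z\in Z^t$ lands in $\mc S^{t+1}(\{u,v\})$ independently with probability $1-o(1)$, so by \cref{lem:chernoff} the first alternative $|\mc S^{\le i+1}(\{u,v\})|\ge f(n)$ already holds with probability $1-o(n^{-2})$. Then, conditioning only on the history up to step $t$, a fixed vertex becomes $W$-bad with probability at most $p^2|\mc S^{t}(\{u,v\})|^2=n^{-2+o(1)}$ and $Z$-bad with probability at most $p|\mc S^{t}(\{u,v\})|=n^{-1+o(1)}$; hence the total bad count is stochastically dominated by a sum of $n^{1+o(1)}$ independent Bernoullis at rate $n^{-2+o(1)}$ plus $n^{o(1)}$ Bernoullis at rate $n^{-1+o(1)}$, and a direct computation gives $\Pr[\text{at least }3\text{ bad vertices}]=n^{-3+o(1)}=o(n^{-2})$. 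If you want to salvage your write-up, you should replace the static configuration count by this step-by-step conditional bound (or at least charge \emph{both} edges to each $W$-bad witness and add the missing $|Z^t|$ control), since these are the ingredients the union-bound sketch cannot supply.
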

\begin{claimproof}

First, note that with probability $1-o(n^{-2})$, we have $|Z^t|< 2f(n)$ for all $t\le i$ such that $|\mc S^{\le t+1}(\{u,v\})|< f(n)$. Indeed, consider the \emph{first} step $t$ such that $|Z^t|\ge 2f(n)$ (if such a step exists), and suppose that $|\mc S^{\le t}(\{u,v\})|< f(n)$. Conditioning on the information revealed up to step $t$, by the same argument as given in \cref{claim:critical-moment-p}, each vertex in $Z^{t}$ independently ends up in $\mc S^{t+1}(\{u,v\})$ with probability $1-o(1)$, so by a Chernoff bound (\cref{lem:chernoff}), we have $|\mc S^{t+1}(\{u,v\})|\ge |Z^t|/2\ge f(n)$ with probability at least $1-o(n^{-2})$, in which case the desired property holds.

Now, let $X_W=\sum_{t}|W^t_{\mr{bad}}|$, where the sum is over all $t\le f(n)$ for which $|\mc S^{t}(\{u,v\})|<f(n)$, and let $X_Z=\sum_{t}|Z^t_{\mr{bad}}|$, where the sum is over all $t\le f(n)$ for which $|Z^t|<2f(n)$ and $|\mc S^{t}(\{u,v\})|<100\log n$.  It suffices to show that $X_W+X_Z\le 2$ with probability $1-o(n^{-2})$.

Note that, if we condition on the information revealed up to step $t$ (i.e., after we have revealed $\mc S^{ t}(\{u,v\})$, but before we have revealed $\mc S^{t+1}(\{u,v\})$), then each vertex $w\in W^t\setminus Z^t$ ends up in $W^t_{\mr{bad}}$ with probability at most $p^2 |\mc S^{ t}(\{u,v\})|^2=n^{-2+o(1)}$, and each vertex $z\in Z^t$ ends up in $Z^t_{\mr{bad}}$ with probability at most $p|\mc S^{ t}(\{u,v\})|=n^{-1+o(1)}$. So, $X_W+X_Z$ is stochastically dominated by a sum of at most $f(n)\cdot n =n^{1+o(1)}$ Bernoulli trials with probability $n^{-2+o(1)}$, plus a sum of at most $f(n)\cdot 2f(n)=n^{o(1)}$ Bernoulli trials with probability $n^{-1+o(1)}$ (all independent). We conclude that
\begin{align*}
\Pr[X_W+X_Z\ge 3]&\le \sum_{q=0}^3 \Pr[X_W\ge q,\,X_Z\ge 3-q]
\\&\le \sum_{q=0}^3 \binom{n^{1+o(1)}}{q}(n^{-2+o(1)})^q\cdot \binom{n^{o(1)}}{3-q}(n^{-1+o(1)})^{3-q}
=n^{-3+o(1)} = o(n^{-2}), \end{align*}
as desired.
\end{claimproof}

\noindent\textbf{Step 5: Concluding.} 
We now wish to prove that if \cref{item:2core-for-perturbed} or \cref{item:2core-for-random} are satisfied (i.e., $u\in V_{2,3}^{\mr{safe}}(G_{\mr{rand}})$, or $G_0=\emptyset$ and $u,v\in V_{2,3}(G_{\mr{rand}})$), then the properties in \cref{claim:2core-atypical,claim:no-cut} imply that $|\mc S^{\le i+1}(\{u,v\})|\ge f(n)$. So, suppose for the purpose of contradiction that \cref{item:2core-for-perturbed} or \cref{item:2core-for-random} is satisfied, and the properties in \cref{claim:2core-atypical,claim:no-cut} hold, and for some $i$, $|\mc S^{\le i}(\{u,v\})|< f(n)$, and $\mc S^{i+1}(\{u,v\})=\emptyset$.

Let $T=\bigcup_{t=0}^i (W^t_{\mr{bad}}\cup Z^t_{\mr{bad}})$, so $|T|\le 2$ by the property in \cref{claim:2core-atypical}.  As discussed in \hyperlink{step:atypical}{Step 3}, every edge of $G_{\mr{rand}}$ between $\mc S^{ t}(\{u,v\})$ and $w\in W^t$ contributes to $\mc S^{\le t+1}(\{u,v\})$, unless $w\in W^t_{\mr{bad}}\cup Z^t_{\mr{bad}}$. Since $\mc S^{i+1}(\{u,v\})=\emptyset$, this means $T$ disconnects $\mc S^{\le i}(\{u,v\})$ from the rest of $G_{\mr{rand}}$. Let $U_u$ and $U_v$ be the vertex sets of the connected components of $u$ and $v$, with respect to $G_{\mr{rand}}-T$. Note that $U_u\cup U_v$, being a subset of $\mc S^{\le i}(\{u,v\})$, has fewer than $f(n)\le (\log n)^{100}$ vertices. 

First, suppose \cref{item:2core-for-perturbed} is satisfied (i.e., $u\in V_{2,3}^{\mr{safe}}(G_{\mr{rand}})$). 
By the definition of $V_{2,3}^{\mr{safe}}(G_{\mr{rand}})$,  the connected component of $u$ in $G_{\mr{rand}}-T$ always contains at least three vertices in $ V_{2,3}(G_{\mr{rand}})$. So, $G_{\mr{rand}}\big[(T\cup U_u)\cap V_2(G_{\mr{rand}})\big]$ has at least three vertices of degree at least 3, and at most two vertices of degree 1 (these can only be vertices in $T$). It follows that $G_{\mr{rand}}\big[(T\cup U_u)\cap V_2(G_{\mr{rand}})\big]$ has more edges than vertices, and the same holds for $G_{\mr{rand}}\big[T\cup U_u\big]$ (recall that a graph is obtained from its 2-core by adding ``dangling trees'', which do not change the balance of vertices and edges), contradicting \cref{claim:no-cut}.

Second, suppose \cref{item:2core-for-random} is satisfied (i.e., $G_0=\emptyset$ and $u,v\in V_{2,3}(G_{\mr{rand}})$). Since $G_0$ is the empty graph, we have $Z^t_{\mr{bad}}=Z^t=\emptyset$ for each $t$. So, every vertex in $T$ lies in some $W^t_{\mr{bad}}$, meaning it has at least two neighbours in $\mc S^{\le i}(\{u,v\})$ (which in turn are connected to $u$ or $v$, via some path). So, all vertices in $G_{\mr{rand}}\big[(T\cup U_u\cup U_v)\cap V_2(G_{\mr{rand}})\big]$ have degree at least 2; since $u$ and $v$ have degree at least 3, it follows that $G_{\mr{rand}}\big[(T\cup U_u\cup U_v)\cap V_2(G_{\mr{rand}})\big]$ has more edges than vertices, and the same holds for $G_{\mr{rand}}\big[T\cup U_u\cup U_v\big]$. This graph may not be connected, but at least one of its connected components contradicts \cref{claim:no-cut}.
\end{proof}

Before ending this section, we record a simple consequence of \cref{prop:2core}: for any $k\ge 3$, whp the vertices in the 3-core of $G_{\mr{rand}}$ are all assigned unique colours by the colour refinement algorithm. Recall that $V_k(G)$ is the vertex set of the $k$-core of a graph $G$, and note that $V_k(G)\subseteq V_{k'}(G)$ when $k'\le k$.

\begin{proposition}\label{prop:3core}
    Let $G_0$ be a graph on the vertex set $\{1,\dots,n\}$,  let $G_{\mr{rand}}\sim G(n,p)$ for some $p\in [0,1/2]$, and let $G=G_0\triangle G_{\mr{rand}}$. Then whp $G$ has the property that $\mc R^*_{G} \sigma(u) \neq \mc R^*_{G} \sigma (v)$ for every pair of distinct vertices $u,v$ such that $u\in V_3(G_{\mr{rand}})$.
\end{proposition}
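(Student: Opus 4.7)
The plan is to reduce \cref{prop:3core} to \cref{prop:2core}: it would suffice to prove that whp $V_3(G_{\mr{rand}}) \subseteq V_{2,3}^{\mr{safe}}(G_{\mr{rand}})$, after which an application of \cref{prop:2core} with assumption \cref{item:2core-for-perturbed} finishes the proof. Before invoking \cref{prop:2core} one needs $p \ge (1+(\log n)^{-40})/n$; for smaller $p$ the conclusion is vacuous because $V_3(G_{\mr{rand}}) = \emptyset$ whp (the threshold for a nonempty 3-core in $\mb G(n,p)$ sits at $c_3/n$ for a constant $c_3 > 1$).

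The structural heart of the argument is the following observation. Suppose $u \in V_3(G_{\mr{rand}})$ fails to be safe, as witnessed by a set $T$ with $|T|\le 2$, $u\notin T$, and $u$'s component $U$ in $G_{\mr{rand}}-T$ satisfying $|U\cap V_{2,3}|\le 2$. Then there exist four distinct vertices $u,c,t_1,t_2 \in V_3(G_{\mr{rand}})$ such that the five edges $uc, ut_1, ut_2, ct_1, ct_2$ are all present in $G_{\mr{rand}}$ (a ``$K_4$ minus at most one edge'' pattern on four vertices of the 3-core). This is a tight pigeonhole: the $\ge 3$ $V_3$-neighbours of $u$ all lie in the set $V_{2,3}\cap (U\cup T)\setminus\{u\}$, which has size at most $(|U\cap V_{2,3}|-1)+|T|\le 1+2 = 3$; equality is forced everywhere, giving $|U\cap V_{2,3}|=2$, $|T|=2$, $T\subseteq V_3$, and a fourth vertex $c\in U\cap V_3$ that is $V_3$-adjacent to each of $u, t_1, t_2$. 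Repeating the same pigeonhole for $c$ then pins down the remaining two edges $ct_1, ct_2$.

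Given this, I would rule out the forced configuration via a case split on $p$. For $p\le 3\log n/n$, a direct first-moment count suffices: the expected number of ordered 4-tuples $(u,c,t_1,t_2)$ of distinct vertices carrying the required five edges is at most $n^4 p^5\le 3^5(\log n)^5/n = o(1)$, so Markov rules them out whp. For $p > 3\log n/n$, this exceeds both the minimum-degree-3 threshold and the 3-vertex-connectivity threshold of $\mb G(n,p)$, so whp every vertex of $G_{\mr{rand}}$ has degree at least 3 (hence $V_3 = V_{2,3} = V(G_{\mr{rand}})$) and $G_{\mr{rand}}$ is 3-vertex-connected; removing any 2 vertices therefore leaves all $n-2\ge 3$ remaining vertices in a single component, giving the safe property for free.

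The main obstacle I anticipate is the structural pigeonhole in the second paragraph --- once the forced $K_4$-minus-edge structure is identified, the probabilistic steps are routine, and the split into two regimes is only for expository convenience. A unified bound is also available by noting that in any bad configuration both $u$ and $c$ must have $V_{2,3}$-degree exactly $3$ in $G_{\mr{rand}}$, an event whose probability decays like $e^{-\Omega(np)}$ and thereby suppresses the raw count $n^4 p^5$ to $o(1)$ uniformly in the range of $p$ where \cref{prop:2core} is applicable.
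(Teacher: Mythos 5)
Your proposal is correct and follows essentially the same route as the paper: reduce to showing $V_3(G_{\mr{rand}})\subseteq V_{2,3}^{\mr{safe}}(G_{\mr{rand}})$ whp, observe by the same pigeonhole that a failure forces a four-vertex subgraph of $G_{\mr{rand}}$ with at least five edges, kill this with the first-moment bound $O(n^4p^5)=o(1)$ in the sparse regime, and conclude via \cref{prop:2core} under assumption \cref{item:2core-for-perturbed}. The only cosmetic difference is the dense regime, where the paper notes that minimum degree $4$ (which holds whp for $p\ge(1+\varepsilon)\log n/n$) makes every vertex trivially safe, instead of your $3$-connectivity argument.
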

\begin{proof}
It was famously proved by Pittel, Spencer and Wormald~\cite{PSW96} that there is a constant $c\approx 3.35$ such that if $p\le c/n$  then $V_3(G_{\mr{rand}})=\emptyset$ whp. So, we can assume that $p\ge 3/n$.

Also, it is easy to prove (see for example \cite[Theorem 3.2]{FK16}) that if $p\ge (1+\varepsilon)\log n/n$ for any fixed $\varepsilon>0$, then whp every vertex of $G_{\mr{rand}}$ has degree at least 4; that is, $V_{2,3}(G)=V_4(G)=V(G)$. Note that if a vertex $v\in V_{2,3}(G)$ has at least four neighbours in $V_{2,3}(G)$, then trivially $v\in V_{2,3}^{\mr{safe}}$, so in this case the desired result follows directly from \cref{prop:2core}. We can therefore assume that $p\le 2\log n/n$.

Now, it suffices to prove that whp $V_3(G_{\mr{rand}})\subseteq V_{2,3}^{\mr{safe}}(G_{\mr{rand}})$. Consider $v\in V_3(G_{\mr{rand}})\subseteq V_{2,3}(G_{\mr{rand}})$. Note that $v$ has at least three neighbours with degree at least 3 in $V_{2,3}(G_{\mr{rand}})$. The only way that deletion of two vertices other than $v$ could possibly cause the connected component of $v$ to contain fewer than three vertices of $V_{2,3}(G_{\mr{rand}})$ is if the two deleted vertices $x,y$ are both neighbours of $v$, and there is a third neighbour of $v$ whose neighbours in $V_{2,3}(G_{\mr{rand}})$ are precisely $x,y,v$. If this situation occurs, then $G$ has a subgraph with four vertices and at least five edges. It is easy to see that whp no such subgraph exists in $G_{\mr{rand}}$, given our assumption that $p\le 2\log n/n$ (indeed, the expected number of such subgraphs is $O(p^5 n^4)=o(1)$).
\end{proof}

As observed in the proof of \cref{prop:3core}, for any fixed $\varepsilon>0$, if $G_{\mr{rand}}\sim \mb G(n,p)$ with $p\ge (1+\varepsilon)\log n/n$, then whp every vertex of $G_{\mr{rand}}$ has degree at least 3, so $V_{3}(G_{\mr{rand}})=V(G)$. So, \cref{prop:3core} implies that $\mc R^*\sigma$ gives every vertex a different colour. That is to say, \cref{thm:smoothed-colour-refinement} follows directly from \cref{prop:3core} and \cref{thm:CR-discrete-canonical}.

\section{Small components with very mild perturbation}\label{sec:disparity}
In this section we prove \cref{thm:smoothed-disparity}. The proof is fairly involved, so we start with a rough description of the overall structure of the proof, supplementing the discussion in \cref{subsec:ideas}. In fact, the reader may benefit from re-reading \cref{subsec:breakup-outline} before starting with this section. We also remark that a much simpler implementation of similar ideas can be found in \cref{app:loglog}, where we prove a weaker version of \cref{thm:smoothed-disparity}.   Recall the definitions of the colour refinement and 2-dimensional Weisfeiler--Leman algorithms, from \cref{def:refinement,def:2WL}.

Recalling the notation in \cref{def:disparity,def:2WL} (on disparity graphs and the 2-dimensional Weisfeiler--Leman algorithm), our goal is to prove that if $G=G_0\triangle G_{\mr{rand}}$ is a randomly perturbed graph with perturbation probability as small as $O(1/n)$, then the connected components of the disparity graph $D(G,\mc V^*\phi_G)$ have size $O(\log n)$. Indeed, we saw in \cref{cor:small-components,fact:2WL-efficient} that there is an efficient canonical scheme for graphs satisfying this property.

Towards this goal, we view the random perturbation $G_{\mr{rand}}$ as the union of eight slightly sparser independent random perturbations $G_{\mr{rand}}^{1},\dots,G_{\mr{rand}}^{8}\sim \mb G(n,p')$ (this is called \emph{sprinkling}). Assuming $p\ge 100/n$ (thus $p'>10/n$), standard results show that the 3-core of $G_{\mr{rand}}^{1}$ (which is a subgraph of the 3-core of $G_{\mr{rand}}$) has at least $n/2$ vertices, and by \cref{prop:2core}, whp all these vertices are assigned unique colours by $\mc R_G^*\sigma$ (therefore by $\mc V^*\phi_G$, recalling \cref{fact:2WL-refines-CR}). 

So, we first reveal the 3-core of $G_{\mr{rand}}^1$, and fix a set $V_{\mr{core}}$ of $n/2$ vertices in this 3-core. We then define the complementary set $U=V(G)\setminus V_{\mr{core}}$, and reveal all edges of $G_{\mr{rand}}$ except those between $U$ and $V_{\mr{core}}$. For the rest of the proof we work with the remaining seven rounds of random edges (in $G_\mr{rand}^2,\dots,G_{\mr{rand}}^8$) between $U$ and $V_{\mr{core}}$.

Specifically, whenever a vertex in $U$ has at least three neighbours in $V_{\mr{core}}$, that vertex is guaranteed to be in the 3-core of $G_{\mr{rand}}$, and therefore we can assume it will be assigned a unique colour. So, we can consider a vertex-colouring of $G[U]$ that is refined with each round of sprinkling: at each round, new random vertices in $U$ are assigned unique colours, and then the 2-dimensional Weisfeiler--Leman algorithm propagates this information to obtain a new stable vertex-colouring. In each round, we show that the colour classes, and the components of the disparity graph, are broken into smaller and smaller pieces.

\subsection{Preparatory lemmas}\label{subsec:preparation}
Before we get into the details of the proof of \cref{thm:smoothed-disparity}, we start with a few general graph-theoretic lemmas.

First, we need two near-trivial facts about disparity graphs with respect to equitable colourings: in a disparity graph, the density between any pair of colour classes is at most $1/2$, and refinement cannot increase degrees in the disparity graph. Recall the definition of equitability from \cref{def:equitable}, and recall the definition of the generalised disparity graph $D_L(G,c)$ from \cref{def:DL} (if $c_0$ is a coarsening of $c$, then $D(G,c_0)$ can be interpreted as a generalised disparity graph of the form $D_L(G,c)$).
\begin{fact}\label{fact:D-equitable}
    Consider any graph $G$ and any equitable colouring $c:V(G)\to \Omega$, and let $A,B$ be two distinct colour classes of $c$.
    \begin{compactitem}
\item $D(G,c)[A]$ is regular, with degree at most $(|A|-1)/2$.
    \item $D(G,c)[A,B]$ is biregular. In this bipartite subgraph, the common degree of vertices in $A$ is at most $|B|/2$, and the common degree of vertices in $B$ is at most $|A|/2$.
    \end{compactitem}
\end{fact}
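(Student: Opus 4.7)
The plan is to unpack the definition of $D(G,c)$ and reduce to a simple case analysis based on whether the majority graph $M(G,c)$ is complete or empty between each pair of (possibly equal) colour classes.

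First I would set up the relevant notation. Since $c$ is equitable, $G[A]$ is $d$-regular for some $d$, and $G[A,B]$ is biregular with degree $d_A$ from the $A$-side and $d_B$ from the $B$-side, satisfying $d_A|A| = d_B|B|$. By \cref{def:disparity}, for the pair $(A,A)$ the majority graph $M(G,c)$ either contains all of $\binom{A}{2}$ or none of it, and similarly the edges of $M(G,c)$ between $A$ and $B$ form either the complete bipartite graph $K_{A,B}$ or the empty bipartite graph.

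For the first bullet, I would do a two-case analysis on $D(G,c)[A] = M(G,c)[A] \triangle G[A]$. If fewer than half the possible edges inside $A$ are present in $G$, then $M(G,c)[A]$ is empty, so $D(G,c)[A] = G[A]$ is $d$-regular; and the density condition $d|A|/2 < \binom{|A|}{2}/2$ forces $d < (|A|-1)/2$. If at least half the possible edges are present, then $M(G,c)[A]$ is complete and $D(G,c)[A]$ is the (graph-theoretic) complement of $G[A]$, which is $(|A|-1-d)$-regular, and the density assumption now gives $d \geq (|A|-1)/2$, so $|A|-1-d \leq (|A|-1)/2$. Either way, $D(G,c)[A]$ is regular with degree at most $(|A|-1)/2$.

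The second bullet is entirely analogous: $D(G,c)[A,B]$ either equals $G[A,B]$ (when fewer than half of the $|A||B|$ possible edges are present) or equals the bipartite complement of $G[A,B]$ (when at least half are present). Both graphs are biregular, and in each case the density threshold translates directly into the claimed upper bounds $d_A \leq |B|/2$ and $d_B \leq |A|/2$ on the two common degrees. There is no real obstacle here; the whole statement is just a direct unpacking of \cref{def:disparity} combined with the fact that equitability guarantees the relevant induced (bi)partite subgraphs of $G$ are already (bi)regular, so the same holds after complementing between colour classes.
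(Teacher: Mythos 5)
Your proof is correct, and since the paper states \cref{fact:D-equitable} without proof (calling it ``near-trivial''), your direct case analysis --- between any two colour classes the disparity graph is either the induced (bi)partite subgraph of $G$ or its (bipartite) complement, with equitability giving (bi)regularity and the majority threshold giving the degree bounds --- is exactly the intended argument. No issues.
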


\begin{fact}\label{fact:degrees-decrease}
Let $c:V(G)\to \Omega$ be an equitable vertex-colouring of a graph $G$. Then for every vertex $v$, the degree of $v$ in $D(G,c)$ is at most the degree of $v$ in any generalised disparity graph $D_L(G,c)$ (and therefore at most the degree of $v$ in $D(G,c_0)$, for any possibly non-equitable coarsening $c_0$ of $c$).
\end{fact}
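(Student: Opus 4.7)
The plan is to argue colour-class by colour-class. Fix a vertex $v$ with $c(v)=\omega$, and compute separately, for each $\omega'\in \Omega$, the number of neighbours of $v$ in $\omega'$ appearing in $D(G,c)$ and in $D_L(G,c)$. By equitability of $c$, every vertex in $\omega$ has the same number $d_{\omega'}$ of neighbours in $\omega'$ (and if $\omega'=\omega$ the induced subgraph $G[\omega]$ is $d_\omega$-regular).

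In both graphs $D(G,c)$ and $D_L(G,c)$, the edges between $\omega$ and $\omega'$ are either all complemented (if the corresponding majority decision, respectively $L$-decision, was to include every possible edge between these classes) or left entirely unchanged. So for $\omega'\ne \omega$, the number of neighbours of $v$ lying in $\omega'$ must be one of the two values $d_{\omega'}$ or $|\omega'|-d_{\omega'}$. The defining property of $M(G,c)$ is precisely that it selects the option minimising this count: if $d_{\omega'}\ge |\omega'|/2$ then $M(G,c)$ contains every possible $\omega\omega'$-edge and the $D(G,c)$-contribution is $|\omega'|-d_{\omega'}\le d_{\omega'}$, while otherwise the contribution is $d_{\omega'}\le |\omega'|-d_{\omega'}$. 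Either way, this contribution is at most the corresponding contribution to $\deg_{D_L(G,c)}(v)$. The case $\omega'=\omega$ is completely analogous, with the two candidate values being $d_\omega$ and $|\omega|-1-d_\omega$. Summing over $\omega'\in \Omega$ then yields $\deg_{D(G,c)}(v)\le \deg_{D_L(G,c)}(v)$.

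For the parenthetical consequence about a possibly non-equitable coarsening $c_0$ of $c$, I would observe that each colour class of $c_0$ is a disjoint union of $c$-colour classes, so the majority graph $M(G,c_0)$ can be rewritten in the form $M_L(G,c)$, where $L_{\omega,\omega'}$ is defined to be the uniform majority decision that $M(G,c_0)$ makes for the pair of $c_0$-classes containing $\omega$ and $\omega'$ respectively. Consequently $D(G,c_0)=D_L(G,c)$, and the claim reduces to the case already handled. I do not anticipate any substantial obstacle here: the whole fact reduces to the observation that the majority graph is, by construction, the edge-by-edge minimiser of degree among graphs obtained from $G$ by complementing its edge set block-by-block according to $c$.
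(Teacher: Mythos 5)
Your proof is correct, and it coincides with the (omitted) argument the paper has in mind: the paper states this as a near-trivial Fact without proof, and the intended verification is exactly your class-by-class computation, where equitability guarantees that the majority decision picks the smaller of the two possible per-vertex contributions ($d_{\omega'}$ versus $|\omega'|-d_{\omega'}$ off the diagonal, and $d_\omega$ versus $|\omega|-1-d_\omega$ on the diagonal), so any $L$-decision can only do worse. Your handling of the coarsening $c_0$ via the identification $D(G,c_0)=D_L(G,c)$ is likewise precisely the observation the paper itself records at the end of \cref{subsec:CR}.
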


Next, the following lemma shows that refining a colouring cannot significantly increase the sizes of connected components of the disparity graph.
\begin{lemma}\label{lem:disparity-components}
Let $c:V(H)\to \Omega$ be an equitable vertex-colouring of a graph $H$, and consider a matrix $L\in \{0,1\}^{\Omega\times \Omega}$. Let $H_1=D_L(H,c)$ and $H_2=D(H,c)$, and for a vertex $v$, let $X_1(v),X_2(v)$ be the (vertex sets of) the connected components containing $v$ in the graphs $H_1$ and $H_2$, respectively. Then
\[|X_2(v)|\le 2|X_1(v)|.\]
(The same inequality therefore holds when $H_1=D(H,c_0)$ for some coarsening $c_0$ of $c$).
\end{lemma}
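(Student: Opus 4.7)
The plan is to establish the stronger per-class inequality $|X_1\cap A|\ge |X_2\cap A|/2$ for every colour class $A$ of $c$ that meets $X_2$; summing then gives $|X_1|\ge |X_2|/2$ as desired. For a pair $(B,B')$ of $c$-classes, call it \emph{matched} when $D[B,B']=D_L[B,B']$ and \emph{mismatched} otherwise; in the mismatched case $D_L[B,B']$ is the bipartite complement of $D[B,B']$ in $K_{B,B'}$, and by \cref{fact:D-equitable} every vertex of $B$ has at least $|B'|/2$ $D_L$-neighbours in $B'$. Call $A$ \emph{active} when $A\cap X_1\ne\emptyset$; an active $A$ is \emph{safe} if $(A,A)$ is mismatched or $A$ lies in a mismatched pair with some other active class, and \emph{non-safe} otherwise.

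First I would show by induction on $D$-distance from $v$ that every class meeting $X_2$ is active: whenever $B$ is active and a $D$-edge connects $B$ to a class $A$, any $b\in B\cap X_1$ has at least one $D_L$-neighbour in $A$ (directly, if matched; via the dense complement, if mismatched) which automatically lies in $X_1$. The same calculations show that every safe class satisfies $|A\cap X_1|\ge |A|/2\ge |X_2\cap A|/2$, so the per-class inequality is immediate in the safe regime.

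The core of the proof concerns non-safe active classes. If $A$ is non-safe, then no pair incident to $A$ can be mismatched---otherwise the mismatched partner would be forced active by the previous step, contradicting non-safety---so every pair involving $A$ (including $(A,A)$) is matched. Define the \emph{matching community} $M(A)$ as the set of classes reachable from $A$ through chains of matched pairs carrying at least one $D$-edge. A double-counting identity (edges of $D_L[B_1,B_2]$ leaving $B_1\cap X_1$ must stay inside $B_2\cap X_1$), combined with biregularity, forces the ratio $|X_1\cap B|/|B|$ to be constant across $B\in M(A)$. If any member of $M(A)$ happens to be safe, this constant is at least $1/2$, giving $|X_1\cap A|\ge |A|/2\ge |X_2\cap A|/2$.

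Otherwise $M(A)$ consists entirely of non-safe classes, in which case every pair among its members (including diagonals) is matched, so $D$ and $D_L$ agree on every edge with an endpoint in $V_M:=\bigcup_{B\in M(A)}B$; moreover any $D$-edge out of $V_M$ would land in a class matched-adjacent with positive $D$-density to a non-safe class of $M(A)$, hence inside $M(A)$ again. Thus $V_M$ is closed under $D$- and $D_L$-neighbourhoods, and a backward induction along a shortest $D$-path from $v$ to $A$ places $v\in V_M$, whence $X_1=X_2\subseteq V_M$ and the inequality holds trivially. The main obstacle will be this last case, where the delicate point is to verify that an all-non-safe matching community is rigid enough that $X_1(v)$ and $X_2(v)$ must coincide literally.
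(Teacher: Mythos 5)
Your proposal is correct, but it takes a genuinely different route from the paper. The paper's proof fixes the classes $C_1,\dots,C_k$ meeting $X_2(v)$, uses \cref{fact:detect-paths} to show that in $H_1[C_1\cup\dots\cup C_k]$ every vertex has a path to every class, and then invokes the dichotomy of \cref{fact:graph-structure}: a complemented pair is either connected or ``special'' (a disjoint union of two isomorphic complete bipartite graphs), so $H_1[C_1\cup\dots\cup C_k]$ is either connected or splits into exactly two equal halves, which is where the factor $2$ comes from. You instead prove the per-class bound $|X_1\cap A|\ge|X_2\cap A|/2$: mismatched (complemented) pairs give $X_1$-density at least $1/2$ directly from \cref{fact:D-equitable}; matched pairs carrying a $D$-edge propagate the exact $X_1$-density by double counting with biregularity (from $|X_1\cap B_1|\,d_{12}=|X_1\cap B_2|\,d_{21}$ and $|B_1|\,d_{12}=|B_2|\,d_{21}$ with $d_{12},d_{21}>0$); and when the whole matching community of a non-safe class is mismatch-free, $D$ and $D_L$ coincide on a union of classes that is closed under both neighbourhoods, which forces $X_1=X_2$ -- the ``delicate'' rigidity step you flagged does go through exactly as you sketch. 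Your activity-propagation step is a self-contained substitute for \cref{fact:detect-paths}, and you never need the special-graph structure fact; the paper's route is shorter given those two prepackaged facts, while yours is more elementary and yields the slightly finer conclusion that either every class meeting $X_2$ has $X_1$-density at least $1/2$, or $X_1=X_2$. Both arguments rest crucially on the equitability of $c$ and the at-most-half density of $D(H,c)$ between colour classes.
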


The proof of \cref{lem:disparity-components} uses 
the equitability of $c$ together with the following simple structural fact (see \cite[Exercise 1.1.12]{bondymurty}).
The ``special'' graphs in the second bullet point of \cref{fact:graph-structure} may already give some hint for why the factor of two in \cref{lem:disparity-components} is required.

\begin{fact}\label{fact:graph-structure}
Let $H$ be a graph.
\begin{compactitem}
    \item If $H$ has at least $\binom{|V(H)|}2/2$ edges, then it is connected.
    \item If $H$ is a biregular bipartite graph with bipartition $A,B$ and at least $|A||B|/2$ edges, then it is connected, unless $H$ is a disjoint union of two isomorphic complete bipartite graphs (in which case $H$ has exactly $|A||B|/2$ edges). In this latter case we say $H$ is ``special''.
\end{compactitem}
\end{fact}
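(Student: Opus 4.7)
My plan is to deduce both bullet points from minimum-degree arguments, exploiting the (bi)regularity that is implicit in how the fact will be invoked via \cref{fact:D-equitable} (in the intended applications, the graph $H$ to which this is applied is either regular on a single colour class or biregular between two colour classes).

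For the first bullet, in the regular setting that is relevant here, having at least $\binom{n}{2}/2$ edges in an $n$-vertex regular graph $H$ forces every vertex to have degree $d \geq (n-1)/2$. I would then appeal to the classical observation that any graph with minimum degree at least $(n-1)/2$ is connected: otherwise, its smallest connected component has size at most $n/2$, and each of its vertices would then have degree at most $n/2 - 1 < (n-1)/2$, a contradiction.

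For the second bullet, let $H$ be biregular with bipartition $(A, B)$ and common $A$-degree $d_A$ and $B$-degree $d_B$. Double-counting edges yields $|A|\,d_A = |B|\,d_B = e(H) \geq |A||B|/2$, hence $d_A \geq |B|/2$ and $d_B \geq |A|/2$. Suppose $H$ is disconnected. Every connected component of $H$, with $a$ vertices in $A$ and $b$ vertices in $B$, inherits biregularity on its own bipartition, so every $A$-vertex in that component has all of its $\geq |B|/2$ neighbours inside, forcing $b \geq |B|/2$; symmetrically $a \geq |A|/2$. Summing these bounds over all components of $H$ and using $\sum a = |A|$, $\sum b = |B|$ forces exactly two components, with equality throughout. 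The equality case pins down $d_A = |B|/2$, $d_B = |A|/2$, $a = |A|/2$, $b = |B|/2$, so every $A$-vertex of each component is joined to every $B$-vertex of the same component; the two components are therefore both copies of the complete bipartite graph $K_{|A|/2,\,|B|/2}$, giving the stated ``special'' configuration.

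The argument is essentially elementary; the only step requiring any care is the equality analysis in the second bullet, which must verify that saturating every inequality simultaneously really does pin down the two-component complete-bipartite structure (and that the two components are necessarily isomorphic to each other).
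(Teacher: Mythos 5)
Your proof is correct, and there is nothing in the paper to compare it against: the paper never proves \cref{fact:graph-structure}, it only points to an exercise in Bondy--Murty, so a self-contained degree-counting argument like yours is a genuine addition rather than a variant of an existing proof. One point is worth making explicit rather than leaving implicit. The first bullet, as literally stated for an arbitrary graph $H$, is false: already $K_3$ together with an isolated vertex has $3\ge\binom{4}{2}/2$ edges and is disconnected, and $K_{n-1}$ plus an isolated vertex works for every $n\ge 4$. So the regularity you import is genuinely needed, not merely convenient, and your proof should state it as a hypothesis. This is exactly what the sole application supplies: in Case~1 of the proof of \cref{lem:disparity-components}, the graph to which the first bullet is applied is $H_1[C_i]=D_L(H,c)[C_i]$, which is regular because $c$ is equitable (\cref{def:equitable}, \cref{fact:D-equitable}; regularity is preserved when complementing within a class), and having at least $\binom{|C_i|}{2}/2$ edges it has degree at least $(|C_i|-1)/2$, whence your minimum-degree argument gives connectivity with no exceptional configuration.

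Your treatment of the second bullet is sound: from $|A|d_A=|B|d_B=e(H)\ge|A||B|/2$ you get $d_A\ge|B|/2$ and $d_B\ge|A|/2$; each component, meeting both sides, satisfies $a\ge d_B\ge|A|/2$ and $b\ge d_A\ge|B|/2$; summing forces at most two components, and in the disconnected case equality holds throughout, which pins down $d_A=|B|/2$, $d_B=|A|/2$ and component sizes $(|A|/2,|B|/2)$, so each component is complete bipartite and the two components are both $K_{|A|/2,|B|/2}$ with $|A||B|/2$ edges in total --- precisely the ``special'' case, including the parenthetical edge count. The only unstated caveat is the degenerate situation of an empty part or $d_A=d_B=0$ (needed so that every component really does meet both $A$ and $B$); this cannot occur where the fact is invoked, since there both colour classes are nonempty and the bipartite graph between them contains at least half of a positive number of possible edges, hence at least one edge.
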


\begin{proof}[Proof of \cref{lem:disparity-components}]
Fix a vertex $v$, and let $C_1, C_2,...,C_k\subseteq V(G)$ be the colour classes which contain at least one vertex of the component $X_2(v)$. Let $X_1^*(v)$ be the (vertex set of the) component of $v$ in $H_1[C_1\cup\dots\cup C_k]$, so  $X_1^*(v)\subseteq X_1(v)$ and it suffices to show  that
\begin{equation}
 |X_1^*(v)|\geq \frac{|X_2(v)|}2.\label{eq:goal-coupling-disparity}\end{equation}
First, we show that the paths in $H_2$ between $C_1,\dots,C_k$ imply the existence of corresponding paths in $H_1$.

\begin{claim}\label{fact:connected}
    Consider any $i\le k$ and any vertex $u\in C_j$. In the graph $H_1[C_1\cup \dots\cup C_k]$ there is a path between $u$ and $C_i$.
\end{claim}
\begin{claimproof}
By the choice of $C_1,\dots,C_k$, note that for each $i,j$ there is a path in $H_2[C_1\cup \dots\cup C_k]$ between some vertex of $C_i$ and some vertex of $C_j$. But if there is any edge in $H_2=D(H,c)$ between two colour classes $C_i$ and $C_j$, then $D(H,c)[C_i,C_j]$ is neither complete nor empty\footnote{Here we are abusing notation slightly and writing $F[A,A]$ to mean $F[A]$. ``Complete'' should be understood in context to mean either a complete graph or a complete bipartite graph.} (recalling \cref{fact:D-equitable}), and the same is true for $H_1[C_i,C_j]=D_L(H,c)[C_i,C_j]$. So, for each $i,j$ there is a path in $H_1[C_1\cup \dots\cup C_k]$ between some vertex of $C_i$ and some vertex of $C_j$, and the desired result follows from \cref{fact:detect-paths}.
\end{claimproof}

Now, we break into cases depending on the structure of the graphs $H_1[C_i,C_j]$. Recall the structural description in \cref{fact:graph-structure}, and the definition of ``special''.

\medskip
\noindent\textbf{Case 1: $H_1[C_i,C_j]$ is not special for any $i\ne j$.} Note that if $H_1[C_i,C_j]=H_2[C_i,C_j]$ for all $i,j$, we trivially have $X_2(v)=X_1^*(v)$, and the desired result \cref{eq:goal-coupling-disparity} follows. So, we assume that $H_1[C_i,C_j]\ne H_2[C_i,C_j]$ for some $i,j$. Since $H_1,H_2$ can both be interpreted as generalised disparity graphs of $H$ with respect to $c$, this means that $H_1[C_i,C_j]$ is the bipartite complement of $H_2[C_i,C_j]$.

By \cref{fact:D-equitable}, $H_2[C_i,C_j]$ is (bi)regular with at most half the total possible number of edges, so $H_1[C_i,C_j]$ is biregular with at \emph{least} half the total possible number of edges. Since in this case we are assuming that $H_1[C_i,C_j]$ is not special, by \cref{fact:graph-structure} $H_1[C_i,C_j]$ is connected.

By \cref{fact:connected}, in $H_1[C_1\cup \dots\cup C_k]$ there is some path from every vertex to $C_i$. Since $H_1[C_i,C_j]$ is connected, this means that in fact $H_1[C_1\cup \dots\cup C_k]$ is connected, meaning that $X_1^*(v)=C_1\cup \dots \cup C_k\supseteq X_2(v)$, and the desired result \cref{eq:goal-coupling-disparity} follows.

\medskip
\noindent\textbf{Case 2: There is a special subgraph.} It remains to consider the case where $H_1[C_i,C_j]$ is special for some $i\ne j$. This means that $H_1[C_i,C_j]$ is a disjoint union of two complete bipartite graphs with bipartitions $(C_i^{\mr A},C_j^{\mr A})$ and $(C_i^{\mr B},C_j^{\mr B})$, for some partition of $C_i$ into equal-sized subsets $C_i^{\mr A},C_i^{\mr B}$ and some partition of $C_j$ into equal-sized subsets $C_j^{\mr A},C_j^{\mr B}$. Note that $H_1[C_i^{\mr A},C_j^{\mr A}]$ and $H_1[C_i^{\mr B},C_j^{\mr B}]$ are both connected.

Now, recall that we are assuming that $c$ is equitable for $H$, and by the definition of a generalised disparity graph it immediately follows that $c$ is equitable for $H_1$. 
By \cref{fact:connected}, in $H_1[C_1\cup \dots\cup C_k]$ there is some path from every vertex to $C_i$. So, $H_1[C_1\cup \dots\cup C_k]$ has at most two connected components (one including $C_i^{\mr A},C_j^{\mr A}$ and one including $C_i^{\mr B},C_j^{\mr B}$). If there is only one component, we are done as in the previous case. If there are two components, note that both components have the same degree statistics between each pair of parts $C_i,C_j$ (by equitability); since $C_i^{\mr A}$ and $C_i^{\mr B}$ have the same size, this means that both components have exactly half the vertices of $H_1[C_1\cup \dots\cup C_k]$. So, $|X^*_1(v)|=|C_1\cup \dots \cup C_k|/2\ge |X_2(v)|/2$; this is the desired inequality \cref{eq:goal-coupling-disparity}.
\end{proof}

Next, we need a graph-theoretic lemma about distances in sparse graphs (i.e., graphs of bounded degree). Roughly speaking, this lemma says that for any set of vertices $C$ in a sparse connected graph, we can find a vertex $y$ and a set of distances $\mc I$, such that the number of vertices in $C$ which see $y$ at those distances is not too large and not too small. This means that if $C$ is large, then there are many vertices which see $y$ at distances in $\mc I$ and many vertices which do not see $y$ at distances in $\mc I$ (i.e., it is not the case that $y$ has the same distance to almost all vertices).

\begin{lemma}\label{lem:findingavertex}
Let $F$ be a connected graph of maximum degree at most $\Delta\geq 2$, and fix a vertex subset $C\subseteq V(F)$. Then, for any $1\le a\leq |C|$, there is a vertex $y \in V(G)$ and a subset $\mc I\subseteq \mb N$, such that the number of vertices $v\in C$ with $\on{dist}_G(y,v) \in \mc I$ lies in the interval $[a,a\Delta)$. 
\end{lemma}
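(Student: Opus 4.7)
The plan is to split into cases based on whether $|C|$ is small or large compared to $a\Delta$. If $|C| < a\Delta$, I take $y$ to be any vertex of $V(H)$ and $\mc I = \mb N$, so that the count is $|C|$, which lies in $[a, a\Delta)$ by the hypothesis $a \le |C|$.

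Suppose instead $|C| \ge a\Delta$. Then I pick $y \in C$ and analyse the BFS level counts $n_i := |\{v \in C : \on{dist}_H(y,v) = i\}|$, which satisfy $n_0 = 1$ and $\sum_i n_i = |C|$. Let $r^* = \min\{r \ge 0 : \sum_{i \le r} n_i \ge a\}$, which is well-defined since the prefix sums grow from $1$ to $|C|\ge a$. If $\sum_{i \le r^*} n_i < a\Delta$, I take $\mc I = \{0, 1, \ldots, r^*\}$ and I am done. Otherwise the minimality of $r^*$ forces $n_{r^*} \ge a\Delta - (a-1) > a(\Delta-1) \ge a$, and if additionally $n_{r^*} < a\Delta$ the singleton $\mc I = \{r^*\}$ yields sum in $[a(\Delta-1)+1,\, a\Delta - 1] \subseteq [a, a\Delta)$, again finishing the argument.

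The main obstacle, which I expect to take the bulk of the work, is the residual case $n_{r^*} \ge a\Delta$: a single BFS sphere around $y$ already contains at least $a\Delta$ vertices of $C$. To resolve this I plan to re-root: pick a new root $y' \in \{v \in C : \on{dist}_H(y,v) = r^*\}$ (which is nonempty) and repeat the analysis from $y'$. To ensure termination of the iterative re-rooting, my plan is to make the initial choice of $y$ extremal --- for instance, by minimizing $r^*(y)$ over $y \in V(H)$, or by taking $y$ to be a diameter-endpoint of $C$ in the graph metric of $H$ --- and then to combine this extremality with the maximum-degree bound $|\{v \in V(H) : \on{dist}_H(y, v) = r\}| \le \Delta(\Delta-1)^{r-1}$ via a pigeonhole-type counting argument. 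Concretely, the $\ge a\Delta$ vertices of the bad level must distribute across the BFS levels of any re-rooted vertex $y' \in \{v \in C : \on{dist}_H(y,v) = r^*\}$, so that either some level count lands in $[a, a\Delta)$ yielding one of the easy sub-cases handled above, or the persistence of the bad configuration under re-rooting contradicts the extremal choice of $y$.
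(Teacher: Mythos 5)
Your two easy sub-cases are fine, and your prefix-sum argument for them is essentially the paper's ``all levels are small'' case. But the residual case --- a single sphere $\{v\in C:\on{dist}_H(y,v)=r^*\}$ containing at least $a\Delta$ vertices of $C$ --- is exactly the heart of the lemma, and you have not proved it; you have only sketched a plan. Moreover, the plan as stated does not obviously work: re-rooting at a vertex $y'$ in the bad sphere and restarting BFS gives you no control relating the levels around $y'$ to the levels around $y$ (the other vertices of the sphere can sit at any distances between $1$ and $2r^*$ from $y'$), there is no argument that the quantity $r^*(\cdot)$ decreases or that the ``bad configuration'' cannot simply recur at $y'$, and minimality of $r^*(y)$ is not contradicted by $r^*(y')\ge r^*(y)$. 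The growth bound $|\{v:\on{dist}_H(y,v)=r\}|\le\Delta(\Delta-1)^{r-1}$ only tells you that $r^*$ is large, which by itself yields nothing. So as written this is a genuine gap, not a routine verification.

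The missing idea (which is how the paper handles it) is a \emph{one-step} walk towards the heavy sphere rather than a jump into it: if $Q_{\{r\}}(y)$ denotes the number of vertices of $C$ at distance exactly $r$ from $y$, then every such vertex is at distance exactly $r-1$ from some neighbour of $y$ (take the second vertex of a shortest path), so by pigeonhole some neighbour $y_1$ of $y$ satisfies $Q_{\{r-1\}}(y_1)\ge Q_{\{r\}}(y)/\Delta$. Iterating, you get vertices $y=y_0,y_1,\dots,y_{r^*}$ with $Q_{\{r^*-i\}}(y_i)\ge Q_{\{r^*-(i-1)\}}(y_{i-1})/\Delta$; the sequence starts at a value $\ge a$ (indeed $\ge a\Delta$) and ends at $Q_{\{0\}}(y_{r^*})\le 1\le a$, and since it can drop by at most a factor of $\Delta$ per step, some term lands in $[a,a\Delta)$, finishing with a singleton $\mc I$. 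If you replace your re-rooting step by this neighbour-pigeonhole walk, your case analysis closes and you recover (essentially) the paper's proof.
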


\begin{proof}[Proof of \cref{lem:findingavertex}]
For any vertex $y$ and set $\mc I\subseteq \mb N$, let $Q_{\mc I}(y)$ be the number of vertices in $C$ whose distance to $y$ lies in $\mc I$. Fix a vertex $z$.

\medskip\noindent\textbf{Case 1: there are at least $a$ vertices in $C$ with the same distance $d$ to $z$.} In this case, let $z_d=z$ and recursively define a sequence of vertices $z_{d-1},\dots,z_0$ by choosing $z_i$ to be a neighbour of $z$ such that $Q_{\{i\}}(z_i)\ge Q_{\{i+1\}}(z_{i+1})/\Delta$ (this is possible, because the $Q_{\{i+1\}}(z_{i+1})$ vertices in $C$ which have distance $i+1$ to $z_{i+1}$ are all at distance $i$ to at least one of the at most $\Delta$ neighbours of $z_{i+1}$).

Note that $Q_{\{d\}}(z_{d})\ge a$ and $Q_{\{0\}}(z_{0})=1$, so as we gradually decrease $i$ from $d$ to $0$, we must find some $i$ for which $Q_{\{i\}}(z_{i})\in [a,a\Delta)$ (and then we can take $y=z_{i}$ and $\mc I=\{i\}$).

\medskip\noindent\textbf{Case 2: for every $d\in \mb N$, there are at most $a$ vertices in $C$ with distance $d$ to $z$.} In this case, let $\mc I(i)=\{0,1,\dots,i-1\}$. Note that $Q_{\mc I(0)}(z)=0$ and $Q_{\mc I(n)}(z)=|C|$ for large enough $n$, and note that $Q_{\mc I(i+1)}(z)-Q_{\mc I(i)}(z)\le a$ for each $i$. So, as we gradually increase $i$ from $0$ to $n$, we must find some $i$ for which $Q_{\mc I(i)}(z)\in [a,a\Delta)$ (and then we can take $z=y$ and $\mc I=\mc I(i)$).
\end{proof}

Finally, the following very simple lemma will be used in the ``fingerprinting'' argument briefly outlined in \cref{subsec:2WL-outline}. For a set of vertices $A$ in a graph $G$, write $N(A)=\bigcup_{v\in A}N(v)$ for the set of all vertices adjacent to a vertex in $A$.

\begin{lemma}\label{lem:findset}
Let $F$ be a non-empty biregular bipartite graph with bipartition $C\cup B$, where the vertices in $B$ have degree $d_B$. Then there is a subset $S\subseteq B$ of size at most $|B|/2d_B+1$ with the property that $|N(S)|\geq |C|/4$. 
\end{lemma}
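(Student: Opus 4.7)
The plan is to use either a greedy or a probabilistic argument exploiting biregularity. Let $d_C$ denote the common degree of vertices in $C$, so that $|B|d_B=|C|d_C$ by double-counting edges.

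I would first try a greedy argument: start from $S=\emptyset$ and, as long as $|N(S)|<|C|/4$, add to $S$ a vertex $v\in B\setminus S$ maximising the marginal gain $|N(\{v\})\setminus N(S)|$. The key observation is that while $|N(S)|<|C|/4$ we have $|C\setminus N(S)|>3|C|/4$, and every vertex of $C\setminus N(S)$ has all $d_C$ of its $B$-neighbours lying in $B\setminus S$ (since an edge to $S$ would place it in $N(S)$). Hence the number of edges between $B\setminus S$ and $C\setminus N(S)$ exceeds $(3|C|/4)d_C=(3/4)|B|d_B$, and by averaging over $B\setminus S$ some vertex of $B\setminus S$ has at least $\lceil 3d_B/4\rceil$ neighbours in $C\setminus N(S)$. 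Thus each greedy step enlarges $|N(S)|$ by at least $\lceil 3d_B/4\rceil$, so the process terminates within $\lceil|C|/(3d_B)\rceil+1$ steps, producing an $S$ of that size with $|N(S)|\geq|C|/4$.

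The main obstacle is matching the precise constant in the stated bound $|B|/(2d_B)+1$. The raw greedy bound $|C|/(3d_B)+1$ has the correct order of magnitude but is expressed in terms of $|C|$ rather than $|B|$; to convert it I would use biregularity $|C|=|B|d_B/d_C$ and verify $|C|/(3d_B)\leq|B|/(2d_B)$, which reduces to $d_C\geq 2d_B/3$. In the complementary regime $d_C<2d_B/3$ the bound $|B|/(2d_B)+1$ is smaller, so I would instead invoke a direct probabilistic argument: choose $S\subseteq B$ uniformly at random of size $k=\lfloor|B|/(2d_B)\rfloor+1$ and compute, for each $c\in C$,
\[\Pr[c\notin N(S)]=\binom{|B|-d_C}{k}/\binom{|B|}{k}\leq\Big(1-\tfrac{k}{|B|}\Big)^{d_C}.\]
A first-order expansion of this probability, combined with the identity $|C|d_C/|B|=d_B$, should yield $\mathbb{E}|N(S)|\geq|C|/4$, after which a set $S$ with the required properties exists by the probabilistic method. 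The delicate part of this second approach is justifying the first-order expansion in the small-$d_C$ regime; if needed, I would split the argument at a threshold like $d_B\leq|C|/4$ (so that a single vertex already suffices) versus $d_B>|C|/4$ (handled by the calculation above).
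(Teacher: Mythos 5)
Your greedy argument is essentially the paper's own proof: the paper also builds $S$ one vertex at a time, always adding a vertex of $B$ with the maximum number of neighbours in the not-yet-covered part of $C$, and uses biregularity ($d_C=d_B|B|/|C|$) to show each step gains at least $d_B/2$ new neighbours, so $\lceil|C|/(2d_B)\rceil$ steps suffice; your variant gains at least $3d_B/4$ per step and stops within $\lceil|C|/(3d_B)\rceil$ steps, which is fine and even slightly sharper. Note that both arguments naturally produce a bound of the form $|C|/\Theta(d_B)+1$, not $|B|/(2d_B)+1$: the ``$|B|$'' in the statement is a typo for ``$|C|$''. This is confirmed by how the lemma is invoked in the proof of \cref{lem:graph_shrinks_certificate}, where the fingerprint set $S\subseteq B'$ is taken to have size at most $|C'|/(2Q_{\mathcal I}(C',y))+1$.

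Consequently, the second half of your proposal --- trying to establish the literal bound $|B|/(2d_B)+1$ in the regime $d_C<2d_B/3$ --- is chasing a false statement, and the probabilistic patch cannot be completed. Concretely, let $F$ be a disjoint union of $|B|$ stars $K_{1,d_B}$ with centres in $B$ and $d_B$ large (so $d_C=1$). Then $|C|=|B|d_B$ and every $S\subseteq B$ has $|N(S)|=d_B|S|$, so $|N(S)|\ge|C|/4$ forces $|S|\ge|B|/4$, far exceeding $|B|/(2d_B)+1$. In the same example your expectation computation gives $\mathbb{E}|N(S)|=kd_B\approx|B|/2+d_B$, which is much smaller than $|C|/4=|B|d_B/4$, so no first-order expansion rescues it. (Your fallback split is also stated backwards: a single vertex suffices when $d_B\ge|C|/4$, not $d_B\le|C|/4$.) In short: keep the greedy argument, state the conclusion with $|C|/(2d_B)+1$ (or your sharper $|C|/(3d_B)+1$), and drop the second branch.
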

\begin{proof}
Let $S_0=\emptyset$, and for $i=1,..., \lceil |C|/(2d_B) \rceil$, recursively define $S_i=S_{i-1}\cup \{z_i\}$, where $z_i\in B$ is a vertex which has the maximum possible number of neighbours in $C\setminus N(S_{i-1})$. For each $i$, note that $N(S_{i-1})\le (i-1)d_B<(|C|/(2d_B))d_B \leq |C|/2$, so $C\setminus N(S_{i-1})\ge |C|/2$. 
The degree of every vertex in $C$ is $d_B |B|/|C|$, so the vertices in $C\setminus N(S_{i-1})$ are incident to at least $d_C\cdot |C|/2=d_B|B|/2$ edges. Thus, $z_i$ is incident to at least $d_B/2$ vertices in $C\setminus N(S_{i-1})$. It follows that $|N(S)|\geq (|C|/2d_B)(d_B/2)=|C|/4$.   
\end{proof}

\subsection{Bounding the degrees}
Now we begin to get to the heart of the proof of \cref{thm:smoothed-disparity}. Recall from the discussion at the start of this section that we are going to define a partition $V(G)=V_{\mr{core}}\cup U$, and with each round of sprinkling, each vertex of $U$ will be assigned a unique colour if it has at least three neighbours in $V_{\mr{core}}$. Provided that $p\ge 100/n$, we will calculate that in each round of sprinkling, each vertex of $U$ is assigned a unique colour with probability at least $0.7$ independently. 

Our first key lemma is the simple observation that this random assignment of unique colours typically causes the disparity graph to have maximum degree $O(\log n)$. Indeed, if a vertex $v$ in the disparity graph has large degree, it must have quite a different neighbourhood to some other vertex $u$ in its colour class. But then it is very likely we assign a unique colour to some vertex which is in the neighbourhood of $v$ but not $u$, or vice versa, so colour refinement will distinguish $u$ and $v$ from each other.

\begin{lemma}\label{lem:degrees-log}
    Consider an $n$-vertex graph $H$ and an equitable colouring $c:V(H)\to \Omega$. Let $c'$ be a random refinement of $c$, obtained by assigning each vertex a new unique colour with probability at least $0.7$, independently. Then, whp $D(H,\mc R^*c')$ has maximum degree at most $4\log n$.
\end{lemma}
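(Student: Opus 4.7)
The plan is to reduce bounding the maximum degree of $D(H,\tilde c)$ (with $\tilde c := \mc R^*c'$) to controlling the pairwise symmetric differences $\Delta(u,v) := |N_H(u) \triangle N_H(v)|$ of vertices sharing a $\tilde c$-class. The reduction combines a probabilistic estimate controlling $\Delta$ with a deterministic inequality linking $\Delta$ to the disparity degree, plus an easy separate argument for singleton classes.

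For the probabilistic step, I would show that for any distinct $u,v$, $\Pr[\tilde c(u) = \tilde c(v)] \le 0.3^{\Delta(u,v)}$: indeed, whenever some $w \in N_H(u) \triangle N_H(v)$ receives a unique colour under $c'$ (which happens independently with probability at least $0.7$), the neighbour multisets of $u$ and $v$ in $c'$ differ, so $\mc R c'$ (hence $\tilde c$) distinguishes them. Taking $\Delta(u,v) \ge 4\log n + 1$ makes this probability smaller than $n^{-4}$, and a union bound over $\binom{n}{2}$ pairs gives that whp every pair $u,v$ with $\tilde c(u) = \tilde c(v)$ satisfies $\Delta(u,v) \le 4\log n$.

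For the deterministic step, I would prove that whenever $v$'s $\tilde c$-class $C$ has $|C|\ge 2$,
\[
\sum_{u \in C} \Delta(u,v) \;\ge\; |C| \cdot \deg_{D(H,\tilde c)}(v),
\]
so some $u \in C \setminus \{v\}$ satisfies $\Delta(u,v) \ge \deg_{D(H,\tilde c)}(v)$. The key calculation uses equitability of $\tilde c$ (\cref{fact:equitable}): for each class $C' \ne C$, $H[C,C']$ is biregular with common $C$-side degree $k_{C'} := |N_H(v)\cap C'|$, and a vertex-by-vertex count over $w \in C'$ yields
\[
\sum_{u\in C} |N_H(u) \cap C' \,\triangle\, N_H(v) \cap C'| \;=\; \frac{2|C|\,k_{C'}(|C'|-k_{C'})}{|C'|} \;\ge\; |C|\cdot\min(k_{C'},|C'|-k_{C'}),
\]
and $\min(k_{C'},|C'|-k_{C'})$ is exactly the contribution of $C'$ to $\deg_{D(H,\tilde c)}(v)$; the intra-class case $C'=C$ is handled by an analogous within-class count.

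Singletons are automatic: if $[v]_{\tilde c} = \{v\}$, equitability of $\tilde c$ forces every other class $C'$ to be either entirely adjacent or entirely non-adjacent to $v$ (as all $w \in C'$ must share the same count of neighbours in $\{v\}$), so $H[\{v\},C']$ has density $0$ or $1$ and contributes $0$ to the disparity, giving $\deg_{D(H,\tilde c)}(v) = 0$. Combining: a vertex $v$ with $\deg_{D(H,\tilde c)}(v) \ge 4\log n + 1$ must have $|[v]_{\tilde c}| \ge 2$, and the deterministic step then produces some $u$ in its class with $\Delta(u,v) \ge 4\log n + 1$, contradicting the probabilistic step whp. The main technical subtlety will be verifying the deterministic inequality cleanly---in particular the elementary bound $2k(|C'|-k)/|C'| \ge \min(k,|C'|-k)$ and its within-class analogue.
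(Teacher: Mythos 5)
Your proposal is correct, and its overall skeleton is the same as the paper's: a union bound showing that whp any two vertices left in the same $\mc R^*c'$-class have small neighbourhood symmetric difference in $H$ (each vertex of $N_H(u)\triangle N_H(v)$ independently gets a unique colour with probability $\ge 0.7$, giving the $0.3^{\Delta(u,v)}$ bound), followed by a deterministic argument that a vertex of large disparity degree must have a class-mate with large symmetric difference. Where you differ is in that second, deterministic step. The paper works directly in the disparity graph: by \cref{fact:D-equitable} a uniformly random $v$ in the class of $u$ is $D$-adjacent to any fixed vertex with probability at most $1/2$, so some class-mate $v$ satisfies $|N_D(u)\setminus N_D(v)|\ge \deg_D(u)/2$, and since $u,v$ lie in the same class the $D$-symmetric difference equals the $H$-symmetric difference; this loses a factor $2$, which is absorbed by using the threshold $2\log n$ in the probabilistic claim. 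You instead do an exact double count in $H$ itself, using biregularity (\cref{fact:equitable}) to show $\sum_{u\in C}|N_H(u)\triangle N_H(v)|\ge |C|\deg_D(v)$, after identifying the contribution of each class $C'$ to $\deg_D(v)$ as $\min\bigl(k_{C'},|C'|-k_{C'}\bigr)$ (respectively $\min\bigl(k_C,|C|-1-k_C\bigr)$ within the class). This avoids the factor-$2$ loss, at the cost of a slightly longer computation, an explicit treatment of the within-class case and of singleton classes (both of which you handle correctly: the within-class count gives $2k(|C|-k)\ge|C|\min(k,|C|-1-k)$, and singletons have disparity degree $0$ by equitability), whereas the paper's averaging argument handles these uniformly. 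Both routes give the stated $4\log n$ bound; yours would in fact tolerate the smaller threshold with a bit of slack in the union bound.
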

We remark that in our proof of \cref{thm:smoothed-disparity}, we will apply \cref{lem:degrees-log} with $H=G[U]$.
\begin{proof}[Proof of \cref{lem:degrees-log}]
    For a pair of vertices $u,v$, let $N_H(u)\triangle N_H(v)$ be the set of vertices in $H$ which are a neighbour of exactly one of $u,v$.
    \begin{claim}\label{claim:distinguish-colour-cheap}
        Whp the following holds. For every pair of vertices $u,v$ with $|N_H(u)\triangle N_H(v)|\ge 2\log n$, we have $\mc R^* c'(u)\ne \mc R^* c'(v)$.
    \end{claim}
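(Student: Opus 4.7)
The plan is to show that whenever $u$ and $v$ have a large symmetric difference of neighbourhoods, with overwhelming probability some vertex in $N_H(u)\triangle N_H(v)$ receives a unique colour under $c'$, and this unique colour immediately separates $u$ from $v$ after one step of refinement. The argument is a straightforward union bound; there isn't really a major obstacle, so I'll mostly be careful about the case split and the bookkeeping.

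First I would dispose of the trivial case $c(u)\neq c(v)$: since $c'$ is a refinement of $c$ and $\mc R^*c'$ is in turn a refinement of $c'$, we have $\mc R^*c'(u)\neq \mc R^*c'(v)$ automatically. So I may assume $c(u)=c(v)$, and that $|N_H(u)\triangle N_H(v)|\ge 2\log n$. By the independence hypothesis, the probability that no vertex in $N_H(u)\triangle N_H(v)$ is assigned a unique colour under $c'$ is at most $0.3^{2\log n}=n^{-2\log(10/3)}\le n^{-2.4}$. Taking a union bound over the $\binom{n}{2}$ pairs $(u,v)$ shows that with probability $1-O(n^{-0.4})=1-o(1)$, for every such pair at least one vertex $w\in N_H(u)\triangle N_H(v)$ has a unique colour under $c'$.

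Finally I would observe that any such $w$ distinguishes $u$ from $v$ after a single application of $\mc R$. Indeed, suppose WLOG that $w\in N_H(u)\setminus N_H(v)$ and that $c'(w)=\omega$ with $\omega$ not occurring at any other vertex; then $u$ has exactly one neighbour of colour $\omega$ (namely $w$) while $v$ has none, so $(\mc R c')(u)\neq (\mc R c')(v)$. Since $\mc R^*c'$ is a refinement of $\mc R c'$, this yields $\mc R^*c'(u)\neq \mc R^*c'(v)$, completing the proof of the claim.
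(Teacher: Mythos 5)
Your proof is correct and follows essentially the same route as the paper: note that any vertex of $N_H(u)\triangle N_H(v)$ receiving a unique colour under $c'$ forces $\mc R^*c'(u)\ne\mc R^*c'(v)$, bound the failure probability for a fixed pair by $0.3^{2\log n}=o(n^{-2})$, and union bound over pairs. The extra case split on $c(u)\ne c(v)$ and the explicit one-step refinement argument are harmless elaborations of what the paper leaves implicit.
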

    \begin{claimproof}
        For any particular pair of vertices $u,v$, if any of the vertices in $N_H(u)\triangle N_H(v)$ are assigned a unique colour by $c'$ then $\mc R^* c'(u)\ne \mc R^* c'(v)$. So, if $|N_H(u)\triangle N_H(v)|\ge 2\log n$ then $\Pr[\mc R^* c'(u)= \mc R^* c'(v)]\le 0.3^{2\log n}=o(n^{-2})$. The desired result follows from a union bound over pairs $u,v$.
    \end{claimproof}
    Now, suppose that the conclusion of \cref{claim:distinguish-colour-cheap} holds (from now on we no longer view $c'$ as random), and suppose for the purpose of contradiction that some vertex $u$ has at least $4\log n$ neighbours in $D(H,\mc R^* c')$. Let $N_{D(H,\mc R^* c')}(u)$ be this set of neighbours, and let $C'$ be the colour class of $u$ in $D(H,\mc R^* c')$. Observe that we must have $|C'|\ge 2$, because colour classes of size 1 correspond to isolated vertices in $D(H,\mc R^* c')$. Now, by \cref{fact:D-equitable}, every vertex is adjacent to at most half the vertices in $C'$, so if $v$ is a uniformly \emph{random} vertex in $C'$  (this is now the only source of randomness, as we are viewing $c'$ as fixed), then for every vertex $x\in V(H)$, we have $\Pr[vx\text{ is an edge of }D(H,\mc R^* c')]\le 1/2$. This means that $\mb E[|N_{D(H,\mc R^* c')}(u)\cap N_{D(H,\mc R^* c')}(v)|]\le |N_{D(H,\mc R^* c')}(u)|/2$, so there must be some specific vertex $v\in C'$ with $|N_{D(H,\mc R^* c')}(u)\cap N_{D(H,\mc R^* c')}(v)|\le |N_{D(H,\mc R^* c')}(u)|/2$. That is to say, with respect to $D(H,\mc R^*c')$, the symmetric difference of neighbourhoods of $u$ and $v$ is at least $|N_{D(H,\mc R^* c')}(u)|/2\ge 2\log n$, and since $u,v$ lie in the same colour class $C'$, the same is true with respect to $H$. This contradicts the conclusion of \cref{claim:distinguish-colour-cheap}.
\end{proof}

\subsection{Splitting connected components via colour classes}
Our next key lemma is that in order to control the sizes of the connected components of the disparity graph (after sprinkling), it suffices to control the number of vertices of a single colour in a single component. We will prove this with an adaptive algorithm to explore a connected component, revealing randomness as we go. Along the way we also prove that sprinkling typically ensures that large components can be partitioned into full colour classes.

\begin{lemma}\label{lem:break}
Consider an $n$-vertex graph $H$ and an equitable colouring $c:V(H)\to \Omega$. Let $c'$ be a random refinement of $c$, obtained by assigning each vertex a new unique colour with probability at least $0.7$, independently. Then, whp $c'$ satisfies the following properties.
\begin{enumerate}
    \item Every connected component of $D(H,c)$ with at least $\log n$ vertices can be partitioned into colour classes of $\mc R^* c'$.
    \item For every colour class $C$ of $c$, and every connected component\footnote{Here, and in the rest of the section, we conflate components with their vertex sets (i.e., here $X$ is really a vertex set on which a connected component lies).} $X$ of $D(H,c)$, every connected component of $D(H,\mc R^* c')$ that intersects $C\cap X$ has at most $5|C\cap X|+5\log n$ vertices.
\end{enumerate}
\end{lemma}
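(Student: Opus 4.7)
The plan is to combine \cref{lem:degrees-log} with an adaptive exploration argument. I first condition on the whp event from \cref{lem:degrees-log} that $D(H,\mc R^*c')$ has maximum degree at most $4\log n$; this is the key quantitative input used throughout.

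The main task is to establish \cref{lem:break}(2). Fix a $c$-colour class $C$, a component $X$ of $D(H,c)$, and $u_0\in C\cap X$; let $X'$ denote the component of $u_0$ in $D(H,\mc R^*c')$. I run an adaptive exploration which reveals sprinkles lazily: initialise $V_0=C\cap X$, and at step $t$ let $c'_t$ be the partial colouring in which only the sprinkles of vertices in $V_t$ have been revealed (other vertices retain their $c$-colour), and set $c''_t=\mc R^*c'_t$. At step $t$, pick $v_{t+1}$ outside $V_t$ which is adjacent in $D(H,c''_t)$ to the $D(H,c''_t)$-component of $u_0$, reveal its sprinkle, and add it to $V_{t+1}$; terminate at step $T$ when no such vertex exists, so that the $D(H,c''_T)$-component of $u_0$ is contained in $V_T$. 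Since $c''_T$ is a coarsening of $\mc R^*c'$, \cref{lem:disparity-components} gives $|X'|\le 2|V_T|$. To bound $|V_T|$, I track $K_T$, the number of distinct $c''_T$-colour classes appearing in $V_T$. Each vertex of $V_T$ is sprinkled independently with probability at least $0.7$ (the sprinkle of $v_{t+1}$ being independent of the exploration up to its revelation, since the choice of $v_{t+1}$ depends only on previously revealed sprinkles), and each sprinkled vertex carries a unique $c''_T$-colour; a martingale concentration estimate (in the style of \cref{lem:chernoff}) then yields $K_T\ge 0.6|V_T|$ whp. Combined with a combinatorial upper bound $K_T\le \tfrac{3}{2}(|C\cap X|+\log n)$ (discussed below), this gives $|V_T|\le \tfrac{5}{2}(|C\cap X|+\log n)$ and hence $|X'|\le 5|C\cap X|+5\log n$, as required.

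For \cref{lem:break}(1), the plan is a direct view-based argument. Suppose $u\in X$ and $v\notin X$ share a colour in $\mc R^*c'$; then by \cref{Universialtreecoverstep}, the $c'$-coloured views $\mc T_H(u)$ and $\mc T_H(v)$ must be isomorphic, forcing every sprinkled vertex visible from $u$ at some depth to correspond to a sprinkled vertex of the same unique $c'$-colour visible from $v$ at the same depth. Since $|X|\ge\log n$ and sprinkles within $X$ are independent Bernoullis of bias at least $0.7$, $X$ provides sufficiently many ``distinguishing witnesses'' $w$; a careful union bound over pairs $(u,v)$ of matching $c$-colour lying in distinct $D(H,c)$-components of size at least $\log n$ then rules out the bad event.

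The main obstacle is establishing the combinatorial upper bound $K_T\le \tfrac{3}{2}(|C\cap X|+\log n)$ in the proof of \cref{lem:break}(2). One must carefully account for the ways new $c''_t$-colour classes can appear as the exploration progresses: each new class is either a refinement of the $c$-class $C$ restricted to $V_t$ (contributing at most $|C\cap X|$ such classes), or is forced by a sprinkle which splits some other $c$-class reachable from $V_t$ via $D(H,c)$-paths. The max-degree bound $4\log n$ from \cref{lem:degrees-log}, together with \cref{fact:equitable} and \cref{fact:detect-paths}, limits how far such splits can propagate and provides the desired bound.
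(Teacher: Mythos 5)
Your overall architecture for part (2) (lazy revelation of sprinkles, an adaptive exploration, a drift argument driven by the $0.7$ sprinkle probability, and the factor-$2$ comparison via \cref{lem:disparity-components}) is in the same spirit as the paper's proof, but the step you flag as ``the main obstacle'' is a genuine gap, and it cannot be repaired in the form you propose. No combinatorial bound of the form $K_T\le \tfrac32\bigl(|C\cap X|+\log n\bigr)$ can hold: every sprinkled vertex of $V_T$ is a singleton colour class of $c''_T$, so deterministically $K_T\ge \#\{\text{sprinkled vertices in }V_T\}\approx 0.7|V_T|$, and hence bounding $K_T$ by $O(|C\cap X|+\log n)$ is \emph{equivalent} to the conclusion $|V_T|=O(|C\cap X|+\log n)$ you are trying to prove, not an independent input. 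The sketched justification also does not work: the $4\log n$ bound from \cref{lem:degrees-log} concerns the final graph $D(H,\mc R^*c')$, whereas your exploration is driven by the coarser intermediate graphs $D(H,c''_t)$, whose degrees can only be larger (\cref{fact:degrees-decrease} goes the wrong way). More structurally, your exploration rule (add any vertex of the current $D(H,c''_t)$-component of $u_0$) lacks the feature that makes a drift argument possible: when the newly revealed vertex is sprinkled it creates its own singleton class, but it need not split any class meeting $V_t$ (its disparity-graph neighbours may all be unexplored), so the potential $|V_t|-K_t$ has no negative drift. The paper's proof fixes exactly this by the choice of exploration criterion: a vertex is revealed only if making it special would strictly increase the number of colour classes of the refined colouring meeting the explored set $E_t$. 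Then ``special'' forces $N_t=|E_t|-\#\{\text{classes meeting }E_t\}$ to drop by at least $1$ while ``not special'' raises it by $1$, giving drift $-0.4$ and $|E_\tau|\le 5|C\cap X|+5\log n$ whp; and it is the termination condition that yields containment, via the closure claim that the union $W$ of fully explored classes has no edges of $D(H,\mc R^*c_\tau)$ leaving it (using \cref{fact:D-equitable}), so every component of $D(H,\mc R^*c')$ meeting $C\cap X$ is trapped inside $E_\tau$; no degree hypothesis is needed anywhere in the lemma.

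For part (1) your sketch also leaves the main step unproved. The correct mechanism (and the paper's) is deterministic: if even one vertex $w\in X$ is sprinkled, then every $u\in X$ is separated by $\mc R^*c'$ from every $v\notin X$ of the same $c$-colour, because $u$ has a path in $D(H,c)$ to the singleton class $\{w\}$ while $v$ does not, $D(H,c)$ is a generalised disparity graph with respect to the refined equitable colouring, and \cref{fact:detect-paths} shows that colour refinement detects such path-connectivity. Hence it suffices that $X$ (of size at least $\log n$) contain a special vertex, which fails with probability at most $0.3^{\log n}=o(1/n)$, and one takes a union bound over the at most $n$ choices of component. Your argument instead asserts that sprinkled vertices of $X$ must appear ``differently'' in the $H$-views of $u$ and $v$; that is exactly what needs proof (it follows from the disparity-graph argument just described, not from view-isomorphism considerations alone, since $X$ is a component of $D(H,c)$ and not of $H$). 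Moreover, as phrased your union bound only covers $v$ lying in components of size at least $\log n$, whereas (1) requires separating $X$ from \emph{all} outside vertices, including those in tiny components.
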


\begin{proof}
Fix a colour class $C$ (of $c$) and a connected component $X$ (of $D(H,c)$), such that $C\cap X$ is nonempty (there are at most $n$ such choices). Suppose that $|X|\ge \log n$ (noting that (2) follows deterministically by \cref{lem:disparity-components} when $|X|< \log n$). We will show that with probability $1-o(1/n)$,
\begin{enumerate}
    \item $C\cap X$ can be partitioned into colour classes of $\mc R^* c'$, and
    \item every connected component of $D(H,\mc R^*c')$ that intersects $C\cap X$ has at most $5|C\cap X|+5\log n$ vertices.
\end{enumerate}
The desired result will follow by a union bound over all nonempty choices of $C\cap X$.

Say that a vertex $v\in U$ is \emph{special} if it is assigned a new unique colour by $c'$. We reveal which vertices are special in an adaptive fashion, according to the following 2-phase algorithm.
\begin{itemize}
    \item \textbf{Phase 1:} Consider an arbitrary subset $Y\subseteq X$ of size $\log n$, and reveal which of the vertices in $(C\cap X)\cup Y$ are special.
    \begin{itemize}
            \item If none of these vertices of $X$ are special, abort the whole algorithm.
            \end{itemize}
    \item \textbf{Phase 2:} Let $t=0$, and repeatedly do the following.
    \begin{enumerate}[label=(\alph*)]
        \item Let $E_t\supseteq C\cap X$ be the set of vertices whose specialness has been revealed so far, let $S_t\subseteq E_t$ be the set of vertices revealed to be special so far, and let $c_t$ be the colouring obtained from $c$ by assigning each vertex in $S_t$ a new unique colour (i.e., $c_t$ represents the information about $c'$ that we know so far).
        \item For each vertex $v\notin E_t$, let $c_t^v$ be the colouring obtained from $c_t$ by assigning $v$ a new unique colour (i.e., $c_t^v$ is the hypothetical colouring we would have after this step, if we revealed $v$ to be special).
        \item If there is some $v\notin E_t$ such that the number of colour classes of $\mc R^* c_t^v$ intersecting $E_t$ exceeds the number of colour classes of $\mc R^* c_t$ intersecting $E_t$, then choose such a $v$ arbitrarily, and reveal whether it is special.
        \begin{itemize}
            \item If there is no such $v$, then set $\tau=t$ and abort the algorithm.
        \end{itemize}
        \item Increment $t$ (i.e., set $t=t+1$).
    \end{enumerate}
\end{itemize}

To briefly summarise: in Phase 1, we ensure that there is a vertex in $X$ with a unique colour (this will imply (1), and is also an ingredient in the proof of (2)), then in Phase 2 we keep revealing whether vertices are special, as long as their specialness would create new colour classes among the previously revealed vertices.

First, note that
\[
    \Pr[\text{the algorithm aborts in Phase 1}]\le 0.3^{\log n}=o(1/n).\label{eq:easy-abort}
\]
For the rest of the proof, we assume that Phase 1 has completed without aborting (all probability calculations should be interpreted as being conditional on the information revealed by this phase).

For all $t\le \tau$ let $N_t$ be the number of vertices in $E_t$, minus the number of colour classes of $\mc R^* c_{t}$ intersecting $E_{t}$. Note that $N_0\le |C\cap X|+\log n$. Each time when step (c) of Phase 2 occurs (for a vertex $v$), either $v$ is special, in which case $N_t$ decreases by at least 1, or $v$ is not special, in which case $N_t$ increases by 1. Since vertices are more likely than not to be special (each vertex is special with probability 0.7), this process typically does not continue for long, as follows.

\begin{claim}\label{claim:size_explored}
We have
    $|E_\tau|\leq 5|C\cap X| + 5\log n$ with probability $1-o(1/n)$.
\end{claim}
\begin{claimproof}
    Artificially extend the definition of $N_t$ for $t> \tau$, by taking $N_{t+1}=N_{t+1}+1$ with probability 0.3 and $N_{t+1}=N_{t+1}-1$ with probability 0.7. For $s= |C\cap X| + \log n$ we have $\mb E [N_{4s}]\le s+(0.3-0.7)4s=-0.6s$. Also note that $E_{t}=t+s$. Thus $|E_\tau|>5s$ implies that $\tau>4s$. In particular, not every vertex in $E_{4s}$ lies in a distinct colour class of $\mc R^* c_{4s}$ (this event would had cause Phase 2 to terminate- see step (c)), meaning that $N_{4s}>0$. So
    \[\Pr\big[|E_\tau|> 5|C\cap X| + 5\log n\big]\le\Pr[N_{4s} > 0]\le \Pr[N_{4s}-\mb E N_s > 0.6s]=o(1/n)\]
    by a Chernoff bound (\cref{lem:chernoff}).
\end{claimproof}

Now, let $W$ be the union of all the colour classes of $\mc R^* c_\tau$ which are entirely contained in $E_\tau$. The following two claims essentially complete the proof of \cref{lem:break}.
\begin{claim}\label{claim:prepartition_colours}
    $C\cap X$ can be partitioned into colour classes of $\mc R^* c_\tau$, so $C\cap X\subseteq W$.
\end{claim}
\begin{claimproof}
This claim is where we use the fact that Phase 1 of the algorithm did not abort: let $v$ be a special vertex in $X$, so $v$ has a unique colour with respect to $\mc R^* c_\tau$.

Since $\mc R^* c_\tau$ is a refinement of $c$, we can interpret $D(G,c)$ as a generalised disparity graph of the form $D_L(G,\mc R^* c_\tau)$. Note that there is a path in $D(G,c)$ between a vertex of colour $\mc R^* c_\tau(v)$ and any vertex in $C\cap X$, but there is no such path between a vertex of colour $\mc R^* c_\tau(v)$ and any vertex in $C\setminus X$ (recalling that $v$ is the unique vertex with colour $\mc R^* c_\tau(v)$). So, by \cref{fact:detect-paths}, the vertices in $C\cap X$ and the vertices in $C\setminus X$ are assigned disjoint sets of colours by $\mc R^* c_\tau$.
\end{claimproof}

\begin{claim}\label{claim:partition_components}
    $W$ can be partitioned into connected components of $D(H,\mc R^* c')$.
\end{claim}

\begin{claimproof}
We need to prove that there is no edge between $W$ and $V(H)\setminus W$ in $D(H,\mc R^* c')$. Since $\mc R^* c'$ is a refinement of $\mc R^* c_\tau$, and $W$ can be partitioned into colour classes of $c_\tau$, it suffices to prove that there is no edge between $W$ and $V(H)\setminus W$ in $D(H,\mc R^* c_\tau)$. For the rest of this proof, all graph-theoretic language is with respect to $D(H,\mc R^* c_\tau)$, and all colour classes are with respect to $c_\tau$.

Suppose for the purpose of contradiction that there is an edge between a colour class $Z\subseteq E_\tau$ and a colour class $Y\not\subseteq E_\tau$, and consider any $v\in Y\setminus E_\tau$. By \cref{fact:D-equitable}, the number of neighbours of $v$ in $Z$ lies in the range $[1,|Z|/2]$; in particular, $v$ has both neighbours and non-neighbours in $Z$. Recall that $c_\tau^v$ is the colouring obtained from $c_\tau$ by assigning $v$ a new unique colour, and note that $\mc R^* c_\tau^v$ assigns disjoint sets of colours to the neighbours of $v$ and the non-neighbours of $v$. So, the number of colour classes of $\mc R^* c_\tau^v$ intersecting $E_\tau$ exceeds the number of colour classes of $\mc R^* c_\tau$ intersecting $E_\tau$, contradicting the definition of $\tau$ (see Phase~2(c)).
\end{claimproof}

\cref{claim:prepartition_colours} immediately implies (1). Then, \cref{claim:partition_components,claim:prepartition_colours} imply that every connected component of $D(H,\mc R^* c')$ intersecting $C\cap X$ is contained in $E_\tau$. By \cref{claim:size_explored}, with probability $1-o(1/n)$ every such component $X$ has at most $5|X\cap C| + 5\log n$ vertices, proving (2).
\end{proof}

\subsection{Splitting colour classes via distance information}

Given \cref{lem:break}, our goal is now to show that sprinkling tends to break up intersections of the form $C\cap X$ (where $C$ is a colour class and $X$ is a connected component of the disparity graph) into much smaller parts. We prove a general-purpose lemma that accomplishes this, which will be applied several times (each time with different parameters) in the proof of \cref{thm:smoothed-disparity}.
\begin{definition}
For a graph $H$ and a colouring $c:V(H)\to \Omega$, say that $D(H,c)$ is \emph{$s$-bounded} if for every colour class $C$ of $c$, and every connected component $X$ of $D(H,c)$, we have $|C\cap X|\le s$.
\end{definition}
\begin{lemma}\label{lem:graph-shrinks}
Consider integers $s,s',\Delta$ such that $s\ge s'\ge 100\log n$ and $s'/(8\Delta)\geq 10\log s$. Consider an $n$-vertex graph $H$ and an equitable colouring $c:V(H)\to \Omega$, such that $D(H,c)$ is $s$-bounded and has maximum degree at most $\Delta$. Let $c'$ be a random refinement of $c$, obtained by assigning each vertex a new unique colour with probability at least $0.7$, independently. Then whp $D(H,\mc V^*c')$ is $s'$-bounded.
\end{lemma}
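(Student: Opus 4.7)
Plan of proof. I will argue by contradiction, via a fingerprint-style union bound reminiscent of hypergraph containers. Let $R\subseteq V(H)$ denote the random set of vertices assigned fresh unique colours by $c'$, so every vertex lies in $R$ independently with probability at least $0.7$. Suppose for contradiction that some colour class $C'$ of $\mathcal V^* c'$ and some component $X'$ of $D(H,\mathcal V^*c')$ satisfy $|C'\cap X'|>s'$, and write $A:=C'\cap X'$. Three preliminary facts will be used repeatedly: since $\mathcal V^*c'$ refines $c$ we have $A\subseteq C$ for a single $c$-colour class $C$; by \cref{fact:detect-distances} with $i=1$ the colouring $\mathcal V^*c'$ is equitable for $H$, so \cref{fact:degrees-decrease} yields that $D(H,\mathcal V^*c')$ has maximum degree at most $\Delta$; and, taking the all-zero matrix $L$ in \cref{fact:detect-distances} so that $D_L(H,\mathcal V^*c')=H$, for any $u,v\in A$ and $r\in R$ (which has a singleton $\mathcal V^*c'$-colour class) we have $\mathrm{dist}_H(u,r)=\mathrm{dist}_H(v,r)$.

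The goal is to extract from $A$ a small ``fingerprint'' $S\subseteq V(H)$ whose distance profile is so constrained by the event $|C'\cap X'|>s'$ that this event can only occur when $R$ avoids a large ``distinguishing set''; a cheap union bound over fingerprints then closes the argument.

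\emph{Step 1 (distance-bucketing).} I will apply \cref{lem:findingavertex} to the graph $X'$ (connected with maximum degree at most $\Delta$) with target set $A$ and parameter $a=\lceil s'/(8\Delta)\rceil\le |A|$, obtaining a vertex $y\in X'$ and a set of distances $\mathcal I$ such that
\[
B:=\{v\in A:\mathrm{dist}_{X'}(y,v)\in\mathcal I\}
\]
has $|B|\in[s'/(8\Delta),\,s'/8)\subseteq[10\log s,\,s'/8)$, using the hypothesis $s'/(8\Delta)\ge 10\log s$. In particular $B\subsetneq A$, so $A\setminus B$ is non-empty.

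\emph{Step 2 (fingerprint via findset).} Using the equitability of $\mathcal V^*c'$, I will set up an auxiliary biregular bipartite graph $F$ with bipartition $B\cup C_F$, where $C_F$ is a carefully chosen $\mathcal V^*c'$-colour class and the edges of $F$ are $H$-edges (or some canonical relation whose biregularity is forced by equitability). Applying \cref{lem:findset} to $F$ will yield a fingerprint $S\subseteq B$ with $|S|\le |B|/(2d_B)+1$ and $|N_F(S)|\ge |C_F|/4$, where $d_B$ is the common $B$-side degree.

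\emph{Step 3 (probabilistic event).} Because $S\subseteq A$ shares a single $\mathcal V^*c'$-colour, the distance-detection property above forces no vertex of $R$ to lie in a ``distinguishing set'' $D_S$ built from $S$ via the construction of $F$; in particular $R\cap N_F(S)=\emptyset$, an event of probability at most $0.3^{|N_F(S)|}$.

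\emph{Step 4 (union bound).} Since $S$ lies inside $C\cap X$ for some $c$-colour class $C$ and some $D(H,c)$-component $X$ with $|C\cap X|\le s$ (by $s$-boundedness), the number of candidate triples $(C,X,S)$ is at most $n^2\cdot s^{|S|}$. Balancing $|S|$ against $|N_F(S)|$ using the hypotheses $s\ge s'\ge 100\log n$ and $s'/(8\Delta)\ge 10\log s$ will give $n^2\cdot s^{|S|}\cdot 0.3^{|N_F(S)|}=o(1)$, producing the desired contradiction.

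\emph{Main obstacle.} The crux is Step 2: choosing $F$ and $C_F$ so that the $B$-side degree $d_B$ is large enough for $|S|$ to be genuinely small, while simultaneously ensuring that $N_F(S)$ consists of bona fide distance-distinguishers whose presence in $R$ would break the $\mathcal V^*c'$-colour equality on $S$. Equitability of $\mathcal V^*c'$ supplies the required biregularity, but balancing these parameters and verifying the distance-distinguishing property requires delicate bookkeeping and several applications of \cref{fact:detect-distances}.
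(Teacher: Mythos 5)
Your high-level plan (distance bucketing via \cref{lem:findingavertex}, a fingerprint extracted via \cref{lem:findset}, and a union bound against the event that no special vertex distinguishes the fingerprint) is indeed the intended strategy, but as written it has a genuine gap at its core: all of your distances are measured in $X'$, a component of the \emph{post-sprinkling} disparity graph $D(H,\mc V^*c')$, which is itself a function of the random set $R$. Consequently the ``distinguishing set'' in Step 3 and the auxiliary graph $F$ in Step 2 are random objects determined by the very randomness you are trying to bound, so the estimate $0.3^{|N_F(S)|}$ and the union bound over fingerprints in Step 4 are not justified: a union bound of this type needs the bucketing vertex $y$, the distance classes, the bipartite relation $F$, and the distinguishing set to be computable from data fixed \emph{before} $c'$ is revealed. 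The paper's proof does exactly this by measuring all distances in the old disparity graph $D(H,c)$ (which is deterministic, has maximum degree $\Delta$, and can be interpreted as a generalised disparity graph $D_L(H,\mc V^*c')$, so \cref{fact:detect-distances} still applies to it), applying \cref{lem:findingavertex} to $D(H,c)[X]$ rather than to $X'$, and defining $F$ between $C'$ and the $\mc V^*c'$-colour class $B'$ of $y$ with edges given by ``$D(H,c)$-distance in $\mc I$''; the fingerprint $S$ is then taken inside $B'$ (not inside $C'$ as in your Step 2, which you yourself flag as unresolved). Relatedly, your event ``$R\cap N_F(S)=\emptyset$'' is too strong: a special vertex at an $\mc I$-distance from \emph{all} of $S$ distinguishes nothing, so one must pass to the set $\on{Diff}(S)\supseteq N_F(S)\setminus N_F(v)$ of vertices that see the members of $S$ at genuinely different distances, and check it is still of size $\Omega(|C'|)$.

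There is a second missing ingredient in Step 4. Your count of $n^2\cdot s^{|S|}$ candidate fingerprints uses $s$-boundedness, which requires knowing that $S$ lies inside a single set $C\cap X$ with $X$ a component of the \emph{old} disparity graph $D(H,c)$; components of $D(H,\mc V^*c')$ need not sit inside components of $D(H,c)$, so this is not automatic. In the paper this is supplied by two further whp facts that your proposal never invokes: \cref{lem:disparity-components} (to find an old component $X$ meeting $C'$ in at least $s'/2$ vertices) and \cref{lem:break}(1) (to upgrade this to $C'\subseteq X$, and similarly to place $B'$ inside $X$). Without something of this kind you would be forced to union over all $|S|$-subsets of $\{1,\dots,n\}$, costing $n^{|S|}=e^{|S|\log n}$, and since $s$ may be polylogarithmic in $n$ while $|S|$ is only bounded in terms of $|C'|/\log s$, the exponent $|S|\log n$ can overwhelm the gain $0.3^{\Omega(|C'|)}$, so the calculation does not close. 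In short: the skeleton is right, but the argument must be re-anchored to the deterministic graph $D(H,c)$, the fingerprint must live in the class of the bucketing vertex with the distinguishing set corrected to $\on{Diff}(S)$, and the containment of the relevant classes in a single old component must be established via the additional sprinkling lemmas.
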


Recalling \cref{lem:degrees-log} (which shows that the maximum degree of the disparity graph is at most $O(\log n)$), we can apply \cref{lem:graph-shrinks} with $s=n$, $s'=O((\log n)^2)$ and $\Delta=O(\log n)$ to establish $O((\log n)^2)$-boundedness. Actually, this would already be enough to prove \cref{thm:smoothed-disparity} if we were willing to use group-theoretic canonical labelling schemes on small components of the disparity graph, but for a combinatorial algorithm we need to go down to $O(\log n)$-boundedness. For this we will need some additional control over degrees in the disparity graph, via the following somewhat delicate lemma.
\begin{definition}
    For a graph $H$ and a colouring $c:V(H)\to \Omega$, say that $D(H,c)$ is \emph{$(r,d)$-degree-bounded} if for every colour class $C$ of $c$ which intersects some component of $D(H,c)$ in at least $r$ vertices, every vertex $y\in V(H)$ has at most $d$ neighbours in $C$, with respect to $D(H,c)$.
\end{definition}
\begin{lemma}\label{lem:degree-shrinks}
Consider integers $s,s'$ such that $s\ge s'\ge 100\log n$. Consider an $n$-vertex graph $H$ and a colouring $c:V(H)\to \Omega$, such that $D(H,c)$ is $s$-bounded and $D(H,c)$ has maximum degree less than $s'/8$. Let $c'$ be a random refinement of $c$, obtained by assigning each vertex a new unique colour with probability at least $0.7$, independently. Then whp $D(H,\mc V^*c')$ is $(s',10\log s)$-degree-bounded.
\end{lemma}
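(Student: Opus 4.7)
The plan is to argue by contradiction. Suppose there exist a colour class $C'$ of $\mc V^*c'$, a component $X'$ of $D(H,\mc V^*c')$, and a vertex $y$ with $|C'\cap X'|\ge s'$ and $y$ having more than $10\log s$ neighbours in $C'$ with respect to $D(H,\mc V^*c')$.

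I would first record some basic consequences. By \cref{fact:degrees-decrease} (applied with $\mc V^*c'$ equitable for $D(H,\mc V^*c')$, which follows from \cref{fact:detect-distances}, and $c$ a coarsening of $\mc V^*c'$), refinement cannot increase degrees, so $D(H,\mc V^*c')$ has maximum degree strictly less than $s'/8$. The task is to sharpen this to $10\log s$ when restricted to a ``large'' colour class. Since $y$'s $D$-neighbours lie in its own component, $y\in X'$; writing $Y'$ for the colour class of $y$ in $\mc V^*c'$, the bipartite subgraph $D(H,\mc V^*c')[C'\cap X',Y'\cap X']$ is biregular with $Y'$-side degree $d>10\log s$.

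The crux is a distance obstruction via \cref{fact:detect-distances}. If some vertex $y^*$ is made ``special'' by the sprinkling (assigned a unique new colour by $c'$), then every vertex of $C'\cap X'$ must lie at the same distance from $y^*$ in $D(H,\mc V^*c')$; otherwise 2-WL would distinguish some vertices of $C'\cap X'$, contradicting $C'$ being a single colour class. Every $y^*\in Y'\cap X'$ has exactly $d$ neighbours in $C'\cap X'$ (by biregularity) but cannot have all of $C'\cap X'$ at distance one, since $|C'\cap X'|\ge s'>s'/8>d$; so no element of $Y'\cap X'$ can be special.

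The remaining task is to turn this observation into an $o(1)$ failure probability via the ``fingerprint'' strategy sketched in \cref{subsec:2WL-outline}. The $s$-boundedness hypothesis together with \cref{lem:disparity-components} shows that $X'$ is contained in the union of at most two components of $D(H,c)$ (since each vertex of $X'$ has its $D(H,c)$-component of size at least $|X'|/2$), giving $|C'\cap X'|\le 2s$ and keeping the enumeration cheap. I would then use \cref{lem:findset} (applied to the biregular subgraph above) together with \cref{lem:findingavertex} (applied inside $X'$ to locate vertices viewing $C'\cap X'$ at several distinct distances) to assemble a short fingerprint $S$ which certifies the bad configuration and whose every member must be non-special under the distance obstruction. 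A union bound over fingerprints, each contributing probability at most $0.3^{|S|}$, then yields the desired $o(1)$ bound; the coefficient $10$ in $10\log s$ is tuned precisely so that $0.3^{10\log s}=s^{-\Omega(1)}$ dominates the fingerprint enumeration overhead. The main obstacle is exactly this calibration: the fingerprint must be short enough for cheap enumeration, yet the distance obstruction must force enough non-special vertices for a strong probabilistic bound.
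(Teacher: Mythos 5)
Your qualitative setup is largely right --- the distance obstruction from \cref{fact:detect-distances} (a special vertex cannot see a single colour class of $\mc V^*c'$ at two different distances), the use of \cref{fact:degrees-decrease} for the upper bound $s'/8$, and the fingerprint idea via \cref{lem:findset} --- but the probabilistic accounting at the end has a genuine gap. You want each fingerprint $S$ to consist of vertices that are forced to be non-special, paying probability $0.3^{|S|}$ with $|S|$ of order $10\log s$, and you claim this is calibrated to ``dominate the fingerprint enumeration overhead''. It cannot: $0.3^{10\log s}=s^{-O(1)}$, and in the intended applications $s=O(\log n\log\log n)$, so this is only polylogarithmically small --- not even enough to union bound over the $n$ choices of the vertex $y$, let alone over candidate fingerprint sets (subsets of size about $10\log s$ of a ground set of size up to $s$ cost roughly $s^{10\log s}$). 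A second, related problem is that your forced-non-special vertices are defined through $C'\cap X'$ and distances in $D(H,\mc V^*c')$, all of which depend on the random colouring $c'$; the collection of witness sets is therefore not fixed in advance, so a union bound of the form you describe is not available.

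The paper's argument (its \cref{lem:graph_shrinks_certificate}, from which \cref{lem:degree-shrinks} follows in two lines by taking $\mc I=\{1\}$ or its complement --- between two colour classes of $\mc V^*c'$, the graph $D(H,\mc V^*c')$ is either $D(H,c)$ or its bipartite complement, so the $D(H,\mc V^*c')$-degree of $y$ into $C'$ is $Q_{\mc I}(C',y)$ for one of these two choices of $\mc I$) inverts the roles you assign. There, the fingerprint $S\subseteq B'$ produced by \cref{lem:findset} is \emph{small}, of size at most $|C'|/(19\log s)$ --- this is where the threshold $10\log s$ is actually used, as a lower bound on the biregular degree --- while the forced-non-special set is the \emph{large}, deterministic set $\on{Diff}(S)$ of vertices seeing $S$ at more than one distance \emph{in $D(H,c)$}, of size at least $|C'|/8\ge s'/8$. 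The probability paid per fingerprint is thus $0.3^{|\on{Diff}(S)|}=e^{-\Omega(s')}$, which beats the enumeration of $S$ inside the deterministic set $B\cap X$ of size at most $s$ (cost $s^{|S|}\le e^{|C'|/19}$, using $s$-boundedness together with \cref{lem:break}(1) and \cref{lem:disparity-components} to place $C'$ inside a single $D(H,c)$-component). Measuring distances in the deterministic graph $D(H,c)$ --- legitimate because $D(H,c)$ is a generalised disparity graph $D_L(H,\mc V^*c')$, so \cref{fact:detect-distances} still applies --- is exactly what makes $\on{Diff}(S)$ non-random and the union bound valid. Without this inversion (large exponent, small enumeration, deterministic witness) your sketch does not close. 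A minor further point: your assertion that $y\in X'$ does not follow from the hypotheses, but it is easily repaired via biregularity of $D(H,\mc V^*c')[C',Y']$, which supplies a vertex of $Y'$ inside $X'$ with the same degree into $C'$.
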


Both \cref{lem:graph-shrinks,lem:degree-shrinks} are proved with the same ``fingerprint'' idea (briefly sketched in \cref{subsec:2WL-outline}). In fact, we will be able to deduce both of \cref{lem:graph-shrinks,lem:degree-shrinks} from the following technical lemma.

\begin{lemma}\label{lem:graph_shrinks_certificate}
Consider integers $s,s'$ such that $s\ge s'\ge 100\log n$. Consider an $n$-vertex graph $H$ and a colouring $c:V(H)\to \Omega$, such that $D(H,c)$ is $s$-bounded. Let $c'$ be a random refinement of $c$, obtained by assigning each vertex a new unique colour with probability at least $0.7$, independently. Then, the following property holds whp.

For a set of vertices $S$, a vertex $y$ and a subset $\mc I\subseteq \mc N\cup \{\infty\}$, let $Q_{\mc I}(S,y)$ be the number of vertices in $S$ whose distance to $y$, with respect to $D(H,c)$, lies in $\mc I$. Then, for every vertex $y\in V(G)$, every $\mc I\subseteq \mb N\cup\{\infty\}$ and every colour class $C'$ of $\mc V^* c'$ which intersects some component of $D(H,\mc V^*c')$ in at least $s'$ vertices, we have $Q_{\mc I}(C',y)\notin [10\log s, s'/8)$.
\end{lemma}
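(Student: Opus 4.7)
The plan is to argue by contradiction via a fingerprint-style union bound, in the spirit of hypergraph containers. Assume the bad event holds, witnessed by $y$, $\mathcal{I}$, and a colour class $C'$ of $\mathcal{V}^*c'$ with $|C' \cap X| \ge s'$ for some component $X$ of $D(H, \mathcal{V}^*c')$ and $q := Q_{\mathcal{I}}(C', y) \in [10\log s, s'/8)$. Put $A = \{v \in C' : \mathrm{dist}_{D(H,c)}(y, v) \in \mathcal{I}\}$, so $|A| = q$. Since $\mathcal{V}^*c'$ is equitable (\cref{fact:equitable}) and refines $c$ (\cref{fact:2WL-refines-CR}), equitability within components of $D(H, \mathcal{V}^*c')$ gives $|C'| \ge s'$, while $s$-boundedness together with $C' \subseteq C$ (for some $c$-colour class $C$) gives $|C'| \le s$.

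First I would show $y$ cannot have a unique $\mathcal{V}^*c'$-colour: since $\mathcal{V}^*c'$ refines $c$, we can write $D(H,c) = D_L(H, \mathcal{V}^*c')$ for a suitable $L$, and \cref{fact:detect-distances} then forces every vertex of $C'$ to have the same distance to $y$ in $D(H,c)$, contradicting $0 < q < |C'|$. Let $T_y$ be the $\mathcal{V}^*c'$-colour class of $y$, and consider the bipartite graph $F$ on parts $(C', T_y)$ with an edge $vt$ iff $\mathrm{dist}_{D(H,c)}(v, t) \in \mathcal{I}$. By \cref{fact:detect-distances} applied in both directions, $F$ is biregular; since $y$ has degree $q$ in $F$, every $t \in T_y$ has degree $q$. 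Applying \cref{lem:findset} with roles $(C, B) = (C', T_y)$ and $d_B = q$ produces a \emph{fingerprint} $S \subseteq T_y$ with $|S| \le |T_y|/(2q) + 1$ and $|N_F(S)| \ge |C'|/4 \ge s'/4$.

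The union bound is then over the data $(y, S, \mathcal{I})$, using that $\mathcal{I}$ can be essentially encoded by which distances appear from $y$. For fixed such data, the bad event simultaneously forces: (i) all of $S \cup \{y\}$ must lie in the same $\mathcal{V}^*c'$-colour class $T_y$ of $H$, so every would-be ``splitter'' of $T_y$ via 2-dim WL refinement must fail to be special; and (ii) the $\ge s'/4$ vertices of $N_F(S)$ must split between $C'$ and its complement consistently with the imposed distance pattern. The lower bound $q \ge 10\log s$ ensures that at least $\Omega(\log s)$ such distinguishing vertices exist, so the conditional probability of the bad event is at most $0.3^{\Omega(\log s)} = s^{-\Omega(1)}$, which after summing over $\le n^{O(|S|)}$ choices of data closes the union bound.

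The main obstacle, in my view, is the delicate interplay between $|S|$, $|T_y|$, and the number of distinguishers: when $|T_y|$ is large, the fingerprint $S$ furnished by \cref{lem:findset} is larger than $\log s$, so one must squeeze out correspondingly more distinguishers from the additional structure of $T_y$ itself. The particular interval $[10\log s, s'/8)$ arises precisely because below $10\log s$ the exponent in $0.3^q$ is too weak to beat the union-bound cost, while at $s'/8$ and above the set $A$ is already large enough for $|N_F(S)|$ to rival $|C'|$, so that the fingerprint mechanism has nothing more to separate—ruling out the intermediate range is exactly what the lemma delivers.
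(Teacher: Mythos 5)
Your outline has the same skeleton as the paper's argument (the biregular ``distance graph'' $F$ between $C'$ and the class of $y$, a small fingerprint $S$ extracted via \cref{lem:findset}, distinguishers that see two vertices of $S$ at different distances, then a union bound), but the union bound as you set it up does not close, and this is a genuine gap rather than a technicality — you flag it yourself as ``the main obstacle'' without resolving it. You propose to union over the data $(y,S,\mc I)$, paying $n^{O(|S|)}$ (and, in fact, also an uncontrolled factor for $\mc I$, which ranges over subsets of the possible distances from $y$, i.e.\ up to $2^{\Theta(n)}$ choices), against a probability bound of only $0.3^{\Omega(\log s)}=s^{-\Omega(1)}$ extracted from $q\ge 10\log s$. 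Since $|S|$ can be of order $|C'|/\log s$ and $s$ may be polylogarithmic in $n$ (as in the application), $n^{O(|S|)}\cdot s^{-\Omega(1)}$ is nowhere near $o(1)$. The roles of the two endpoints of the interval are also not as you describe: $q\ge 10\log s$ is used only to make the fingerprint small, $|S|\le |C'|/(2q)+1\le |C'|/(19\log s)$ (note the usable bound from \cref{lem:findset} is $|C'|/(2q)+1$, coming from its proof, not $|T_y|/(2q)+1$, which is useless when $T_y$ is large), while $q<s'/8$ guarantees that removing one vertex's $F$-neighbourhood from $N_F(S)$ still leaves $|N_F(S)|-q\ge |C'|/4-|C'|/8=|C'|/8$ distinguishers; the probability gain must be $0.3^{|C'|/8}$, not $0.3^{O(\log s)}$ or $0.3^{q}$.

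The missing idea that makes the counting work is to have no union over $(y,\mc I)$ at all. The paper's probabilistic statement (\cref{claim:diff-union-bound}) concerns only monochromatic subsets: whp, for every $c$-colour class $B$, every component $X$ of $D(H,c)$ and every $S\subseteq B\cap X$ with $\binom{|B\cap X|}{|S|}0.3^{|\on{Diff}(S)|}\le n^{-2}$, the set $S$ is not monochromatic under $\mc V^*c'$ (here $\on{Diff}(S)$ is the deterministic set of vertices not seeing all of $S$ at one distance in $D(H,c)$, and \cref{fact:detect-distances} gives the $0.3^{|\on{Diff}(S)|}$ bound). The ambient set is $B\cap X$, of size at most $s$ by $s$-boundedness — this is exactly where $s$-boundedness is needed, giving a fingerprint cost $s^{|S|}\le e^{|C'|/19}$ which is beaten by $0.3^{|C'|/8}$; the pair $(y,\mc I)$ enters only in the deterministic deduction afterwards, where one assumes the bad event, builds $S$, and lower-bounds $|\on{Diff}(S)|\ge |C'|/8$. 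To confine $S$ and the relevant classes to a single $B\cap X$ one must also show $C'$ and the class of $y$ lie inside one component $X$ of $D(H,c)$; the paper gets this from \cref{lem:disparity-components} together with the whp conclusion of \cref{lem:break}(1), an ingredient absent from your sketch (your assertion that $s$-boundedness plus $C'\subseteq C$ gives $|C'|\le s$ has the same issue, since a priori $C'$ could meet several components of $D(H,c)$). In short: right mechanism, but the decisive bookkeeping — which event to union over, over which ambient set of size $\le s$, and with $|C'|/8$ rather than $O(\log s)$ distinguishers — is missing, and without it the proof does not go through.
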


Before proving \cref{lem:graph_shrinks_certificate}, we deduce \cref{lem:graph-shrinks,lem:degree-shrinks}.

\begin{proof}[Proof of \cref{lem:graph-shrinks}]
Suppose that the conclusions of \cref{lem:break}(1) and \cref{lem:graph_shrinks_certificate} both hold, and suppose for the purpose of contradiction that there is a colour class $C'$ of $\mc V^* c'$ and connected component $X'$ of $D(H,\mc V^* c')$ such that $|C'\cap X'|\ge s'$.

By \cref{lem:disparity-components}, there is a connected component $X$ of $D(H,c)$ which intersects $C'$ in at least $s'/2\ge 50\log n$ vertices. By the conclusion of \cref{lem:break}(1), we must have $X\supseteq C'$. By \cref{lem:findingavertex}, with $F=D(H,c)[X]$, there is a vertex $y\in X$ and a set $\mc I$ such that $Q_{\mc I}(C',y)\in [s'/(8\Delta), s'/8)$. Since we are assuming $s'/(8\Delta) \geq 10\log s$, this contradicts the statement of \cref{lem:graph_shrinks_certificate}.
\end{proof}

\begin{proof}[Proof of \cref{lem:degree-shrinks}]
Suppose that the conclusion of \cref{lem:graph_shrinks_certificate} holds, and suppose for the purpose of contradiction that there were some vertex $y$ and colour class $C'$ of $\mc V^*c'$ which intersects some component of $D(H,\mc V^*c')$ in at least $s'$ vertices, such that $y$ has more than $10\log s$ neighbours in $C'$ with respect to $D(H,\mc V^* c')$. Note that this number of neighbours is less than $s'/8$, by our assumption on the maximum degree of $D(H,c)$ and \cref{fact:degrees-decrease}.

But taking $\mc I=\{1\}$ or $\mc I=(\mb N\cup \{\infty\})\setminus\{1\}$, and recalling the definitions of the disparity graphs $D(H,c)$ and $D(H,\mc V^* c')$, we can write the above number of neighbours as $Q_{\mc I}(C',y)$, contradicting \cref{lem:graph_shrinks_certificate}.
\end{proof}
Now we prove \cref{lem:graph_shrinks_certificate}. The idea is that if there were $C',y$ satisfying $Q_{\mc I}(C',y)\in [10\log s, s'/4)$, then every other vertex $v$ in the colour class $B'$ of $y$ (with respect to $\mc V^* c' $) would also have $Q_{\mc I}(C',v)\in [10\log s, s'/4)$, due to the fact that the 2-dimensional Weisfeiler--Leman algorithm can detect distances in $D(H,c)$ (which can be interpreted as a generalised disparity graph $D_L(H,\mc V^* c)$). We would then be able to define an auxiliary bipartite graph and use \cref{lem:findset} to find a relatively small ``fingerprint'' set of vertices in $B'$ which have a wide variety of distances to the vertices in $C'$ (with respect to $D(H,c)$). But this situation can be ruled out whp with a union bound: if, in $D(H,c)$, a set of vertices $S$ has a wide variety of distances to many other vertices, then it is very unlikely that that the vertices in $S$ are all given the same colour by $\mc V^*c'$.
\begin{proof}[Proof of \cref{lem:graph_shrinks_certificate}]
First, we can use the union bound to show that whp certain sets of vertices (with a rich variety of distances to other vertices) are not monochromatic with respect to $\mc V^*c'$. For a set of vertices $S$ we denote by $\on{Diff}(S)$ the set of all vertices in $D(G,c)$ which do not see all vertices of $S$ at the same distance.
\begin{claim}\label{claim:diff-union-bound}
Whp the following holds. For every colour class $B$ of $c$ and connected component $X$ of $D(H,c)$, and every subset $S\subseteq B\cap X$ satisfying
\begin{equation}\label{eq:certificate}
\binom{|B\cap X|}{|S|}0.3^{|\on{Diff}(S)|}\le \frac1{n^2},
\end{equation}
$S$ is not contained inside a single colour class of $\mc V^*c'$.
\end{claim}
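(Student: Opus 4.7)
The plan is a straightforward union bound, whose key ingredient is the observation that if $S$ is contained in a single colour class of $\mc V^*c'$, then no vertex of $\on{Diff}(S)$ can be \emph{special} (i.e., assigned a new unique colour by $c'$).

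To justify this, first note that $\mc V^*c'$ refines $c'$, which refines $c$, so there is a matrix $L \in \{0,1\}^{\Omega\times\Omega}$ (with $\Omega$ the colour set of $\mc V^*c'$) for which $D(H,c) = D_L(H, \mc V^*c')$: simply define $L_{\omega,\omega'}$ according to the behaviour of the majority graph $M(H,c)$ between the $c$-colour classes containing $\omega$ and $\omega'$, using that every $\mc V^*c'$-colour class is contained in a unique $c$-colour class. Now suppose $S$ is contained in a single colour class of $\mc V^*c'$, and let $w$ be any vertex assigned a unique colour by $c'$. Such a $w$ also has a unique colour under $\mc V^*c'$, since refinement preserves singleton colour classes. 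Applying \cref{fact:detect-distances} with this $L$, for every $i \in \mb N\cup\{\infty\}$ all vertices of $S$ see the same number of copies of $w$'s (unique) colour at distance $i$ in $D(H,c)$, and hence all lie at the same distance from $w$. Thus $w \notin \on{Diff}(S)$, proving the claimed implication.

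Since each vertex is independently special with probability at least $0.7$, the probability that no vertex of $\on{Diff}(S)$ is special is at most $0.3^{|\on{Diff}(S)|}$. Using the inequality hypothesised in the statement, summing over all triples $(B,X,S)$ gives
\[
\sum_{B,X}\,\sum_{\substack{S\subseteq B\cap X \\ S \text{ satisfies the hypothesis}}} 0.3^{|\on{Diff}(S)|}
\;\le\; \sum_{B,X}\,\sum_{k=0}^{|B\cap X|} \binom{|B\cap X|}{k}\cdot \frac{1}{n^2\binom{|B\cap X|}{k}}
\;\le\; \sum_{B,X} \frac{|B\cap X|+1}{n^2}
\;=\; o(1),
\]
where in the last step we used that $\sum_{B,X}|B\cap X|=n$ (each vertex lies in exactly one colour class and one component) and consequently the number of pairs $(B,X)$ with $B\cap X\ne\emptyset$ is also at most $n$. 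Thus whp no such $S$ is monochromatic, as required. The only conceptual step that required care was setting up the correct $L$ so that \cref{fact:detect-distances} could be invoked for $D(H,c)$ via $\mc V^*c'$; everything else is bookkeeping.
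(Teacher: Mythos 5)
Your proposal is correct and follows essentially the same route as the paper's proof: the contrapositive observation that a monochromatic $S$ forces every special vertex to lie outside $\on{Diff}(S)$ (via interpreting $D(H,c)$ as a generalised disparity graph $D_L(H,\mc V^*c')$ and invoking \cref{fact:detect-distances}), followed by the bound $0.3^{|\on{Diff}(S)|}$ from independence and a union bound over the at most $n$ nonempty sets $B\cap X$ using \cref{eq:certificate}. Your write-up merely spells out the construction of $L$ and the union-bound bookkeeping in more detail than the paper does; no gap.
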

\begin{claimproof}
For a particular set $S$, if any of the vertices in $\on{Diff}(S)$ are assigned a unique colour by $c'$ then $S$ does not lie inside a single colour class of $\mc V^*c$. This follows from \cref{fact:detect-distances}, noting that we can interpret $D(H,c)$ as a generalised disparity graph $D_L(H,\mc V^*c')$. So, the probability that a particular set $S$ is contained inside a single colour class of $\mc V^*c'$ is at most $0.3^{|\on{Diff}(S)|}$. The desired result then follows from a union bound, noting that there are at most $n$ nonempty subsets of the form $|B\cap X|$.
\end{claimproof}
Suppose that the conclusions of \cref{claim:diff-union-bound} and \cref{lem:break}(1) both hold, and suppose for the purpose of contradiction that there is a colour class $C'$ of $\mc V^* c'$, connected component $X'$ of $D(H,\mc V^* c')$, vertex $y$ and set $\mc I$ such that $|C'\cap X'|\ge s'$ and $Q_{\mc I}(C',y)\in [10\log s, s'/8)$.

Let $B'$ be the colour class containing $y$ with respect to $\mc V^* c'$ (we may or may not have $B'=C'$), and let $F$ be the auxiliary bipartite graph with parts $C'$ and $B'$, with an edge between $u\in C'$ and $v\in B'$ if the distance between $u$ and $v$ in $D(H,c)$ lies in $\mc I$. We can interpret $D(H,c)$ as a generalised disparity graph $D_L(H,\mc V^* c')$, so by \cref{fact:detect-distances}, $F$ is a biregular bipartite graph. In this graph the degree of each vertex in $B'$ is exactly $Q_{\mc I}(C',y)$. By \cref{lem:findset}, there is a subset $S\subseteq B'$ of size at most $|C'|/(2Q_{\mc I}(C',y))+1\le |C'|/(19\log s)$ such that $|N_F(S)|\ge |C'|/4 \ge s'/4\ge 2Q_{\mc I}(C',y)$ (for both these inequalities we used our assumption $Q_{\mc I}(C',y)\in [10\log s, s'/8)$).

Now, fix a vertex $v\in S$ and note that $\on{Diff}(S)\supseteq N_F(S)\setminus N_F(v)$ (because every vertex in $N_F(S)\setminus N_F(v)$ sees some vertex in $S$ at a distance in $\mc I$, but does not see $v$ at a distance in $\mc I$). So, $|\on{Diff}(S)|\ge |N_F(S)|-Q_{\mc I}(C',y)\ge |C'|/8$.

Then (as in the deduction of \cref{lem:graph-shrinks}), \cref{lem:disparity-components} tells us that there is a connected component $X$ of $D(H,c)$ intersecting $C'$ in at least $s/2$ vertices, and  by the conclusion of \cref{lem:break}(1), we have $X\supseteq C'$. We must also have $B'\subseteq X$ (if there were a vertex in $B'$ lying in a different component to the vertices of $C'$, we would have $Q_{\mc I}(C',y)\in \{0,|C'|\}$, which contradicts our assumption $Q_{\mc I}(C',y)\in [10\log s, s'/8)$). Letting $B$ be the colour class of $c$ containing $B'$, recalling that $|C'|\ge s'\ge 100\log n$, and recalling that $D(H,c)$ is $s$-bounded, we compute
\[
\binom{|B\cap X|}{|S|}0.3^{|\on{Diff}(S)|}\leq|B\cap X|^{|S|}0.3^{|C'|/8}\le s^{|C'|/(19\log s)}0.3^{|C'|/8}\le \frac1{n^2}.
\]
But $S$ is a subset of the colour class $B'$ of $\mc V^*c'$, contradicting the conclusion of \cref{claim:diff-union-bound}.
\end{proof}

\subsection{Putting everything together}

We now prove \cref{thm:smoothed-disparity}, combining all the ingredients collected so far. By \cref{cor:small-components} and \cref{fact:2WL-efficient}, it suffices to prove the following theorem.
\begin{theorem}\label{thm:smoothed-disparity-cheap}
Consider any $p\in [0,1/2]$ satisfying $p\ge 100/n$, consider any graph $G_0$ on the vertex set $\{1,\dots,n\}$, and let $G_{\mr{rand}}\sim \mb G(n,p)$. Then whp every connected component of $D(G_0\triangle G_{\mr{rand}},\mc V^*\phi_G)$ has $O(\log n)$ vertices.
\end{theorem}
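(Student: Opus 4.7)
The plan is to prove \cref{thm:smoothed-disparity-cheap} via seven sprinkling rounds feeding the refinement lemmas of \cref{subsec:preparation}. First, split $G_{\mr{rand}}$ as an edge-disjoint union $G_{\mr{rand}}^1 \cup \dots \cup G_{\mr{rand}}^8$ of independent copies of $\mb G(n, p')$ with $p' \ge 10/n$ (possible since $p \ge 100/n$). Standard random-graph theory gives that whp $V_3(G_{\mr{rand}}^1)$ has at least $n/2$ vertices, so, once $G_{\mr{rand}}^1$ is revealed, deterministically fix a subset $V_{\mr{core}}$ of exactly $n/2$ such vertices, and let $U = V(G) \setminus V_{\mr{core}}$. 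By \cref{prop:3core} and \cref{fact:2WL-refines-CR}, whp every vertex of $V_{\mr{core}} \subseteq V_3(G_{\mr{rand}})$ has a unique colour under $\mc V^* \phi_G$. Now reveal every edge of $G$ except those between $U$ and $V_{\mr{core}}$; this leaves seven independent bipartite perturbations $G_{\mr{rand}}^2[U, V_{\mr{core}}], \ldots, G_{\mr{rand}}^8[U, V_{\mr{core}}]$ still to be exposed.

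A Chernoff bound shows that in each round $t \ge 2$, each $u \in U$ independently acquires at least three neighbours in $V_{\mr{core}}$ via $G_{\mr{rand}}^t[U, V_{\mr{core}}]$ with probability at least $0.7$, in which case $u$ joins $V_3(G_{\mr{rand}})$ and so inherits a unique colour under $\mc V^* \phi_G$. This produces a nested sequence of canonical colourings $c_1, c_2, \ldots, c_8$, each a coarsening of $\mc V^* \phi_G$, such that passing from $c_t$ to $c_{t+1}$ independently assigns a fresh unique colour to each vertex of $U$ with probability at least $0.7$. This is exactly the random-refinement hypothesis of \cref{lem:degrees-log,lem:break,lem:graph-shrinks,lem:degree-shrinks}, with the consistency of the $c_t$'s underpinned by \cref{fact:2WL-consistent}.

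The cascade then proceeds as follows. Round~2 applies \cref{lem:degrees-log} to bound the maximum degree of the disparity graph by $4\log n$ whp. Round~3 feeds this into \cref{lem:graph-shrinks} with $s = n$ and $\Delta = 4\log n$ to establish $s$-boundedness for $s = O((\log n)^2)$. Round~4 applies \cref{lem:degree-shrinks} to obtain a degree bound of $O(\log \log n)$ on all colour classes intersecting components in at least $100\log n$ vertices. Rounds~5,~6,~7 alternate \cref{lem:graph-shrinks} (with the improved degree) and \cref{lem:degree-shrinks}, each time collapsing $s$ via the recursion $s' = O(\Delta \log s + \log n)$, until $s$ reaches $100\log n$. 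Finally, round~8 invokes \cref{lem:break} to convert $O(\log n)$-boundedness of $D(G, c_7)$ into an $O(\log n)$ bound on the connected components of $D(G, c_8) = D(G, \mc V^* \phi_G)$, as required.

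The hard part will be making the cascade self-consistent: \cref{lem:graph-shrinks} demands a \emph{global} maximum-degree bound, whereas \cref{lem:degree-shrinks} only controls degrees for colour classes with large component intersection. One therefore has to argue at each step that small intersections are already harmless (they do not themselves violate $s'$-boundedness) while large intersections inherit the improved degree bound from the previous round, so that the next application of \cref{lem:graph-shrinks} goes through. A secondary technical issue is the identification of the correct host colouring at each step, together with verifying that the singleton colour classes on $V_{\mr{core}}$ interact benignly with the disparity graph, as permitted by \cref{fact:D-equitable}.
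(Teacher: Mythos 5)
Your setup is exactly the paper's: the eight-fold sprinkling with $p'\ge 10/n$, the choice of $V_{\mr{core}}\subseteq V_3(G_{\mr{rand}}^1)$ of size $n/2$, the $0.7$-probability unique-colour events via \cref{prop:3core}, and the first rounds (degree bound $4\log n$ via \cref{lem:degrees-log}, then \cref{lem:graph-shrinks} with $s=n$, $\Delta=4\log n$ to reach $O((\log n)^2)$-boundedness) all match. But the ``hard part'' you defer is precisely where your plan breaks, and your proposed resolution does not work as stated. The recursion you invoke, $s'=\Theta(\Delta\log s)$, uses the \emph{global} maximum-degree bound $\Delta$ of the disparity graph required by \cref{lem:graph-shrinks}, and nothing in your cascade ever improves that global bound below $\Theta(\log n)$: \cref{lem:degree-shrinks} only bounds degrees \emph{into colour classes with large component intersection}, and a vertex can still have $\Theta(\log n)$ disparity-neighbours spread over many small classes (by \cref{fact:degrees-decrease} the global degree can only be inherited from round 1, not reduced). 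With $\Delta=\Theta(\log n)$ the recursion bottoms out at $s'=\Theta(\log n\log\log n)$ and never reaches $100\log n$, so ``alternating'' the two lemmas does not converge. The paper's fix is a change of host graph, which is the missing idea: first do an extra \cref{lem:graph-shrinks} round to get $O(\log n\log\log n)$-boundedness and apply \cref{lem:break}(2) to bound \emph{component sizes} by $O(\log n\log\log n)$, so each component meets only $O(\log\log n)$ ``big'' intersection sets; combined with the $(100\log n,O(\log\log n))$-degree-boundedness from \cref{lem:degree-shrinks}, the disparity graph restricted to the union $U^{\mr{big}}$ of big intersection sets has maximum degree $O((\log\log n)^2)$. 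One then uses \cref{lem:break}(1) to show $U^{\mr{big}}$ is a union of colour classes (so that passing to the induced subgraph $G[U^{\mr{big}}]$ is canonical and its disparity graph is a generalised disparity graph of the original), applies \cref{lem:graph-shrinks} on $G[U^{\mr{big}}]$ with $\Delta=O((\log\log n)^2)$ to reach $100\log n$-boundedness, handles $U\setminus U^{\mr{big}}$ by the small components already produced by \cref{lem:break}(2), recombines via \cref{lem:disparity-components}, and finishes with one more application of \cref{lem:break}. Without this restriction device (and the preceding component-size step), rounds 5--7 of your outline cannot be carried out.

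A smaller but real imprecision: your final step asserts $D(G,c_8)=D(G,\mc V^*\phi_G)$. That is false in general — the $c_t$ are (whp) only \emph{coarsenings} of $\mc V^*\phi_G$, built from $G[U]$ and the sprinkled unique colours, and $\mc V^*\phi_G$ may refine them further. The transfer of the component bound to $D(G,\mc V^*\phi_G)$ needs \cref{lem:disparity-components} (refining a colouring can at most double component sizes), as in the paper's Claim~\ref{claim:components-final}; this costs only a factor $2$ and is easily patched, but it is a step, not an identity.
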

\begin{remark}
    Our proof approach shows that the implicit constant in ``$O(\log n)$'' can be made as small as desired, by increasing the ``100'' in the assumption on $p$.
\end{remark}

\begin{proof}
Let $G = G_{0} \triangle G_{\mr{rand}}$.
First note that we can assume $p\le 2\log n/n$, as otherwise the desired result follows from \cref{thm:smoothed-colour-refinement} (note that if $\mc R^*\sigma$ assigns all vertices distinct colours, then so does $\mc V^*\phi_G$, by \cref{fact:2WL-refines-CR}, and $D(G,\mc V^*\sigma)$ consists only of isolated vertices. As outlined, we view the random perturbation $G_{\mr{rand}}$ as the union of eight slightly sparser random perturbations $G_{\mr{rand}}^i\sim \mb G(n,p')$, for $i=1,...,8$, where $p'$ is chosen such that $1-p=(1-p')^8$. Since we are assuming $p\ge 100/n$, we have $p'\ge 10/n$.

Since $p'\ge 10/n$, standard results show that $|V_{3}(G_{\mr{rand}}^1)|\ge n/2$ whp (see e.g.\ \cite{PSW96}). So, whp we can fix a subset $V_{\mr{core}} \subseteq V_{3}(G_{\mr{rand}}^{\mr 1})$ with size $n/2$. Let $U=V(G)\setminus V_{\mr{core}}$, and for each $i\in \{2,\dots,8\}$, let $S_i$ be the set of vertices in $U$ which have at least three neighbours in $V_{\mr{core}}$ with respect to $G_{\mr{rand}}^i$. We use the sets $S_i$, together with the 2-dimensional Weisfeiler--Leman algorithm, to recursively define a sequence of vertex-colourings $c_1,\dots,c_8$ of $U$, as follows.

\begin{itemize}
    \item Let $c_1=\mc V^* \phi_{G[U]}$ (this is a colouring of vertices in $U$)
    \item For $i\in\{2,\dots,8\}$:
    \begin{itemize}
        \item 
    let $b_i$ be the vertex-colouring obtained from $c_{i-1}$ by giving all vertices in $S_i$ a unique colour, and
    \item let $c_i=\mc V^* \phi_{G[U],b_i}$ (recalling from \cref{def:2WL} that $\phi_{G[U],b_i}$ is the colouring of pairs of vertices of $G[U]$, obtained by augmenting the ``trivial'' colouring $\phi_{G[U]}$ with the vertex-colouring $b_i$).
    \end{itemize}
\end{itemize}

The plan is to first reveal $G[U]$ and $c_1$, and then reveal $c_2,\dots,c_8$ one-by-one, studying how the components, colour classes and degrees change along the way (using \cref{lem:degrees-log,lem:break,lem:graph-shrinks,lem:degree-shrinks}). The following claim justifies this plan.

\begin{claim}\label{claim:components-final}
Whp the largest connected component in $D(G,\mc V^* \phi_G)$ has at most twice as many vertices as the largest connected component in $D(G[U],c_8)$.
\end{claim}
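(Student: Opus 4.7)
The plan is built around one structural observation: vertices with unique colours in an equitable vertex-colouring are always isolated in the corresponding disparity graph. Using this, I will reduce the comparison on all of $V(G)$ to a comparison on $U^-:=U\setminus(S_2\cup\cdots\cup S_8)$, where \cref{lem:disparity-components} can be applied directly.

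Concretely, by \cref{prop:3core} applied to $G=G_0\triangle G_{\mr{rand}}$, whp every vertex of $V_3(G_{\mr{rand}})$ gets a unique colour under $\mc R^*\sigma$. Since $V_{\mr{core}}\subseteq V_3(G_{\mr{rand}}^1)\subseteq V_3(G_{\mr{rand}})$, and every $v\in S_i$ has at least three $G_{\mr{rand}}$-neighbours in $V_{\mr{core}}\subseteq V_3(G_{\mr{rand}})$ (so $v\in V_3(G_{\mr{rand}})$), all of $V_{\mr{core}}\cup S_2\cup\cdots\cup S_8$ receive unique colours, and hence unique colours under $\mc V^*\phi_G$ by \cref{fact:2WL-refines-CR}. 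Since $\mc V^*\phi_G$ is equitable on $G$ (a special case of \cref{fact:detect-distances}), for any singleton colour class $\{v\}$ and any other class $C$, biregularity of $G[\{v\},C]$ forces $v$ to be adjacent to all of $C$ or to none of $C$; in either case the majority graph agrees with $G$ on these edges, and no disparity edge is incident to $v$. The same argument applied to $c_8$ (which is equitable on $G[U]$ by the same fact) shows that the vertices of $S_2\cup\cdots\cup S_8$ are isolated in $D(G[U],c_8)$.

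Every non-trivial (size $\geq 2$) component of $D(G,\mc V^*\phi_G)$ therefore lies entirely in $U^-$. Moreover, since $V_{\mr{core}}\cup S_2\cup\cdots\cup S_8$ are all singletons under $\mc V^*\phi_G$, each $\mc V^*\phi_G$-colour class that meets $U^-$ is entirely contained in $U^-$, so the induced subgraph $D(G,\mc V^*\phi_G)[U^-]$ coincides with $D(G[U^-],\mc V^*\phi_G|_{U^-})$; the analogous identity $D(G[U],c_8)[U^-]=D(G[U^-],c_8|_{U^-})$ holds for the same reason. The restriction $\mc V^*\phi_G|_{U^-}$ is equitable on $G[U^-]$ (its colour classes stay inside $U^-$). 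Provided that $c_8|_{U^-}$ is a coarsening of $\mc V^*\phi_G|_{U^-}$, \cref{lem:disparity-components} applied to $H=G[U^-]$ yields that the largest component of $D(G[U^-],\mc V^*\phi_G|_{U^-})$ has at most twice as many vertices as the largest component of $D(G[U^-],c_8|_{U^-})$, and combining with the reductions above proves the claim.

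The main obstacle is establishing this coarsening. I would argue by induction on $i\in\{1,\dots,8\}$ that $c_i$ is a coarsening of $\mc V^*\phi_G|_U$. The inductive step is handled by \cref{fact:2WL-consistent}: assuming $c_{i-1}$ coarsens $\mc V^*\phi_G|_U$, the refinement $b_i$ obtained by giving $S_i$ unique colours still does (since $S_i$-vertices are already uniquely coloured in $\mc V^*\phi_G$), and then passing to $c_i=\mc V^*\phi_{G[U],b_i}$ preserves the coarsening. The genuinely delicate step is the base case, i.e., showing $\mc V^*\phi_{G[U]}$ is a coarsening of $\mc V^*\phi_G|_U$. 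This I would prove by a parallel induction on 2-WL iterations of $G$ versus $G[U]$, tracking how the neighbourhood statistic $|\{w\in V(G):(f^G_t(u,w),f^G_t(w,v))=(\omega_1,\omega_2)\}|$ decomposes into a $w\in U$ contribution and a $w\in V_{\mr{core}}$ contribution, and exploiting the fact that after enough iterations each $V_{\mr{core}}$-vertex eventually acquires a distinct pair-colour with every other vertex (again a consequence of \cref{prop:3core}); this allows the $V_{\mr{core}}$-contribution to be separated out, so that the $U$-contribution must match between pairs that agree under $\mc V^*\phi_G$.
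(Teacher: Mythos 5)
Your proposal is correct and is, at its core, the same argument as the paper's (sketched) proof: unique colours on $V_{\mr{core}}\cup S_2\cup\cdots\cup S_8$ via \cref{prop:3core} and \cref{fact:2WL-refines-CR}, a factor-two comparison via \cref{lem:disparity-components}, and an induction over the eight rounds based on 2-WL consistency. The one real difference is the plumbing of the reduction: the paper extends each $c_i$ to a colouring $c_i^{\mr{ext}}$ of all of $V(G)$ (unique colours on $V_{\mr{core}}$), applies \cref{lem:disparity-components} on $G$, and asserts that $D(G,c_i^{\mr{ext}})$ has the components of $D(G[U],c_i)$ plus singletons; you instead restrict everything to $U\setminus(S_2\cup\cdots\cup S_8)$ and apply \cref{lem:disparity-components} on the induced subgraph. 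Your packaging buys a genuinely cleaner justification of the ``uniquely coloured vertices are isolated'' step, since both $\mc V^*\phi_G$ and $c_8$ are honestly equitable (via \cref{fact:detect-distances}), whereas the paper's artificial extension $c_i^{\mr{ext}}$ need not be equitable on $G$, so its component-correspondence assertion requires a touch more care than the sketch lets on. One caution on your side: your inductive step invokes \cref{fact:2WL-consistent} directly, but that fact compares colourings refined over a single vertex set, while $c_i$ is computed by 2-WL on $G[U]$ and $\mc V^*\phi_G$ on $G$; the $U$-versus-$V_{\mr{core}}$ decomposition you sketch for the base case (valid because stable pair colours determine the vertex colours of their endpoints, so the counts over $w\in U$ are recoverable) is exactly what is needed in every round --- equivalently, once you observe that $(2\mc R)^*\phi_G$ restricted to $U^2$ is already stable for counting over $U$, \cref{fact:2WL-consistent} applies verbatim with vertex set $U$ and handles all eight rounds uniformly. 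This is the same level of detail the paper's own sketch leaves implicit, so it is a point to tighten rather than a gap.
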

\begin{claimproof}[Sketch proof of claim]
By \cref{prop:3core}, whp $\mc R^*\sigma$ assigns each vertex in the 3-core $V_{3}(G_{\mr{rand}})\supseteq V_{\mr{core}}\cup S_2\cup \dots\cup S_8$ a unique colour, and by \cref{fact:2WL-refines-CR} the same is true for $\mc V^*\phi_G$. We will see that the desired conclusion holds whenever this is the case.
  
Extend each $c_i$ to a colouring $c_i^{\mr{ext}}$ of $V(G)$, by assigning each vertex in $V_{\mr{core}}$ a unique colour. Note that $D(G,c_i^{\mr{ext}})$ has the same components as $D(G[U],c_i)$ (plus singleton components for each vertex of $V_{\mr{core}}$), so by \cref{lem:disparity-components} it suffices to show that $\mc V^* \phi_G$ is a refinement of each $c_i^{\mr{ext}}$. This can be proved inductively, with eight applications of \cref{fact:2WL-consistent}
(note that we are assuming $\mc V^*\phi_G$ assigns each vertex a unique colour, so $G[U]$ can be partitioned into full colour classes). 
\end{claimproof}

For the rest of the proof, we reveal all the edges in $G^1_{\mr{rand}}$ (which determines $U$), and reveal all the edges in $G[U]$. 
We assume that $|V_3(G^1_{\mr{rand}})|\ge n/2$ (so $V_{\mr{core}}$ and $U$ are well-defined).

Given \cref{claim:components-final}, our objective is to use the remaining randomness of $G^2_{\mr{rand}},\dots,G^8_{\mr{rand}}$ (via the colourings $c_2,\dots,c_8$) to prove that whp every component of $D(G[U],c_8)$ has at most $O(\log n)$ vertices.

First, note that the random sets $S_i$ are independent. For each $i\in \{2,\dots,8\}$, each vertex $v\in U$ is independently present in $S_i$ with probability \[\Pr[\on{Binomial}(n/2,10/n)\ge 3]=1-\frac{37}{2e^5}+o(1)\ge 0.7.\]
We now track how $D(G[U],c_i)$ evolves with $i$.

\medskip\noindent\textbf{Round 1.} Applying \cref{lem:degrees-log} with $H=G[U]$, $c=c_1$ and $c'=c_2$, we see that whp $D(G[U],c_2)$ has maximum degree at most $4\log n$. Reveal an outcome of $c_2$ such that this is the case (i.e., for the rest of the proof, all probabilities should be interpreted as being conditional on an outcome of $c_2$ with this property).

\medskip\noindent\textbf{Round 2.} Applying \cref{lem:graph-shrinks} with $H=G[U]$, $c=c_2$, $c'=c_3$, $\Delta=4\log n$, $s=n$ and $s'=10^4(\log n)^2$, we see that whp $D(G[U],c_3)$ is $10^4(\log n)^2$-bounded. Reveal an outcome of $c_3$ such that this is the case.

\medskip\noindent\textbf{Round 3.} 
By \cref{fact:degrees-decrease}, the maximum degree of $\Delta(D(H,c_3))$ is at most the maximum degree of $\Delta(D(H,c_2))$, which is at most $4\log n$. Applying \cref{lem:graph-shrinks} again, with $H=G[U]$, $c=c_3$, $c'=c_4$, $\Delta=4\log n$, $s=10^4(\log n)^2$ and $s'=10^4\log n\log \log n$, we see that whp $D(G[U],c_4)$ is $10^4\log n\log \log n$-bounded. Reveal an outcome of $c_4$ such that this is the case.

\medskip\noindent\textbf{Round 4.}
By \cref{lem:break}(2) with $H=G[U]$, $c=c_4$ and $c'=c_5$, using the $10^4\log n\log \log n$-boundedness we have just established in the previous round, whp every connected component of $D(G[U],c_5)$ has at most $10^5\log n\log \log n$ vertices. 
Also, applying \cref{lem:degree-shrinks} with $H=G[U]$, $c=c_4$, $c'=c_5$, $s=10^4\log n\log \log n$ and $s'=100\log n$ (noting that $s'$ is at least 8 times the maximum degree of $D(H,c_4)$, by \cref{fact:degrees-decrease}), we see that whp $D(G[U],c_5)$ is $(100\log n,11\log \log n)$-degree-bounded. Reveal an outcome of $c_5$ such that both of these properties hold.

Let $U^{\mr{big}}\subseteq U$ be the union of all sets of the form $C\cap X$ with size at least $100 \log n$, where $C$ is a colour class of $c_5$ and $X$ is a component of $D(G[U],c_5)$ (say such a set $C\cap X$ is a ``big intersection set''). Every connected component of $D(G[U],c_5)$ has at most $10^5\log n\log \log n$ vertices, so it contains at most $10^3 \log \log n$ big intersection sets. By degree-boundedness, every vertex has at most $11\log \log n$ neighbours (with respect to $D(G[U],c_5)$) in each big intersection set, so the maximum degree of $D(G[U],c_5)[U^{\mr{big}}]$ is at most $11\log \log n\cdot 10^3 \log \log n\le 10^5(\log \log n)^2$.

\medskip\noindent\textbf{Round 5.} By \cref{lem:break}(1) with $H=G[U]$, $c=c_5$ and $c'=c_6$, whp $U^{\mr{big}}$ can be partitioned into colour classes of $c_6$. Also, by \cref{lem:break}(2) (with the same $H,c,c'$), whp $U\setminus U^{\mr{big}}$ can be covered by connected components of $D(H,c_6)$, each with at most $10^6\log n$ vertices. Reveal an outcome of $c_6$ such that both these properties hold.

For $i\in \{5,6,7\}$, let $c_i^{\mr{big}}$ be the restriction of $c_i$ to $U^{\mr{big}}$, so $D(G[U],c_5)[U^{\mr{big}}]$ can be interpreted as a generalised disparity graph $D_L(G[U],c_5^{\mr{big}})$. Then $D(G[U^{\mr{big}}],c_6^{\mr{big}})$ has maximum degree at most $10^5(\log \log n)^2$ by \cref{fact:degrees-decrease}, and its components have at most $2\cdot 10^5\log n\log \log n$ vertices, by \cref{lem:disparity-components}.

\medskip\noindent\textbf{Round 6.} We apply \cref{lem:graph-shrinks} yet again, this time with $H=G[U^{\mr{big}}]$, $c=c_6^\mr{big}$, $c'=c_7^{\mr{big}}$, $\Delta=10^5(\log \log n)^2$, $s=2\cdot 10^5\log n\log \log n$ and $s'=100\log n$, to see that whp $D(G[U^{\mr{big}}],c_7^{\mr{big}})$ is $100\log n$-bounded. Reveal an outcome of $c_7$ such that this is the case.

Recalling that $U\setminus U^{\mr{big}}$ can be covered by connected components of $D(H,c_6)$ with at most $100\log n$ vertices, \cref{lem:disparity-components} tells us that in fact the whole of $D(G[U],c_7)$ is $2\cdot 10^5\log n$-bounded.

\medskip\noindent\textbf{Round 7.} Finally, apply \cref{lem:break} with $H$, $c=c_7$ and $c'=c_8$, to see that whp all connected components of $D(H,c_8)$ have at most $O(\log n)$ vertices, as desired.
\end{proof}

\section{Near-critical random graphs}\label{sec:random-graphs}

In this section we prove that for random graphs of any density, colour refinement whp assigns distinct colours to each vertex of degree at least 3 in the 2-core. By \cref{cor:kernel-canonical} and \cref{rem:automorphisms}, this implies \cref{thm:sparse-random-informal,thm:automorphisms}.

\begin{theorem}\label{thm:2-core}
For any sequence $(p_n)_{n\in \mb N}\in [0,1]^{\mb N}$, and $G\sim \mb G(n,p_n)$, whp every two distinct vertices $u,v\in V_{2,3}(G)$ are assigned different colours in the stable colouring $\mc R^*\sigma$.
\end{theorem}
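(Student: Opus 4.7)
The proof splits into two cases based on the value of $p_n$. When $p_n \ge (1 + (\log n)^{-40})/n$, the conclusion follows immediately from \cref{prop:2core} applied with $G_0$ the empty graph, so that $G_{\mr{rand}} = G$ and assumption \cref{item:2core-for-random} holds for every pair of distinct vertices $u, v \in V_{2,3}(G) = V_{2,3}(G_{\mr{rand}})$. Thus the main work is for the subcritical/near-critical regime $p_n < (1 + (\log n)^{-40})/n$, where the expansion-based machinery of \cref{sec:expansion} does not apply (since $(np-1)^{-2}$ is no longer meaningfully bounded). In this regime, $V_{2,3}(G)$ is contained in the union of complex components (those with more than one cycle), and standard anatomy results for (near-)critical random graphs imply that each such component consists of a small kernel (a multigraph of bounded excess) joined by bare paths whose lengths are essentially independent random variables with strong anticoncentration.

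The plan is to run an exploration analogous to the one in \cref{subsec:Exploration}, starting simultaneously from two hubs $u, v \in V_{2,3}(G)$ and iteratively revealing the sets $\mc S^i_G(\{u,v\})$. Rather than expanding outward through a dense neighbourhood, the exploration now traverses bare paths towards neighbouring hubs, and each such traversal reveals an (almost) independent random variable encoding a bare-path length. By \cref{lem:receive_same_colour}, a necessary condition for $u$ and $v$ to share a colour is that the degree statistics of $\mc L^j_G(u,v)$ and $\mc L^j_G(v,u)$ coincide for every $j$; we translate this into a system of linear constraints on the bare-path lengths encountered during the exploration. An application of the Littlewood--Offord type inequality \cref{thm:LO} then bounds the probability of each such constraint by $o(n^{-3})$, so that a union bound over all pairs $u, v \in V_{2,3}(G)$ yields the result. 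For $u,v$ lying in different complex components, the same scheme applies, with the added simplification that the randomness in the two components is independent, making anticoncidence even more likely.

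The main obstacle is the deeply critical window $np_n = 1 + o(1)$, where complex components can be as large as $\Theta(n^{2/3})$ vertices and their kernels may carry nontrivial symmetries that would put $u$ and $v$ into the same orbit of the kernel's automorphism group. In such cases, only the randomness of the bare-path lengths can distinguish $u$ from $v$, so one must first reveal the kernel topology of the relevant component(s), then apply \cref{thm:LO} to the remaining bare-path-length randomness, treating potential automorphisms of the kernel one at a time and verifying that each is broken by the random lengths. Executing this cleanly requires a careful two-phase revelation (kernel first, then bare-path lengths), together with a bound on the total number of kernel topologies that can appear whp; both are controllable using the anatomy results, so that the aforementioned union bound still goes through.
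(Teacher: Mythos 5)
Your first step (invoking \cref{prop:2core} with $G_0$ empty when $p_n\ge (1+(\log n)^{-40})/n$, so that the real work is in the near-critical window) matches the paper, but from there your route diverges and, as written, has two genuine gaps. The first is quantitative: you want a union bound over pairs $u,v\in V_{2,3}(G)$ with per-pair failure probability $o(n^{-3})$ coming from anticoncentration of bare-path lengths via \cref{thm:LO}. In the critical window $np_n=1+O(n^{-1/3})$ a complex component has bounded excess, hence only $O(1)$ bare paths, each spread over a scale of order $n^{1/3}$; the point probabilities you can extract from this randomness are therefore of the form $n^{-c}$ for a small constant $c$, nowhere near $n^{-3}$. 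An anatomy-based proof must instead trade this weak per-pair bound against the fact that $|V_{2,3}(G)|$ is itself small there, and track how both quantities vary across the whole range of $np_n-1$; your proposal does not attempt this balancing. Moreover, \cref{thm:LO} needs independent Bernoulli variables, whereas conditional on the kernel and the size of the 2-core the bare-path lengths are exchangeable with a fixed sum, not independent, so even the basic anticoncentration step requires a different statement or a carefully designed revelation scheme that is not supplied.

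The second gap is conceptual: reducing the problem to ``breaking kernel automorphisms'' is not the right condition. By \cref{Universialtreecoverstep}, colour refinement fails to separate $u$ and $v$ exactly when their views (universal covers) are isomorphic, and this can occur with no automorphism of the kernel or of $G$ at all --- vertices in components with non-isomorphic kernels can share a view once the bare-path lengths line up suitably --- so the family of bad events to union over is strictly larger than the automorphisms you propose to enumerate. For contrast, the paper's near-critical argument avoids anatomy entirely: it shows whp there is no subgraph on $O(\log n)$ vertices containing two cycles (\cref{lem:no-close-cycles}), deduces deterministically that for any two hubs $u,v$ the sets $\mc S^{i}(\{u,v\})$ are nonempty for all $i\le 10\log n$ (\cref{lem:strong-cycles}), and then runs the exploration of \cref{subsec:Exploration} with a per-step Littlewood--Offord bound of $0.8$ on the event $|\mc L^i(u,v)|=|\mc L^i(v,u)|$, so that $10\log n$ steps give $o(n^{-2})$ per pair and a clean union bound (\cref{lem:longpaths}), concluding with \cref{lem:receive_same_colour}. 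Your plan is closer in spirit to the independent Verbitsky--Zhukovskii approach and could plausibly be completed, but the two issues above are missing ideas, not bookkeeping.
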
 
\cref{thm:2-core} follows directly from \cref{prop:2core} (with assumption \cref{item:2core-for-random}) in the case $p\ge (1+(\log n)^{-40})/n$, and it is vacuous for $p\le (1-\varepsilon)/n$, for any constant $\varepsilon>0$, as in this regime $V_{2,3}(G)$ is empty whp (see for example \cite[Lemma~2.10]{FK16}). So, we focus on the near-critical regime $0.9/n\le p\leq (1+(\log n)^{-2})/n$.

In this near-critical regime we cannot apply the machinery developed in \cref{prop:2core,prop:expansion-colour-refinement} Indeed, this machinery fundamentally relies on expansion properties only available for supercritical random graphs, and critical random graphs typically have poor expansion properties (in particular, most pairs of vertices are quite far from each other). But this gives us another way to proceed: if two vertices $u,v$ are far from each other (or, more generally, if for reasonably large $i$ we can find vertices which have distance $i$ from $u$ while being at distance greater than $i$ from $v$), then if we consider exploration processes starting from $u$ and $v$, we can run these processes for quite a long time while continuing to maintain some independent randomness between the two processes. Although this a rather small amount of randomness per step (compared to \cref{prop:expansion-colour-refinement}), we can accumulate this for many steps, to see that there is very likely to be some deviation between the degree statistics of these processes. This will allow us to  conclude with \cref{lem:receive_same_colour}.

The following lemma can be viewed as an analogue of \cref{prop:expansion-colour-refinement}, which will be used to execute the above plan. (It is much easier to prove than \cref{prop:expansion-colour-refinement}).

\begin{lemma}\label{lem:longpaths}
  Let $G\sim \mb G(n,p)$ for some $p$ satisfying $0.9/n\le p\le (1+(\log n)^{-40})/n$. Then whp $G$ satisfies the following property. For every pair of vertices $u,v$ such that $\mc R^*\sigma(u)= \mc R^*\sigma(v)$, we have $\mc S^{i}(\{u,v\})=\emptyset$ for some $i\le 10\log n$.
\end{lemma}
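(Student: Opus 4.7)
The plan is to prove the contrapositive by a union bound over pairs of distinct vertices. Fix $u,v$ and suppose that $\mathcal{S}^i(\{u,v\})\ne\emptyset$ for all $i\le 10\log n$; I want to show that with probability $1-o(n^{-2})$ we then have $\mathcal{R}^*\sigma(u)\neq \mathcal{R}^*\sigma(v)$. Combined with \cref{lem:receive_same_colour}, it suffices to exhibit, with probability $1-o(n^{-2})$, some $i\le 10\log n$ and some $d$ such that the number of degree-$d$ vertices in $\mathcal{L}^i(u,v)$ differs from the number in $\mathcal{L}^i(v,u)$.

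The main engine is an exploration/revelation procedure of the kind used in the proofs of \cref{prop:expansion-colour-refinement,lem:expansion_SLi}: at step $t$ we reveal the edges of $G$ incident to $\mathcal{S}^t(\{u,v\})$, which determines $\mathcal{S}^{t+1}(\{u,v\})$. Under the hypothesis, this procedure produces a fresh, non-empty $\mathcal{S}^i$ for each $i\le 10\log n$. At each such step I would pick a vertex $w_i\in \mathcal{S}^i$ whose membership is certified by a freshly revealed edge (as in the ``type-$W$ critical moment'' analysis in \hyperlink{step:exploration-defs}{Step~2} of the proof of \cref{lem:expansion_SLi}), and postpone revealing the remaining edges at $w_i$ going out to an unexplored ``hole'' of vertices. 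In the near-critical regime $np=1+o(1)$, the number of such additional edges at $w_i$ is asymptotically $\mathrm{Poisson}(1)$, so, conditional on everything revealed so far, the degree of $w_i$ in $G$ takes each integer value with probability bounded away from $1$ by an absolute constant. Equivalently, with probability at least some fixed $c>0$, the revealed degree at step $i$ takes a value that is different from whatever ``match'' would be required to make $\mathcal{L}_d^i(u,v)=\mathcal{L}_d^i(v,u)$ for every $d$.

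Because the holes at different steps can be chosen disjoint, the degree-fluctuations across distinct steps are essentially independent, so the probability that degree-matching persists at all $10\log n$ steps is at most $(1-c)^{10\log n}=n^{-\Omega(1)}$; choosing the constants so that this exponent exceeds $2$ gives the desired $o(n^{-2})$ bound, and then the union bound over $\binom{n}{2}$ choices of $(u,v)$ completes the proof.

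The main obstacle will be the bookkeeping needed to isolate the freshly-revealed randomness at each step. In particular, the sets $\mathcal{S}^{\le i}(\{u,v\})$ become larger over time and the exploration processes starting from $u$ and from $v$ may interact; the hint in the excerpt (``independent randomness between the two processes'') suggests that one should argue that, since the explored vertex sets have size at most $O(\log n)\ll n$ throughout, at each step one can set aside a linear-size pool of unexplored vertices to serve as a hole, so that the edges at $w_i$ into this hole are genuinely fresh. A secondary issue is that a small number of steps may fail to provide new randomness (because $w_i$'s degree is already essentially determined, e.g.\ by earlier edges that entered $\mathcal{S}^{\le i-1}$); as in the ``atypical events'' argument of \cref{claim:2core-atypical}, these can be bounded by $O(1)$ whp and absorbed into the accounting, so the overall number of ``useful'' steps remains $\Theta(\log n)$.
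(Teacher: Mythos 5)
Your overall strategy coincides with the paper's: pass to the contrapositive, run the same $\mc S^t$-revelation process, extract a constant amount of anticoncentration from fresh randomness at each of the $10\log n$ steps (possible because in the near-critical regime the process does not exhaust the graph), multiply the conditional bounds, union bound over pairs, and conclude via \cref{lem:receive_same_colour}. The difference is the per-step statistic. The paper notes that $|\mc L^i(u,v)|-|\mc L^i(v,u)|=\sum_{w,x}a_x\xi_{wx}$ is a weighted sum over the many unrevealed pairs between $\mc S^{i-1}(\{u,v\})$ and the unexplored vertices, so \cref{thm:LO} bounds the per-step matching probability by $M(\lfloor n/4\rfloor,p)=(1-p)^{\lfloor n/4\rfloor}\le 0.8$ with no need to single out a vertex or reserve a hole. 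Your version anticoncentrates the degree of one newly explored vertex $w_i\in\mc S^i(\{u,v\})$; this can be made to work, but it is more delicate than you indicate: for ``at most one value of $\deg(w_i)$ completes a degree-profile match at level $i$'' to hold conditionally, you must first reveal \emph{all} edges incident to $\mc S^{\le i}(\{u,v\})$ except the $w_i$--hole edges (so that the degrees of every other vertex of $\on{supp}\mc L^i(u,v)\cup\on{supp}\mc L^i(v,u)$ are fixed, and $w_i$, lying in exactly one of the two supports, shifts only one side of the profile), and then chain the conditional bounds step by step rather than appeal loosely to disjoint holes. With that set-up every step automatically supplies fresh randomness (since $w_i\notin\mc S^{\le i-1}(\{u,v\})$, its edges to unexplored vertices are untouched), so your ``secondary issue'' and the \cref{claim:2core-atypical}-style accounting are unnecessary, as is the type-$W$ critical moment machinery from \cref{lem:expansion_SLi} (that bookkeeping exists for the perturbed setting).

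The one genuine gap is your justification that a linear-sized hole exists. You assert the explored set has size $O(\log n)$; this is neither proved nor true in general (for some vertices of a near-critical random graph the ball of radius $10\log n$ is much larger than $\log n$). What you actually need is only that $|\mc S^{\le i}(\{u,v\})|\le n/2$ throughout, and since $\mc S^{\le i}(\{u,v\})$ is contained in the union of the balls of radius $i$ around $u$ and $v$, this follows from a separate whp statement about neighbourhood growth --- precisely the paper's \cref{lem:few-close}, proved by a Chernoff bound on $|N^i(v)|$ and a union bound; note this must be proved for all vertices simultaneously, outside the per-pair argument. Once that lemma is in place, your per-step conditional matching probability is at most the modal probability of a $\on{Binomial}(\Omega(n),p)$ variable, a constant comfortably below the threshold $e^{-1/5}$ needed to beat $n^{-2}$ over $10\log n$ steps, and the argument closes as you describe.
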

(Recall that the sets $\mc S^{i}(\{u,v\})$, defined in \cref{def:multisets}, describe the ``vertices that appear differently in the $i$-th step of the view exploration process'').

For our proof of \cref{lem:longpaths} we will need a simple lemma showing that in very sparse random graphs, every vertex has distance greater than $10\log n$ from most other vertices (therefore throughout the first $10\log n$ steps of an exploration process, there are always many vertices that have not yet been explored).

\begin{lemma}\label{lem:few-close}
    Let $G\sim \mb G(n,p)$ for some $p$ satisfying $p\le (1+1/\log n)/n$. Whp, $G$ does not have any vertex $v$ which is within distance $10\log n$ of at least $n/4$ different vertices.
\end{lemma}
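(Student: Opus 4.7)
The plan is to upper-bound $|B(v,10\log n)|$ by the size of the connected component containing $v$, and then invoke classical bounds on component sizes in near-critical random graphs. Since $B(v,r)\subseteq C(v)$ (where $C(v)$ denotes the vertex set of the component containing $v$) deterministically, it suffices to prove that whp the largest component of $G\sim \mb G(n,p)$ has fewer than $n/2$ vertices. Note that this bound is independent of the specific radius $10\log n$ and only uses connectedness.

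I would split into two cases. First, if $np\le 1$, the graph is subcritical or at criticality, and the standard coupling of BFS exploration with a Galton--Watson branching process of offspring distribution $\mathrm{Bin}(n,p)$ (mean at most $1$) shows that whp every connected component has size $O(n^{2/3})$; see e.g.\ \cite[Chapter~2]{FK16}. Second, if $1<np\le 1+1/\log n$, write $np=1+\varepsilon$ with $\varepsilon\in(0,1/\log n]$, and observe that $\varepsilon^3 n\ge n/(\log n)^3 \to \infty$, placing us squarely in the weakly supercritical regime where the classical component-size asymptotics apply. By standard results (see e.g.\ \cite[Section~5.3]{JLR00} or \cite[Chapter~2]{FK16}), whp the largest component has size $(2+o(1))\varepsilon n=O(n/\log n)$, and every other component has size $O(\varepsilon^{-2}\log n)=O((\log n)^3)$.

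In both cases, whp every component of $G$ has size $o(n)$ and in particular less than $n/2$, which combined with the inclusion $B(v,10\log n)\subseteq C(v)$ immediately yields the lemma. The only mild obstacle is verifying that the hypotheses of the weakly supercritical theorem are met when $\varepsilon=1/\log n$, but this reduces to checking $\varepsilon^3 n\to\infty$, which is immediate. A direct branching-process/union-bound argument could replace the appeal to component-size theorems, but it would need to use the generation-$r$ total progeny $T_r$ rather than the full total progeny $T$ (which, because the process is slightly supercritical, survives with non-negligible probability $\approx 2/\log n$); using $T_r$ one can obtain a tail bound of the form $\Pr[T_r\ge n/2]=e^{-\Omega(n/(\log n)^2)}$, strong enough for a union bound over the $n$ vertices.
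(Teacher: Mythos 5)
Your overall route---bounding the ball of radius $10\log n$ around $v$ by the component containing $v$, and then invoking component-size bounds for near-critical $\mathbb{G}(n,p)$---is genuinely different from the paper's proof (which controls the growth of the successive distance-layers $N^i(v)$ directly, via stochastic domination of $|N^i(v)|$ by $\mathrm{Binomial}(n|N^{i-1}(v)|,p)$ together with a Chernoff bound and a union bound over $v$ and $i$), and the strategy itself is viable. However, your Case 2 contains a concrete error: from $\varepsilon\in(0,1/\log n]$ you deduce $\varepsilon^3 n\ge n/(\log n)^3$, but the inequality goes the other way ($\varepsilon^3 n\le n/(\log n)^3$), and $\varepsilon$ is only bounded above by the hypothesis, not below. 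For instance $p=(1+n^{-1/2})/n$ satisfies the lemma's hypothesis and has $\varepsilon^3 n\to 0$, i.e.\ it lies inside the critical window, where the weakly supercritical asymptotics you quote (largest component $(2+o(1))\varepsilon n$, all other components $O(\varepsilon^{-2}\log n)$) simply do not apply. So your two cases do not cover the full range $p\le (1+1/\log n)/n$; the regime where $\varepsilon=o(1)$ but $\varepsilon^3 n$ stays bounded is left unargued. (A smaller point: at $np=1$ the largest component is only $O_P(n^{2/3})$, not whp $O(n^{2/3})$ with a fixed implicit constant, though any whp bound like $n^{2/3}\log n$ is more than enough here.)

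The gap is easily repaired. The event that some vertex has at least $n/2$ vertices within distance $10\log n$ (indeed already the event that the largest component has at least $n/2$ vertices) is monotone increasing in the edge set, so by the standard monotone coupling it suffices to prove the lemma at $p=(1+1/\log n)/n$ exactly; there $\varepsilon=1/\log n$ gives $\varepsilon^3 n=n/(\log n)^3\to\infty$, and the weakly supercritical results do yield a largest component of size $O(n/\log n)=o(n)$ whp. Alternatively, one can cite component bounds valid throughout the critical window. With either fix your argument works, at the cost of importing nontrivial random-graph theory; the paper's elementary layer-by-layer Chernoff argument avoids this and in fact gives the much stronger conclusion that every ball of radius $10\log n$ has only $n^{0.1+o(1)}$ vertices, though only the bound $n/2$ is needed.
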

\begin{proof}
For a vertex $v$, let $N^i(v)$ be the set of vertices at distance $i$ from $v$.
We claim that, whp, for all vertices $v$, and all $i\le 10\log n$, we have
\begin{equation}
    |N^i(v)|\le \max\Big\{n^{0.1},\;(1+5/\log n)|N^{i-1}(v)|\Big\}.\label{lem:neighbourhood-growth}
\end{equation}
Indeed, for any fixed $v$, imagine an exploration process where we iteratively reveal the sets $N^{i}(v)$ (at step $i$ we reveal all edges incident to vertices in $N^{i-1}(v)$). For fixed $i$, if we condition on any outcome of $N^{i-1}(v)$, then $|N^{i}(v)|$ is at most the number of edges between $N^{i-1}(v)$ and previously unrevealed vertices, which is stochastically dominated by the binomial distribution $\on{Binomial}\big(n|N^{i-1}(v)|,p\big)$. By a Chernoff bound (\cref{lem:chernoff}), \cref{lem:neighbourhood-growth} holds with probability at least $1-o(1/n^2)$, so the result holds by a union bound over all vertices $v$, and positive integers $i$.

Then, note that if \cref{lem:neighbourhood-growth} holds for each $i\le 10\log n$, the total number of vertices within distance $10\log n$ of $v$ is at most $\sum_{i=0}^{10\log n} (1+5/\log n)^{i} n^{0.1} \leq  (10\log n )(1+5/\log n)^{10\log n} n^{0.1} \le n/4$.
\end{proof}
Now we prove \cref{lem:longpaths}.
\begin{proof}[Proof of \cref{lem:longpaths}]
Fix $v,u\in [n]$. We consider the exploration process defined in the proof of \cref{prop:expansion-colour-refinement} (also used in the proofs of \cref{prop:2core,lem:expansion_SLi}), to iteratively reveal the sets $\mc S^{i-1}(\{u,v\})$.

Fix vertices $u,v$ and let $i\leq 10\log n$. Suppose we have so far revealed $\mc S^{1}(\{u,v\}),\dots,\mc S^{i-1}(\{u,v\})$, suppose that $\mc S^{i-1}(\{u,v\})\ne \emptyset$ and suppose that $|\mc S^{\le i-1}(\{u,v\})|\le n/2$. For vertices $w,x$, let $\xi_{wx}$ be the indicator random variable for the event that $wx$ is an edge of $G$, and let $a_x$ be the number of times $x$ appears in $\mc L^{i-1}(u,v)$, minus the number of times it appears in $\mc L^{i-1}(v,u)$. So,
\[|\mc L^i(u,v)|-|\mc L^i(v,u)|=\sum_{w,x\in V(G)} a_x\xi_{wx}.\]
Some of the $\xi_{wx}$ may have already been revealed. In particular, there are \[\big(n-|\mc S^{\le i-1}(\{u,v\})|\big)\cdot |\mc S^{i-1}(\{u,v\})|\ge n/2\] pairs of vertices $w\notin \mc S^{\le i-1}(\{u,v\})$ and $x\in \mc S^{i-1}(\{u,v\})$ for which $a_x\ne 0$ and $\xi_{wx}$ has not yet been revealed.
Reveal $\xi_{wx}$ for all but $n/2$ of these pairs,
putting us in a position to apply \cref{thm:LO}. Recall the definition of $M(\lfloor n/4\rfloor,p)$ from \cref{def:modalprobability}; conditional on information revealed so far, we see that the probability of the event $|\mc L^i(u,v)|-|\mc L^i(v,u)|=0$ is at most
\[M(\lfloor n/4\rfloor,p) = (1-p)^{\lfloor n/4\rfloor} \leq e^{-0.9/4}+o(1)\le 0.8\]
(recalling that $p\ge 0.9/n$). 
Recall from \cref{lem:receive_same_colour} that if $|\mc L^i(u,v)|\ne |\mc L^i(v,u)|$ then $\mc R^*\sigma (u)\ne \mc R^* \sigma(v)$. So, for our fixed $u,v$, the joint event that
\[\mc S^{i-1}(\{u,v\})\ne \emptyset\text{ and }|\mc S^{\le i-1}(\{u,v\})|\le n/2\text{ for all }i\le 10\log n,\quad \text{and }\mc R^*\sigma (u)=\mc R^* \sigma(v)\]
holds with probability at most $0.8^{10\log n}=o(n^{-2})$. Taking a union bound over choices of $u,v$, we see that whp the above event does not hold for any $u,v$. To finish the proof, observe that the property in \cref{lem:few-close} implies that $|\mc S^{\le i-1}(\{u,v\})|\le n/2$ for all $u,v$ and all $i\le 10\log n$.
\end{proof}

We need two final ingredients before completing the proof of \cref{thm:2-core}: in very sparse random graphs, whp there are no short cycles in close proximity to each other, and this implies a certain structural fact about vertices in $V_{2,3}(G)$.

\begin{lemma}\label{lem:no-close-cycles}
    Let $G\sim G(n,p)$ for some $p$ satisfying $p\le (1+(\log n)^{-2})/n$. Then whp $G$ does not have any connected subgraph with at most $200\log n$ vertices and at least two cycles.
\end{lemma}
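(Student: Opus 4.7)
My plan is a first-moment union bound, but executed on \emph{minimally bicyclic} subgraphs rather than on arbitrary bicyclic ones; this is what keeps the counts tractable at this near-critical density.

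First, I would reduce to minimally bicyclic subgraphs. If $H \subseteq G$ is any connected subgraph with at least two cycles, then $|E(H)| \ge |V(H)|+1$, and by iteratively deleting pendant vertices and then deleting any non-bridge edge whose removal keeps the cyclomatic number at least $2$, I extract a subgraph $H' \subseteq H$ on at most $|V(H)|$ vertices that is connected, has cyclomatic number exactly $2$, and satisfies $|E(H')|=|V(H')|+1$ on the nose. A standard structural classification tells us that such an $H'$ is one of three topological types: a \emph{theta graph} (two degree-$3$ branch vertices joined by three internally disjoint paths), a \emph{figure-eight} (two cycles meeting at a single degree-$4$ vertex), or a \emph{dumbbell} (two vertex-disjoint cycles connected by a path). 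So it suffices to show that whp no such $H'$ of at most $200\log n$ vertices lies in $G$.

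Next, I would bound the number of labeled minimally bicyclic graphs on $k$ vertices. Each of the three topologies is described by a constant-size kernel (at most $2$ branch vertices and at most $3$ kernel edges) together with an assignment of the remaining $k - O(1)$ vertices as internal vertices of the paths along the kernel edges. A direct enumeration gives an $O(k^4\cdot k!)$ bound on the total number of such labeled graphs on $k$ vertices (choose the $O(1)$ branch vertices in $O(k^2)$ ways, order the remaining vertices in $(k-O(1))!$ ways, and split that ordering into the appropriate number of ordered path segments in $O(k^2)$ ways). Consequently, the expected number of minimally bicyclic subgraphs of $G$ on exactly $k$ vertices is at most
\[
\binom{n}{k}\cdot O(k^4\cdot k!)\cdot p^{k+1} \;\le\; O\!\left(k^4\, (np)^k\, p\right).
\]

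Finally, I would sum and apply Markov. Using $np \le 1 + (\log n)^{-2}$ together with $k \le 200\log n$, I get $(np)^k \le \exp(k/(\log n)^2) \le \exp(200/\log n) = 1+o(1)$, so the expected count per $k$ is $O(k^4/n)$; summing over $k\in[4,200\log n]$ gives $O((\log n)^5/n)=o(1)$, and Markov's inequality finishes the proof. The main obstacle is really the very first step: if one tried to union-bound directly over all connected subgraphs with at least two cycles (via the naive ``spanning tree plus two extra edges'' count), the resulting bound blows up to $n^{\Omega(1)}$ for $k=\Theta(\log n)$. Passing to a minimally bicyclic subgraph is precisely what pins the edge count at $|V(H')|+1$ exactly, and this is what lets the hypothesis $np=1+o(1)$ be converted, via $(np)^k=1+o(1)$, into genuine smallness of the expected count.
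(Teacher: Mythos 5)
Your proposal is correct and follows essentially the same route as the paper: reduce to a minimal connected subgraph with two cycles (the paper's two cases, vertex-disjoint cycles joined by a possibly trivial path and a theta graph, coincide with your dumbbell/figure-eight/theta classification), then do a first-moment union bound exploiting that such a subgraph on $k$ vertices has exactly $k+1$ edges, so $np\le 1+(\log n)^{-2}$ makes $(np)^k=1+o(1)$ for $k\le 200\log n$. The only difference is bookkeeping (you count labelled minimal bicyclic graphs on $k$ vertices, the paper parametrises by the path/cycle lengths $k_1,k_2,k_3$), which does not change the argument.
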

\begin{proof}
An edge-minimal connected subgraph containing two cycles is always either:
\begin{itemize}
    \item a pair of vertex-disjoint cycles joined by a (possibly trivial) path, or
    \item two vertices with three internally disjoint paths between them.
\end{itemize}
The probability that $G$ contains two vertex-disjoint cycles of lengths $k_1,k_2$, joined by a path of length $k_3\ge 0$ (i.e., a path with $k_3$ edges) is at most
\[n^{k_1+k_2+k_3-1}p^{k_1+k_2+k_3}=n^{-1}(np)^{k_1+k_2+k_3}\le \frac 1{n}\bigg(1+\frac1{(\log n)^2}\bigg)^{k_1+k_2+k_3}.\]
The number of vertices in such a configuration is $k_1+k_2+k_3-1$, so the probability that such a configuration exists on at most $200\log n$ vertices is at most
\[\frac{(200\log n)^3}n \bigg(1+\frac1{(\log n)^2}\bigg)^{200\log n+1}=o(1).\]
Similarly, the probability that $G$ contains two vertices joined by three internally disjoint paths of lengths $k_1,k_2,k_3\ge 1$ is at most
\[n^{k_1+k_2+k_3-1}p^{k_1+k_2+k_3},\]
so essentially the same calculation shows that whp there is no such configuration on at most $200\log n$ vertices.
\end{proof}

\begin{lemma}\label{lem:strong-cycles}
    For an integer $k\ge 1$, let $G$ be a connected graph containing no connected subgraph on at most $20k$ vertices with at least two cycles. Consider two distinct vertices $u,v\in V_{2,3}(G)$. Then for $i\le k$ there is a vertex $w$ with $d(u,w)=i$ and $d(v,w)>i$, or with $d(v,w)=i$ and $d(u,w)>i$.
\end{lemma}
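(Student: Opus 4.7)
The approach is proof by contradiction. Suppose there exists $i^* \in \{1, \dots, k\}$ where the conclusion fails, and choose $i^*$ minimal. The plan is to exhibit a connected subgraph of $G$ on at most $20k$ vertices containing at least two independent cycles, contradicting the hypothesis.

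I would first settle the base case $i^* = 1$: failure means every neighbour of $u$ lies in $\{v\} \cup N(v)$, and symmetrically for $v$. Since $u, v \in V_{2,3}(G)$ each have at least three neighbours in $\on{core}_2(G)$, a short case analysis on whether $uv \in E(G)$ yields a subgraph on at most five vertices with at least five edges, hence at least two cycles. For example, if $u \not\sim v$, the three $\on{core}_2(G)$-neighbours of $u$ must all be adjacent to $v$, producing a $K_{2,3}$ on $5$ vertices and $6$ edges.

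Next I would make a shortest-path reduction: if $d(u,v) > 2i^*$, then the vertex at distance $i^*$ from $u$ on any shortest $u$-$v$ path lies at distance $d(u,v) - i^* > i^*$ from $v$, already a witness, contradicting failure. So we may assume $d(u,v) \le 2i^* \le 2k$.

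For the main case ($i^* \ge 2$ and $d(u,v) \le 2k$), I would use $u \in V_{2,3}(G)$ to pick three neighbours $u_1, u_2, u_3$ of $u$ in $\on{core}_2(G)$ and build three simple $u$-$v$ paths $P_1, P_2, P_3$, each of length at most $2k + 2$, whose first edges $uu_1, uu_2, uu_3$ are distinct. The naive construction is $P_j = (u, u_j)$ followed by a shortest $u_j$-$v$ path; this works unless that shortest path passes through $u$ (i.e.\ $d(u_j, v) = d(u,v) + 1$), in which case I would reroute by extending $u_j$ to another $2$-core neighbour (using the min-degree-$\ge 2$ property of $\on{core}_2(G)$) before joining a shortest path to $v$. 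Since $u$ is an endpoint of each $P_j$, it has degree $1$ there, so the edge $uu_j$ appears only in $P_j$ and not in any $P_{j'}$ for $j \ne j'$; thus $H = P_1 \cup P_2 \cup P_3$ contains three distinct $u$-$v$ paths, and a short cyclomatic computation (two distinct $u$-$v$ paths give one cycle, and the extra edge $uu_3 \notin P_1 \cup P_2$ together with the fact that $u$ is isolated in the overlap $P_3 \cap (P_1 \cup P_2)$ forces a second independent cycle) yields that $H$ contains at least two independent cycles. Since $|V(H)| \le 3(2k + 2) - 2 \le 20k$ for $k \ge 1$, this contradicts the hypothesis.

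The main obstacle I anticipate is controlling the length of the rerouted path in the case where the shortest $u_j$-$v$ path passes through $u$: the detour through $\on{core}_2(G)$ must exit any small region around $u$ and rejoin a shortest-path regime within $O(k)$ steps. This requires combining the $2$-core min-degree structure with the no-two-cycle hypothesis to prevent the detour from getting ``trapped'' in a small unicyclic region.
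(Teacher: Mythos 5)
There is a genuine gap, and it sits exactly where you flagged it: the construction of three simple $u$--$v$ paths of length at most $2k+2$ whose first edges are $uu_1,uu_2,uu_3$. Neither the ``naive construction'' nor the proposed reroute is available in general, and the failure hypothesis at $i^*$ does not rescue it. Concretely, the $2$-core branch leaving $u$ through $u_1$ may be a long bare path that wanders away for far more than $20k$ steps before meeting anything, or it may be a small ``lollipop'' (a bare path from $u$ ending in a short cycle) attached to the rest of $G$ only at $u$. In the first case every $u$--$v$ path starting with $uu_1$ has length much larger than $2k+2$; in the second case no simple $u$--$v$ path starts with $uu_1$ at all. Rerouting ``to another $2$-core neighbour of $u_j$'' does not help: minimum degree $2$ lets you keep walking, but nothing bounds how long you must walk before you can turn back towards $v$. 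Nor does the contradiction hypothesis force such short paths to exist: failure at $i^*$ only constrains vertices at distance \emph{exactly} $i^*$ from $u$ (or $v$), and a small lollipop branch whose depth is less than $i^*$ contains no such vertices, so the hypothesis is silent about it. In that situation your Step 3 simply cannot be carried out, and the cyclomatic count never gets off the ground. (Your ear-decomposition computation for $P_1\cup P_2\cup P_3$ and the base case $i^*=1$ are fine; the problem is that the three paths need not exist.)

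What is missing is the case analysis that the paper's proof is built around: within distance $k$, each branch leaving $u$ or $v$ either meets another branch, or itself contains a cycle, or does neither -- and in the last case that branch \emph{directly} supplies the witnesses $w$ with $d(u,w)=i$ and $d(v,w)>i$ for every $i\le k$, with no contradiction argument needed. The paper formalises this with the sets $W^i_j$ (the distance-$k$ reach of each neighbour $w^i_j$ in $G-\{u,v\}$) and an auxiliary multigraph $G^*$ on $\{u,v\}\cup N(u)\cup N(v)$, where intersections of branches give edges and cycles inside a branch give loops; the hypothesis that $G$ has no small subgraph with two cycles forces some vertex of $G^*$ to have degree $1$, and that ``free'' branch yields the witnesses. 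Your approach would have to incorporate this trichotomy anyway (e.g.\ using a lollipop's cycle, or a cycle created by two merging branches, as one of the two independent cycles, while keeping everything inside a common connected subgraph on at most $20k$ vertices), at which point it essentially reconstructs the paper's argument; as written, the proposal only treats the most favourable configuration.
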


\begin{proof}
    
Let $w^1=u$ and $w^2=v$, and for $i\in \{1,2\}$ let $w_1^i,w_2^i,\ldots, w_{d_{i}}^i$ be the neighbours of $w^i$. For $i\in \{1,2\}$ and $j\in\{1,\dots,d_{i}\}$, let $W_j^i$ be the set of vertices within distance $k$ of $w_j^i$, with respect to the graph $G-\{u,v\}$ obtained by deleting $u$ and $v$.

It may be helpful to imagine an exploration process where we start at a vertex $w^i$ and ``explore outwards in the direction of $w_j^i$'', at each step discovering a new ``layer'' of vertices which are adjacent to previously discovered vertices. Then $W_j^i$ can be interpreted as the set of vertices discovered in the first $k$  steps of this process.

\begin{claim}\label{claim:cycle-in-Wji}
    If for any $i,j$ we have that $G[W_j^i]$ contains a cycle, then in fact $W_j^i$ contains a cycle $C$ with the property that $C$, together with a shortest path between $w_j^i$ and $C$, collectively comprise a subgraph $G_j^i\subseteq G[W_j^i]$ with at most $2k$ vertices.
\end{claim}

\begin{claimproof}
    Consider a breadth-first search tree $T$ (rooted at $w_j^i$) spanning $W_j^i$; if $W_j^i$ contains a cycle then $G[W_j^i]$ has an edge $xy$ which does not appear in $T$. Let $C$ be the cycle consisting of the edge $xy$, and the unique path in $T$ between $x$ and $y$. 
\end{claimproof} 

\begin{claim}\label{claim:path-between-Wji}
For any $i,j,i',j'$, if the sets $W_j^i$ and $W_{j'}^{i'}$ intersect, then there is a connected subgraph $G_{j,j'}^{i,i'}\subseteq G[W_j^i\cup W_{j'}^{i'}]$ containing $w_j^i$ and $w_{j'}^{i'}$, and fewer than $2k$ vertices in total.
\end{claim}
\begin{claimproof}
    If there is $x\in W_j^i\cap W_{j'}^{i'}$, we can simply take $G_{j,j'}^{i,i'}$ to be the union of a shortest path between $w_j^i$ and $x$ followed by a shortest path between $x$ and $w_{j'}^{i'}$.
\end{claimproof}
Now, consider the auxiliary multigraph $G^*$ constructed as follows.
\begin{itemize}
\item The vertex set of $G^*$ consists of the two vertices $u,v$ and all their neighbours (this is at most $2+d_{1}+d_{2}$ vertices, but it may be fewer if some neighbours of $u$ and $v$ coincide, or if $u$ and $v$ are adjacent). For each $i,j$, put an edge\footnote{If $u$ and $v$ are adjacent then $v$ is one of the $w^1_j$s and $u$ is one of the $w^2_j$s, but we put only one edge between $u$ and $v$.}between $w^i,w^i_j$ in $G^*$ (these edges are all also in $G$). Call these \emph{basic edges}.
\item For each $(i,j)\ne (i',j')$, if $W_j^i$ and $W_{j'}^{i'}$ intersect, then put an edge between $w_j^i$ and $w_{j'}^{i'}$ in $G^*$ if $w_{j}^{i} \neq w_{j'}^{i'}$. Call this a \emph{type-1 special edge}.
\item For each $i,j$, if $w_j^i$ is not incident to a type-1 special edge and $W_j^i$ contains a cycle with respect to $G$, then put a loop on $w_j^i$ in $G^*$. Call this a \emph{type-2 special edge}.
\end{itemize}
\begin{claim}\label{claim:reduced-cycles}
    $G^*$ does not have a connected component with at least two cycles.
\end{claim}
\begin{claimproof}
    For the purpose of contradiction, suppose such a connected component were to exist.
    Considering all possibilities for the structure of this component, we see that there must be a connected subgraph $G'\subseteq G^*$ with at least two cycles, at most three type-1 special edges, and at most two type-2 special edges (specifically, consider an edge-minimal connected subgraph with at least two cycles). Via \cref{claim:path-between-Wji,claim:cycle-in-Wji}, the special edges in $G'$ correspond to subgraphs of $G$ which each have at most $2k$ vertices, and the union of these subgraphs, together with the basic edges of $G'$, yields a connected subgraph of $G$ with at least two cycles and at most $(2+3)(2k+2)\le 20k$ vertices. 
    This contradicts our assumption on $G$.
\end{claimproof}

If every vertex of $G^*$ had degree at least 2, then $G^*$ would have a connected component with at least two cycles (since $u,v$ have degree at least 3). So, \cref{claim:reduced-cycles} shows that there is a vertex of $G^*$ which has degree 1. Suppose without loss of generality that this vertex is $w^1_1$, so $W_1^1$ does not contain any cycles (of $G$) and does not intersect any other $W_{j}^{i}$.

Recall that $w^1=u$ lies in the 2-core of $G$, so if we ``explore outwards from $u$ in the direction of $w_1^1$'' for long enough, we will eventually see a cycle or some other $w^i_j$. This does not happen within $k$ steps, so for every $i\le k$ there is some vertex $w\in W_1^1$ with $d(u,w)=i$. Since $W_1^1$ does not intersect any $W_{j}^{2}$, we have $d(v,w)>i$.
\end{proof}

Now we are ready to complete the proof of \cref{thm:2-core}.

\begin{proof}[Proof of \cref{thm:2-core} in the regime $0.9/n\le p\leq (1+(\log n)^{-2})/n$]Recall the definitions of $\mc L^{i}(u,v)$ and $\mc S^{i}(\{u,v\})$ (in \cref{def:multisets}) as the multisets of vertices that ``appear differently'' in the view exploration processes starting from $u$ and $v$. For any vertices $u,v,w$ with $d(v,w)=i$ and $d(u,w)>i$ (or vice versa), we have $w\in \mc S^{i}(\{u,v\})$. So, by \cref{lem:no-close-cycles} and \cref{lem:strong-cycles}, whp $G$ has the property that for every pair of distinct vertices $u,v\in V_{2,3}$, and every $i\le 10\log n$, we have $S^{i}(\{u,v\})\ne 0$. By \cref{lem:longpaths}, it follows that the vertices in $V_{2,3}(G)$ have distinct colours with respect to $\mc R^*\sigma$.
\end{proof}

\bibliographystyle{amsplain_initials_nobysame_nomr}
\bibliography{main.bib}

\appendix
\section{Canonical labelling proofs}\label{app:labelling}
In this section we provide some details for the (routine) proofs of \cref{cor:kernel-canonical,prop:kernel-canonical,cor:small-components}.

First, we make the basic (well-known) observation that for the purposes of efficient canonical labelling, it suffices to label each connected component separately. 

Recall that $\mc G_n$ is the set of $n$-vertex graphs (on the vertex set $\{1,\dots,n\}$); we write $\mc G_{\le n}\subseteq \bigcup_{k=0}^n \mc G_{k}$ for the set of all graphs on \emph{at most} $n$ vertices.

\begin{fact}\label{fact:components}
    Suppose $\Phi_{\mc H}$ is a canonical labelling scheme for a graph family $\mc H\subseteq \mc G_{\le n}$, such that $\Phi_{\mc H}(G)$ can be computed in time $T$ for each $G\in \mc H$. Let $\mc F$ be the family of graphs whose connected components are all isomorphic to a graph in $\mc H$. Then there is a canonical labelling scheme $\Phi$ for $\mc G_n$, such that for every $G\in \mc F$, we can compute $\Phi(G)$ in time $O(n^3\log n+nT +m)$.
\end{fact}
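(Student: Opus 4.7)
The plan is the standard ``label each component canonically, then assemble them in a canonical order'' reduction. First I would compute the connected components $C_1,\dots,C_k$ of $G$ (via a BFS/DFS, in time $O(n+m)$). For each component $C_i$, applying $\Phi_{\mc H}$ yields a permutation of $V(C_i)$ and hence a labelled graph $\Phi_{\mc H}((C_i))$ on vertex set $\{1,\dots,|C_i|\}$; this costs $O(nT)$ in total across all components. The resulting labelled graph depends only on the isomorphism class of $C_i$ because $\Phi_{\mc H}$ is canonical on $\mc H$.

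Next I would fix, once and for all, a total order $\prec$ on $\bigcup_{k\le n}\mc G_k$ that depends only on isomorphism-invariant data after we apply the canonical labellings, for example: compare first by number of vertices, then by number of edges, then lexicographically by the entries of the (labelled) adjacency matrix. Sort the sequence $\Phi_{\mc H}((C_1)),\dots,\Phi_{\mc H}((C_k))$ with respect to $\prec$. Finally, assemble $\Phi(G)\in \mc S_n$ by concatenation: the vertices of the first component in the sorted order receive the labels $1,\dots,|C_1|$ (in the order specified by $\Phi_{\mc H}((C_1))$), the next component receives $|C_1|+1,\dots,|C_1|+|C_2|$, and so on.

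To check canonicity: if $G,G'\in \mc F$ are isomorphic, any isomorphism restricts to a bijection between the multisets of components, so the two multisets $\{\Phi_{\mc H}((C_i))\}$ and $\{\Phi_{\mc H}((C_j'))\}$ agree (as multisets of labelled graphs). The sort is a deterministic function of this multiset, and the concatenation step only uses the sorted sequence, so $\Phi((G))=\Phi((G'))$.

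The only place where runtime care is needed is the sort. Comparisons under $\prec$ between two labelled graphs on $a$ and $b$ vertices cost $O((a+b)^2)$ (reading off adjacency entries), so a merge sort of the $k\le n$ components performs $O(k\log k)$ comparisons each of cost $O(n^2)$, comfortably within the claimed $O(n^2 m\log n+nT)$ budget (together with the $O(nT)$ spent producing the canonical labels of the components). I do not expect any real obstacle; the only subtlety is ensuring the tiebreaking rule $\prec$ is truly isomorphism-invariant, which is automatic once the comparison is performed on the \emph{canonically} labelled adjacency matrices rather than on the input labelling.
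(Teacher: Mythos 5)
Your proposal is correct and follows essentially the same route as the paper: canonically label each component via $\Phi_{\mc H}$, sort the components lexicographically by their canonically labelled adjacency matrices, and concatenate, with canonicity following because the output labelled graph depends only on the multiset of canonically labelled components. The runtime accounting and the observation that ties between identical components are harmless match the paper's argument.
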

\begin{proof}
     For a graph $G\in \mc F$, our canonical labelling $\Phi(G)\in \mc S_n$ (interpreted as an ordering of the vertices of $G$) is computed as follows. First, identify the connected components of $G$ (which takes time $O(n+m)$). For each component $C$ (with $n_C$ vertices, say), we interpret $C$ as a graph on the vertex set $\{1,\dots,n_C\}$, and compute the ordering $\Phi_{\mc H}(C)$. Since there are at most $n$ components, we can do this in time $O(nT)$.

Now that we have a labelling on each component, we simply need to decide how the components are ordered relative to each other. To do this, we consider the adjacency matrix of each connected component. To unambiguously specify these adjacency matrices we need an ordering of the vertices of each component; we simply use the canonical labelling $\Phi_{\mc H}(C)$ that we computed above. Then, we sort the components lexicographically by their adjacency matrices\footnote{Here, we are viewing a $k\times k$ matrix as a string of length $k^2$, so lexicographic ordering makes sense.}.
This can be done in time $O(n^{3}\log n)$ (the string corresponding to each adjacency matrix has length at most $n^{2}$, so comparing two sequences takes time $O(n^{2})$, and we are sorting at most $n$ components)\footnote{We are being very crude here, it is straightforward to bring the time down to $O(n^2)$.}.

Note that we did make some ``arbitrary choices'' in this description. Indeed, if there were two components which have the same adjacency matrix, then we had to arbitrarily break a tie when sorting these components. These arbitrary choices are not a problem (i.e., we have indeed described a canonical labelling scheme), because any two outcomes of these arbitrary choices correspond to an automorphism of $G$.
\end{proof}
Taking $\mc H$ to be the set of all graphs on at most $C\log n$ vertices, for a constant $C$, \cref{cor:small-components} follows immediately from \cref{prop:D-to-G}, \cref{thm:CG} and \cref{fact:components}.

Next, we provide some explanation for \cref{prop:kernel-canonical}. First, we observe that the vertices in the kernel, the non-kernel vertices in the 2-core, and the vertices outside the 2-core can all be distinguished from each other via the colour information generated by the colour refinement algorithm. Let $V_2(G)$ be the vertex set of the 2-core of $G$.

\begin{lemma}
\label{lem:differentiatecoreandkernel}
Let $G$ be a graph. The colour $\mc R^{*} \sigma(v)$ is enough information to distinguish between the three possibilities $v\in V(G)\setminus V_2(G)$, $v \in V_2(G) \setminus V_{2,3}(G)$ and $v\in V_{2,3}(G)$.
\end{lemma}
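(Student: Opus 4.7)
The plan is to show that $\mc R^*\sigma(v)$ determines, in turn, (i) whether $v$ lies in the 2-core $V_2(G)$, and then (ii) for $v\in V_2(G)$, whether $v$ has degree exactly $2$ or at least $3$ inside $\on{core}_2(G)$.

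For (i), I will mimic the greedy ``peeling'' process that produces the 2-core: starting from $G$, iteratively delete all vertices of current degree at most $1$, and let $\tau(v)\in\{1,2,\dots\}\cup\{\infty\}$ denote the round in which $v$ is removed (with $\tau(v)=\infty$ iff $v$ survives, i.e., $v\in V_2(G)$). I claim by induction on $t\ge 0$ that the colour $\mc R^t\sigma(v)$ determines the value of $\min\{\tau(v),\,t+1\}$. The base case $t=0$ is vacuous, and $t=1$ is immediate since $\mc R^1\sigma(v)$ is the degree $\deg_G(v)$, and $\tau(v)=1$ iff $\deg_G(v)\le 1$. For the inductive step, $\tau(v)=t+1$ precisely when after $t$ rounds of peeling the remaining degree of $v$ is at most $1$, i.e., when $|\{u\sim v:\tau(u)\ge t+1\}|\le 1$. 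By the inductive hypothesis, $\mc R^t\sigma$ already identifies the neighbours $u$ of $v$ with $\min\{\tau(u),t+1\}=t+1$; one more refinement step appends to $\mc R^t\sigma(v)$ the multiset of $\mc R^t\sigma$-colours of the neighbours of $v$, so in particular the count $|\{u\sim v:\tau(u)\ge t+1\}|$ is encoded in $\mc R^{t+1}\sigma(v)$, completing the induction. After at most $n$ steps the colouring stabilises, and $\mc R^*\sigma(v)$ determines $\tau(v)$ entirely, hence whether $v\in V_2(G)$. In particular $V_2(G)$ is a union of $\mc R^*\sigma$-colour classes.

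For (ii), let $v\in V_2(G)$. The stable colouring $\mc R^*\sigma$ is equitable by \cref{fact:equitable}, so the number of neighbours $v$ has in any given $\mc R^*\sigma$-colour class is determined by $\mc R^*\sigma(v)$. Summing over the colour classes comprising $V_2(G)$ (identified in step (i)), the colour of $v$ determines the degree of $v$ inside $\on{core}_2(G)=G[V_2(G)]$; this degree is exactly $2$ when $v\in V_2(G)\setminus V_{2,3}(G)$ and at least $3$ when $v\in V_{2,3}(G)$, by definition of $V_{2,3}(G)$.

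The only delicate point is the bookkeeping in the induction of step (i), ensuring that each level of refinement really does encode the statistics produced by the corresponding round of peeling; this is immediate from the definition of the refinement operator $\mc R$, which appends to $c(v)$ the multiset of colours of $v$'s neighbours under $c$. No further ideas are required beyond what is already in the paper.
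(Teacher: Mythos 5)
Your proof is correct and follows essentially the same route as the paper's: an induction showing that $t$ steps of refinement encode which vertices are peeled off within $t$ rounds of the 2-core peeling process, followed by equitability of the stable colouring (\cref{fact:equitable}) to read off the degree inside $\on{core}_2(G)$ and thereby separate $V_2(G)\setminus V_{2,3}(G)$ from $V_{2,3}(G)$. The explicit tracking of $\min\{\tau(v),t+1\}$ is just a more formal bookkeeping of the paper's sketch; no substantive difference.
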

\begin{proof}
Consider the process of ``peeling off'' vertices of degree less than 2, to generate the 2-core: at each step, we identify all vertices of degree less than 2, and remove them. It is easy to prove by induction that the colour of a vertex with respect to $\mc R^t\sigma$ is enough information to determine whether that vertex will be peeled off by the $t$-th peeling step (indeed, recall that $\mc R^t\sigma(v)$ tells us the number of neighbours that $v$ has in each colour, with respect to $\mc R^{t-1}\sigma(v)$).

Recall that $\mc R^*\sigma(v)$ contains all the information about $\mc R^t\sigma$ for each $t$ until stabilisation is reached. So, $\mc R^*\sigma(v)$ tells us whether $v\in V_2(G)$. Since $\mc R^*\sigma(v)$ has the property that every vertex of a given colour has the same number of neighbours in every other colour (\cref{fact:equitable}), it follows that, in the 2-core, the vertices of degree at least 3 have different colours than the vertices of degree 2.
\end{proof}

Now we explain how to reconstruct a connected graph $G$ given that the vertices in $V_{2,3}(G)\ne \emptyset$ are assigned distinct colours by $\mc R^{*} \sigma$.

\begin{proof}[Proof of \cref{prop:kernel-canonical}]
By \cref{lem:differentiatecoreandkernel}, if we know the multiset of colours in $\mc R^{*} \sigma$ then we know the multiset of colours assigned to vertices in $V_{2,3}(G)$. Start with a generic set of vertices with these colours. We want to show that the colours specify a unique way to put bare paths between these vertices to reconstruct the 2-core, and they also specify a unique way to attach trees to vertices of the 2-core to reconstruct $G$.

It would be possible to show this by induction (in a similar fashion to \cref{lem:differentiatecoreandkernel}), but it is perhaps most convenient/intuitive to proceed via universal covers (cf. \cref{rem:view-vs-UC}).

Indeed, to reconstruct the bare paths between vertices in $V_{2,3}(G)$, consider a vertex $u\in V_{2,3}(G)$, and consider the universal cover $\mathcal{T}_G(u)$ (recall from \cref{Universialtreecoverstep} that this is equivalent information to $\mc R^*\sigma(u)$). The vertices in this universal cover can be viewed as copies of vertices in $G$; we do not necessarily know the identities of these vertices, but we do know their colours (with respect to $\mc R^*\sigma$). Since the vertices in $V_{2,3}(G)$ receive unique, distinguishable colours (by the assumption of this proposition, and \cref{lem:differentiatecoreandkernel}), we can directly read off the lengths of the bare paths between $u$ and other vertices in $V_{2,3}(G)$, and we can read off what the other endpoints of these bare paths are.
    
To reconstruct the trees attached to vertices of $V_2(G)$, similarly consider a vertex $u\in V_{2}(G)$ and consider the universal cover $\mathcal{T}_G(u)$. Some of the children of the root are in $V_2(G)$ (we can see this from their colours, by \cref{lem:differentiatecoreandkernel}); after deleting these children (and their subtrees) we see the exact structure of the tree outside the 2-core attached to $u$.
\end{proof}

Finally, to prove \cref{cor:kernel-canonical}, we apply \cref{fact:components} with $\mc H$ being the set of graphs on at most $n$ vertices which are either CR-determined or outerplanar (recalling \cref{thm:outerplanarcl,thm:CR-determined-canonical,prop:kernel-canonical}).

\section{A simpler proof of a weaker result}\label{app:loglog}
Recall the definition of the disparity graph from \cref{def:disparity}. In this appendix we give the details of the argument sketched in \cref{subsec:sprinkling-outline}, proving the following simpler version of \cref{thm:smoothed-disparity}.

\begin{theorem}\label{thm:smoothed-disparity-cheap2}
Consider any $p\in [0,1/2]$ satisfying $p\ge 100\log \log n/n$, consider any graph $G_0$, and let $G_{\mr{rand}}\sim \mb G(n,p)$. Let $\sigma$ be the trivial colouring, assigning each vertex the same colour. Then whp the connected components of $D(G_0\triangle G_{\mr{rand}},\mc R^*\sigma)$ each have at most $\log n/\log \log n$ vertices.
In particular, by \cref{cor:small-components}, whp $G_0\triangle G_{\mr{rand}}$ can be tested for isomorphism with any other graph in polynomial time\footnote{\cref{cor:small-components} uses the Corneil--Goldberg exponential-time canonical labelling scheme on each connected component of the disparity graph, meaning that the components can have up to $O(\log n)$ vertices. However, here we can ensure that the components have at most $\log n/\log \log n$ vertices, so the Corneil--Goldberg labelling scheme is not necessary: we can afford to use a trivial factorial-time canonical labelling scheme on each connected component.}.
\end{theorem}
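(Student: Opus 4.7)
My plan is to adapt the three-round sprinkling strategy from the proof of \cref{thm:smoothed-disparity-cheap}, using the stronger hypothesis $p\ge 100\log\log n/n$ to replace the 2-dimensional Weisfeiler--Leman algorithm $\mc V^*\phi_G$ with plain colour refinement $\mc R^*\sigma$ and to push the final component size down from $O(\log n)$ to $O(\log n/\log\log n)$.

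\textbf{Step 1 (Sprinkling and the 3-core skeleton).} I would write $G_{\mr{rand}} = G^1_{\mr{rand}}\cup G^2_{\mr{rand}}\cup G^3_{\mr{rand}}$ as a union of three independent $\mb G(n,p')$ random graphs, where $p'$ satisfies $1-p=(1-p')^3$ and hence $p' \ge 30\log\log n/n$. Reveal $G^1_{\mr{rand}}$ first; standard 3-core estimates (see e.g.~\cite{PSW96}) give $|V_3(G^1_{\mr{rand}})|\ge n/2$ whp, and I fix a subset $V_{\mr{core}}$ of this size. Setting $U = V(G)\setminus V_{\mr{core}}$, I then reveal all of $G_{\mr{rand}}$ except the bipartite edges $U\leftrightarrow V_{\mr{core}}$ in $G^2_{\mr{rand}}\cup G^3_{\mr{rand}}$, which determines the induced graph $G[U]$ and the colouring $c_1 := \mc R^*_{G[U]}\sigma$. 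By \cref{prop:3core}, whp every vertex of $V_3(G_{\mr{rand}})$ --- which contains $V_{\mr{core}}$ together with any vertex of $U$ that accumulates $\ge 3$ neighbours in $V_{\mr{core}}$ via $G^2_{\mr{rand}}\cup G^3_{\mr{rand}}$ --- receives a unique colour in $\mc R^*_G \sigma$, and is hence isolated in $D(G,\mc R^*_G\sigma)$.

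\textbf{Step 2 (Independent sprinkling on $U$).} For $i\in\{2,3\}$, let $S_i \subseteq U$ be the set of vertices with $\ge 3$ neighbours in $V_{\mr{core}}$ via $G^i_{\mr{rand}}$. Since $|V_{\mr{core}}|p' \ge 15\log\log n$, a Chernoff bound yields $\Pr[v\in S_i] \ge 1-o(1/\log n)$ for each $v \in U$ independently, with $S_2,S_3$ mutually independent. I define $c_2$ (resp.\ $c_3$) as the refinement of $c_1$ (resp.\ $c_2$) obtained by giving each vertex of $S_2$ (resp.\ $S_3$) a fresh unique colour.

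\textbf{Step 3 (Bounding the degree after first sprinkling).} I would apply a quantitatively strengthened version of \cref{lem:degrees-log} to $(H,c,c') = (G[U],c_1,c_2)$. Its proof goes through verbatim, except that the threshold $2\log n$ in \cref{claim:distinguish-colour-cheap} can be replaced by $2\log n/\log\log n$: the per-pair failure probability becomes $(o(1/\log n))^{2\log n/\log\log n}=o(n^{-2})$, and the subsequent averaging argument then gives that $D(G[U],\mc R^*c_2)$ has max degree at most $\Delta := 4\log n/\log\log n$ whp.

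\textbf{Step 4 (Shattering into small components).} Next I would apply a strengthened version of \cref{lem:break} to $(H,c,c') = (G[U],\mc R^*c_2,c_3)$. In Phase~1 of its proof the set $Y$ can be taken of size $O(\log n/\log\log n)$ (so the abort probability is $(o(1/\log n))^{|Y|}=o(1/n)$), and the random-walk bound of \cref{claim:size_explored} sharpens to $|E_t|\le |C\cap X|+O(\log n/\log\log n)$. It remains to bound $|C\cap X|$ for colour classes $C$ of $\mc R^*c_2$ and components $X$ of $D(G[U],\mc R^*c_2)$. Here I would combine the degree bound from Step~3 with a subcritical branching-process estimate: performing BFS in $D(G[U],\mc R^*c_2)$ and ``killing'' each vertex of $S_3$ yields a process in which the expected offspring per step is at most $\Delta\cdot o(1/\log n)= o(1/\log\log n)$, so the probability that any BFS tree reaches size $\log n/\log\log n$ is at most $n^{-\omega(1)}$. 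A union bound over starting vertices gives $|C\cap X|=O(\log n/\log\log n)$, and plugging this into the strengthened \cref{lem:break} yields $O(\log n/\log\log n)$-sized components of $D(G[U],\mc R^*c_3)$ whp.

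\textbf{Step 5 (Propagating to the full graph).} Finally, I would extend $c_3$ to a colouring $c_3^{\mr{ext}}:V(G)\to\Omega$ by assigning each vertex of $V_{\mr{core}}$ a fresh unique colour. By \cref{fact:consistent}, together with the Step~1 consequence that $V_{\mr{core}}\cup S_2\cup S_3$ all receive unique colours in $\mc R^*_G\sigma$, whp $\mc R^*_G\sigma$ refines $c_3^{\mr{ext}}$ (this is the analogue of \cref{claim:components-final}). Then \cref{lem:disparity-components} gives that every component of $D(G,\mc R^*_G\sigma)$ has at most twice as many vertices as the corresponding component of $D(G,c_3^{\mr{ext}})$, which (apart from the singletons at $V_{\mr{core}}$) matches a component of $D(G[U],c_3)$, bounded by Step~4.

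\textbf{Main obstacle.} The delicate step is Step~4, because \cref{lem:break} as stated only gives components of size $5|C\cap X|+5\log n$. To reach $O(\log n/\log\log n)$ I need both a sharper random-walk exponent (exploiting the $1-o(1/\log n)$ survival probability) \emph{and} an independent subcritical-percolation bound on $|C\cap X|$ that uses the Step~3 degree bound. The subtlety is that colour refinement is non-monotone --- assigning a unique colour to $v\in S_3$ does not literally isolate $v$ in $D(G[U],\mc R^*c_3)$ --- so the percolation intuition only becomes rigorous through the Phase~2 exploration algorithm of \cref{lem:break}, which must be re-run carefully with the stronger parameters.
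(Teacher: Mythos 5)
Your Steps 1--3 and 5 follow the paper's appendix argument (sprinkling into three rounds, the $V_{\mr{core}}/U$ split, the degree bound via \cref{lem:degrees-log}, and the transfer to $D(G,\mc R^*\sigma)$ via consistency plus \cref{lem:disparity-components}), but Step 4 has a genuine gap. The quantity $|C\cap X|$, for a colour class $C$ of $\mc R^*c_2$ and a component $X$ of $D(G[U],\mc R^*c_2)$, is determined by $G_0$, $G_{\mr{rand}}^1$ and $G_{\mr{rand}}^2$ alone; it does not depend on $S_3$. So a BFS in $D(G[U],\mc R^*c_2)$ in which vertices are ``killed'' by membership in $S_3$ cannot bound $|C\cap X|$ --- what that subcritical process bounds is the size of the components of $D(G[U],\mc R^*c_2)-S_3$, a different (post-deletion) object. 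As written, your plan is circular: you spend the $S_3$ randomness to control a pre-$S_3$ quantity and then spend it again inside \cref{lem:break}. Nor is there any reason to believe $|C\cap X|=O(\log n/\log\log n)$ after a single round of sprinkling with plain colour refinement: controlling colour-class-by-component intersections is precisely the obstacle that forces the main proof of \cref{thm:smoothed-disparity} to pass to the 2-dimensional Weisfeiler--Leman algorithm and the fingerprint argument over several further rounds, and the whole point of the $\log\log n$-regime proof is to avoid ever needing such a bound. (A secondary quantitative slip: with survival probability only recorded as $o(1/\log n)$ and degree $O(\log n/\log\log n)$, a tree of size $\log n/\log\log n$ survives with probability about $e^{-\log n\cdot\log\log\log n/\log\log n}=n^{-o(1)}$, not $n^{-\omega(1)}$, so even the percolation step needs the sharper estimate $\Pr[v\in S_i]\ge 1-(\log n)^{-6}$ that the paper computes.)

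The paper closes this step differently: it applies the percolation lemma (\cref{lem:graph random vertex removal}) directly to the maximum-degree-$4\log n$ graph $D(G[U],c_2)$, with each vertex deleted (i.e.\ placed in $S_3$) with probability at least $1-1/(\log n)^6$, giving components of size at most $\log n/(4\log\log n)$; the transfer to $D(G[U],c_3)$ then goes not through \cref{lem:break} but through \cref{lem:disparity-components}. The key observation is that each vertex of $S_3$ is a singleton colour class of the equitable colouring $c_3$, hence is adjacent to all or none of every other class, so deleting the edges at $S_3$ from $D(G[U],c_2)$ yields a generalised disparity graph $D_L(G[U],c_3)$, and \cref{lem:disparity-components} converts the component bound into one for $D(G[U],c_3)$ at the cost of a factor $2$. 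This also shows that the subtlety you flag at the end is mislocated: a uniquely coloured vertex \emph{is} isolated in the disparity graph of an equitable colouring; the real consistency issue is that refining the colouring can flip majority decisions and hence change the disparity edges among the surviving vertices, which is exactly what the generalised-disparity-graph observation handles. Finally, your sharpened degree bound $4\log n/\log\log n$ in Step 3 is correct but unnecessary, as the paper itself remarks in a footnote.
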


At a very high level the strategy to prove \cref{thm:smoothed-disparity-cheap2} is as follows. First, we view the random perturbation $G_{\mr{rand}}$ as the union of three slightly sparser random perturbations $G_{\mr{rand}}^{1},G_{\mr{rand}}^{2},G_{\mr{rand}}^{3}\sim \mb G(n,p')$. Assuming $p=\Omega(\log \log n/n)$, standard results show that the 3-core of $G_{\mr{rand}}^{1}$ (which is a subgraph of the 3-core of $G_{\mr{rand}}$) comprises \emph{almost all} the vertices of $G$, and by \cref{prop:2core}, whp almost all these vertices are assigned unique colours by $\mc R_G^*\sigma$.

So, we first reveal the 3-core of $G_{\mr{rand}}^1$, and fix a set $V_{\mr{core}}$ of $n/2$ vertices in this 3-core. We then define the complementary set $U=V(G)\setminus V_{\mr{core}}$, and reveal all edges of $G_{\mr{rand}}$ except those between $U$ and $V_{\mr{core}}$. For the rest of the proof we work with the remaining random edges in $G_\mr{rand}^2$ between $U$ and $V_{\mr{core}}$.

Specifically, whenever a vertex in $U$ has at least three neighbours in $V_{\mr{core}}$, that vertex is guaranteed to be in the 3-core of $G_{\mr{rand}}$, and therefore (whp) it is assigned a unique colour. Such vertices are removed from consideration in the disparity graph. To show that such deletions typically shatter the disparity graph into small components, we will use the following lemma.
\begin{lemma}\label{lem:graph random vertex removal}
     Let $G$ be an $n$-vertex graph with maximum degree at most $4\log n$. Let $G(q)$ be the graph obtained by removing each vertex with probability $q\ge 1-1/(\log n)^6$ independently. Then whp $G(q)$ does not have a component of size larger than $\log n/(4\log\log n)$.
\end{lemma}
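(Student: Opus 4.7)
The plan is to prove this via a direct union bound over connected subgraphs (the classical site-percolation argument). The key observation is that if a vertex $v$ lies in a component of $G(q)$ of size greater than $k := \lfloor \log n/(4\log\log n)\rfloor$, then there exists a connected vertex subset $C \subseteq V(G)$ with $v \in C$, $|C| = k$, such that every vertex of $C$ survives the deletion process. Indeed, the component of $v$ in $G(q)$ is an induced connected subgraph of $G$ whose vertices all survive, and any connected set of size larger than $k$ contains a connected subset of size exactly $k$ through $v$.

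The two ingredients are then: (i) a bound on the number of connected vertex subsets of $G$ of size $k$ containing a fixed vertex, and (ii) the probability that all $k$ vertices survive. For (i), I would invoke the standard fact that in a graph of maximum degree at most $\Delta$, the number of connected subgraphs of size $k$ containing a fixed vertex is at most $(e\Delta)^k$ (this follows, e.g., from the Borgs--Chayes-style encoding of a connected subset by a depth-first exploration, or from the classical count of subtrees of a rooted infinite $\Delta$-ary tree). For (ii), since survival of each vertex is independent with probability $1-q \le (\log n)^{-6}$, the probability that a specific such $C$ survives entirely is at most $(\log n)^{-6k}$.

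Combining, the probability that a fixed vertex $v$ lies in a surviving component of size greater than $k$ is at most
\[
(4e\log n)^k \cdot (\log n)^{-6k} = (4e)^k \, (\log n)^{-5k}.
\]
With $k = \lfloor \log n/(4\log\log n)\rfloor$, the first factor equals $\exp(O(\log n/\log\log n)) = n^{o(1)}$, while the second factor equals $\exp(-5k\log\log n) \le n^{-5/4+o(1)}$. Hence the per-vertex probability is $n^{-5/4+o(1)} = o(1/n)$. A union bound over the $n$ choices of $v$ gives $o(1)$, as required.

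I do not expect any serious obstacle: the maximum-degree hypothesis $\Delta \le 4\log n$ combines cleanly with the very aggressive deletion probability $1-q \le (\log n)^{-6}$ to make the per-subgraph survival probability decay much faster than the number of candidate subgraphs grows, leaving a comfortable $n^{-5/4}$ margin for the union bound. The only mildly delicate point is quoting (or quickly reproving) the $(e\Delta)^k$ upper bound on the number of connected subsets of size $k$ through a fixed vertex; if one prefers a self-contained bound, it suffices to use the cruder $\Delta^{2k}$ count obtained by associating each such subset with a DFS walk of length $2(k-1)$, which still yields $(16(\log n)^2/(\log n)^6)^k = (\log n)^{-4k+o(k)} = o(1/n)$ at the chosen value of $k$.
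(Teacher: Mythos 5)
Your main argument is correct, and it takes a genuinely different route from the paper. The paper explores the component of a fixed vertex $v$ in $G(q)$ by breadth--first search, couples the alive/dead revelations with a sequence of $\on{Bernoulli}(1-q)$ coin flips, uses the degree bound to show that a surviving component of size $s$ forces at least $s$ ``successes'' among roughly $4s\log n$ flips, and finishes with a binomial tail bound summed over $s\ge \log n/(4\log\log n)$. You instead run the classical static site-percolation union bound: a large surviving component containing $v$ yields a connected set $C\ni v$ of size exactly $k=\lfloor\log n/(4\log\log n)\rfloor$ all of whose vertices survive, the number of such $C$ is at most $(e\Delta)^k\le(4e\log n)^k$ by the standard connected-subgraph counting lemma, and each survives with probability at most $(\log n)^{-6k}$, giving a per-vertex bound of $(4e)^k(\log n)^{-5k}=n^{-5/4+o(1)}=o(1/n)$, which survives the union bound over $v$. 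Both arguments are elementary and of comparable length; yours trades the paper's self-contained exploration/Chernoff computation for an appeal to the (well-known, but external) $(e\Delta)^k$ counting lemma, and in exchange avoids any coupling bookkeeping.

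One concrete correction: your closing remark that the cruder DFS-walk count $\Delta^{2k}$ ``still yields $(16(\log n)^2/(\log n)^6)^k=(\log n)^{-4k+o(k)}=o(1/n)$'' is false at this value of $k$. With $k=\lfloor\log n/(4\log\log n)\rfloor$ one has $(\log n)^{-4k}=\exp(-4k\log\log n)\ge 1/n$ (and it can be as large as $(\log n)^4/n$), while the extra factor $16^k=\exp(\Theta(\log n/\log\log n))$ exceeds every power of $\log n$; so the per-vertex bound from the cruder count is $n^{-1+o(1)}$, not $o(1/n)$, and the union bound over the $n$ choices of $v$ then gives only $n^{o(1)}$, which does not tend to $0$. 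The net exponent per vertex must strictly exceed $1$: with the $(e\Delta)^k$ count you get $(6-1)/4=5/4>1$, which is why your main computation works, whereas the $\Delta^{2k}$ count gives exactly $(6-2)/4=1$ and fails. So either keep the $(e\Delta)^k$ lemma (citing or reproving it), or note that the cruder count would require a smaller constant in the component-size threshold (or a larger exponent in the deletion probability).
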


We prove \cref{lem:graph random vertex removal} at the end of this section. First, we deduce the statement of \cref{thm:smoothed-disparity-cheap2}.

\begin{proof}[Proof of \cref{thm:smoothed-disparity-cheap2}]Let $G =G_{0} \triangle G_{\mr{rand}} $.
 First note that we can assume $p\le 2\log n/n$, as otherwise the desired result follows from \cref{thm:smoothed-colour-refinement} (note that if $\mc R^*\sigma$ assigns all vertices distinct colours, then $D(G,\mc R^*\sigma)$ consists only of isolated vertices.

Write $G_{\mr{rand}}=G_{\mr{rand}}^1\cup G_{\mr{rand}}^2\cup G_{\mr{rand}}^3$, where $G_{\mr{rand}}^1,G_{\mr{rand}}^2,G_{\mr{rand}}^3\sim \mb G(n,p')$ are independent random graphs with edge probability $p'$ satisfying $1-p=(1-p')^3$. Note that $p'\ge (100/3)\log \log n/n\ge 20\log \log n/n$.

Since $np'\to \infty$, the 3-core of $G_{\mr{rand}}^1$ has $n-o(n)$ vertices whp (see e.g.\ \cite[Exercise~2.4.14]{FK16}). So, whp we can fix a subset $V_{\mr{core}}\subseteq V_3(G_{\mr{rand}}^1)$ with size $n/2$. Let $U=V(G)\setminus V_{\mr{core}}$, and for $i\in \{2,3\}$ let $S_i$ be the set of vertices in $U$ which have at least three neighbours in $V_{\mr{core}}$ with respect to $G_{\mr{rand}}^i$. We use the sets $S_i$, together with the colour refinement algorithm, to recursively define a sequence of vertex-colourings $c_1,c_2,c_3$ of $U$, as follows.

\begin{itemize}
    \item Recall that $\sigma_{G[U]}$ is the trivial vertex-colouring of $G[U]$, and let $c_1=\mc R_{G[U]}^* \sigma_{G[U]}$.
    \item For $i\in\{2,3\}$: let $b_i$ be the vertex-colouring obtained from $c_{i-1}$ by giving all vertices in $S_i$ a unique colour, and let $c_i=\mc R^* b_i$.
\end{itemize}

The plan is to first reveal $G[U]$ and $c_1$ and $c_2$, using \cref{lem:degrees-log} to bound the maximum degree of $D(G[U],c_2)$, and to then reveal $c_3$, using \cref{lem:graph random vertex removal} to bound the sizes of the components of $D(G[U],c_3)$. The following claim justifies this plan.

\begin{claim}\label{claim:components-final2}
Whp the largest connected component in $D(G,\mc R^* \sigma_G)$ has at most twice as many vertices as the largest connected component in $D(G[U],c_3)$.
\end{claim}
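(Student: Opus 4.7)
My plan is to bootstrap from \cref{lem:disparity-components} applied inside $G[U]$, after establishing two whp statements: (a) every vertex of $V_{\mr{core}}$ is isolated in $D(G,\mc R^*\sigma_G)$, and (b) the restriction $\mc R^*\sigma_G|_U$ refines $c_3$ as a colouring of $U$. Together these imply that every connected component of $D(G,\mc R^*\sigma_G)$ is either a singleton $V_{\mr{core}}$ vertex or a component of $D(G[U],\mc R^*\sigma_G|_U)$, and the latter has at most twice as many vertices as the corresponding component of $D(G[U],c_3)$ by \cref{lem:disparity-components}.

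For (a): since the 3-core is monotone under adding edges, $V_{\mr{core}}\subseteq V_3(G_{\mr{rand}}^1)\subseteq V_3(G_{\mr{rand}})$, so \cref{prop:3core} guarantees that whp each $v\in V_{\mr{core}}$ is a singleton colour class of $\mc R^*\sigma_G$. Because $\mc R^*\sigma_G$ is equitable on $G$ by \cref{fact:equitable}, \cref{fact:D-equitable} forces every other colour class $B$ to have each of its vertices with at most $|\{v\}|/2<1$ disparity neighbours in $\{v\}$, and by biregularity $v$ itself then has no disparity neighbours in $B$. Hence $v$ is incident to no edges of $D(G,\mc R^*\sigma_G)$.

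For (b): I proceed by induction on $i\in\{1,2,3\}$. The key observation is that $\mc R^*\sigma_G|_U$ is equitable on $G[U]$: by (a), no colour class of $\mc R^*\sigma_G$ crosses $U$ and $V_{\mr{core}}$ (whp), so for any two colour classes $A,B\subseteq U$ the bipartite subgraph $G[A,B]=G[U][A,B]$ inherits biregularity directly from the equitability of $\mc R^*\sigma_G$ on $G$. The base case is then immediate: $c_1=\mc R_{G[U]}^*\sigma_{G[U]}$ is by \cref{fact:equitable} the coarsest equitable partition of $G[U]$ refining $\sigma_{G[U]}$, and $\mc R^*\sigma_G|_U$ is one such partition. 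For the inductive step, $\mc R^*\sigma_G$ whp assigns unique colours to $S_{i+1}\subseteq V_3(G_{\mr{rand}})$ by \cref{prop:3core}, so the inductive hypothesis together with this gives that $\mc R^*\sigma_G|_U$ refines $b_{i+1}$; being equitable on $G[U]$, it must then also refine the coarsest equitable refinement $c_{i+1}=\mc R_{G[U]}^*b_{i+1}$.

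The main subtlety is step (b): one must check that equitability of $\mc R^*\sigma_G$ on $G$ descends to equitability of $\mc R^*\sigma_G|_U$ on $G[U]$, which is what enables the iterated use of the coarsest-equitable-refinement characterization of $\mc R^*$. Once both (a) and (b) are in place, the conclusion follows by applying \cref{lem:disparity-components} with $H=G[U]$, $c=\mc R^*\sigma_G|_U$, and the coarsening $c_0=c_3$.
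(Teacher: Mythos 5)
Your proof is correct, and its skeleton matches the paper's: condition on the whp event of \cref{prop:3core} that all vertices of $V_3(G_{\mr{rand}})\supseteq V_{\mr{core}}\cup S_2\cup S_3$ receive unique colours, show by a short induction over the three rounds that the global stable colouring refines the sprinkled colourings, and finish with \cref{lem:disparity-components}. The implementation differs in where that lemma is applied. The paper extends each $c_i$ to a colouring $c_i^{\mr{ext}}$ of all of $V(G)$ (unique colours on $V_{\mr{core}}$), proves via \cref{fact:consistent} that $\mc R^*\sigma_G$ refines $c_i^{\mr{ext}}$, applies \cref{lem:disparity-components} in $G$ with $c_0=c_3^{\mr{ext}}$, and then identifies the components of $D(G,c_3^{\mr{ext}})$ with those of $D(G[U],c_3)$ plus singletons on $V_{\mr{core}}$. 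You instead restrict $\mc R^*\sigma_G$ to $U$, verify that this restriction is equitable on $G[U]$ (using that no colour class crosses $U$ and $V_{\mr{core}}$), run the induction through the coarsest-equitable-refinement characterisation in \cref{fact:equitable} rather than through \cref{fact:consistent} (the two are interchangeable here), and apply \cref{lem:disparity-components} inside $G[U]$ with $c_0=c_3$. A modest advantage of your packaging is the treatment of $V_{\mr{core}}$: isolation of these vertices in $D(G,\mc R^*\sigma_G)$ falls out of \cref{fact:D-equitable} applied to the genuinely equitable colouring $\mc R^*\sigma_G$, and you never need to reason about disparity edges between $V_{\mr{core}}$ and $U$ under the extended colourings $c_i^{\mr{ext}}$, which are not equitable on $G$, so the corresponding assertion in the paper's sketch requires a little more care than your version does. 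Both routes deliver the same factor-2 bound, and both use the same two external inputs (\cref{prop:3core} and \cref{lem:disparity-components}).
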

\begin{claimproof}[Sketch proof of claim]
By \cref{prop:3core}, whp $\mc R^*\sigma$ assigns each vertex in the 3-core $V_{3}(G_{\mr{rand}})\supseteq V_{\mr{core}}\cup S_2\cup S_3$ a unique colour. We will see that the desired conclusion holds whenever this is the case.

Extend each $c_i$ to a colouring $c_i^{\mr{ext}}$ of $V(G)$, by assigning each vertex in $V_{\mr{core}}$ a unique colour. Note that $D(G,c_i^{\mr{ext}})$ has the same components as $D(G[U],c_i)$ (plus singleton components for each vertex of $V_{\mr{core}}$), so by \cref{lem:disparity-components} it suffices to show that $\mc R^* \sigma_G$ is a refinement of each $c_i^{\mr{ext}}$. This can be proved inductively, with three applications of \cref{fact:consistent}
(note that we are assuming $\mc R^*\sigma_G$ assigns each vertex a unique colour, so $G[U]$ can be partitioned into full colour classes).
\end{claimproof}

For the rest of the proof, we reveal all the edges in $G^1_{\mr{rand}}$ (which determines $U$), and reveal all the edges in $G[U]$. 
We assume that $|V_3(G^1_{\mr{rand}})|\ge n/2$ (so $V_{\mr{core}}$ and $U$ are well-defined).

Given \cref{claim:components-final2}, our objective is to use the remaining randomness of $G^2_{\mr{rand}}$ and $G^3_{\mr{rand}}$ (via the colourings $c_2$ and $c_3$) to prove that whp every component of $D(G[U],c_3)$ has at most $\log n/(2\log \log n)$ vertices.

First, note that the random sets $S_i$ are independent. For each $i\in \{2,3\}$, each vertex $v\in U$ is independently present in $S_i$ with probability
\begin{align*}
  1-\Pr(\on{Binomial}(n/2,p')\le 2)&\geq 1-3\binom{n/2}{2}(p')^2(1-p')^{n/2-2}
  \\& \geq 1-(np')^2e^{-np'/2}\geq 1-(20\log \log n)^2 e^{-10\log\log n}\ge 1-1/(\log n)^6.
\end{align*}

We now track how $D(G[U],c_i)$ evolves with $i$.

\medskip\noindent\textbf{Round 1.} Applying \cref{lem:degrees-log} with $H=G[U]$, $c=c_1$ and $c'=c_2$, we see that whp $D(G[U],c_2)$ has maximum degree at most $4\log n$. Reveal an outcome of $c_2$ such that this is the case (i.e., for the rest of the proof, all probabilities should be interpreted as being conditional on an outcome of $c_2$ with this property).

\medskip\noindent\textbf{Round 2.} Recall that each vertex is present in $S_3$ with some probability $q\ge 1-1/(\log n)^6$. So, $D(G[U],c_2)-S_3$ is obtained from $D(G[U],c_2)$ by deleting each vertex with probability $q$, independently. By \cref{lem:graph random vertex removal}, whp the connected components of $D(G[U],c_2)-S_3$ all have size at most $\log n/(4\log \log n)$. Reveal an outcome of $c_3$ such that this is the case.

Let $H'$ be the graph obtained from $G[U]$ by deleting all edges incident to vertices of $S_3$ (leaving those vertices isolated). So, apart from isolated vertices, $H'$ has the same components as $G[U]-S_3$, and these components therefore all have size at most $\log n/(4\log \log n)$. But note that $H'$ can be interpreted as a generalised disparity graph $G_L(G[U],c_3)$, so by \cref{lem:disparity-components}, the connected components of $D(G,c_3)$ all have size at most $\log n/(2\log \log n)$, as desired.
\end{proof}

Now we prove \cref{lem:graph random vertex removal}.
\begin{proof}[Proof of \cref{lem:graph random vertex removal}]
Say that a vertex of $G$ is \emph{dead} if it is not in $G(q)$; otherwise it is \emph{alive}.

Fix $v\in V(G)$. We wish to show that the component of $v$ in $G(q)$ (if $v$ is alive) has at most $\log n/(4\log \log n)$ vertices, with probability at least $1-o(n^{-1})$ (the desired result will then follow from a union bound over $v$).

We explore the component of $v$ in $G(q)$ using breadth-first search, revealing whether vertices are dead or alive as we go. Specifically, let $S^0=\{v\}$ (we can assume $v$ is alive, or there is nothing to prove) and mark $v$ as ``explored''. For each $t\ge 1$ (in increasing order), let $T^t$ be the set of neighbours of vertices in $S^{t-1}$ which have not yet been marked as ``explored'', let $S^t$ be the set of alive vertices in $T^t$, and mark all vertices in $T^t$ as ``explored''. This process terminates when $S^t=\emptyset$.

Note that $S=\bigcup_{t=0}^\infty S^t$ is precisely the connected component of $v$ in $G(q)$. Since each vertex in $G$ has degree at most $4\log n$, we have $|T^t|\le (4\log n)|S^{t-1}|$ for all $t\ge 1$, so writing $T=\bigcup_{t=0}^\infty T^t$ we have $|T|\le (4\log n) |S|$.

Just like in the proof of \cref{lem:expansion_SLi}, we can couple our exploration process with an infinite sequence of $\on{Bernoulli}(1-q)$ random variables $(\alpha_i)_{i\in \mb N}$: whenever we need to know the alive/dead status of a vertex $v$, we look at the next $\alpha_i$ in our sequence. In particular, $|S|-1$ is the number of times we see $\alpha_i=1$ in the first $|T|$ coin flips, so the probability that $|S|\ge \log n/(4\log \log n)$ (i.e., the component of $G$ has at most $\log n/(4\log \log n)$ vertices) is at most the probability that for some $s\ge \log n/(4\log \log n)$, among the first $(4\log n)s+1$ of the $\alpha_i$ we see $\alpha_i=1$ at least $s$ times. This probability is at most
\begin{align*}
         \sum_{s=\frac{\log n}{4\log\log n}} \binom{4s\log n+1}{s}(1-q)^s
    \leq \sum_{s=\frac{\log n}{4\log\log n}} (5\log n)^s (1-q)^s
    &= \sum_{s=\frac{\log n}{4\log\log n}} \left(\frac{5}{(\log n)^5}\right)^s
    \\&=  e^{-(5/4+o(1)) \log n}=o(n^{-1}).
\end{align*}
The desired result follows.
\end{proof}

\end{document}